\theoremstyle{plain}
\newtheorem{theorem}{Theorem}[section]
\newtheorem{lemma}[theorem]{Lemma}
\newtheorem*{mainresult*}{Main Result}
\newtheorem{assumption}[theorem]{Assumption}
\theoremstyle{definition}
\newtheorem{remark}[theorem]{Remark}
\numberwithin{equation}{section}
\theoremstyle{plain}
\newcommand{\linspan}{\mathop{\rm span}\nolimits}
\newcommand{\rest}{\left.\kern-2\nulldelimiterspace\right|_}
\newcommand{\norm}[2]{\left|#1\right|_{#2}}
\newcommand{\Id}{{\mathbf1}}
\newcommand{\indf}{1}
\newcommand{\ex}{\mathrm{e}}
\newcommand{\p}{\partial}
\newcommand{\ed}{\mathrm d}
\newcommand*{\Bigcdot}{\raisebox{-.25ex}{\scalebox{1.25}{$\cdot$}}}
\newcommand{\clB}{{\mathcal B}}
\newcommand{\clC}{{\mathcal C}}
\newcommand{\clE}{{\mathcal E}}
\newcommand{\clG}{{\mathcal G}}
\newcommand{\clK}{{\mathcal K}}
\newcommand{\clL}{{\mathcal L}}
\newcommand{\clN}{{\mathcal N}}
\newcommand{\clS}{{\mathcal S}}
\newcommand{\clT}{{\mathcal T}}
\newcommand{\clU}{{\mathcal U}}
\newcommand{\clW}{{\mathcal W}}
\newcommand{\clZ}{{\mathcal Z}}
\newcommand{\bbE}{{\mathbb E}}
\newcommand{\bbH}{{\mathbb H}}
\newcommand{\bbM}{{\mathbb M}}
\newcommand{\bbN}{{\mathbb N}}
\newcommand{\bbR}{{\mathbb R}}
\newcommand{\bbS}{{\mathbb S}}
\newcommand{\bfC}{{\mathbf C}}
\newcommand{\fkE}{{\mathfrak E}}
\newcommand{\fkL}{{\mathfrak L}}
\newcommand{\rmD}{{\mathrm D}}
\newcommand{\bfn}{{\mathbf n}}
\newcommand{\bfu}{{\mathbf u}}
\newcommand{\rmd}{{\mathrm d}}
\newcommand{\rme}{{\mathrm e}}
\newcommand{\fkm}{{\mathfrak m}}
\newcommand{\fkn}{{\mathfrak n}}
\newcommand{\fks}{{\mathfrak s}}
\newcommand{\ovlineC}[1]{\overline C_{\left[#1\right]}}
\definecolor{DarkBlue}{rgb}{0,0.08,0.45}
\definecolor{DarkRed}{rgb}{.65,0,0}
\definecolor{applegreen}{rgb}{0.55, 0.71, 0.0}
\newcounter{mymac@matlab}
\newcommand{\matlab}{MATLAB%
   \ifnum\value{mymac@matlab}<1%
   \textregistered%
   \setcounter{mymac@matlab}{1}%
   \fi%
  }
\newcommand{\black}{ \color{black} } 
\newcommand{\blue}{ \color{blue} }
\begin{document}
\title{Existence, uniqueness, and stabilization results for parabolic variational inequalities}
\author{Axel Kr\"{o}ner$^1$}
\author{Carlos N. Rautenberg$^2$}
\author{S\'ergio S.~Rodrigues$^3$}
\thanks{\newline\noindent$^1$ Weierstrass Institute
for Applied Analysis and Stochastics
Berlin, Germany,  ({\small\tt axel.kroener@wias-berlin.de}).
\newline $^2$ George Mason University, Fairfax, VA, USA,
({\small\tt crautenb@gmu.edu}).\qquad  {C. N. R.} was supported by NSF grant DMS-2012391, and  acknowledges the support of Germany's Excellence Strategy - The Berlin Mathematics Research Center MATH+ (EXC-2046/1, project ID: 390685689) within project AA4-3.
\newline $^3$ Karl-Franzens University of Graz, Austria,
 ({\small\tt sergio.rodrigues@ricam.oeaw.ac.at}).\qquad
{S. S. R.} was supported by the ERC advanced grant 668998 (OCLOC) under the EU's H2020 research program, and acknowledges partial support from Austrian Science Fund (FWF): P 33432-NBL.}

\begin{abstract}  In this paper we consider feedback stabilization for parabolic variational
inequalities of obstacle type with time and space depending reaction and 
convection coefficients and show exponential stabilization to nonstationary trajectories.
Based on a Moreau--Yosida approximation, a feedback operator is established
using a finite (and uniform in the approximation index) number of actuators leading to exponential decay of given
rate of the state variable.
Several numerical examples are presented addressing smooth and nonsmooth obstacle functions. 
\end{abstract}

\subjclass[2020]{35K85, 93D15}

\keywords{Exponential stabilization, parabolic variational inequalities,
oblique projection feedback, Moreau--Yosida approximation}

\maketitle

\pagestyle{myheadings} \thispagestyle{plain} \markboth{\sc A. Kr\"{o}ner, C. N. Rautenberg, and S. S. Rodrigues}
{\sc Stabilization for nonautonomous parabolic variational inequalities}

\section{Introduction}
Our goal is the stabilization to trajectories for parabolic variational inequalities, in particular towards the solution~$ y$ to the obstacle problem
\begin{subequations}\label{sys-haty}
 \begin{align}
&\langle \tfrac{\p}{\p t}{ y} +(-\Delta+\Id) y+a y+b\cdot\nabla y -f,v- y\rangle\geq 0,
\quad \forall v\leq \psi,\;\; t>0,\\
& y\leq \psi,\quad \clG y\rest{\Gamma}=\chi,\quad	t>0,\qquad y(\Bigcdot,0)= y_\circ,
\end{align}
\end{subequations}
 in a bounded domain~$\Omega\subset\bbR^d$ with a regular enough boundary $\Gamma:=\partial\Omega$,
 where~$d$ is a positive integer. The obstacle~$\psi=\psi(x,t)$ and the functions~$a=a(x,t)\in\bbR$,
 $b=b(x,t)\in\bbR^d$, $f=f(x,t)\in\bbR$, $\chi=\chi(\overline x,t)\in\bbR$, $v=v(x,t)\in\bbR$,
and~$y_\circ=y(x)$, are assumed to be sufficiently regular, for~$(x,\overline x,t)\in \Omega\times\Gamma\times(0,+\infty)$;
regularity details are specified later. The linear operator~$\clG$ is determined by either Dirichlet or Neumann boundary conditions.

 For some pairs $(a,b)$,  the solution~$ w$ issued from a different initial condition~$ w_\circ\ne y_\circ$
\begin{subequations}\label{sys-tildey}
 \begin{align}
&\langle \tfrac{\p}{\p t}{ w}+ (-\Delta+\Id)w+a w+b\cdot\nabla w -f,v- w\rangle\geq 0,
\quad \forall v\leq \psi,\;\; t>0,\\
& w\leq \psi,\quad \clG w\rest{\Gamma}=\chi,\quad t>0,\qquad	 w(\Bigcdot,0)= w_\circ,
\end{align}
\end{subequations}
may not converge to~$y$ as time increases.   Our goal is  to show that, by
means of an feedback control input~$\bfu=\clK(w-y)$, we can track~$y$ exponentially
fast with an arbitrary exponential rate~$-\mu<0$. That is, we want to construct an input feedback operator~$\clK$ such that the solution of
\begin{subequations}\label{sys-tildey-u}
 \begin{align}
&\langle \tfrac{\p}{\p t}{ w}+ (-\Delta+\Id)w+a w+b\cdot\nabla w-f-\clK(w-y),v- w\rangle\geq 0,
\quad \forall v\leq \psi,\quad t>0,\\
&w\leq \psi,\quad \clG w\rest{\Gamma}=\chi,\quad  t>0,\qquad w(\Bigcdot,0)= w_\circ,
\end{align}
\end{subequations}
satisfies, for a suitable constant~$C\ge1$,
\begin{equation}\label{goal-intro}
 \left| w(t)- y(t)\right|_{L^2(\Omega)}\le C{\rm e}^{-\mu t}\left| w_\circ- y_\circ\right|_{L^2(\Omega)},
 \quad\mbox{for all}\quad(w_\circ,y_\circ)\in L^2(\Omega)\times L^2(\Omega),\quad t\ge0.
\end{equation}

We are interested
in the case~ $\clK\colon L^2(\Omega) \to \clU_M$,
where~$\clU_M\subset L^2(\Omega)$ is a finite-dimensional subspace,
given by the linear span of a finite set of
actuators~$U_M=\{\Psi_i\mid 1\le i\le \fkm(M)\}\subset L^2(\Omega)$,
where~$\fkm(M)$ is a positive integer which will be appropriately chosen later on.
It follows that the control input 
will be of the form
\begin{align}\notag
\bfu(t)=\clK(w(t)-y(t))=\sum\limits_{i=1}^{M_\fkm}u_i(t)\Psi_i\in\clU_M.
\end{align}
Further, motivated by real applications, we consider the case in which the actuators
are determined by indicator functions~$\indf_{\omega_i}$ of small subdomains~$\omega_i\subset\Omega$,
\[
\Psi_i(x)=\indf_{\omega_i}(x)=\begin{cases}1,&\mbox{ if }x\in\omega_i,\\
                          0,&\mbox{ if }x\in\Omega\setminus\omega_i,
                         \end{cases} \qquad 1\le i\le M_\fkm.
\]

\begin{remark}\label{R:nu}
 Note that for simplicity we have taken the diffusion operator as~$-\Delta+\Id$. One reason is to facilitate the
inclusion of Neumann boundary conditions
in our investigation  where, in particular, we ask the operator to be injective.
This is not a significant restriction, since we can always transform a given dynamics
$\frac{\p}{\p t}y-\nu\Delta y+\widetilde ay+h=0$ into~$\frac{\p}{\p \tau}z+(-\Delta +\Id)z+(\nu^{-1}\widetilde a-1)z+\nu^{-1}h=0$
simply by rescaling time, $\tau=\nu t$, $z(\tau)=y(\nu^{-1}\tau)$.
\end{remark}

\subsection{Main stabilizability result}

Recall that for  Dirichlet and Neumann boundary conditions, the operator~$\clG$
reads, respectively,
\[
 \clG=\Id\qquad\mbox{and}\qquad\clG=\tfrac{\p}{\p\bfn}=\bfn\cdot\nabla,
\]
where~$\bfn=\bfn(\overline x)$ is the unit outward normal vector to~$\Gamma$ at~$\overline x\in\Gamma$.
In either case we set~$L^2(\Omega)$ as a pivot space, that is, we identify~$L^2(\Omega)$ with its own dual,~$L^2(\Omega)'=L^2(\Omega)$.

Depending on the choice of $\clG$, we define the spaces
\[
V\coloneqq\begin{cases}
       H^1_0(\Omega),&\;\mbox{if}\quad \clG=\Id,\\
       H^1(\Omega),&\;\mbox{if}\quad \clG=\tfrac{\p}{\p\bfn},
 \end{cases}
\]
and the symmetric isomorphism 
\begin{equation}\label{defA}
A\colon V\to V',\qquad \langle Ay,z\rangle_{V',V}\coloneqq (\nabla y,\nabla z)_{L^2(\Omega)^d}+(y,z)_{L^2(\Omega)}.
\end{equation}
Throughout the paper, we assume that the subset~$\Omega$ is bounded, open, and connected, located on one side of its
boundary~$\Gamma=\p\Omega$. Furthermore, either~$\Gamma$ is a compact~$C^2$-manifold or~$\Omega$ is a convex polygonal domain.
The domain of~$A$ is defined as~$\rmD(A)\coloneqq \{z\in L^2(\Omega)\mid A z\in L^2(\Omega)\}$,
and since~$\Omega$ is regular enough,\black
we have the following characterizations
\begin{align}\label{char-dom}
  \rmD(A)=\{z\in H^2(\Omega)\mid \clG z\rest\Gamma=0\}.
\end{align}
It also follows that~$A$ has a compact inverse, and that~$L^2(\Omega)=\rmD(A^0)$ and~$V=\rmD(A^\frac12)$. Note that
$
A\coloneqq(-\Delta+\Id)\rest{\rmD(A)}\colon \rmD(A)\to L^2(\Omega),
$
is the restriction of~$-\Delta+\Id$ to~$\rmD(A)$.

We shall assume  that~$V$\black and~$\rmD(A)$ are endowed, respectively, with the scalar products
\[
(y,z)_V\coloneqq \langle Ay,z\rangle_{V',V}\quad\mbox{and}\quad (y,z)_{\rmD(A)}\coloneqq (Ay,Az)_{L^2(\Omega)}
\]
and associated norms. Note that~$(y,z)_V=(y,z)_{H^1(\Omega)}$ coincides with the usual scalar product of~$H^1(\Omega)$. 
Finally, we denote the increasing sequence of eigenvalues of~$A$ by~$(\alpha_i)_{i\in\bbN}$, and a
complete basis of eigenfunctions by~$(e_i)_{i\in\bbN}$,
\[
Ae_i=\alpha_ie_i,\qquad e_i\in\rmD(A),\qquad 0<\alpha_i\le\alpha_{i+1}\to+\infty.
\]

Throughout this manuscript, for simplicity, we shall denote the Hilbert Sobolev spaces
\[
H^s\coloneqq H^s(\Omega)=W^{s,2}(\Omega)\quad\mbox{for}\quad s>0,\quad\mbox{and}\quad {L^2}\coloneqq
L^2(\Omega). 
\]

We consider sequences of sets of actuators and eigenfunctions  $E_M$ of the diffusion operator under homogeneous boundary conditions as follows, for some nondecreasing
function $\mathfrak{m}:\mathbb{N}\to\mathbb{N}$
\begin{subequations}\label{seqActEig}
 \begin{align}
 &(U_M)_{M\in \bbN},\quad U_M=\{\Psi_i\mid 1\le i\le \fkm(M)\}\subset {L^2(\Omega)},\\
 &(E_M)_{M\in \bbN},\quad E_M=\{e_i\mid i\in\bbE_M\}\subset \rmD(A)\subset {L^2(\Omega)},
 \quad\bbE_M=\{j_k^M\mid 1\le k\le \fkm(M\}\subset\bbN,
 \end{align}
where~$\bbN$ stands for the set of positive integers and the $j_k^M$s are specified later. Further, we denote 
 \begin{equation}
 \clU_M=\linspan U_M,\qquad \clE_M=\linspan E_M,
 \end{equation}
 and assume that
 \begin{equation}
 \dim\clU_M=M_\fkm=\dim\clE_M,\quad {L^2(\Omega)}=\clU_M+\clE_M^\perp,\quad\mbox{and}\quad\clU_M\:{ \textstyle\bigcap}\:\clE_M^\perp=\{0\}.\label{seqActEig.DS}
 \end{equation} 
 \end{subequations}
Due to~\eqref{seqActEig.DS}, the
oblique projection~$P_{\clU_M}^{\clE_M^\perp}$, in~${L^2(\Omega)}$ onto~$\clU_M$ along~$\clE_M^\perp$, is well defined as follows: we can write an arbitrary $h\in L^2$ in a unique way as $h=h_{\clU_M}+h_{\clE_M^\perp}$ with~$(h_{\clU_M},h_{\clE_M^\perp})\in\clU_M\times\clE_M^\perp$, then we set~$P_{\clU_M}^{\clE_M^\perp}h\coloneqq h_{\clU_M}$.

Our results will follow under general conditions on the dynamics tuple~$(a,b,f,\chi,\psi)$  and under a particular condition on
the sequence~$(\clU_M,\clE_M)_{M\in \bbN}$. Such conditions will be presented and  specified later on.
Without entering into more details at this point our main result  is the following, whose precise statement shall be given in Theorem \ref{T:main}.\black
\begin{mainresult*}
{ Let $r=r(t)\coloneqq\min(t,1)$ for  $t\ge 0$. Under sufficient regularity of the data
and some assumptions which will be specified in Section \ref{sec:assum} we have the following:
\par
{\rm (i)}  For every~$T>0$, there exists a unique solution $y\in W((0,T);H^1,V')$ of~\eqref{sys-haty} with~$ry\in W((0,T);H^2,{L^2})$. 
\par
{\rm (ii)}  For every~$\mu>0$, there are $M$ and~$\lambda$ large enough such that,  with~$ \clK_M^\lambda\coloneqq \lambda P_{\clU_{M}}^{\clE_{M}^\perp}A P_{\clE_{M}}^{\clU_{M}^\perp}$,
the solution of the system
\begin{subequations}\label{sys-tildey-K-intro}
 \begin{align}
&\left( \tfrac{\p}{\p t}{ w} +(-\nu\Delta+\Id) w+a w+b\cdot\nabla w - f
+ \clK_M^\lambda(w-y),v- w\right)_{L^2}\geq 0,
\quad \forall v\leq \psi,\quad t>0,\\
&w\leq \psi,\quad
w(0)= w_\circ,\quad \clG w\rest{\Gamma}=\chi.
\end{align}
\end{subequations}
satisfies the inequality~\eqref{goal-intro} with $C=1$.
Furthermore, 
\begin{subequations}\label{prop.K-intro}
\begin{align}
\norm{\clK_M^\lambda}{\clL(L^2)}
&\le\lambda\widehat\alpha_M\norm{P_{\clU_{M}}^{\clE_{M}^\perp}}{\clL(L^2)}^2\quad\mbox{and}\\
 \norm{\clK_M^\lambda({w} -{y})}{L^2(\bbR_+,L^2)}
& \le \lambda\widehat\alpha_M\mu^{-1} \norm{P_{\clU_{M}}^{\clE_{M}^\perp}}{\clL(L^2)}^2
\norm{{w}_\circ-{y}_\circ }{L^2},
 \end{align}
\end{subequations}
 where~$\widehat\alpha_M=\sup\{\alpha_i\mid e_i\in E_M\mbox{ and }Ae_i=\alpha_ie_i\}$.
}
\end{mainresult*}

\subsection{Previous literature}
The use of oblique projections has been introduced in Kunisch and Rodrigues~\cite{KunRod19-cocv},
in the  construction of explicit feedback operators for stabilization of linear
parabolic-like systems under homogeneous conditions $(f,\chi)=0$. Precisely, the feedback in~\cite{KunRod19-cocv} is given by
\begin{align}\label{FeedKunRod}  
 \clK_{M}(t)(y)=P_{\clU_{M}}^{\clE_{M}^\perp}
 \Bigl(A+A_{\rm rc}(t)-\lambda \Id\Bigr)y,
 \end{align}
where~$\clU_M$ is the finite-dimensional actuators space and the auxiliary space~$\clE_{M}$
is spanned by a suitable set of eigenfunctions of the diffusion-like operator~$A$.
Further~$A_{\rm rc}$ is a reaction-convection-like operator.
Appropriate variations of such feedback are used in Kunisch and Rodrigues~\cite{KunRod19-dcds}
to stabilize coupled parabolic-{\sc ode} systems, and
in Azmi and Rodrigues~\cite{AzmiRod20}  to stabilize damped wave equations.
In Rodrigues~\cite{Rod20-eect}, the analogous feedback
\begin{align}\label{FeedRod-eect}  
 \clK_{M}(t)(y)=P_{\clU_{M}}^{\clE_{M}^\perp}
 \Bigl(Ay+A_{\rm rc}(t)y+\clN(t,y)-\lambda y\Bigr),
 \end{align}
is used to semiglobally stabilize  parabolic equations, where the dynamics
includes a given nonlinear term~$\clN(t,\Bigcdot)$ and the number of actuators
is large enough, depending on the norm~$\norm{y_0}{V}$ of the initial state in
a suitable Hilbert space $V\subseteq L^2$.

In this paper we investigate the stabilizability of nonautonomous parabolic variational
inequalities through a limiting argument based on Moreau--Yosida approximations.
The latter are semilinear parabolic equations and by this reason we could try to
use the feedback~\eqref{FeedRod-eect}. However,  the number of actuators required by that feedback
increases (or may increase) with the norm of the nonlinear term, that is, the number of actuators is expected to
increase with the Moreau--Yosida parameter. Roughly speaking, the number of needed actuators could diverge
to~$+\infty$ as the Moreau--Yosida parameter does. This would mean that, even in the case we can find a
limit feedback operator, that operator could have an infinite-dimensional range, that is, we would need
an infinite number of actuators to be able to implement the controller. This is of course unfeasible for real world applications.
Therefore, we will use a different feedback operator in~\eqref{sys-tildey-K-intro}, namely, 
\begin{align}\label{FeedK}  
 \clK_{M}^\lambda=-\lambda P_{\clU_{M}}^{\clE_{M}^\perp}AP_{\clE_{M}}^{\clU_{M}^\perp}.
 \end{align}

We shall make use of the monotonicity of the nonlinear term associated with the Moreau--Yosida approximation. Without such monotonicity we do not know whether the feedback in~\eqref{FeedK} is able to stabilize parabolic systems for a general class of nonlinearities as in~\cite{Rod20-eect}. Moreover, it is also such monotonicity which will allow us to take the pair~$(\lambda,M)$ in~\eqref{FeedK} independently of the Moreau--Yosida parameter,
and this is why we will be able to take such feedback in the limit variational inequality.

This manuscript introduces the use of oblique projections in the construction of explicit feedback operators which are able to stabilize parabolic variational inequalities. Moreover, to the best knowledge of the authors, there are no results on  stabilization of parabolic variational inequalities available in the literature.
In spite of this fact we would like to refer the reader to previous works on controlled parabolic variational inequalities defined on a \textit{bounded} time interval. 

Feedback laws for optimal control of parabolic variational inequalities have been addressed in Popa \cite{MR1816854} and robust feedback 
laws in Maksimov \cite{MR2155305}. In the first reference the author shows that for a certain class of parabolic variational inequalities the optimal control is
given by a feedback law given by the optimal value function. In the latter reference the author considers a robust control problem for a parabolic variational inequality in the case of distributed control actions and disturbances, and establishes a feedback law using piecewise (in time) constant control functions being irrespective of the unknown effective perturbation.

For stabilization we are often interested in closed-loop (feedback) controls. However, we would like to refer the reader to several contributions concerning open-loop optimal control of parabolic variational inequalities (still, in a bounded time interval).  Wang~\cite{MR1866305} considers optimal control problems for systems governed by a parabolic variational inequality coupled with a semilinear parabolic differential equation, Ito and Kunisch~\cite{MR2609034} consider strong and weak solution concepts for parabolic variational inequalities and study existence. Furthermore the first order optimality system in a Lagrangian framework is derived.
 Sensitivity analysis is considered in Christof~\cite{MR3899156}. For optimal control of elliptic-parabolic variational
              inequalities with time-dependent constraints see Hofmann, Kubo, and Yamakaki~\cite{MR2228961}. Wachsmuth~\cite{MR3438403} studies optimal control of quasistatic plasticity with linear
              kinematic hardening and derives optimality conditions. Chen, Chu, and Tan~\cite{MR2346390} analyze bilateral obstacle control problem of parabolic variational inequalities.
For time optimal control of parabolic variational inequalities see Barbu~\cite{MR726973}, where a variant of the maximum principle for time-optimal trajectories of control systems governed by certain variational inequalities of parabolic type is derived. 
Optimal control problems of parabolic variational inequalities of second  kind have been addressed by Boukrouche and Tarzia~\cite{MR2801013}.

\vspace*{11pt}
The rest of the paper is organized as follows. In Section~\ref{sec:ExistUniq-var}
we analyze the Moreau--Yosida approximations. The stabilization of the Moreau--Yosida
approximations is addressed in Section~\ref{sec:stabilization_sequ}.
Section~\ref{sec:stabil-var} is dedicated to the proof of the main stabilization
result for the variational inequality. Finally, in Section~\ref{sec:num}
several numerical examples are presented for the case of a regular obstacle fulfilling the theoretical assumptions,
and in Section~\ref{sec:num_nonsmooth} a less regular obstacle~$\psi$ is considered for the sake of comparison.

\textbf{Notation:}
For an open interval $I\subseteq\bbR$ and two Banach spaces~$X,\,Y$, we write
	$W(I;\;X,\,Y)\coloneqq\{y\in L^2(I;\,X)\mid \dot y\in L^2(I;\,Y)\}$,
where~$\dot y\coloneqq\frac{\ed}{\ed t}y$ is taken in the sense of
distributions. This space is a Banach space when endowed with the natural norm
$	|y|_{W(I;\,X,\,Y)}\coloneqq\bigl(|y|_{L^2(I;\,X)}^2+|\dot y|_{L^2(I;\,Y)}^2\bigr)^{1/2}.$
If the inclusions $X\subseteq Z$ and~$Y\subseteq Z$ are continuous, where~$Z$ is a Hausdorff topological space,
then we can define the Banach spaces $X\cap Y$, $X\times Y$, and $X+Y$,
endowed with the norms defined as,
\begin{align}
|(a,\,b)|_{X\times Y} &:=\bigl(|a|_{X}^2+|b|_{Y}^2\bigr)^{\frac{1}{2}}, \quad |a|_{X\cap Y} :=|(a,\,a)|_{X\times Y},\notag\\
	|a|_{X+Y}&:=\inf\limits_{(a_1,\,a_2)\in X\times Y}\bigl\{|(a_1,\,a_2)|_{X\times Y}\mid a=a_1+a_2\bigr\},\notag
	\end{align}
respectively.
In case we know that $X\cap Y=\{0\}$, we say that $X+Y$ is a direct sum and we write $X\oplus Y$ instead.
If the inclusion
$X\subseteq Y$ is continuous, we write $X\xhookrightarrow{} Y$.

The space of continuous linear mappings from~$X$ into~$Y$ is denoted by~$\clL(X,Y)$. In case~$X=Y$ we 
write~$\clL(X)\coloneqq\clL(X,X)$.
The continuous dual of~$X$ is denoted~$X'\coloneqq\clL(X,\bbR)$.
The space of continuous functions from~$X$ into~$Y$ is denoted by~$\clC(X,Y)$.
Given a subset~$S\subset H$ of a Hilbert space~$H$, with scalar product~$(\Bigcdot,\Bigcdot)_H$, the orthogonal complement of~$S$ is
denoted~$S^\perp\coloneqq\{h\in H\mid (h,s)_H=0\mbox{ for all }s\in S\}$.
Given two closed subspaces~$F\subseteq H$ and~$G\subseteq H$ of the Hilbert space~$H=F\oplus G$, we denote by~$P_F^G\in\clL(H,F)$
the oblique projection in~$H$ onto~$F$ along~$G$. That is, writing $h\in H$ as $h=h_F+h_G$ with~$(h_F,h_G)\in F\times G$, we have~$P_F^Gh\coloneqq h_F$.
The orthogonal projection in~$H$ onto~$F$ is denoted by~$P_F\in\clL(H,F)$. Notice that~$P_F= P_F^{F^\perp}$.
By
$\overline C_{\left[a_1,\dots,a_n\right]}$ we denote a nonnegative function that
increases in each of its nonnegative arguments.
Finally, $C,\,C_i$, $i=0,\,1,\,\dots$, stand for unessential positive constants.

\section{ Existence, uniqueness, and approximation of the solution}\label{sec:ExistUniq-var}
We consider here a more general version of system~\eqref{sys-haty}, which will allow us to work with
the controlled system~\eqref{sys-tildey-K-intro} as well.
Namely
\begin{subequations}\label{sys-y-gen}
 \begin{align}
&\left( \tfrac{\p}{\p t}{ y} +(-\Delta+\Id) y+ Q y -f,v- y\right)_{L^2}\geq 0,
\quad \forall v\leq \psi,\;\; t>0,\\
& y\leq \psi,\quad \clG y\rest{\Gamma}=\chi,\quad	t>0,\qquad y(\Bigcdot,0)= y_\circ,
\end{align}
\end{subequations}
with~$Q=Q(x,t)\coloneqq \clB(x,t)+b(x,t)\cdot\nabla$ where $\clB(\Bigcdot,t)\in\clL(L^2)$
is a general linear bounded mapping, from~$L^2(\Omega)$ into itself.\
 
We show that there exists a solution of~\eqref{sys-y-gen},  which can be approximated by
the sequence~$({ y}_k)_{k\in\mathbb N}$, where~$y_k$ is the solution of the system
\begin{align}\label{sys-haty-k}
&\tfrac{\p}{\p t}{ y}_k +(-\Delta+\Id) y_k+Q y_k+k({ y}_k-\psi)^+=f,\qquad
{y}_k(0)={ y}_\circ ,\qquad\clG  y_k\rest{\Gamma}=\chi,
\end{align}
with
 \[
  v^+(x)\coloneqq
  \begin{cases}v(x),&\mbox{ if } v(x)>0,\\
  0,&\mbox{ if } v(x)\le 0,\end{cases}
  \quad\mbox{for}\quad v\in L^2.
 \]

\subsection{{ Assumptions on the data}}\label{sec:assum}
We assume the following regularity assumptions for the data. Hereafter, we will denote~$\bbR_+\coloneqq(0,+\infty)$.

\begin{assumption}\label{A:char-dom}
The subset~$\Omega$ is bounded, open, and connected, located on one side of its
boundary~$\Gamma=\p\Omega$. Furthermore, either~$\Gamma$ is a compact~$C^2$-manifold or~$\Omega$ is a convex polygonal domain.
 \end{assumption}
Under Assumption~\ref{A:char-dom} we have the characterizations~\eqref{char-dom},
this follows from~\cite[Thms. 2.2.2.3,  2.2.2.5, 3.2.1.3 and 3.2.1.3]{Grisv85}.

\begin{assumption}\label{A:Arc}
 The operator~$Q$ in~\eqref{sys-y-gen} is a sum~$Q=\clB+b\cdot\nabla$ with
\[
 \clB\in L^\infty(\bbR_+;\clL(L^2))\quad\mbox{and}\quad b\in L^\infty(\Omega\times\bbR_+)^d.
\]
 \end{assumption}
Assumption~\ref{A:Arc} is satisfied if, for example, $\clB =a\Id$ with $a\in
L^\infty(\Omega\times\bbR_+)$.

\begin{assumption}\label{A:fchi}
The external forces~$f$ and~$\chi$, and initial condition~$y_\circ$ in~\eqref{sys-haty}, satisfy
\begin{align}\notag
&f\in L^2_{\rm loc}(\bbR_+;L^2),\quad \chi \in  \clT, \quad y_\circ\in L^2,\quad\mbox{and}\quad 
y_\circ\le\psi(\Bigcdot,0).
\end{align}
\end{assumption}

 See Section \ref{sec:trace} for the definition of $ \clT$ as the trace space of $W_{\rm loc}(\bbR_+;H^2,L^2)$. The condition $\chi\in \clT$ specifically means that there exists a function in $W_{\rm loc}(\bbR_+;H^2,L^2)$ such that $\clG h$ is equal to $\chi$ in the trace sense.

\begin{assumption}\label{A:obst-lb}
The obstacle satisfies~$\psi\in W_{\rm loc}(\bbR_+;H^2,L^2)$
 and~$\clG\psi\rest\Gamma\ge\chi-\eta$ for a suitable real function~$\eta(\overline x,t)=\eta(t)$
independent of~$\overline x\in\Gamma$ where:{ 
\begin{itemize}
 \item[(i)] for Dirichlet boundary conditions, $\eta=0$, 
 \item[(ii)] for Neumann boundary conditions, $\eta\ge0$ and $\eta\in W^{1,2}_{\rm loc}(\bbR_+)$.
\end{itemize}}
\end{assumption}

\begin{remark}\label{R:dirpsichi}
Notice that for Dirichlet boundary conditions, since we will be looking for a
solution  satisfying~$y\rest\Gamma=\chi$ and~$y\le\psi$,
then the requirement~$\psi\rest\Gamma\ge\chi$ is necessary.  Instead, for Neumann boundary conditions,
we do not claim the necessity of the requirements in Assumption~\ref{A:obst-lb}. However, the relaxation of
those requirements will, probably, involve extra technical difficulties.
\end{remark}

\subsection{Trace and lifting operators}\label{sec:trace}
For simplicity, we denote
\[
\clW\coloneqq W_{\rm loc}(\bbR_+;H^2,L^2)\quad\mbox{and}\quad
\clW_0\coloneqq W_{\rm loc}(\bbR_+;\rmD(A),L^2)\subset\clW.
\]
Let us define the trace spaces on the boundary
\begin{align}\notag
 &\clT\coloneqq \left\{\clG h\rest\Gamma\mid h\in\clW\right\},
\;&& \clT_0\coloneqq
\left\{\clG h\rest\Gamma\mid h\in \clW_0\right\}.
\end{align}
Recall that we have
(cf.~\cite[Ch.~1, Thms.~3.2 and~9.6]{LioMag72-I})
for the trace spaces at initial time,
\begin{align}\notag
&\clW^{[t=0]}\coloneqq\{y(0)\mid y\in\clW\}=H^1,
\;&&\clW_0^{[t=0]}\coloneqq\{y(0)\mid y\in\clW_0\}=V.
\end{align}

Now for any finite time interval~$(t_1,t_2)$, with $t_2>t_1$, we { define} the Hilbert
spaces 
\begin{equation}\label{clWt1t2}
\clW_{(t_1,t_2)}:=W((t_1,t_2),H^2,L^2)
\end{equation}
and the corresponding traces are denoted by~$\clT_{(t_1,t_2)}=\clW_{(t_1,t_2)}\rest\Gamma$.\black

Next for each positive integer~$j\in\bbN$ we define the time interval~$I_j\coloneqq(j-1,j)$.
Observe that for any~$\chi\in\clT$ we have that $\chi\rest{I_j}\in \clT_{I_j}$.
We consider the extension (lifting) function defined, for~$\widetilde\chi\in \clT_{I_j}$ by 
\[
\fkE^j\widetilde\chi\in\clW_{I_j},\quad (\clG\fkE^j\widetilde\chi)\rest\Gamma=\widetilde\chi,
\quad\mbox{and}\quad \fkE^j\widetilde\chi\in\clW_{I_j,0}^\perp,\quad\mbox{with}\quad\clW_{I_j,0}\coloneqq \clW_{I_j}\bigcap \clW_0\rest{I_j},
\]
where the orthogonal space~$\clW_{I_j,0}^\perp$  to~$\clW_{I_j,0}$
is taken { with respect to } the  scalar product of~$\clW_{I_j}$. This defines the extension operator, 
$\fkE^j\in\clL(\clT_{I_j},\clW_{I_j})$, which is a right inverse for the trace operator
$(\clG(\Bigcdot))\rest\Gamma\in\clL(\clW_{I_j},\clT_{I_j})$.
We endow~$\clT_{I_j}$ with the scalar product induced by the trace mapping
\[
 (\chi_1,\chi_2)_{\clT_{I_j}}\coloneqq (\fkE^j\chi_1,\fkE^j\chi_2)_{\clW_{I_j}}.
\]
{ This allows to introduce } the extension~$\fkE\colon\clT\to\clW$ defined by concatenation
\[
\fkE\chi(t)\coloneqq (\fkE^{\lceil t\rceil}\chi\rest{I_{\lceil t\rceil}}) (t),
\]
where~$\lceil t\rceil$ is the positive integer satisfying~$\lceil t\rceil-1<t\le\lceil t\rceil$.

\begin{remark}\label{R:compcond_y0}
Note that for any~$h\in\clW$ satisfying~$\clG h\rest\Gamma=\chi$ we have that~$\fkE\chi-h\in \clW_{0}$.
In particular we have that
$\fkE\chi(t)-h(t)\in V$, for all $t\ge0$.
\end{remark}

\begin{remark}
Several existence results for parabolic variational inequalities can be found in the literature.
However, though we borrow some ideas and arguments from classic
references (e.g, ~\cite{bensoussan2011applications,Bensoussan1984,MR635927,Brezis71}) we could not find in the literature,
the existence results for obstacles as general as in Assumption~\ref{A:obst-lb}.
For example in~\cite[Ch.~3, Sect.~2.2, Thm.~2.2]{bensoussan2011applications}, for Dirichlet boundary conditions
it is assumed that the
boundary trace of the obstacle is static\black (independent of time). In~\cite[Sect.~II]{Brezis71} the
triple~$(a,b,\psi)$ is time-independent.
\end{remark}

\subsection{On the Moreau--Yosida approximation}
We present the main result concerning Moreau--Yosida approximations  for parabolic variational inequalities.
We start by denoting, for a given function~$\varphi\in L^2_{\rm loc}(\bbR_+,L^2)$, the 
convex sets 
\begin{subequations}\label{convex}
\begin{align}
\bfC^\varphi_T&\coloneqq\{v\in L^2((0,T);{H^1})\mid v\le\varphi\},\quad\mbox{for}\quad T>0,
\intertext{and}
\bfC^\varphi_\infty&\coloneqq\{v\in L^2_{\rm loc}(\mathbb{R}_+;{H^1})\mid v\le\varphi\}.
\end{align}
\end{subequations}

We set
\begin{align}
	\clZ_r&\coloneqq\{z\in W((0,T);H^1,V')\mid rz\in W((0,T);H^2,L^2)\},\notag
	\intertext{where} 
	r(t)& \coloneqq\min\{t,1\},\quad t\ge0.\notag
\end{align} 
 
\begin{theorem}\label{T:MY-approx}
Let Assumptions~\ref{A:char-dom}--\ref{A:obst-lb} hold true, $T>0$, and suppose $(f_k)\subset L^2((0,T); L^2)$ 
converges weakly to some $f$ in $L^2((0,T); L^2)$. Then, for a given~$k\in\bbN$. there exists one, and only one, weak
solution~$y_k\in\clZ_r$ for 
 \begin{equation}
\tfrac{\p}{\p t} y_k +(-\Delta+\Id)y_k+Qy_k+k(y_k-\psi)^+=f_k,
\qquad \clG y_k\rest\Gamma=\chi,\qquad y_k(0)=y_\circ.\label{sys:MYyk}
\end{equation}
Moreover, the sequence~$(y_k)$ of solutions satisfy
\begin{align}\label{seq-gk-weak}
y_{k}-\fkE\chi\xrightharpoonup[L^2((0,T); V)]{} y-\fkE\chi,\qquad
\tfrac{\p}{\p t} (y_{k}-\fkE\chi)\xrightharpoonup[L^2((0,T); V')]{}
\tfrac{\p}{\p t} (y-\fkE\chi),
 \end{align}
for some~$y\in\clZ_r$ with
 \begin{align}\label{eq:limMYyk}
 y\in\bfC^\psi_T,\qquad y(0)=y_\circ,\qquad\clG y\rest\Gamma=\chi, 
\end{align}
and,
for an arbitrary
$v\in\clZ_r{\textstyle\bigcap}\bfC^\psi_T$,  with~$v-y\in \clC((0,T]; V)$,  we have
\begin{align}\label{MY:VI-weakfull}
\langle\tfrac{\p}{\p t} y +(-\Delta+\Id)y+Qy-f,v-y\rangle_{V',V}\ge 0,\quad \text{almost everywhere  in } (0,T).
 \end{align}
Furthermore, we have
\begin{align}\label{seq-gk}
r(y_{k}-\fkE\chi)\xrightharpoonup[L^2((0,T); \rmD(A))]{} r(y-\fkE\chi),\quad
\tfrac{\p}{\p t}( r(y_{k}-\fkE\chi))\xrightharpoonup[L^2((0,T); L^2)]{}
\tfrac{\p}{\p t}( r(y-\fkE\chi)),
 \end{align}
and, for arbitrary $v\in L^2((0,T); L^2)$,
\begin{align}\label{MY:VI-strongfull}
\left(\tfrac{\p}{\p t} y +(-\Delta+\Id)y+Qy-f,v-y\right)_{L^2}\ge 0,\quad \text{almost everywhere in } (0,T).
\end{align}
Finally, $y$ is unique the only element in~$\clZ_r$ satisfying~\eqref{eq:limMYyk} and~\eqref{MY:VI-weakfull}, and we have
\begin{align}\label{MY:stlimitsVI-full}
y_{k}\xrightarrow[L^2((0,T); L^2)]{}  y\qquad\mbox{and}\qquad 
r(y_{k}-\fkE\chi)\xrightarrow[\clC({[0,T]}; L^2)]{}  r(y-\fkE\chi).
 \end{align} 
\end{theorem}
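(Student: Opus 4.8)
\emph{The plan.} I would argue in four stages: (i) for each fixed penalty level~$k$, solve \eqref{sys:MYyk} and establish the $\clZ_r$‑regularity; (ii) derive a priori bounds \emph{uniform in~$k$}; (iii) pass to the limit along a subsequence and identify the limit as a solution of the two variational inequalities; (iv) prove uniqueness of the limit in~$\clZ_r$, which promotes subsequential convergence to full convergence. For Stage~(i) I would lift the boundary data, writing $y_k=z_k+\fkE\chi$ with $z_k$ of vanishing $\clG$‑trace solving
$\tfrac{\p}{\p t}z_k+(-\Delta+\Id)z_k+Qz_k+k(z_k+\fkE\chi-\psi)^+=f_k-\tfrac{\p}{\p t}\fkE\chi-(-\Delta+\Id)\fkE\chi$, $z_k(0)=y_\circ-\fkE\chi(0)$, where the right‑hand side lies in $L^2((0,T);L^2)$ because $\fkE\chi\in\clW$. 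Since $v\mapsto k(v+\fkE\chi-\psi)^+$ is monotone and globally Lipschitz from $L^2$ into $L^2$ uniformly in~$t$, a Faedo--Galerkin scheme in the eigenbasis $(e_i)$ of~$A$ with the standard energy estimate and strong $L^2((0,T);L^2)$‑compactness (Aubin--Lions, together with Lipschitz continuity of the nonlinearity) yields a solution $z_k\in W((0,T);V,V')$, unique by monotonicity and Gronwall. As then $k(y_k-\psi)^+\in L^2((0,T);L^2)$, the function $z_k$ solves a linear parabolic equation $\tfrac{\p}{\p t}z_k+Az_k=\tilde g_k$ with $\tilde g_k\in L^2((0,T);L^2)$ and $z_k(0)\in L^2$; the parabolic smoothing estimate with the weight~$r$ (needed precisely because $y_\circ$ is only in $L^2$, not in~$V$) gives $rz_k\in W((0,T);\rmD(A),L^2)$, i.e.\ $y_k\in\clZ_r$.

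\emph{Stage (ii): $k$‑uniform bounds.} By Assumption~\ref{A:obst-lb} I would fix a barrier $\phi\in\clW$ with $\clG\phi\rest\Gamma=\chi$ and $\phi\le\psi$ a.e.; testing \eqref{sys:MYyk} with $y_k-\phi\in V$ and using that
$k\bigl((y_k-\psi)^+,\,y_k-\phi\bigr)_{L^2}=k\bigl|(y_k-\psi)^+\bigr|_{L^2}^2+k\bigl((y_k-\psi)^+,\,\psi-\phi\bigr)_{L^2}\ge k\bigl|(y_k-\psi)^+\bigr|_{L^2}^2$,
Young's inequality and Gronwall give, with $C$ independent of~$k$,
\[
\sup_{t\in[0,T]}\bigl|y_k(t)-\fkE\chi(t)\bigr|_{L^2}^2+\bigl|y_k-\fkE\chi\bigr|_{L^2((0,T);V)}^2+k\bigl|(y_k-\psi)^+\bigr|_{L^2((0,T);L^2)}^2\le C ,
\]
and then a $k$‑uniform bound on $\tfrac{\p}{\p t}(y_k-\fkE\chi)$ in $L^2((0,T);V')$ by duality. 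For the weighted bounds I would test the equation satisfied by $w_k:=y_k-\psi$ with $r^2k\,w_k^+$ (in the Neumann case, with $r^2k$ times the positive part of $w_k$ shifted by a time‑dependent nonnegative function whose $\clG$‑trace is~$\eta$, so that the resulting boundary integral has the favourable sign): after integration by parts in time the penalty term contributes $r^2k^2\int|w_k^+|^2$, which, absorbing the lower‑order contributions through the energy bound above and $\psi\in\clW$, yields $\bigl|r\,k(y_k-\psi)^+\bigr|_{L^2((0,T);L^2)}\le C$ uniformly; combined with the linear smoothing estimate for $z_k$ this produces $k$‑uniform bounds on $r^{1/2}(y_k-\fkE\chi)$ in $L^\infty((0,T);V)$ and on $r(y_k-\fkE\chi)$ in $W((0,T);\rmD(A),L^2)$.

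\emph{Stage (iii): passage to the limit.} The bounds of Stage~(ii) furnish, along a subsequence, the weak convergences \eqref{seq-gk-weak} and \eqref{seq-gk} to some $y\in\clZ_r$, which Aubin--Lions (plain and weighted) upgrades to the strong convergences \eqref{MY:stlimitsVI-full}; continuity of the initial trace on $W((0,T);V,V')$ gives $y(0)=y_\circ$, closedness of the zero‑$\clG$‑trace condition gives $\clG y\rest\Gamma=\chi$, and $k\bigl|(y_k-\psi)^+\bigr|_{L^2((0,T);L^2)}^2\le C$ with the strong $L^2$‑convergence forces $(y-\psi)^+=0$, so $y\in\bfC^\psi_T$ and \eqref{eq:limMYyk} holds. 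For the inequalities, fix an admissible~$v$; from \eqref{sys:MYyk}, for a.e.\ $t$,
\[
\langle\tfrac{\p}{\p t}y_k+(-\Delta+\Id)y_k+Qy_k-f_k,\,v-y_k\rangle=k\bigl((y_k-\psi)^+,\,(y_k-\psi)+(\psi-v)\bigr)_{L^2}\ge0
\]
because $v\le\psi$. Multiplying by an arbitrary nonnegative smooth $\theta$ with compact support in $(0,T)$, integrating, and letting $k\to\infty$: the terms linear in~$y_k$ and the $f_k$‑term converge by the weak convergences and $f_k\rightharpoonup f$; the two quadratic contributions are treated through $\int_0^T\theta\langle\tfrac{\p}{\p t}z_k,z_k\rangle\,\ed t=\tfrac12\int_0^T\theta'|z_k|_{L^2}^2\,\ed t$ (which converges by the strong $L^2((0,T);L^2)$‑convergence) and weak lower semicontinuity of $y\mapsto|\nabla y|_{L^2}^2+|y|_{L^2}^2$, both in the direction compatible with the inequality, so that the limit satisfies \eqref{MY:VI-weakfull} a.e. The strong form \eqref{MY:VI-strongfull} follows in the same way, now using that $y_k(t)\in H^2$ for $t>0$ and the weighted bounds to test against an arbitrary $v\in L^2((0,T);L^2)$, the same sign computation discarding the penalty term.

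\emph{Stage (iv) and main obstacle.} If $y,\widetilde y\in\clZ_r$ both satisfy \eqref{eq:limMYyk}--\eqref{MY:VI-weakfull}, then $y,\widetilde y\in\clZ_r\cap\bfC^\psi_T$ and, since $r(y-\fkE\chi),r(\widetilde y-\fkE\chi)\in W((0,T);\rmD(A),L^2)\hookrightarrow\clC([0,T];V)$, also $y-\widetilde y\in\clC((0,T];V)$; inserting each as test function in the inequality for the other and adding gives, a.e.,
\[
\tfrac12\tfrac{\ed}{\ed t}\bigl|y-\widetilde y\bigr|_{L^2}^2+\bigl|y-\widetilde y\bigr|_V^2\le\langle Q(y-\widetilde y),\,y-\widetilde y\rangle\le C\bigl|y-\widetilde y\bigr|_{L^2}\bigl|y-\widetilde y\bigr|_V ,
\]
and since $(y-\widetilde y)(0)=0$, Young and Gronwall force $y=\widetilde y$; hence the whole sequence $(y_k)$ converges. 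The genuinely delicate points, which I expect to be the crux, are: the construction of the barrier $\phi\le\psi$ with $\clG\phi\rest\Gamma=\chi$ from Assumption~\ref{A:obst-lb} --- immediate for Dirichlet, but in the Neumann case only $\clG\psi\rest\Gamma\ge\chi-\eta$ with $\eta\ge0$ is available and the $\eta$‑term must be absorbed (the ``extra technical difficulty'' flagged in Remark~\ref{R:dirpsichi}, which is why $\eta\in W^{1,2}_{\rm loc}(\bbR_+)$ is assumed); and, throughout, carrying out every a priori estimate --- energy, the $V'$‑bound on the time derivative, and especially the $L^2((0,T);L^2)$‑bound on the (weighted) penalty term $k(y_k-\psi)^+$, which requires $\psi\in\clW$ --- with constants independent of~$k$, since this $k$‑independence is exactly what makes the passage to the limit (and, later, the $k$‑uniform stabilization) possible.
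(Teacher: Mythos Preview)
Your four-stage architecture matches the paper's, and Stages~(i), (iii) and~(iv) are essentially correct (the paper also uses the barrier $v=\fkE\chi-(\fkE\chi-\psi)^+\le\psi$ for the basic energy estimate, Aubin--Lions for compactness, and the cross-testing argument for uniqueness). There is, however, a genuine gap in Stage~(ii).

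\medskip
\textbf{The gap.} From the energy estimate you obtain
\[
k\,\bigl|(y_k-\psi)^+\bigr|_{L^2((0,T);L^2)}^2\le C,
\]
which only says $\bigl|(y_k-\psi)^+\bigr|_{L^2L^2}=O(k^{-1/2})$. Your claim that a $k$-uniform bound on $\tfrac{\p}{\p t}(y_k-\fkE\chi)$ in $L^2((0,T);V')$ then follows ``by duality'' is false: from the equation, that bound requires $\bigl|k(y_k-\psi)^+\bigr|_{L^2L^2}\le C$, i.e.\ $\bigl|(y_k-\psi)^+\bigr|_{L^2L^2}=O(k^{-1})$, which is strictly stronger and does \emph{not} follow from the energy estimate. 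Without it you cannot establish the unweighted convergence~\eqref{seq-gk-weak} asserted in the theorem. Your subsequent test with $r^2k\,w_k^+$ gives only the weighted bound $\bigl|r\,k(y_k-\psi)^+\bigr|_{L^2L^2}\le C$, which still does not close this gap.

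\medskip
\textbf{The fix (and the paper's device).} The weight $r^2$ in that test is unnecessary: since $y_\circ\le\psi(\cdot,0)$ (Assumption~\ref{A:fchi}), one has $(y_k-\psi)^+(0)=0$, so testing \emph{unweighted} with $\zeta_k^+$ is legitimate and the time-boundary term at $t=0$ vanishes. The paper does exactly this (Lemma~\ref{L:weak-viol}), taking $\zeta_k=y_k-\psi+\eta\widehat\xi$ where, for Neumann data, $\widehat\xi$ is an explicit nonnegative $H^2$-function with $\tfrac{\p}{\p\bfn}\widehat\xi\rest\Gamma\ge1$ (built from a pseudo-distance function in Lemma~\ref{L:xi-hat}); the shift $\eta\widehat\xi\ge0$ makes the boundary contribution nonpositive (Lemma~\ref{L:beta.bdry}), which is precisely the ``favourable sign'' you allude to. This yields $k\bigl|(y_k-\psi)^+\bigr|_{L^2L^2}\le C$ directly, and then the $V'$-bound on the time derivative follows from the equation.

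\medskip
\textbf{A secondary difference.} In passing to the limit the paper does not use weak lower semicontinuity of the $V$-norm. Instead it multiplies the pointwise inequality by $r^2$ and observes that, thanks to the weighted regularity, the left factor $r\bigl(\dot y_k+(-\Delta+\Id)y_k+Qy_k-f_k\bigr)$ converges \emph{weakly} in $L^2((0,T);L^2)$ while the right factor $r(v-y_k)$ converges \emph{strongly} in $L^2((0,T);L^2)$; the product therefore converges, and a Lebesgue-point argument recovers the a.e.\ inequality. Your lower-semicontinuity route would also work, but the paper's weak-times-strong argument is shorter and yields~\eqref{MY:VI-strongfull} (for $v\in L^2((0,T);L^2)$) directly.
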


The proof of Theorem \ref{T:MY-approx} is given in several steps, which we include in several lemmas.

\begin{lemma}\label{L:exist-parab1k}
Let Assumptions~\ref{A:char-dom}--\ref{A:obst-lb} hold true.
Let us fix~$k\in \mathbb{N}$. There exists one, and only one, 
solution~$y_k\in W((0,T);H^1,V')$ for \eqref{sys:MYyk}, furthermore~$ry_k\in W((0,T);H^2,L^2)$. 
\end{lemma}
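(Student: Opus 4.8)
The plan is to remove the inhomogeneous boundary datum by the lifting~$\fkE$ and then run a Galerkin scheme, exploiting that the Moreau--Yosida nonlinearity $v\mapsto k(v-\psi)^+$ is globally Lipschitz and monotone on~$L^2$. Concretely, I would put $z_k\coloneqq y_k-\fkE\chi$ and $\phi\coloneqq\psi-\fkE\chi$. By Assumptions~\ref{A:Arc}--\ref{A:obst-lb} and the mapping properties of~$\fkE$ (in particular $\fkE\chi\in\clW=W_{\rm loc}(\bbR_+;H^2,L^2)$), the datum $g_k\coloneqq f_k-\tfrac{\p}{\p t}\fkE\chi-(-\Delta+\Id)\fkE\chi-Q\fkE\chi$ belongs to $L^2((0,T);L^2)$, $z_k(0)=y_\circ-\fkE\chi(0)\in L^2$, and---since $\fkE\chi$ and $r\fkE\chi$ belong to $W((0,T);H^2,L^2)$ and $V\subseteq H^1$---the lemma is equivalent to producing a unique $z_k\in W((0,T);V,V')$ with $rz_k\in W((0,T);\rmD(A),L^2)$ solving $\tfrac{\p}{\p t}z_k+Az_k+Qz_k+k(z_k-\phi)^+=g_k$ and $z_k(0)=y_\circ-\fkE\chi(0)$.

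For existence I would use the eigenbasis~$(e_i)$ of~$A$: with $\clV_n\coloneqq\linspan\{e_1,\dots,e_n\}$ and $P_n$ the $L^2$-orthogonal projection onto~$\clV_n$ (which commutes with~$A$, hence $\norm{P_n}{\clL(V')}\le1$), I seek $z_n\in\clV_n$ solving $(\tfrac{\p}{\p t}z_n+Az_n+Qz_n+k(z_n-\phi)^+-g_k,e_i)_{L^2}=0$, $1\le i\le n$, with $z_n(0)=P_n(y_\circ-\fkE\chi(0))$. Since $v\mapsto k(v-\phi(t))^+$ is $k$-Lipschitz on~$L^2$ uniformly in~$t$, this is a Carath\'eodory system with a unique local solution, made global by the bound that follows. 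Testing with~$z_n$, controlling $(Qz_n,z_n)_{L^2}$ by Young's inequality (absorbing part of the first-order term into $|z_n|_V^2$) and using $k((z_n-\phi)^+,z_n)_{L^2}\ge-\tfrac k2\norm{\phi}{L^2}^2$, Gronwall's inequality gives $\sup_{[0,T]}|z_n|_{L^2}^2+\int_0^T|z_n|_V^2\,\ed t\le C_1$; reading $\tfrac{\p}{\p t}z_n$ off the equation then bounds $(z_n)$ in $W((0,T);V,V')$ uniformly in~$n$.

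For the weighted $\rmD(A)$-bound---the crux---I would test with $r^2Az_n\in\clV_n$. Using $(\tfrac{\p}{\p t}z_n,r^2Az_n)_{L^2}=\tfrac{\ed}{\ed t}\bigl(\tfrac{r^2}{2}|z_n|_V^2\bigr)-rr'|z_n|_V^2$ (with $|rr'|\le1$) and $A=-\Delta+\Id$, one gets the good term $r^2|Az_n|_{L^2}^2$ and the dangerous term $r^2k((z_n-\phi)^+,Az_n)_{L^2}$. Integrating by parts, $((z_n-\phi)^+,-\Delta z_n)_{L^2}=(\nabla(z_n-\phi)^+,\nabla z_n)_{L^2}-\int_\Gamma(z_n-\phi)^+\tfrac{\p z_n}{\p\bfn}$; the boundary integral vanishes---for Neumann conditions because $\tfrac{\p z_n}{\p\bfn}\rest\Gamma=0$, and for Dirichlet conditions because $z_n\rest\Gamma=0$ while $\phi\rest\Gamma=\psi\rest\Gamma-\chi\ge0$ by Assumption~\ref{A:obst-lb} forces $(z_n-\phi)^+\rest\Gamma=0$---and $\nabla(z_n-\phi)^+=\indf_{\{z_n>\phi\}}\nabla(z_n-\phi)$ gives $(\nabla(z_n-\phi)^+,\nabla z_n)_{L^2}\ge-\tfrac12\norm{\nabla\phi}{L^2}^2$, so altogether $r^2k((z_n-\phi)^+,Az_n)_{L^2}\ge-\tfrac k2\norm{\phi}{H^1}^2$. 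Bounding $r^2(Qz_n,Az_n)_{L^2}$ and $r^2(g_k,Az_n)_{L^2}$ by Young's inequality (absorbing a fraction of $r^2|Az_n|_{L^2}^2$, using $\norm{Qz_n}{L^2}\le C|z_n|_V$) and integrating in time---the initial term $\tfrac{r(0)^2}{2}|z_n(0)|_V^2$ disappearing because $r(0)=0$, which is exactly why only $y_\circ\in L^2$ is needed---yields $\sup_{[0,T]}r^2|z_n|_V^2+\int_0^Tr^2|Az_n|_{L^2}^2\,\ed t\le C_2$, from $C_1$, $g_k\in L^2((0,T);L^2)$ and $\phi\in L^2((0,T);H^1)$. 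Since $A(rz_n)=rAz_n$ and $\tfrac{\ed}{\ed t}(rz_n)=r'z_n+r\tfrac{\p}{\p t}z_n$ (with $r'\in L^\infty$ and $\tfrac{\p}{\p t}z_n$ read off the equation), $(rz_n)$ is bounded in $W((0,T);\rmD(A),L^2)$.

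Passing to the limit: by these bounds and the Aubin--Lions lemma (the embedding $W((0,T);V,V')\xhookrightarrow{}L^2((0,T);L^2)$ being compact), a subsequence converges weakly in $L^2((0,T);V)$ and, after multiplication by~$r$, in $L^2((0,T);\rmD(A))$, weakly for the time derivatives, and strongly in $L^2((0,T);L^2)$; the Lipschitz bound $\norm{k(z_n-\phi)^+-k(z_k-\phi)^+}{L^2((0,T);L^2)}\le k\norm{z_n-z_k}{L^2((0,T);L^2)}$ makes the nonlinear term converge strongly, so the limit~$z_k$ solves the equation in $L^2((0,T);V')$ (test against~$e_i$, then use density of $\linspan\{e_i\}$ in~$V$), and $z_n(0)\to y_\circ-\fkE\chi(0)$ in~$L^2$ together with $W((0,T);V,V')\xhookrightarrow{}\clC([0,T];L^2)$ fixes the initial value; then $y_k\coloneqq z_k+\fkE\chi$ has the asserted regularity. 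Uniqueness follows because the difference~$w$ of two solutions lies in $W((0,T);V,V')$, vanishes at $t=0$, and solves $\tfrac{\p}{\p t}w+Aw+Qw+k\bigl((z_k^{(1)}-\phi)^+-(z_k^{(2)}-\phi)^+\bigr)=0$; testing with~$w$, dropping the nonlinear term by pointwise monotonicity of $s\mapsto s^+$, and absorbing $(Qw,w)_{L^2}$ by Young's inequality gives $\tfrac{\ed}{\ed t}|w|_{L^2}^2+|w|_V^2\le C|w|_{L^2}^2$, whence $w\equiv0$ by Gronwall. I expect the weighted $\rmD(A)$-estimate to be the main obstacle: one must integrate by parts against the nonsmooth term $(z_n-\phi)^+$ and see that the boundary contribution is killed either by the structure of~$\clV_n$ (Neumann) or by the compatibility in Assumption~\ref{A:obst-lb} (Dirichlet), all while carrying the time weight~$r$ near $t=0$, where only $y_\circ\in L^2$ is available.
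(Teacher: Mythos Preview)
Your argument is correct and follows the same overall scheme that the paper sketches: lift away the boundary datum via~$\fkE$, run a spectral Galerkin scheme, obtain the weak bound by testing with~$z_n$, and then recover the strong regularity with the time weight~$r$ to accommodate~$y_\circ\in L^2$ only. The paper's proof is deliberately a short sketch pointing to standard references for the weak and strong estimates and for the parabolic smoothing; your write-up supplies precisely the details one would expect to fill in.

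The one genuine technical difference is in how the term~$k((z_n-\phi)^+,r^2Az_n)_{L^2}$ is handled in the weighted~$\rmD(A)$-estimate. You integrate by parts, kill the boundary contribution using the structure of~$\clV_n$ (Neumann) or the compatibility~$\phi\rest\Gamma\ge0$ from Assumption~\ref{A:obst-lb} (Dirichlet), and then use the pointwise sign of~$\indf_{\{z_n>\phi\}}\nabla(z_n-\phi)\cdot\nabla z_n$ to obtain a lower bound of order~$-\tfrac{k}{2}\norm{\phi}{H^1}^2$. The paper, by contrast, develops in the subsequent Lemmas~\ref{L:weak-viol} and~\ref{L:strong-smooth1k} a \emph{$k$-independent} bound on~$k\norm{(y_k-\psi)^+}{L^2((0,T),L^2)}$ and then simply treats the nonlinear term as a forcing when testing with~$r^2A\varkappa_k$. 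Your route is more self-contained for the present lemma (fixed~$k$) and uses Assumption~\ref{A:obst-lb} in a direct way; the paper's route is set up with the later Moreau--Yosida limit in mind, where uniformity in~$k$ is essential. For Lemma~\ref{L:exist-parab1k} itself, both approaches are valid.
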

\begin{proof}  We sketch the proof which follows from standard arguments.
 By a lifting argument (cf.~\cite[Def.~3.1]{Rod14-na})
 we can reduce the problem to the case of homogeneous boundary conditions, where
 we can prove the existence of weak solutions, in~$W((0,T),V,V')$, as a weak limit of suitable Galerkin
 approximations. Weak solutions are understood in the classical sense~\cite{Temam01,Lions69}.
 Strong solutions in~$W((0,T),H^2,L^2)$ can be proven for more regular initial conditions~$y_\circ\in V$,
 see~\cite[Sect.4.3]{Rod20-eect}. For our initial conditions in~$y_\circ\in L^2\setminus V$, 
we can use the smoothing property of
 parabolic-like equations to conclude that~$ry_k\in W((0,T),H^2,L^2)$, see~\cite[Ch.~3, Thm.~3.10]{Temam01}
 and~\cite[Lem.~2.6]{PhanRod18-mcss}.
 Note that~$r(0)y_k(0)=0\in V$ at initial time.
\end{proof}

Note that by direct computations
 \begin{align}
 &(h,h^+)_{L^2}=\norm{h^+}{L^2}^2,\quad\mbox{for all}
 \quad h\in L^2.\label{monot+}
\end{align}

Let us denote
\begin{equation}\label{Crc}
C_{Q}\coloneqq \norm{Q}{L^\infty(\bbR^+,\clL(H^1,L^2))}
.
\end{equation}
\begin{lemma}\label{L:weak-parab1k}
 Let Assumptions~\ref{A:char-dom}--\ref{A:obst-lb} hold true. Then, the solution~$y_k$
 for \eqref{sys:MYyk} satisfies
\begin{align}
&2k\norm{(y_k-\psi)^+}{L^2((0,T),L^2)}^2+\norm{y_k}{L^\infty((0,T),L^2)}^2+\norm{y_k}{L^2((0,T), H^1)}^2\notag\\
&\hspace{3em}\le\ovlineC{C_Q ,T}\left(\norm{y_\circ}{L^2}^2
+\norm{\fkE\chi}{\clW_{(0,T)}}^2
+\norm{f_k}{L^2((0,T),L^2)}^2+\norm{\psi}{W((0,T), H^1,V')}^2\right),\notag
\end{align}
with~$\ovlineC{C_Q ,T}$  independent of~$k$.
\end{lemma}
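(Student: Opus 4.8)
The plan is a standard parabolic energy estimate for~\eqref{sys:MYyk}; the only delicate point is to choose the test function so that the penalty term $k\bigl((y_k-\psi)^+,\Bigcdot\bigr)_{L^2}$ has a favourable sign modulo a remainder controlled \emph{independently of}~$k$. For this I would subtract from~$y_k$ a lift of the boundary datum~$\chi$ that stays below the obstacle. In the Dirichlet case put $\phi\coloneqq\min\{\fkE\chi,\psi\}$; since $\fkE\chi\in\clW$ and $\psi\in\clW$ (Assumption~\ref{A:obst-lb}), we have $\phi\in W((0,T);H^1,L^2)$, clearly $\phi\le\psi$ a.e., and, because $\fkE\chi\rest\Gamma=\chi$ and $\psi\rest\Gamma\ge\chi$ (Assumption~\ref{A:obst-lb}), the trace is $\phi\rest\Gamma=\min\{\chi,\psi\rest\Gamma\}=\chi$, so that $y_k-\phi\in V=H^1_0(\Omega)$ for a.e.~$t$. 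In the Neumann case put simply $\phi\coloneqq\psi$, so that $y_k-\phi\in H^1(\Omega)=V$ automatically and the datum~$\chi$ enters through the natural boundary term. In either case $\norm{\phi}{W((0,T);H^1,L^2)}$ is controlled by~$\norm{\fkE\chi}{\clW_{(0,T)}}$ and the $W((0,T);H^1,L^2)$-norm of~$\psi$, and $y_k-\phi\in W((0,T);V,V')$ thanks to Lemma~\ref{L:exist-parab1k} and the regularity of~$\phi$.

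Next I would test~\eqref{sys:MYyk}, interpreted in~$V'$, against $v\coloneqq y_k-\phi\in L^2((0,T);V)$. Since $y_k-\phi\in W((0,T);V,V')$, the time-derivative term gives $\langle\tfrac{\p}{\p t}y_k,y_k-\phi\rangle=\tfrac12\tfrac{\ed}{\ed t}\norm{y_k-\phi}{L^2}^2+\bigl(\tfrac{\p}{\p t}\phi,y_k-\phi\bigr)_{L^2}$, the last term being bounded by $\norm{\dot\phi}{L^2}\norm{y_k-\phi}{L^2}$. The diffusion term gives $\norm{y_k}{H^1}^2-(y_k,\phi)_{H^1}$ in the Dirichlet case (the boundary integral vanishes since $y_k-\phi\in H^1_0$); in the Neumann case the identity $\chi=\clG\fkE\chi\rest\Gamma$ and Green's formula rewrite the boundary integral as $(\fkE\chi,v)_{H^1}-\bigl((-\Delta+\Id)\fkE\chi,v\bigr)_{L^2}$, whose absorption produces the dependence on~$\norm{\fkE\chi}{\clW_{(0,T)}}$. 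The convection term obeys $\bigl|(Qy_k,y_k-\phi)_{L^2}\bigr|\le C_Q\norm{y_k}{H^1}\bigl(\norm{y_k}{L^2}+\norm{\phi}{L^2}\bigr)$ by~\eqref{Crc}. The penalty term is the key one: writing $y_k-\phi=(y_k-\psi)+(\psi-\phi)$, using~\eqref{monot+} and the pointwise inequalities $(y_k-\psi)^+\ge0$, $\psi-\phi\ge0$, we obtain
\[
k\bigl((y_k-\psi)^+,y_k-\phi\bigr)_{L^2}=k\norm{(y_k-\psi)^+}{L^2}^2+k\bigl((y_k-\psi)^+,\psi-\phi\bigr)_{L^2}\ge k\norm{(y_k-\psi)^+}{L^2}^2 .
\]
Choosing the Young constants so as to absorb a fixed fraction of $\norm{y_k}{H^1}^2$ and using $\norm{y_k}{L^2}\le\norm{y_k-\phi}{L^2}+\norm{\phi}{L^2}$ leads, for a.e.~$t\in(0,T)$, to a differential inequality of the form
\[
\tfrac{\ed}{\ed t}\norm{y_k-\phi}{L^2}^2+\norm{y_k}{H^1}^2+2k\norm{(y_k-\psi)^+}{L^2}^2\le\ovlineC{C_Q}\Bigl(\norm{y_k-\phi}{L^2}^2+\norm{\phi}{H^1}^2+\norm{\dot\phi}{L^2}^2+\norm{f_k}{L^2}^2\Bigr).
\]

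Finally I would integrate this on~$(0,t)$ and apply Gronwall's lemma; since the multiplier depends only on~$C_Q$, the resulting constant is of the form $\ex^{\ovlineC{C_Q}T}$. Using $\norm{y_\circ-\phi(0)}{L^2}\le\norm{y_\circ}{L^2}+\norm{\phi(0)}{L^2}$, the embedding $W((0,T);H^1,L^2)\hookrightarrow\clC([0,T];L^2)$ to control $\norm{\phi(0)}{L^2}$ and $\norm{\phi}{L^\infty((0,T);L^2)}$, the bound $\norm{y_k}{L^\infty((0,T);L^2)}\le\norm{y_k-\phi}{L^\infty((0,T);L^2)}+\norm{\phi}{L^\infty((0,T);L^2)}$, and the control of $\norm{\phi}{W((0,T);H^1,L^2)}$ by the data, one arrives at the asserted inequality with a constant $\ovlineC{C_Q,T}$ that is, crucially, independent of~$k$. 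I expect the main obstacle to be precisely the construction of the below-obstacle lift~$\phi$ making the penalty term nonnegative up to a $k$-uniform remainder — this is where the boundary compatibility in Assumption~\ref{A:obst-lb} enters (as $\psi\rest\Gamma\ge\chi$ in the Dirichlet case) — while everything else is the routine computation above.
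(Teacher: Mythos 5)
Your proposal is correct and follows essentially the same route as the paper: your lift $\phi=\min\{\fkE\chi,\psi\}$ is exactly the paper's test construction $v=\fkE\chi-(\fkE\chi-\psi)^+$, and the core steps (below-obstacle lift matching the Dirichlet datum so the penalty term $k\bigl((y_k-\psi)^+,y_k-\phi\bigr)_{L^2}\ge k\norm{(y_k-\psi)^+}{L^2}^2$, then Young and Gronwall) coincide with the paper's proof of Lemma~\ref{L:weak-parab1k}. The only deviations are cosmetic: in the Neumann case you take $\phi=\psi$ and recover the boundary term through Green's formula with $\fkE\chi$ (the paper keeps the same lift for both boundary conditions), and your estimate ends up depending on $\norm{\psi}{W((0,T);H^1,L^2)}$ rather than $\norm{\psi}{W((0,T);H^1,V')}$, which is harmless since Assumption~\ref{A:obst-lb} gives $\psi\in\clW_{(0,T)}$.
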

\begin{proof}
Recall that $\psi\in W((0,T); H^2, L^2)$ by Assumption~\ref{A:obst-lb}. Now we set
\begin{equation}\label{vchipsi}
  v\coloneqq\fkE\chi -(\fkE\chi-\psi)^+,
 \end{equation}
which implies ~$v\in W((0,T);H^1,L^2)$. Also, $\psi-v\ge0$, because
\begin{equation}\notag
\begin{aligned}
&\psi-v=0 ,&\mbox{if}\quad \fkE\chi\ge\psi,\\
&\psi-v=\psi-\fkE\chi,&\mbox{if}\quad \fkE\chi\le\psi.
\end{aligned}
\end{equation}
Furthermore under
Dirichlet boundary conditions we also have that~$v\rest\Gamma=\chi$, because
$(\fkE\chi-\psi)^+\rest\Gamma=0$, due to~$\chi\le\psi\rest\Gamma$ in Assumption~\ref{A:obst-lb}.
Hence, we have
\begin{equation}\label{wvchi}
 p_k\coloneqq y_k-v\in W((0,T);V,L^2),\qquad v\le\psi, 
\end{equation}
and
\begin{align}
&\dot p_k +Ap_k+Qp_k+k(y_k-\psi)^+=h_k,\notag
\intertext{with}
&h_k\coloneqq f_k-\tfrac{\rmd}{\rmd t} v -(-\Delta+\Id)v\label{hkfkv}
-Qv.
\end{align}
After  testing the dynamics  with~$2p_k$ to obtain
\begin{align}\notag
\tfrac{\rmd}{\rmd t} \norm{p_k}{L^2}^2 +2\norm{p_k}{V}^2
+2k((y_k-\psi)^+,p_k)_{L^2}= 2\langle-Qp_k+h_k,p_k\rangle_{V',V}.
\end{align}
Observe that,  due to~\eqref{wvchi} we have $p_k\ge y_k-\psi$ and
\begin{equation}\notag
((y_k-\psi)^+,p_k)_{L^2}\ge\norm{(y_k-\psi)^+}{L^2}^2,
\end{equation}
and by using Assumption~\ref{A:Arc} and the Young inequality, and recalling~\eqref{Crc}, it follows that
\begin{align}
&\tfrac{\rmd}{\rmd t} \norm{p_k}{L^2}^2 +\norm{p_k}{V}^2+2k\norm{(y_k-\psi)^+}{L^2}^2
\le 2C_Q ^2\norm{p_k}{L^2}^2+2\norm{h_k}{V'}^2\notag\\
&\hspace{3em}\le \ovlineC{C_Q }\left(\norm{p_k}{L^2}^2
+\norm{h_k}{V'}^2\right).
\label{dtnkappa-w2}
\end{align}
\begin{subequations}\label{weakLinfHL2V}
By the Gronwall Lemma it follows that
\begin{align}
\norm{p_k}{L^\infty((0,T),L^2)}^2\le \ovlineC{C_Q ,T}\left(\norm{p_k(0)}{L^2}^2
+\norm{h_k}{L^2((0,T),V')}^2\right),\label{weakLinfH}
\end{align}
and by integration of~\eqref{dtnkappa-w2}, and using~\eqref{weakLinfH}, we find
\begin{align}
&\norm{p_k}{L^2((0,T),V)}^2+2k\norm{(y_k-\psi)^+}{L^2((0,T),L^2)}^2
\le \ovlineC{C_Q ,T}\left(\norm{p_k(0)}{L^2}^2
 +\norm{h_k}{L^2((0,T),V')}^2\right).\label{weakL2V}
\end{align}
\end{subequations}
\begin{subequations}\label{weak-fobs}
Now, note that from~\eqref{hkfkv}, \eqref{wvchi}, \eqref{vchipsi},  \eqref{hkfkv},\black
and~$L^2\xhookrightarrow{}V'$, we have
\begin{align}
&\norm{h_k}{L^2((0,T),V')}^2\le   \ovlineC{C_Q }
\left(\norm{f_k}{L^2((0,T),V')}^2
+\norm{v}{W((0,T),H^1,V')}^2\right)\\
&\hspace{2em}\le \ovlineC{C_Q }
\left(\norm{f_k}{L^2((0,T),V')}^2
+\norm{\fkE\chi}{\clW_{(0,T)}}^2+\norm{\psi}{W((0,T),H^1,V')}^2\right),\label{esthkL2L2}
\intertext{ (cf.~\eqref{clWt1t2}), and}
&\norm{y_k}{L^\infty((0,T),L^2)}^2+\norm{y_k}{L^2((0,T), H^1)}^2
\notag\\
&\hspace{2em}\le 2\norm{p_k}{L^\infty((0,T),L^2)}^2 
+2\norm{v}{L^\infty((0,T),L^2)}^2
+2\norm{p_k}{L^2((0,T),V)}^2
+2\norm{v}{L^2((0,T), H^1)}^2\notag\\
&\hspace{2em}\le\ovlineC{C_Q ,T}
\left(\norm{p_k(0)}{H}^2+\norm{\fkE\chi}{\clW_{(0,T)}}^2
+\norm{f_k}{L^2((0,T),V')}^2+\norm{\psi}{W((0,T), H^1,V')}^2\right).
\intertext{Notice also that}
&\norm{p_k(0)}{H}^2= \norm{y_k(0)-v(0)}{L^2}^2
\le 2\norm{y_\circ}{L^2}^2+2\norm{\fkE\chi(0) -(\fkE\chi(0)-\psi(0))^+}{L^2}^2.
\end{align}
\end{subequations}
Hence, the result follows from~\eqref{weakLinfHL2V} and~\eqref{weak-fobs}.
\end{proof}

The following lemma establishes that we are able to identify a pseudo-distance function with an strictly negative normal derivative.

\begin{lemma}\label{L:xi-hat}
Let Assumption~\ref{A:char-dom} hold true. Then, there
exists~$\xi\in H^2(\Omega)\bigcap C^2(\Omega)\bigcap C^1(\overline\Omega)$ and constant~$c_\xi<0$ satisfying
\begin{subequations}
 \begin{align}
&\xi(x)\ge0 \quad\mbox{for all}\quad x\in\overline\Omega,\label{xige0}\\
&\tfrac{\p}{\p\bfn}\xi \rest\Gamma(\overline x)\le c_\xi
\quad\mbox{for almost all}\quad \overline x\in\overline\Gamma.\label{normalxileC}
\end{align}
\end{subequations}
\end{lemma}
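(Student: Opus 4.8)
\emph{Overview.} The plan is to treat the two alternatives allowed by Assumption~\ref{A:char-dom} separately and to produce, in each, a function that is actually of class $C^2(\overline\Omega)$; since $\Omega$ is bounded, such a function automatically lies in $H^2(\Omega)\cap C^2(\Omega)\cap C^1(\overline\Omega)$ (derivatives up to order two being continuous on the compact $\overline\Omega$, hence bounded, hence in $L^2(\Omega)$). It then remains only to check the sign conditions~\eqref{xige0}--\eqref{normalxileC}.

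\emph{Convex polygonal domain.} Here I would take a downward paraboloid: fix an arbitrary $x_0\in\Omega$, let $R\coloneqq\max_{x\in\overline\Omega}|x-x_0|<\infty$, and set $\xi(x)\coloneqq R^2-|x-x_0|^2$. Then $\xi$ is a polynomial, so $\xi\in C^\infty(\overline\Omega)\subset H^2(\Omega)\cap C^2(\Omega)\cap C^1(\overline\Omega)$, and $\xi\ge0$ on $\overline\Omega$ by the choice of $R$; moreover $\nabla\xi(x)=-2(x-x_0)$. The boundary $\Gamma$ is the union of the closures of finitely many relatively open flat faces $F_1,\dots,F_N$, and on $F_i$ the outward unit normal is a constant vector $\bfn_i$, with $F_i$ contained in a hyperplane $\{x\mid\bfn_i\cdot x=c_i\}$, $c_i\coloneqq\bfn_i\cdot\overline x$ for $\overline x\in F_i$. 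Since $\Omega$ is open, convex, lies on one side of this hyperplane, and contains $x_0$, the number $\delta_i\coloneqq c_i-\bfn_i\cdot x_0$ is strictly positive. For a.e.\ $\overline x\in\Gamma$, i.e.\ for $\overline x\in F_i$ for some $i$, the outward normal is $\bfn_i$ and $\tfrac{\p}{\p\bfn}\xi(\overline x)=\nabla\xi(\overline x)\cdot\bfn_i=-2(\overline x-x_0)\cdot\bfn_i=-2\delta_i$; hence \eqref{xige0}--\eqref{normalxileC} hold with $c_\xi\coloneqq-2\min_{1\le i\le N}\delta_i<0$.

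\emph{$C^2$ boundary.} When $\Gamma$ is a compact $C^2$-manifold I would smooth out the distance function. Put $d(x)\coloneqq\dist(x,\Gamma)$ for $x\in\overline\Omega$; it is classical that, $\partial\Omega$ being $C^2$, there is $\mu>0$ such that $d\in C^2(\clO_\mu)$ with $\clO_\mu\coloneqq\{x\in\overline\Omega\mid d(x)<\mu\}$, and such that $\nabla d(\overline x)=-\bfn(\overline x)$ for every $\overline x\in\Gamma$. Picking $\zeta\in C^\infty(\bbR)$ with $\zeta\ge0$, $\zeta'\ge0$, $\zeta(t)=t$ for $t\le\mu/3$ and $\zeta(t)=\mu/2$ for $t\ge2\mu/3$, I would set $\xi\coloneqq\zeta\circ d$ on $\clO_\mu$ and $\xi\coloneqq\mu/2$ on $\{x\in\overline\Omega\mid d(x)>2\mu/3\}$. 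These two sets are relatively open in $\overline\Omega$, cover $\overline\Omega$, and the two definitions agree (both equal $\mu/2$) on the overlap $\{2\mu/3<d<\mu\}$; hence $\xi\in C^2(\overline\Omega)\subset H^2(\Omega)\cap C^2(\Omega)\cap C^1(\overline\Omega)$. Clearly $\xi\ge0$, and for $\overline x\in\Gamma$ the chain rule gives $\tfrac{\p}{\p\bfn}\xi(\overline x)=\zeta'(d(\overline x))\,\nabla d(\overline x)\cdot\bfn(\overline x)=\zeta'(0)\,(-\bfn(\overline x))\cdot\bfn(\overline x)=-1$, so \eqref{xige0}--\eqref{normalxileC} hold with $c_\xi\coloneqq-1$.

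\emph{Main difficulty.} Essentially everything above is routine once $\Omega$ is known to be bounded; the only nontrivial ingredient is the imported fact that the distance function to a $C^2$ hypersurface is of class $C^2$ in a one-sided neighborhood of that hypersurface, together with the identity $\nabla d=-\bfn$ on $\Gamma$. (One could alternatively use the torsion function $-\Delta\xi=1$ in $\Omega$, $\xi|_\Gamma=0$, combined with elliptic regularity and the Hopf lemma; but the Hopf argument does not give a uniform negative normal derivative at the corners of a convex polygon, which is why the two cases are handled by different constructions.)
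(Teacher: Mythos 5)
Your proof is correct and follows essentially the same route as the paper: the downward paraboloid $R^2-|x-x_0|^2$ with the face-by-face normal-derivative computation for the convex polygon, and a truncated distance function relying on the classical $C^2$-regularity of $d$ near a $C^2$ boundary together with $\nabla d=-\bfn$ on $\Gamma$. The only difference is cosmetic: you cut off by composing $d$ with a one-variable function $\zeta$, whereas the paper multiplies $d_\Gamma$ by a spatial cut-off $\rho$; both yield the same boundary behavior and the same conclusion.
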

\begin{proof}
In the case $\Omega$ is of class~$C^2$, we can choose~$\xi=\rho d_\Gamma$ as the product of
the distance to the boundary function,
$d_\Gamma(x)=\min_{z}\{\norm{x-z}{\bbR^d}\min z\in\Gamma\}$,
and of a suitable cut-off function~$\rho$. From~\cite[Appendix, Lem.~1 and Eq.~(A7)]{GilbargTrudinger98},
see also~\cite[Sect.~13.3.4]{Katopodes19}, we know
that~$d_\Gamma\in C^2(\Gamma_\delta)$ for a suitable small enough~$\delta>0$
and~$\Gamma_\delta\coloneqq\{x\in\overline\Omega\mid d_\Gamma(x)\le\delta\}$, and also
that~$\frac{\p d_\Gamma}{\p\bfn}=1$. For~$\rho$ we choose a smooth function satisfying
 $0\le\rho\le1$,   such that~$\rho(x)=0$ for all~$x\in\Omega\setminus\Gamma_{\frac{2\delta}3}$,
and~$\rho(x)=1$ for all~$x\in\Gamma_{\frac\delta3}$.

In the case~$\Omega$ is a convex polygonal domain we can choose $x_0\in\Omega$ and
\[
\xi(x)=-\norm{x-x_0}{\bbR^d}^2 +\max_{z\in\overline\Omega}
\norm{z-x_0}{\bbR^d}^2, \quad x\in\overline\Omega,
\]
It is clear that~$\xi\in C^2(\overline\Omega)$ and that~$\xi\ge0$. It remains to prove that~$\xi$ strictly
decreases on~$\Gamma$
in the direction of the outward normal~$\bfn$. To this purpose let~$\overline x\in\Gamma$ and let~$F$ be a
face of~$\Gamma$ contained in the affine hyperplane $\bbH$ and such that~$\overline x\in F$. Up to
an affine change of variables (a translation and a
rotation) we can suppose that~$0\in\Omega$ and
\[
 x_0=0\quad\mbox{and}\quad\bbH=\{(s,x_2,x_3,\dots,x_d)\mid (x_2,x_3,\dots,x_d)\in\bbR^{d-1}\}
 \quad\mbox{with}\quad s>0.
\]
In this case, we find that
\[
 \xi(x)=-\norm{x}{\bbR^d}^2 +\max_{z\in\overline\Omega}
\norm{z}{\bbR^d}^2,  \quad\bfn=(1,0,0,\dots,0)\quad\mbox{and}\quad \tfrac{\p}{\p\bfn} \xi\rest\Gamma
= \tfrac{\p}{\p x_1} \xi\rest\Gamma
=-2x_1.
\]
Therefore at an arbitrary point~$\overline x\in\bbH$
we find that~$\tfrac{\p}{\p\bfn} \xi\rest\Gamma(\overline x)=-2\overline x_1=-2s$.
Note that~$s$ is the distance from~$0$ to~$\bbH$.

Therefore we can conclude that for every point~$\overline x$ in the (boundary)
interior of a face~$F$ we have that
$\tfrac{\p}{\p\bfn}\xi \rest\Gamma(\overline x)=-2s_F$ where $s_F>0$ is the distance from~$x_0$ to the hiperplane~$\bbH_F$
containing~$F$. Since the number of faces is finite,
$\tfrac{\p}{\p\bfn}\xi \rest\Gamma\le\max\{-2s_F\mid F\mbox{ is a face of }\Gamma\}\eqqcolon c_\xi<0$,
for all  boundary points\black living in one face only. Note that if~$\overline x$ lives in the intersection of two
faces then the normal
derivative is not well defined (not continuously, at least), however
the set of such points has vanishing (boundary) measure. That is,
$\tfrac{\p}{\p\bfn}\xi \rest\Gamma(\overline x)\le c_\xi<0$ for almost every boundary point~$\overline x$.
\end{proof}

\begin{lemma}\label{L:beta.bdry} Let~$c_\xi<0$ and~$\xi\in H^2$ be as in Lemma~\ref{L:xi-hat}, 
and~$\eta\ge \chi-\clG\psi\rest\Gamma$ be as in Assumption~\ref{A:obst-lb}.
 Then, for
\begin{align}\label{zetakxi}
&\zeta_k\coloneqq y_k-\psi+\eta\widehat \xi,\quad\mbox{with}\quad\widehat \xi\coloneqq\begin{cases}
                   0,&\mbox{ if }\clG=\Id,\\
                   -c_\xi^{-1}\xi,&\mbox{ if }\clG=\frac{\p}{\p\bfn},
                 \end{cases}
\end{align}
 where $y_k$ is the solution
 for \eqref{sys:MYyk}, we have that
\begin{align}\notag
(\tfrac{\p}{\p\bfn}\fkE\chi,\zeta_k^+)_{L^2(\Gamma)}-(\psi- \eta\widehat \xi,\zeta_k^+)_{H^1}
\le 2\norm{\psi-\eta\widehat \xi}{H^2}\norm{\zeta_k^+}{L^2},\qquad\clG\in\{\Id,\tfrac{\p}{\p\bfn}\}.
\end{align}
\end{lemma}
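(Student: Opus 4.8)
The plan is to treat the two boundary cases separately and to reduce everything to an integration-by-parts (Green's) identity on the domain $\Omega$. First I would observe that the quantity $\zeta_k^+$ lies in $H^1$: indeed $\zeta_k = y_k-\psi+\eta\widehat\xi$, and for each fixed time $y_k(t)-\psi(t)\in H^1$ (from $y_k\in\clZ_r$ and $\psi\in\clW$) while $\eta\widehat\xi\in H^2$, so $\zeta_k\in H^1$ and hence $\zeta_k^+\in H^1$ with the standard chain rule; this is what makes the $H^1$-pairing $(\psi-\eta\widehat\xi,\zeta_k^+)_{H^1}$ meaningful. The key point is then to expand
\[
(\psi-\eta\widehat\xi,\zeta_k^+)_{H^1}=(\nabla(\psi-\eta\widehat\xi),\nabla\zeta_k^+)_{L^2(\Omega)^d}+(\psi-\eta\widehat\xi,\zeta_k^+)_{L^2(\Omega)}
\]
and integrate by parts in the first term, moving the derivative off $\zeta_k^+$. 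Since $\psi-\eta\widehat\xi\in H^2$ this yields
\[
(\psi-\eta\widehat\xi,\zeta_k^+)_{H^1}=\bigl(-\Delta(\psi-\eta\widehat\xi)+(\psi-\eta\widehat\xi),\zeta_k^+\bigr)_{L^2(\Omega)}+\bigl(\tfrac{\p}{\p\bfn}(\psi-\eta\widehat\xi),\zeta_k^+\bigr)_{L^2(\Gamma)}.
\]
The volume term is bounded in absolute value by $\norm{(-\Delta+\Id)(\psi-\eta\widehat\xi)}{L^2}\norm{\zeta_k^+}{L^2}\le\norm{\psi-\eta\widehat\xi}{H^2}\norm{\zeta_k^+}{L^2}$, which will contribute (part of) the right-hand side.

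Next I would handle the boundary term, which is where the two cases genuinely differ and where the definition of $\widehat\xi$ and the sign condition from Lemma~\ref{L:xi-hat} are used. In the Dirichlet case $\widehat\xi=0$, so $\zeta_k=y_k-\psi$; on $\Gamma$ we have $y_k\rest\Gamma=\chi\le\psi\rest\Gamma$ by Assumption~\ref{A:obst-lb}(i), hence $\zeta_k\rest\Gamma\le0$ and $\zeta_k^+\rest\Gamma=0$. Moreover in this case $\tfrac{\p}{\p\bfn}\fkE\chi$ is not even part of the claimed inequality under the convention $\clG=\Id$ — rather the left-hand side reads $(\clG\fkE\chi,\zeta_k^+)_{L^2(\Gamma)}-(\psi,\zeta_k^+)_{H^1}=(\chi,\zeta_k^+)_{L^2(\Gamma)}-(\psi,\zeta_k^+)_{H^1}$, and both boundary integrals vanish because $\zeta_k^+\rest\Gamma=0$, leaving just the volume estimate above with $\widehat\xi=0$. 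In the Neumann case, the boundary term from the Green identity combines with $(\tfrac{\p}{\p\bfn}\fkE\chi,\zeta_k^+)_{L^2(\Gamma)}$ to give
\[
\Bigl(\tfrac{\p}{\p\bfn}\fkE\chi-\tfrac{\p}{\p\bfn}\psi+\eta\,\tfrac{\p}{\p\bfn}\widehat\xi,\,\zeta_k^+\Bigr)_{L^2(\Gamma)}.
\]
Here I would use $\tfrac{\p}{\p\bfn}\fkE\chi\rest\Gamma=\chi$, $\tfrac{\p}{\p\bfn}\psi\rest\Gamma=\clG\psi\rest\Gamma\ge\chi-\eta$, and $\tfrac{\p}{\p\bfn}\widehat\xi\rest\Gamma=-c_\xi^{-1}\tfrac{\p}{\p\bfn}\xi\rest\Gamma\le -c_\xi^{-1}c_\xi=-1$ (using $c_\xi<0$ and \eqref{normalxileC}), together with $\eta\ge0$, to conclude that the bracketed boundary integrand is
\[
\le \chi-(\chi-\eta)+\eta\cdot(-1)=0,
\]
so that term is $\le 0$ since $\zeta_k^+\ge0$ and can be dropped. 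Combining with the volume estimate gives the claim; the factor $2$ in $2\norm{\psi-\eta\widehat\xi}{H^2}\norm{\zeta_k^+}{L^2}$ is slack that absorbs the $\norm{(-\Delta+\Id)(\psi-\eta\widehat\xi)}{L^2}\le\norm{\psi-\eta\widehat\xi}{H^2}$ bound with room to spare (and would also cover a trace-type estimate if one preferred not to cancel the boundary term exactly).

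The main obstacle I anticipate is justifying the integration-by-parts identity rigorously for $\zeta_k^+$ with only $\zeta_k\in H^1$: one needs that $\zeta_k^+\in H^1$ (true) and that the normal trace of the $H^2$ function $\psi-\eta\widehat\xi$ pairs with the $H^{1/2}(\Gamma)$ trace of $\zeta_k^+$ in the Green formula, which is standard but should be stated carefully; and one must be sure the boundary trace manipulations (e.g.\ $\tfrac{\p}{\p\bfn}\fkE\chi\rest\Gamma=\chi$) hold in the trace sense for a.e.\ $t$, which follows from the lifting construction in Section~\ref{sec:trace}. A secondary point is that $\tfrac{\p}{\p\bfn}\xi\rest\Gamma$ is only defined almost everywhere on $\Gamma$ (Lemma~\ref{L:xi-hat}), so the pointwise bound on the boundary integrand holds a.e., which is enough since we only integrate it. Everything else is a routine application of the Cauchy--Schwarz and Young inequalities.
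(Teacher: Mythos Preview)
Your proof is correct and follows essentially the same route as the paper's: apply Green's formula to the $H^1$-pairing $(\psi-\eta\widehat\xi,\zeta_k^+)_{H^1}$, bound the resulting volume term by $\norm{(\Delta-\Id)(\psi-\eta\widehat\xi)}{L^2}\norm{\zeta_k^+}{L^2}\le 2\norm{\psi-\eta\widehat\xi}{H^2}\norm{\zeta_k^+}{L^2}$, and show that the combined boundary term $(\tfrac{\p}{\p\bfn}\fkE\chi-\tfrac{\p}{\p\bfn}\psi+\eta\tfrac{\p}{\p\bfn}\widehat\xi,\zeta_k^+)_{L^2(\Gamma)}$ is nonpositive in both cases --- vanishing in the Dirichlet case because $\zeta_k^+\rest\Gamma=0$, and nonpositive in the Neumann case by the sign computation you carry out. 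One small correction: the statement keeps $\tfrac{\p}{\p\bfn}\fkE\chi$ on the left-hand side even when $\clG=\Id$; it does not switch to $\clG\fkE\chi$. This does not affect your argument, since as you correctly note $\zeta_k^+\rest\Gamma=0$ annihilates the boundary pairing regardless of what sits in the first slot.
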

\begin{proof}
Observe that
\begin{align}
&(\tfrac{\p}{\p\bfn}\fkE\chi,\zeta_k^+)_{L^2(\Gamma)}-(\psi- \eta\widehat \xi,\zeta_k^+)_{H^1}\notag\\
&\hspace{3em}=(\tfrac{\p}{\p\bfn}\fkE\chi,\zeta_k^+)_{L^2(\Gamma)}
+((\Delta-\Id)(\psi- \eta\widehat \xi),\zeta_k^+)_{L^2}
-(\tfrac{\p}{\p\bfn}(\psi- \eta\widehat \xi) ,\zeta_k^+)_{L^2(\Gamma)}\notag\\
&\hspace{3em}=( \tfrac{\p}{\p\bfn}\fkE\chi-\black\tfrac{\p}{\p\bfn}\psi+\eta\tfrac{\p}{\p\bfn}\widehat \xi,\zeta_k^+)_{L^2(\Gamma)}
+((\Delta-\Id)(\psi- \eta\widehat \xi),\zeta_k^+)_{L^2}.\label{beta.bdry1}
\end{align}
Note that
\begin{subequations}\label{beta.bdry2}
\begin{equation} 
\zeta_k^+\rest\Gamma=0,\quad\mbox{if}\quad \clG=\Id,\qquad\mbox{and}\qquad\tfrac{\p}{\p\bfn}\fkE\chi=\chi,\quad\mbox{if}\quad \clG=\tfrac{\p}{\p\bfn}.
\end{equation}
Now, by using~\eqref{normalxileC} and~\eqref{zetakxi},
\begin{equation}
\begin{aligned}
& \tfrac{\p}{\p\bfn}\fkE\chi-\tfrac{\p}{\p\bfn}\psi+\eta\tfrac{\p}{\p\bfn}\widehat \xi=\black\chi-\tfrac{\p}{\p\bfn}\psi\rest\Gamma+\eta\tfrac{\p}{\p\bfn}\widehat \xi\rest\Gamma
 \le\chi-\tfrac{\p}{\p\bfn}\psi\rest\Gamma-\eta\le0,&\mbox{if}\quad \clG=\tfrac{\p}{\p\bfn}
\end{aligned}
\end{equation}
\end{subequations}
and, by~\eqref{beta.bdry2}, we have that
\begin{align}\label{beta.bdry3}
 &( \tfrac{\p}{\p\bfn}\fkE\chi- \tfrac{\p}{\p\bfn}\psi+\eta\tfrac{\p}{\p\bfn}\widehat \xi,\zeta_k^+)_{L^2(\Gamma)}\le0,
 \quad\mbox{if}\quad \clG\in\{\Id,\tfrac{\p}{\p\bfn}\},
\end{align}
with an equality in the case~$\clG=\Id$. Thus, by~\eqref{beta.bdry1} and~\eqref{beta.bdry3} we obtain
\begin{align}
&(\tfrac{\p}{\p\bfn}\fkE\chi,\zeta_k^+)_{L^2(\Gamma)}-(\psi- \eta\widehat \xi,\zeta_k^+)_{H^1}
\le\norm{(\Delta-\Id)(\psi- \eta\widehat \xi)}{L^2}\norm{\zeta_k^+}{L^2}
\le2\norm{\psi- \eta\widehat \xi}{H^2}\norm{\zeta_k^+}{L^2},
\end{align}
which ends the proof.
\end{proof}

\begin{lemma}\label{L:weak-viol}
 Let Assumptions~\ref{A:char-dom}--\ref{A:obst-lb} hold true. Then, the solution~$y_k$
 for \eqref{sys:MYyk} satisfies
\begin{align}
&k^2\norm{(y_k-\psi)^+}{L^2((0,T),L^2(\Omega))}^2+\norm{\tfrac{\rmd}{\rmd t}(y_k-\fkE\chi)}{L^2((0,T),V')}^2\notag\\
&\hspace*{2em}\le\ovlineC{C_Q ,T}\left(\norm{y_\circ}{ L^2}^2
+\norm{\fkE\chi}{\clW_{(0,T)}}^2
+\norm{f_k}{L^2((0,T), L^2)}^2
+\norm{\psi}{\clW_{(0,T)}}^2+\norm{\eta}{W^{1,2}(0,T)}^2
\right),\notag
\end{align}
with~$\ovlineC{C_Q ,T}$ independent of~$k$.
\end{lemma}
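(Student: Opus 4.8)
The plan is to test the penalized equation~\eqref{sys:MYyk} against the penalty‑weighted function $k\zeta_k^+$, where $\zeta_k\coloneqq y_k-\psi+\eta\widehat\xi$ is the shifted obstacle defect of Lemma~\ref{L:beta.bdry} (so that in the Dirichlet case, where $\widehat\xi=0$, one simply tests against $k(y_k-\psi)^+$). This is admissible: for Dirichlet boundary conditions $\zeta_k\rest\Gamma=\chi-\psi\rest\Gamma\le0$ by Assumption~\ref{A:obst-lb}, hence $\zeta_k^+\rest\Gamma=0$ and $\zeta_k^+\in H^1_0=V$; for Neumann boundary conditions $V=H^1$ and $\zeta_k\in H^1$. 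Since $\eta\widehat\xi\ge0$, one has the pointwise bound $\zeta_k^+\ge(y_k-\psi)^+$ and, on $\{\zeta_k>0\}$, the identity $\zeta_k^+=(y_k-\psi)^++\eta\widehat\xi$; consequently $((y_k-\psi)^+,\zeta_k^+)_{L^2}\ge\norm{(y_k-\psi)^+}{L^2}^2$, which is what allows the penalty term to produce exactly the quantity to be estimated.

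Carrying out the test, three structural features do the work. First, writing $\tfrac{\p}{\p t}y_k=\tfrac{\p}{\p t}\zeta_k+\tfrac{\p}{\p t}\psi-\dot\eta\widehat\xi$, the time‑derivative term becomes $\tfrac k2\tfrac{\rmd}{\rmd t}\norm{\zeta_k^+}{L^2}^2+k(\tfrac{\p}{\p t}\psi-\dot\eta\widehat\xi,\zeta_k^+)_{L^2}$; this turns the a~priori only $V'$‑valued, singular‑at‑$t=0$ quantity $\tfrac{\p}{\p t}y_k$ into a total time derivative, so the low regularity $y_\circ\in L^2$ (as in Lemma~\ref{L:exist-parab1k}) is harmless. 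Second, writing $y_k=\zeta_k+(\psi-\eta\widehat\xi)$ and integrating by parts, the diffusion term equals $k\norm{\zeta_k^+}{V}^2$ minus $k$ times the boundary quantity $(\tfrac{\p}{\p\bfn}\fkE\chi,\zeta_k^+)_{L^2(\Gamma)}-(\psi-\eta\widehat\xi,\zeta_k^+)_{H^1}$, which is precisely the one bounded above by $2\norm{\psi-\eta\widehat\xi}{H^2}\norm{\zeta_k^+}{L^2}$ in Lemma~\ref{L:beta.bdry}; here the pseudo‑distance function of Lemma~\ref{L:xi-hat} and the shift $\eta\widehat\xi$ are indispensable. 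Third, the penalty term is $k^2((y_k-\psi)^+,\zeta_k^+)_{L^2}\ge k^2\norm{(y_k-\psi)^+}{L^2}^2$ by the previous paragraph, while the convection term is estimated by $kC_{Q}\norm{y_k}{H^1}\norm{\zeta_k^+}{L^2}$ using Assumption~\ref{A:Arc} and~\eqref{Crc}.

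Collecting terms one obtains, pointwise in time, an inequality of the shape
\[
\tfrac k2\tfrac{\rmd}{\rmd t}\norm{\zeta_k^+}{L^2}^2+k\norm{\zeta_k^+}{V}^2+k^2\norm{(y_k-\psi)^+}{L^2}^2\le kG_k\norm{\zeta_k^+}{V},
\]
with $G_k\coloneqq\norm{f_k}{L^2}+\norm{\tfrac{\p}{\p t}\psi}{L^2}+\norm{\dot\eta\widehat\xi}{L^2}+C_{Q}\norm{y_k}{H^1}+2\norm{\psi-\eta\widehat\xi}{H^2}$; by Assumptions~\ref{A:Arc}--\ref{A:obst-lb} and the uniform‑in‑$k$ bound on $\norm{y_k}{L^2((0,T),H^1)}$ from Lemma~\ref{L:weak-parab1k}, the quantity $\norm{G_k}{L^2(0,T)}^2$ is dominated, uniformly in $k$, by the right‑hand side of the asserted estimate. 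Absorbing $kG_k\norm{\zeta_k^+}{V}\le\tfrac k2\norm{\zeta_k^+}{V}^2+\tfrac k2G_k^2$ and integrating over $(0,T)$, using that $\zeta_k^+(0)=(y_\circ-\psi(0)+\eta(0)\widehat\xi)^+$ vanishes in the Dirichlet case (since $y_\circ\le\psi(\Bigcdot,0)$ by Assumption~\ref{A:fchi}) and is otherwise estimated through $\eta$ and $\widehat\xi$, one reaches the claimed bound for $k^2\norm{(y_k-\psi)^+}{L^2((0,T),L^2)}^2$ with a constant of the form $\ovlineC{C_{Q},T}$. The bound on $\tfrac{\rmd}{\rmd t}(y_k-\fkE\chi)$ in $L^2((0,T),V')$ then follows by reading~\eqref{sys:MYyk} as $\tfrac{\p}{\p t}(y_k-\fkE\chi)=f_k-\tfrac{\p}{\p t}\fkE\chi-(-\Delta+\Id)y_k-Qy_k-k(y_k-\psi)^+$ and bounding each summand in $V'$: the elliptic and boundary contributions via $\norm{(-\Delta+\Id)y_k}{V'}\le C\norm{y_k}{H^1}$, $\chi\in\clT$ and Lemma~\ref{L:weak-parab1k}; the convection term via Assumption~\ref{A:Arc}; and the penalty term via the first part of this lemma together with $L^2\xhookrightarrow{}V'$.

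The main obstacle is the boundary term in the Neumann case: testing directly against $k(y_k-\psi)^+$ would create a term $k(\tfrac{\p}{\p\bfn}\psi-\chi,(y_k-\psi)^+)_{L^2(\Gamma)}$ whose sign is not favorable and which cannot be absorbed uniformly in $k$ through a trace inequality; shifting to $\zeta_k$ and invoking Lemma~\ref{L:beta.bdry} is exactly the device that replaces it by a controllable interior quantity, and it is here that the $W^{1,2}$‑regularity of $\eta$ enters the final constant. The remaining work is bookkeeping: checking term by term that each summand of $G_k$ lies in $L^2(0,T)$ with a $k$‑independent bound, the only $k$‑sensitive ingredient being $\norm{y_k}{L^2((0,T),H^1)}$, which is supplied by Lemma~\ref{L:weak-parab1k}.
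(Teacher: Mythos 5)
Your setup (admissibility of $\zeta_k^+$, the role of Lemma~\ref{L:xi-hat}/Lemma~\ref{L:beta.bdry} for the boundary terms, the lower bound $((y_k-\psi)^+,\zeta_k^+)_{L^2}\ge\norm{(y_k-\psi)^+}{L^2}^2$) matches the paper, but the quantitative core of your argument does not deliver the stated estimate. After you absorb $kG_k\norm{\zeta_k^+}{V}\le\tfrac k2\norm{\zeta_k^+}{V}^2+\tfrac k2G_k^2$ and integrate, the right-hand side is $\tfrac k2\norm{G_k}{L^2(0,T)}^2+\tfrac k2\norm{\zeta_k^+(0)}{L^2}^2$, which grows \emph{linearly} in $k$ (the $k$-independent quantity is $\norm{G_k}{L^2(0,T)}$, not $k\norm{G_k}{L^2(0,T)}^2$). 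Hence your chain only yields $k\norm{(y_k-\psi)^+}{L^2((0,T),L^2)}^2\le \ovlineC{C_Q,T}(\dots)$, i.e.\ exactly the bound already contained in Lemma~\ref{L:weak-parab1k}, and not the claimed $k^2\norm{(y_k-\psi)^+}{L^2((0,T),L^2)}^2\le \ovlineC{C_Q,T}(\dots)$. The misstep is absorbing the source term into the coercive $k\norm{\zeta_k^+}{V}^2$ term: that sacrifices one full power of $k$. A related problem is the initial-time term: with your weight $k$ it reads $\tfrac k2\norm{\zeta_k^+(0)}{L^2}^2$, and in the Neumann case $\zeta_k(0)=y_\circ-\psi(0)+\eta(0)\widehat\xi$ need not be $\le0$ (Assumption~\ref{A:fchi} only gives $y_\circ\le\psi(\Bigcdot,0)$), so ``estimated through $\eta$ and $\widehat\xi$'' leaves a contribution of order $k$ that again destroys uniformity.

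The paper avoids both issues by testing with the \emph{unit-weight} function $\zeta_k^+$ and never invoking Young against the penalty pairing: it keeps the product structure, integrates in time to get $2k((y_k-\psi)^+,\zeta_k^+)_{L^2((0,T),L^2)}\le 2\Xi\norm{\zeta_k^+}{L^2((0,T),L^2)}$ with $\Xi$ built from the data and $\norm{y_k}{L^2((0,T),H^1)}$ (Lemma~\ref{L:weak-parab1k}), and then extracts $\norm{(y_k-\psi)^+}{L^2((0,T),L^2)}\le k^{-1}\Xi$ from this bilinear bound; the $V'$ estimate for $\tfrac{\rmd}{\rmd t}(y_k-\fkE\chi)$ is read off from the equation exactly as you do. If you insist on the weighted test function, the repair is to absorb the right-hand side into the $k^2\norm{(y_k-\psi)^+}{L^2}^2$ term, e.g.\ $kG_k\norm{(y_k-\psi)^+}{L^2}\le\tfrac{k^2}4\norm{(y_k-\psi)^+}{L^2}^2+G_k^2$; this closes the argument in the Dirichlet case, where $\widehat\xi=0$ and $\zeta_k^+=(y_k-\psi)^+$ with $\zeta_k^+(0)=0$, but in the Neumann case $\norm{\zeta_k^+}{L^2}$ is not dominated by $\norm{(y_k-\psi)^+}{L^2}$ (the extra $\eta\widehat\xi$ piece produces a cross term of order $k$), so the shifted quantity cannot simply be carried with the weight $k$ and the unit-weight route of the paper (or a separate treatment of the $\eta\widehat\xi$ contribution) is needed.
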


\begin{proof}
 Let us choose~$c_\xi<0$ and~$\xi$ as in Lemma~\ref{L:xi-hat} implying in particular that $\xi \in H^2$. 
We also have~$ \eta\ge\chi-\clG\psi\rest\Gamma$, due to Assumption~\ref{A:obst-lb}.
 Then, we set~$\zeta_k$ as in~\eqref{zetakxi}.

Observe that both~$\zeta_k$ and~$\zeta_k^+$ are in~$H^1$.
 Furthermore, in the case of Dirichlet boundary conditions we also have~$\zeta_k^+\in H^1_0$ as a corollary of
 Assumption~\ref{A:obst-lb}. Therefore, 
\begin{equation}\label{+inV}
\zeta_k^+\in V,\quad\mbox{for}
\quad\clG\in\{\tfrac{\p}{\p\bfn},\Id\}.
\end{equation}

 Let us denote now~$\varkappa_k=y_k-\fkE\chi$. We find
\begin{subequations}\label{sys:MYkappak}
\begin{align}
&\hspace{-2em}\dot \varkappa_k +A\varkappa_k+Q\varkappa_k+k(y_k-\psi)^+=g_k,
\qquad\varkappa_k(0)=\varkappa_\circ,\qquad \clG \varkappa_k\rest\Gamma=0,
\intertext{with}
\varkappa_\circ &=y_\circ-\fkE\chi(0),\qquad
g_k\coloneqq f_k-\tfrac{\rmd}{\rmd t} \fkE\chi -(-\Delta+\Id)\fkE\chi
-Q\fkE\chi.\label{kappa0gk}
\end{align}
\end{subequations}

Testing the dynamics
with~$\zeta_k^+$, gives us
\begin{align}
0&=(\dot \varkappa_k,\zeta_k^+)_{L^2} +(\varkappa_k,\zeta_k^+)_V+k((y_k-\psi)^+,\zeta_k^+)_{L^2}
+(Q\varkappa_k-g_k,\zeta_k^+)_{L^2}\notag\\
&=(\dot \varkappa_k+\tfrac{\rmd}{\rmd t} \fkE\chi-\dot\psi+\dot\eta\widehat \xi,\zeta_k^+)_{L^2}
+(\varkappa_k+\fkE\chi-\psi+\eta\widehat \xi,\zeta_k^+)_{H^1}
+k((y_k-\psi)^+,\zeta_k^+)_{L^2}\notag\\
&\quad+(Q\varkappa_k-g_k-\tfrac{\rmd}{\rmd t} \fkE\chi+\dot\psi-\dot\eta\widehat \xi,\zeta_k^+)_{L^2}
+(-\fkE\chi+\psi-\eta\widehat \xi,\zeta_k^+)_{H^1}\notag
\end{align}
which is equivalent to
\begin{align}
0&=(\dot \zeta_k,\zeta_k^+)_{L^2}+(\zeta_k,\zeta_k^+)_{H^1}
+k((y_k-\psi)^+,\zeta_k^+)_{L^2}\notag\\
&\quad+(Q\varkappa_k-g_k-\tfrac{\rmd}{\rmd t} \fkE\chi+\dot\psi-\dot\eta\widehat \xi,\zeta_k^+)_{L^2}
+(-\fkE\chi+\psi-\eta\widehat \xi,\zeta_k^+)_{H^1}.\notag
\end{align}
Then, using  Stampacchia Lemma~\cite[Lem.~1.1]{Stampacchia63})
and Lions-Magenes Lemma~\cite[Ch.~3, Sect.~1.4, Lem.~1.2]{Temam01}, we arrive at
\begin{align}
&\tfrac{\rm d}{\rmd t}\norm{\zeta_k^+}{L^2}^2 +2\norm{\zeta_k^+}{V}^2+2k((y_k-\psi)^+,\zeta_k^+)_{L^2}\notag\\
&\hspace{3em}=2(-Q\varkappa_k+g_k+\tfrac{\rmd}{\rmd t} \fkE\chi-\dot\psi+\dot\eta\widehat \xi,\zeta_k^+)_{L^2}
-2(-\fkE\chi+\psi-\eta\widehat \xi,\zeta_k^+)_{H^1}.\notag
\end{align}

Next, we use the relations in~\eqref{sys:MYkappak} to obtain
\begin{align}
&\tfrac{\rm d}{\rmd t}\norm{\zeta_k^+}{L^2}^2 +2\norm{\zeta_k^+}{V}^2 +2k((y_k-\psi)^+,\zeta_k^+)_{L^2}\notag\\
&\hspace{3em}=2(-Qy_k+f_k -(-\Delta+\Id)\fkE\chi
-\dot\psi+\dot\eta\widehat \xi,\zeta_k^+)_{L^2}
-2(-\fkE\chi+\psi-\eta\widehat \xi,\zeta_k^+)_{H^1}\notag\\
&\hspace{3em}=2(-Qy_k+f_k 
-\dot\psi+\dot\eta\widehat \xi,\zeta_k^+)_{L^2}
-2(\psi-\eta\widehat \xi,\zeta_k^+)_{H^1}+2(\tfrac{\p}{\p\bfn}\fkE\chi,\zeta_k^+)_{L^2(\Gamma)}\label{est:for+1}
\end{align}
and, using Lemma~\ref{L:beta.bdry} we find
\begin{align}
&\tfrac{\rm d}{\rmd t}\norm{\zeta_k^+}{L^2}^2 +2\norm{\zeta_k^+}{V}^2 +2k((y_k-\psi)^+,\zeta_k^+)_{L^2}\\
&\hspace{3em}\le2(-Qy_k+f_k 
-\dot\psi+\dot\eta\widehat \xi,\zeta_k^+)_{L^2}+4\norm{\psi- \eta\widehat \xi}{H^2}\norm{\zeta_k^+}{L^2}\\
&\hspace{3em}
\le  2\left(\norm{-Qy_k+f_k-\dot\psi+\dot\eta\widehat \xi}{L^2}
+2\norm{\psi- \eta\widehat \xi}{H^2}\right)
\norm{\zeta_k^+}{H^2}.\label{est:key+1}
\end{align}

Time integration of~\eqref{est:key+1} gives us
\begin{align}\notag
&\norm{\zeta_k^+(T)}{L^2}^2-\norm{\zeta_k^+(0)}{L^2}^2+ 2\norm{\zeta_k^+}{L^2((0,T),V)}^2
 +2k((y_k-\psi)^+,\zeta_k^+)_{L^2((0,T),L^2)}
\le 2\Xi
\norm{\zeta_k^+}{L^2((0,T),L^2)}
\end{align}
with
\begin{align}\notag
& \Xi\coloneqq\left(\norm{-Qy_k+f_k-\dot\psi+\dot\eta\widehat \xi}{L^2((0,T),L^2)}
+2\norm{\psi- \eta\widehat \xi}{L^2((0,T),H^2)}\right),
\end{align}
from which, together with the fact that, due to Assumption~\ref{A:fchi},  at time~$t=0$
we have~$\zeta_k^+(0)=(y_\circ-\psi(0))^+=0$, we obtain
\begin{align}\notag
2k\norm{((y_k-\psi)^+,\zeta_k^+)_{L^2((0,T),L^2)}}{\bbR}=2k((y_k-\psi)^+,\zeta_k^+)_{L^2((0,T),L^2)}
\le 2\Xi
\norm{\zeta_k^+}{L^2((0,T),L^2)},
\end{align}
 which, together with~$L^2((0,T),L^2)=(L^2((0,T),L^2))'$, give
us~$\norm{(y_k-\psi)^+}{L^2((0,T),L^2)}
\le {k}^{-1}{\Xi}$, thus
\begin{align}
&k\norm{(y_k-\psi)^+}{L^2((0,T),L^2)}\notag\\
&\hspace{3em}\le \Xi\le \ovlineC{C_Q }\left(\norm{y_k}{L^2((0,T),L^2)}+\norm{f_k}{L^2((0,T),L^2)}+\norm{\psi}{\clW(0,T)}
+\norm{\eta\widehat\xi}{\clW(0,T)}\right).\label{bound+}
\end{align}
 
Next, from~\eqref{sys:MYkappak} we also find that
\[
 \norm{\dot \varkappa_k}{V'}^2=\norm{A\varkappa_k+Q\varkappa_k+k(y_k-\psi)^+-g_k}{V'}^2
\]
which together with~\eqref{bound+}, $\varkappa_k=y_k-\fkE\chi$,~and~$L^2\xhookrightarrow{}V'$, give us
\[
\norm{\dot \varkappa_k}{L^2((0,T),V')}^2\le \overline C
\left(\norm{y_k}{L^2((0,T),H^1)}^2+\norm{\fkE\chi}{\clW(0,T)}^2+\norm{f_k}{L^2((0,T),L^2)}^2
 +\norm{\psi}{\clW(0,T)}^2+\norm{\eta}{W^{1,2}(0,T)}^2\right).
\]
with~$\overline C=\ovlineC{C_Q,\norm{\widehat\xi}{H^2} }$.
Finally, we can finish the proof by using Lemma~\ref{L:weak-parab1k}.
\end{proof}

\begin{remark}
We can see that the constant~$\ovlineC{C_Q,T}$ in the statement of the Lemma~\ref{L:weak-viol} will also
depend on~$\norm{\widehat\xi}{H^2}$
as~$\ovlineC{C_Q,T,\norm{\widehat\xi}{H^2}}$, but since essentially~$\widehat\xi$ depends only on the spatial
domain~$\Omega$, we omit the dependence on~$\norm{\widehat\xi}{H^2}$ in the statement of
Lemma~\ref{L:weak-viol} and throughout the manuscript.
\end{remark}

\begin{lemma}\label{L:strong-parab1k}
 Let Assumptions~\ref{A:char-dom}--\ref{A:obst-lb} hold true,  with in
 addition~$y_\circ-\fkE\chi(0)\in V$. Then the solution~$y_k$ for \eqref{sys:MYyk} satisfies
\begin{align}
&\norm{y_k}{L^2((0,T),H^2)}^2 +\norm{y_k}{L^\infty((0,T),H^1)}^2\notag\\
&\hspace{3em}\le  \overline C \left(\norm{y_\circ}{H^1}^2+\norm{\fkE\chi}{\clW_{(0,T)}}^2
+\norm{f_k}{L^2((0,T),L^2)}^2 +\norm{\psi}{\clW_{(0,T)}}^2
\right),\label{est:strong-parab1k}
\end{align}
with a constant~$\ovlineC{T,C_Q }$ independent of~$k$.
\end{lemma}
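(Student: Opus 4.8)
The plan is to derive the smoothing estimate by the usual parabolic bootstrap, treating the Moreau--Yosida penalty $k(y_k-\psi)^+$ simply as a source term that Lemma~\ref{L:weak-viol} already bounds in $L^2((0,T),L^2)$ uniformly in~$k$; no sign or monotonicity of the penalty is needed at this stage. As in the previous lemmas I would work with $\varkappa_k\coloneqq y_k-\fkE\chi$, which by~\eqref{sys:MYkappak} solves $\dot\varkappa_k+A\varkappa_k+Q\varkappa_k+k(y_k-\psi)^+=g_k$ with $\varkappa_k(0)=y_\circ-\fkE\chi(0)$, $\clG\varkappa_k\rest\Gamma=0$, and $g_k$ as in~\eqref{kappa0gk}. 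The homogeneous boundary condition forces $\varkappa_k(t)\in\rmD(A)$ for a.e.\ $t$ by~\eqref{char-dom}, and the extra hypothesis $y_\circ-\fkE\chi(0)\in V$ is exactly what places $\varkappa_k$ in $W((0,T),\rmD(A),L^2)$ up to $t=0$, so that $\norm{\varkappa_k(\Bigcdot)}{V}^2$ is absolutely continuous with finite initial value. All the computations below are to be carried out rigorously on the Galerkin approximations of Lemma~\ref{L:exist-parab1k} and then passed to the limit; I will only display them formally.

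First I would test the equation for $\varkappa_k$ with $2A\varkappa_k$. Using the symmetry of $A$ and a Lions--Magenes-type lemma (\cite[Ch.~3, Sect.~1.4, Lem.~1.2]{Temam01}) one has $(\dot\varkappa_k,A\varkappa_k)_{L^2}=\tfrac12\tfrac{\rmd}{\rmd t}\norm{\varkappa_k}{V}^2$, hence
\[
\tfrac{\rmd}{\rmd t}\norm{\varkappa_k}{V}^2+2\norm{A\varkappa_k}{L^2}^2=2\bigl(-Q\varkappa_k-k(y_k-\psi)^++g_k,\,A\varkappa_k\bigr)_{L^2}.
\]
On the right I would apply Young's inequality to each term: $2\norm{Q\varkappa_k}{L^2}\norm{A\varkappa_k}{L^2}\le\tfrac14\norm{A\varkappa_k}{L^2}^2+4C_Q^2\norm{\varkappa_k}{H^1}^2$ (by~\eqref{Crc} and Assumption~\ref{A:Arc}), $2\norm{g_k}{L^2}\norm{A\varkappa_k}{L^2}\le\tfrac14\norm{A\varkappa_k}{L^2}^2+4\norm{g_k}{L^2}^2$, and $2k\norm{(y_k-\psi)^+}{L^2}\norm{A\varkappa_k}{L^2}\le\tfrac14\norm{A\varkappa_k}{L^2}^2+4k^2\norm{(y_k-\psi)^+}{L^2}^2$. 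Absorbing the three $\norm{A\varkappa_k}{L^2}^2$ contributions to the left gives
\[
\tfrac{\rmd}{\rmd t}\norm{\varkappa_k}{V}^2+\tfrac54\norm{A\varkappa_k}{L^2}^2\le 4C_Q^2\norm{\varkappa_k}{H^1}^2+4\norm{g_k}{L^2}^2+4k^2\norm{(y_k-\psi)^+}{L^2}^2 .
\]
Integrating over $(0,t)$ and over $(0,T)$ (dropping the nonnegative term $\norm{\varkappa_k(T)}{V}^2$) yields bounds for $\norm{\varkappa_k}{L^\infty((0,T),V)}^2$ and $\norm{\varkappa_k}{L^2((0,T),\rmD(A))}^2$ in terms of $\norm{\varkappa_k(0)}{V}^2$, $\norm{\varkappa_k}{L^2((0,T),H^1)}^2$, $\norm{g_k}{L^2((0,T),L^2)}^2$, and $k^2\norm{(y_k-\psi)^+}{L^2((0,T),L^2)}^2$.

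It remains to bound these four quantities uniformly in~$k$ and to translate back to $y_k$. One has $\norm{\varkappa_k(0)}{V}^2\le 2\norm{y_\circ}{H^1}^2+2\norm{\fkE\chi}{\clW_{(0,T)}}^2$, since $\clW_{(0,T)}\xhookrightarrow{}\clC([0,T],H^1)$; Lemma~\ref{L:weak-parab1k} bounds $\norm{y_k}{L^2((0,T),H^1)}$, hence $\norm{\varkappa_k}{L^2((0,T),H^1)}$, uniformly in~$k$; formula~\eqref{kappa0gk} together with $\fkE\chi\in\clW_{(0,T)}=W((0,T),H^2,L^2)$ and Assumption~\ref{A:Arc} gives $\norm{g_k}{L^2((0,T),L^2)}^2\le\ovlineC{C_Q}\bigl(\norm{f_k}{L^2((0,T),L^2)}^2+\norm{\fkE\chi}{\clW_{(0,T)}}^2\bigr)$; and the last quantity is precisely what Lemma~\ref{L:weak-viol} controls uniformly in~$k$. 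Combining these with the elliptic estimate $\norm{\varkappa_k}{H^2}\le\overline C\norm{A\varkappa_k}{L^2}$ (Assumption~\ref{A:char-dom} and~\eqref{char-dom}) and with $\norm{y_k}{H^2}\le\norm{\varkappa_k}{H^2}+\norm{\fkE\chi}{H^2}$, $\norm{y_k}{H^1}\le\norm{\varkappa_k}{H^1}+\norm{\fkE\chi}{H^1}$, yields~\eqref{est:strong-parab1k} (the $\norm{\eta}{W^{1,2}(0,T)}$ term inherited from Lemma~\ref{L:weak-viol} is data-dependent and is absorbed into the constant, as was done earlier with $\widehat\xi$). The only genuinely delicate point is the rigorous justification of the test function $A\varkappa_k$ and of the identity for $\tfrac{\rmd}{\rmd t}\norm{\varkappa_k}{V}^2$, which is handled at the Galerkin level; everything else is bookkeeping, and the $k$-independence of all the constants rests entirely on Lemmas~\ref{L:weak-parab1k} and~\ref{L:weak-viol}.
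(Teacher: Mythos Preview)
Your proposal is correct and follows essentially the same approach as the paper: testing the equation for $\varkappa_k=y_k-\fkE\chi$ with $2A\varkappa_k$, applying Young's inequality to put the source term $g_k-Q\varkappa_k-k(y_k-\psi)^+$ into $L^2((0,T),L^2)$, and then invoking Lemmas~\ref{L:weak-parab1k} and~\ref{L:weak-viol} for the $k$-uniform bounds. Your version is somewhat more explicit about the Galerkin justification, the translation back from $\varkappa_k$ to $y_k$, and the absorption of the $\eta$-term, but the argument is the same.
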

\begin{proof}
Testing the dynamics in~\eqref{sys:MYkappak} with~$2A\varkappa_k$,
 where~$\varkappa_k=y_k-\fkE\chi$, it follows that
 \begin{align}\notag
2\norm{\varkappa_k}{\rmD(A)}^2+\tfrac{\rmd}{\rmd t} \norm{\varkappa_k}{V}^2
=2(g_k-Q\varkappa_k -k(y_k-\psi)^+,A\varkappa_k)_{L^2}.
\end{align}
Then, the Young inequality gives us
 \begin{align}\notag
 &\norm{\varkappa_k}{\rmD(A)}^2+\tfrac{\rmd}{\rmd t} \norm{\varkappa_k}{V}^2
 \le\norm{g_k-Q\varkappa_k-k(y_k-\psi)^+)}{L^2}^2,
\end{align}
 and from the Gronwall Lemma and integration over~$(0,T)$ we obtain
 \begin{align}\notag
&\norm{\varkappa_k}{L^2((0,T),\rmD(A))}^2 +\norm{\varkappa_k}{L^\infty((0,T),V)}^2
\le \norm{\varkappa_\circ}{V}^2+
\norm{g_k-Q\varkappa_k-k(y_k-\psi)^+)}{L^2((0,T),L^2)}^2.
\end{align}
Finally, we can conclude the proof by using Lemmas~\ref{L:weak-parab1k} and~\ref{L:weak-viol},
and recalling the identities in~\eqref{kappa0gk}.
\end{proof}

In Lemma~\ref{L:strong-smooth1k} we require the extra regularity 
for the initial condition in order to have strong solutions for
the parabolic equation. This extra requirement is needed due to the compatibility
conditions mentioned in Remark~\ref{R:compcond_y0}.
However, due to the smoothing property of parabolic equations, it
turns out that for strictly positive time~$t>0$ we will have that~$y_k(t)\in V$ when~$y_\circ\in H$. 
This fact is explored in the following result.

\begin{lemma}\label{L:strong-smooth1k}
 Let Assumptions~\ref{A:char-dom}--\ref{A:obst-lb} hold true and let~$y_k$ solve~\eqref{sys:MYyk}.
 Then, it follows that
\begin{align}
&\norm{r y_k}{L^2((0,T),H^2)}^2 +\norm{ry_k}{L^\infty((0,T),H^1)}^2
+\norm{\tfrac{\rmd}{\rmd t}(ry_k)}{L^2((0,T),L^2)}^2
\notag\\
&\hspace{2em}\le  \overline C \left(\norm{y_\circ}{L^2}^2+\norm{r\fkE\chi}{\clW_{(0,T)}}^2
+\norm{rf_k}{L^2((0,T),L^2)}^2 +\norm{r\psi}{\clW_{(0,T)}}^2
\right),\notag
\end{align}
with a constant~$\ovlineC{T,C_Q }$ independent of~$k$.
\end{lemma}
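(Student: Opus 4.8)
The plan is to use parabolic smoothing via a time weight. Set $\varkappa_k\coloneqq y_k-\fkE\chi$, which by~\eqref{sys:MYkappak} solves $\dot\varkappa_k+A\varkappa_k+Q\varkappa_k+k(y_k-\psi)^+=g_k$ with $\varkappa_k(0)=y_\circ-\fkE\chi(0)$, $\clG\varkappa_k\rest\Gamma=0$, and $g_k$ as in~\eqref{kappa0gk}, and then work with $z_k\coloneqq r\varkappa_k$. Since $r\in W^{1,\infty}(0,T)$ with $\dot r=\indf_{(0,1)}$ almost everywhere, multiplying this equation by $r$ shows that $z_k$ solves
\[
\dot z_k+A z_k+Q z_k+r\,k(y_k-\psi)^+=\dot r\,\varkappa_k+r\,g_k,\qquad z_k(0)=0,\qquad\clG z_k\rest\Gamma=0 .
\]
The decisive gain is that the initial datum now vanishes, $z_k(0)=r(0)\varkappa_k(0)=0\in V$, regardless of whether $y_\circ-\fkE\chi(0)\in V$. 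Moreover Lemma~\ref{L:exist-parab1k} already guarantees $ry_k\in W((0,T),H^2,L^2)$, hence $z_k\in W((0,T),\rmD(A),L^2)$, so the energy computation below is rigorous.

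Next I would test this equation with $2Az_k$ and argue exactly as in the proof of Lemma~\ref{L:strong-parab1k}: the Young inequality absorbs $\norm{Az_k}{L^2}^2$, the term $\norm{Qz_k}{L^2}\le C_Q\norm{z_k}{H^1}$ is handled by Gronwall's lemma (or by the interpolation $\norm{z_k}{V}^2\le\varepsilon\norm{z_k}{\rmD(A)}^2+C_\varepsilon\norm{z_k}{L^2}^2$), and after integrating over $(0,T)$ and using $z_k(0)=0$ one obtains, with a constant $\ovlineC{T,C_Q}$ \emph{independent of $k$} and using $0\le r\le1$,
\begin{align*}
\norm{z_k}{L^2((0,T),\rmD(A))}^2+\norm{z_k}{L^\infty((0,T),V)}^2
\le{}&\ovlineC{T,C_Q}\bigl(\norm{\dot r\,\varkappa_k+r\,g_k}{L^2((0,T),L^2)}^2\\
&\quad+k^2\norm{(y_k-\psi)^+}{L^2((0,T),L^2)}^2\bigr).
\end{align*}

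It then remains to bound the right-hand side and to return from $z_k$ to $ry_k$. The term $k^2\norm{(y_k-\psi)^+}{L^2((0,T),L^2)}^2$ is controlled, with a constant independent of $k$, by Lemma~\ref{L:weak-viol} --- this is exactly the place where uniformity in the Moreau--Yosida index is secured, since that lemma's bound does not deteriorate as $k\to\infty$. For the source I split $\norm{\dot r\,\varkappa_k+r\,g_k}{L^2((0,T),L^2)}\le\norm{\varkappa_k}{L^2((0,1),L^2)}+\norm{r\,g_k}{L^2((0,T),L^2)}$: the first summand is bounded via Lemma~\ref{L:weak-parab1k}, and the second, recalling $g_k=f_k-\tfrac{\rmd}{\rmd t}\fkE\chi-(-\Delta+\Id)\fkE\chi-Q\fkE\chi$ from~\eqref{kappa0gk}, by $\overline C(\norm{r f_k}{L^2((0,T),L^2)}+\norm{r\fkE\chi}{\clW_{(0,T)}})$, splitting $r\tfrac{\rmd}{\rmd t}\fkE\chi=\tfrac{\rmd}{\rmd t}(r\fkE\chi)-\dot r\fkE\chi$ and using $\fkE\chi\in\clC([0,1],H^1)$. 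Writing $ry_k=z_k+r\fkE\chi$ and $\tfrac{\rmd}{\rmd t}(ry_k)=\dot z_k+\tfrac{\rmd}{\rmd t}(r\fkE\chi)$, the continuous inclusions $\rmD(A)\subseteq H^2$ and $V\subseteq H^1$ together with the displayed estimate give the bounds on $\norm{ry_k}{L^2((0,T),H^2)}$ and $\norm{ry_k}{L^\infty((0,T),H^1)}$, while $\norm{\dot z_k}{L^2((0,T),L^2)}$ is read off directly from the $z_k$-equation using the bound on $\norm{z_k}{L^2((0,T),\rmD(A))}$ and the estimates just listed. The only point that needs real care is the bookkeeping: one must track that every invoked constant (from Lemmas~\ref{L:weak-parab1k} and~\ref{L:weak-viol}) is independent of $k$, so that the final $\ovlineC{T,C_Q}$ is as well; there is no genuine analytic difficulty beyond this and the (mild) non-smoothness of the weight $r$ at $t=1$, which is absorbed by the $\indf_{(0,1)}$ bound on $\dot r$.
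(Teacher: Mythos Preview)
Your proposal is correct and follows essentially the same route as the paper. The paper multiplies the $\varkappa_k$-equation directly by $2r^2A\varkappa_k$, which is equivalent to your writing out the equation for $z_k=r\varkappa_k$ and testing with $2Az_k$; both then close via Young's inequality, Gronwall, and the $k$-uniform bounds from Lemmas~\ref{L:weak-parab1k} and~\ref{L:weak-viol}, with the time-derivative estimate read off from the equation itself.
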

\begin{proof}
Multiplying the dynamics in~\eqref{sys:MYkappak} by~$2r^2A\varkappa_k$,  it follows that
 \begin{align}\notag
\tfrac{\rmd}{\rmd t} \norm{r\varkappa_k}{V}^2
-(\tfrac{\rmd}{\rmd t}r^2)\norm{\varkappa_k}{V}^2
+2\norm{r\varkappa_k}{\rmD(A)}^2
=2(rg_k-rQ\varkappa_k -rk(y_k-\psi)^+,rA\varkappa_k)_{L^2}.
\end{align}
Then, the Young inequality together with~$\max\{\norm{r}{L^\infty(\bbR_+)},\norm{\dot r}{L^\infty(\bbR_+)}\}=1$
give us
 \begin{align}\notag
 &\norm{r\varkappa_k}{\rmD(A)}^2+\tfrac{\rmd}{\rmd t} \norm{r\varkappa_k}{V}^2
 \le\norm{g_k-Q\varkappa_k-k(y_k-\psi)^+)}{L^2}^2+\norm{r\varkappa_k}{V}^2,
\end{align}
 and from the Gronwall Lemma and integration over~$(0,T)$ we obtain
 \begin{align}\notag
&\norm{r\varkappa_k}{L^2((0,T),\rmD(A))}^2 +\norm{r\varkappa_k}{L^\infty((0,T),V)}^2\le 
\norm{g_k-Q\varkappa_k-k(y_k-\psi)^+)}{L^2((0,T),L^2)}^2.
\end{align}

Further we have that
\[
 \norm{\tfrac{\rmd}{\rmd t}(r\varkappa_k)}{L^2}^2=
 \norm{Ar\varkappa_k+Qr\varkappa_k+rk(\varkappa_k-\phi)^+-rg_k-(\dot r)\varkappa_k}{L^2}^2.
\]
We can conclude the proof by 
using~$ry_k=r\varkappa_k+r\fkE\chi$, \eqref{kappa0gk}, and
Lemmas~\ref{L:weak-parab1k} and~\ref{L:weak-viol}. 
\end{proof}

We are now ready to conclude the proof of Theorem~\ref{T:MY-approx}.
\begin{proof}[Proof of Theorem~\ref{T:MY-approx}] 
 \textbf{Existence:} From Lemmas~\ref{L:weak-parab1k}  and~\ref{L:strong-smooth1k}, there exists a
 subsequence~$y_{\fkn(k)}$ of~$y_k$,  such that the following weak limits hold
\begin{subequations}\label{MY:wklimitsVI}
\begin{align}
&y_{\fkn(k)}-\fkE\chi\xrightharpoonup[L^2((0,T), V)]{} y-\fkE\chi,\quad&&
\dot y_{\fkn(k)}-\tfrac{\rmd}{\rmd t}\fkE\chi\xrightharpoonup[L^2((0,T), V')]{}
\dot y-\tfrac{\rmd}{\rmd t}\fkE\chi,\\
&r(y_{\fkn(k)}-\fkE\chi)\xrightharpoonup[L^2((0,T), \rmD(A))]{} z,\quad &&
\tfrac{\rmd}{\rmd t}(r(y_{\fkn(k)}-\fkE\chi))\xrightharpoonup[L^2((0,T), L^2)]{} \dot  z,
 \end{align}
 \end{subequations} 
for suitable~$y\in W((0,T), H^1,V')$ and~$z\in W((0,T), \rmD(A),L^2)$.
Necessarily we have $z=r(y-\fkE\chi)$ and the strong limits
\begin{subequations}\label{MY:stlimitsVI}
\begin{align}
&y_{\fkn(k)}\xrightarrow[L^2((0,T), L^2)]{}  y,&&  r(y_{\fkn(k)}-\fkE\chi)
\xrightarrow[L^2((0,T), V)]{}  r(y-\fkE\chi),\\
& &&r(y_{\fkn(k)}-\fkE\chi)\xrightarrow[\clC({[0,T]}, L^2)]{}  r(y-\fkE\chi),
 \end{align}
\end{subequations} 
where we have used, in particular the Aubin-Lions-Simon Lemma~\cite[Sect.~8, Cor.~4]{Simon87}.

For the sake of simplicity, let us still denote the subsequence~$y_{\fkn(k)}$ by~$y_k$.
By Lemma~\ref{L:weak-parab1k}, it follows that~$(k^2\norm{(y_k-\psi)^+}{L^2((0,T), L^2)}^2)_{k\in\bbN}$
is bounded, thus
\begin{equation}\notag
        \norm{(y-\psi)^+}{L^2((0,T), L^2)}^2
        =\lim_{k\to+\infty}\norm{(y_{k}-\psi)^+}{L^2((0,T), L^2)}^2
	=0
\end{equation}
 and, since $y\in L^2((0,T);H^1)$, we obtain that~$y\in\bfC^\psi_T$, see~\eqref{convex}.\black
Now, for an arbitrary~$v\in\bfC^\psi_T$, 
we find, for almost every~$t\in(0,T)$,
 \begin{align}
&\Bigl(r\left(\tfrac{\p}{\p t} y_{k } +(-\Delta+\Id)y_{k}
+Qy_{k }-f_{k }\right),r (v-y_{k })\Bigr)_{L^2}\notag\\
&\hspace{3em}
=-{k}\left(r(y_{k}-\psi)^+,r(v-y_{k })\right)_{L^2}\notag\\
&\hspace{3em}
={ k}\left((y_{ k }-\psi)^+,r^2(y_{k}-\psi)\right)_{L^2}
+{ k }\left((y_{ k }-\psi)^+,r^2(\psi-v)\right)_{L^2},\notag
\end{align}
which gives us
\begin{align}
&\Bigl(r\left(\tfrac{\p}{\p t} y_{ k } +(-\Delta+\Id)y_{ k }
+Qy_{ k }-f_{ k }\right),r(v-y_{ k })\Bigr)_{L^2}\ge0,\label{VIk>0}
\end{align}
because~$r^2{ k }(y_{ k }-\psi)^+(y_{ k }-\psi)\ge0$
and~$r^2{ k }(y_{ k }-\psi)^+(\psi-v)\ge0$, due  to~$v\in\bfC^\psi_T$. 

Observe that, with~$q_{ k }\coloneqq r(y_k-\fkE\chi)$ and~$q\coloneqq r(y-\fkE\chi)$,
 for the left-factor in~\eqref{VIk>0}, we find 
\begin{align}
 &r\left(\tfrac{\p}{\p t} y_{ k } +(-\Delta+\Id)y_{ k }
+Qy_{ k }-f_{ k }\right)\notag\\
&\hspace{4.25em}=\dot q_k +Aq_{ k }
+Qq_{ k }-rf_{ k }
+r(\tfrac{\rmd}{\rmd t}+A+Q)\fkE\chi -(\dot r)(y_{ k }-\fkE\chi),\notag
\end{align}
and we have the weak limit in $L^2((0,T),L^2)$ given by
\begin{align}\notag
\dot q +Aq
+Qq-rf
+r(\tfrac{\rmd}{\rmd t}+A+Q)\fkE\chi -(\dot r)(y-\fkE\chi)=r\left(\tfrac{\p}{\p t} y +(-\Delta+\Id)y
+Qy-f\right)
\end{align}
and also the strong limit for the right-factor in~\eqref{VIk>0} as follows
\begin{align}\notag
q_{ k }\xrightarrow[L^2((0,T),L^2)]{}q.
\end{align}
These limits allow us to take the limit for the integrated product in~\eqref{VIk>0}, and obtain
 \begin{align}
&\int_0^T\Bigl(r\left(\tfrac{\p}{\p t} y +(-\Delta+\Id)y
+Qy-f\right),r(v-y)\Bigr)_{L^2}\,\rmd t\notag\\
&\hspace{3em}=\lim_{k\to+\infty}\int_0^T\left(r(\tfrac{\p}{\p t} y_{ { k } }
+(-\Delta+\Id)y_{ { k } }
+Qy_{ { k} }-f_{ { k } }),r(v-y_{ { k } })\right)_{H}\,\rmd t\notag\\
&\hspace{3em}\ge0,\qquad\mbox{for all}\quad v\in\bfC^\psi_T.\label{eq:niVI}
 \end{align}
Let us fix arbitrary~$v\in\bfC^\psi_T$, ~$\overline t\in(0,T)$,
$\delta\in(0,\min\{\overline t,T-\overline t\})$. 
Note that the integrand~$\xi_v\coloneqq\Bigl(r\left(\tfrac{\p}{\p t} y +(-\Delta+\Id)y
+Qy-f\right),r(v-y)\Bigr)_{L^2}$ is an integrable function, $\xi_v\in L^1(0,T)$.
By the Lebesgue differentiation theorem~\cite[Ch.~7, Thm.~7.7]{Rudin87},
the set of Lebesgue points
\[
\fkL_{v}\coloneqq\left\{t^*\in(0,T)\mid \xi_h(t^*)
=\lim_{\delta\searrow0}\frac1{2\delta}{\int_{t^*-\delta}^{t^*+\delta}}\xi_v(t)\,\rmd t\right\},
\]
has full measure. 
{ {We define the functions
\[
v_{\overline t,\delta}\coloneqq\begin{cases}
v,\quad&\mbox{if }t\in(\overline t-\delta,\overline t+\delta)\\
y,\quad&\mbox{if }t\in(0,\overline t-\delta)\bigcup(\overline t+\delta,T).
\end{cases}
\] 
We have}}
$
v_{\overline t,\delta}(t,x)\in\bfC^\psi_T.
$
From~\eqref{eq:niVI}, it follows that
\[
{\int_{\overline t-\delta}^{\overline t+\delta}}\xi_v(t)\,\rmd t
=\int_0^T\Bigl(r\left(\tfrac{\p}{\p t} y +(-\Delta+\Id)y
+Qy-f\right),r(v_{ \overline t,\delta}-y)\Bigr)_{L^2}(t)\,\rmd t\ge0
\]
and as a consequence we have
 \begin{align}\notag
&\Bigl(r\left(\tfrac{\p}{\p t} y +(-\Delta+\Id)y
+Qy-f\right),r(v-y)\Bigr)_{L^2}(t^*)\ge0,\quad\mbox{for all}\quad t^*\in 
\fkL_{v},
 \end{align}
which implies the inequality in~\eqref{eq:limMYyk}, because $r^2=\min\{t^2,1\}>0$ for time~$t>0$.

\textbf{Uniqueness:} Let us assume
that~$w\in\bfC^\psi_T\bigcap W((0,T),H^1,V')$, with~$rw\in W((0,T),H^2,L^2)$
also satisfies~\eqref{eq:limMYyk}.
In this case we find the relations
\begin{align}\notag
&\left(\dot y +(-\Delta+\Id)y
+Qy-f, w-y\right)_{L^2}\ge0,\qquad
\left(\dot w +(-\Delta+\Id)w
+Qw-f,y-w\right)_{L^2}\ge0,
\end{align}
which lead us to, with~$z\coloneqq y-w$, 
\begin{align}\notag
&\left(\dot z +Az+Qz,z\right)_{L^2}\le0,\quad\mbox{for almost all}\quad t\in(0,T),
\quad z(0)=0,
\end{align}
 with~$z(t)\in V$ for all~$t\in[0,T]$.
Thus
\begin{align}
&\tfrac{\rmd}{\rmd t}\norm{z}{L^2}^2+ 2\norm{z}{V}^2
\le 2C_Q \norm{z}{H^1}\norm{z}{L^2}\le\norm{z}{V}^2 + C_Q ^2\norm{z}{L^2}^2,
\end{align}
and the uniqueness follows from Gronwall's Lemma.

\textbf{Convergence:} Finally we show that the strong limits in~\eqref{MY:stlimitsVI} hold for the (entire) sequence~$y_k$.
We argue by contradiction. Let us denote~$\bbS\coloneqq \{L^2((0,T),V),\clC([0,T],L^2)\}$.
\begin{align}\label{assu.contra.seq}
\mbox{Suppose that } r(y_{k}-\fkE\chi)\xrightarrow[\quad\clS\quad]{}  r(y-\fkE\chi) \mbox{ does not hold,
for some }\clS\in\bbS.
\end{align}
Under assumption~\eqref{assu.contra.seq}, there would exist~$\varepsilon>0$ and a
subsequence~$y_{\fks_1(k)}$ of~$y_{k}$ such that
\begin{equation}\label{contra.seq.s1}
 \norm{r(y_{\fks_1(k)}-\fkE\chi)-  r(y-\fkE\chi)}{\clS}\ge\varepsilon.
\end{equation}
However since~$\{\overline y_{k}\}\coloneqq\{y_{\fks_1(k)}\}$ is a subsequence of~$\{y_{k}\}$ we would be able to
follow the arguments above and arrive to analogous limits as in~\eqref{MY:wklimitsVI} and~\eqref{MY:stlimitsVI},
for a suitable subsequence~$\{\overline y_{\fks_2(k)}\}$ of~$\{\overline y_{k}\}$ and a
limit~$\overline y$ in the place of~$y$.
In particular, we would arrive to
\begin{align}\notag
&y_{\fks_2(\fks_1(k))}\xrightarrow[\quad\clS\quad]{}  \overline y,
 \end{align}
where moreover~$\overline y$ solves~\eqref{eq:limMYyk}. By~\eqref{contra.seq.s1} we would have that~$\overline y\ne y$,
which contradicts the uniqueness of the solution proven above. That is, the assumption in~\eqref{assu.contra.seq}
leads us to a contradiction. Therefore,
we can conclude that~\eqref{MY:stlimitsVI-full} holds true.
The proof is finished.
 \end{proof}

\section{Stabilization of a sequence of parabolic equations}\label{sec:stabilization_sequ}
The solution of~\eqref{sys-haty} can
be approximated by the sequence $({ y}_k)_{k\in\mathbb N}$ as stated in Theorem~\ref{T:MY-approx}, where~${ y}_k$ solves
\begin{subequations}\label{sys-yk}
\begin{align}
&\tfrac{\p}{\p t}{ y}_k -\nu\Delta { y}_k+a y_k+b\cdot\nabla y_k+k({ y}_k-\psi)^+=f,\\
&{y}_k(0)={ y}_\circ ,\quad\clG  y\rest{\Gamma}=\chi.
\end{align}
\end{subequations}
This follows from Theorem~\ref{T:MY-approx} with~$Q=a\Id+b\cdot\nabla$, and~$f_k=f$.

Here, we investigate the stabilizability to trajectories for system~\eqref{sys-yk}.
We consider the sequence~$({ w}_k)_{k\in\mathbb N}$, where~${ w}_k$ solves
\begin{subequations}\label{sys-wk}
\begin{align}
&\tfrac{\p}{\p t}{ w}_k -\nu\Delta { w}_k+a w_k+b\cdot\nabla w_k+k({ w}_k-\psi)^+
=f-\lambda P_{\clU_M}^{\clE_M^\perp}AP_{\clE_M}^{\clU_M^\perp}(w_k-y_k),\\
&{ w}_k(0)= w_\circ,\quad\clG w\rest{\Gamma}=\chi,
\end{align}
\end{subequations}
where~$P_{\clE_M}^{\clU_M^\perp}\in\clL(  L^2)$ and~$P_{\clE_M}^{\clU_M^\perp}\in\clL(  L^2)$ are suitable oblique
projections in~$  L^2$, which we shall construct so
that~$P_{\clU_M}^{\clE_M^\perp}AP_{\clE_M}^{\clU_M^\perp}\in\clL(  L^2)$. 
Then again from Theorem~\ref{T:MY-approx}, with~$Q
=a\Id+b\cdot\nabla+\lambda P_{\clU_M}^{\clE_M^\perp}AP_{\clE_M}^{\clU_M^\perp}$,
and~$f_k=f+\lambda P_{\clU_M}^{\clE_M^\perp}AP_{\clE_M}^{\clU_M^\perp}y_k$,
it follows that the solution of~\eqref{sys-tildey-u} can
be approximated by the sequence~$({ w}_k)_{k\in\mathbb N}$.  At this point, it is important to underline that
the triple~$(\lambda,{\clU_M},{\clE_M})$ can be chosen independently of~$k$, as we shall show later on.

In this section we will see~$ y_k$ as our target solution and consider the
difference~${z}_k \coloneqq w_k- y_k$ from the controlled solution~$w_k$ to the target.
With initial condition~${z}_\circ \coloneqq w_\circ- y_\circ$, we find that~${z}_k$ satisfies
\begin{subequations}\label{sys-zk}
 \begin{align}
&\tfrac{\p}{\p t}{z}_k  -\nu\Delta {z}_k +a {z}_k +b\cdot\nabla {z}_k +k\left(({z}_k +{y}_k-\psi)^+-({y}_k-\psi)^+\right)
=-\lambda P_{\clU_M}^{\clE_M^\perp}AP_{\clE_M}^{\clU_M^\perp}z_k,\\
&{z}_k (0)={z}_\circ ,\qquad\clG   {z}_k\rest{\Gamma}=0.
\end{align}
\end{subequations}

 For a given $\mu>0$,  our goal here, see ~\eqref{goal-intro}, is to find   a scalar~$\lambda>0$,
a space of actuators~${\clU_M}$, and an auxiliary
 space~${\clE_M}$,  such that
\begin{equation}\label{goal-intro-k}
 \left| w_k(t)- y_k(t)\right|_{L^2}\le C{\rm e}^{-\mu t}\left| w_\circ- y_\circ\right|_{L^2},
 \quad\mbox{for all}\quad(w_\circ,y_\circ)\in L^2\times L^2,\quad t\ge0
\end{equation}
 for a suitable~$C\ge1$.

 \subsection{The oblique projections}\label{sS:oblProj}
We specify here how we can appropriately choose the spaces of actuators~$\clU_{M}$ and auxiliary
eigenfunctions~$\clE_{M}$,  so that the feedback
operator~$-\lambda P_{\clU_M}^{\clE_M^\perp}AP_{\clE_M}^{\clU_M^\perp}$ is stabilizing for large enough~$\lambda>0$. 
Since the stabilization results will hold for large enough~$M$, we will rather consider a sequence of
pairs of subspaces~$(\clU_{M},\clE_M)_{M\in\bbN}$ as in~\eqref{seqActEig}.

In the one-dimensional case, $\Omega^1=(0,L_1)\subset\bbR$, $L_1>0$, as actuators we take
the indicator functions~$1_{\omega_j^1}(x_1)$, $j\in\{1,2,\dots,M\}$,
defined as follows,
\begin{equation}\label{Act-mxe}
 1_{\omega_j^1}(x_1)\coloneqq\begin{cases}
                            1,&\mbox{if }x_1\in\Omega^1\textstyle\bigcap\omega_j^1,\\
                            0,&\mbox{if }x_1\in\Omega^1\setminus\overline{\omega_j^1},
                           \end{cases}
\quad\! \omega_j^1\coloneqq(c_j-\tfrac{rL_1}{2M},c_j\!+\!\tfrac{rL_1}{2M}),\quad\! c_j\coloneqq\tfrac{(2j-1)L_1}{2M}.
\end{equation}
As eigenfunctions we take the first~$M$ eigenfunctions~$e_j^1$ of~$-\nu\Delta+\Id\colon \rmD(A)\to L^2(\Omega^1)$
(i.e., the first eigenfunctions of~$\Delta$),
\begin{equation}\label{firsteigs}
(-\nu\Delta+\Id)e_j^1=\alpha_j^1e_j^1,\quad\clG e_i^1\rest\Gamma=0,\qquad j\in\{1,2,\dots,M\},
\end{equation}
where the~$\alpha_j^1$s are the ordered eigenvalues, repeated accordingly to their multiplicity,
\[
0<1\le \alpha_1^1< \alpha_2^1< \dots<\alpha_j^1<\alpha_{j+1}^1< \dots,\qquad j\in\bbN.
\]

In the higher-dimensional case, for nonempty rectangular
domains~$\Omega^\times=\prod\limits_{n=1}^{d}(0,L_n)\subset\bbR^{d}$, $L_n>0$  we take
cartesian product actuators of the above actuators~$1_{\omega_j^n}$ and eigenfunctions~$e_j^n$ as follows.
 We define~$\bbM\coloneqq\{1,2,\dots,M\}$ and take 
\begin{align}\label{Act-mxe-cart}
\clU_{M}=\linspan\{1_{\omega_{\mathbf j}^\times}\mid {\mathbf j}\in\bbM^{d}\}&\quad\mbox{and}\quad
\clE_{M}=\linspan\{e_{\mathbf j}^\times\mid {\mathbf j}\in\bbM^{d}\},
\end{align}
and $\omega_{\bf j}^\times\coloneqq\{(x_1,x_2,\dots,x_{ d})\in\Omega^\times\mid x_n\in\omega_{\mathbf j_n}^n\}$
and $e_{\mathbf j}^\times(x_1,x_2,\dots,x_{\mathbf d})\coloneqq\textstyle\prod\limits_{n=1}^{ d}e_{\mathbf j_n}^n(x_n)$.
Notice that we can also
write~$1_{\omega_{\mathbf j}^\times}=\textstyle\prod\limits_{n=1}^{ d}1_{\omega_{\mathbf j_n}^n}(x_n)$.

In particular, by setting the eigenvalue
\begin{subequations}\label{bound.Proj.ratLam}
 \begin{align}
 \widehat\alpha_{M}&\coloneqq \max\{\alpha_i\mid \mbox{there is }\phi\in\clE_{M}
 \mbox{ such that } A\phi=\alpha_i\phi\}, 
 \intertext{and the Poincar\'e-like constant}
 \beta_{M_+}&\coloneqq \min\left\{\left.\tfrac{\norm{h}{V}}{\norm{h}{L^2}}
 \;\right|\; h\in \clU_{M}^\perp{\textstyle\bigcap}V,\quad h\ne0\right\}, 
\end{align}
we have
\begin{align}
 & L^2=\clU_{M}\oplus\clE_{M}^\perp,\qquad
 \lim_{M\to+\infty}\beta_{M_{+}}=+\infty,
\end{align}
and also 
 \begin{equation}
 \sup\limits_{M\ge1}\norm{P_{\clU_{M}}^{\clE_{M}^\perp}}{\clL(L^2)}
   \eqqcolon C_P< +\infty.
 \end{equation}
\end{subequations}

See~\cite[Sect.~2.2]{Rod20-eect} and~\cite[Sect.~5]{Rod21-sicon}  for more details.
For the one-dimensional case we refer to~\cite[Thms.~4.4 and~5.2]{RodSturm20}, for higher-dimensional rectangular
domains see~\cite[Sect.~4.8.1]{KunRod19-cocv}.

\begin{remark}
 For nonrectangular domains~$\Omega\subset\bbR^{d}$, with~${d}\ge2$, we still not
know whether we can choose the actuators (as indicator functions) so that the properties in~\eqref{bound.Proj.ratLam}
are satisfied. So we cannot guarantee that an oblique projection based feedback will stabilize our system.
In spite of this fact, we refer the reader to~\cite{KunRod19-cocv,KunRod19-dcds}, where  numerical simulations show the 
the stabilizing performance of such a  feedback 
for equations evolving in a spatial nonrectangular domain.
\end{remark}

 \subsection{On the nonlinearity} We gather key properties of the  nonlinear
operator in~\eqref{sys-zk}.
\begin{equation}\label{NNkdef}
\mathcal N_k(z)\in\clC(L^2,L^2),\qquad \mathcal N_k(z)\coloneqq k\left((z+{y}_k-\psi)^+-({y}_k-\psi)^+\right).
\end{equation}
\begin{lemma}\label{L:bdNN}
The nonlinear operator~\eqref{NNkdef} is bounded, as
 \begin{align}\notag
 &\norm{\mathcal N_k(z_1)-\mathcal N_k(z_2)}{L^2}\le k\norm{z_1-z_2}{L^2},\quad\mbox{for all}
 \quad (z_1,z_2)\in L^2\times L^2.
\end{align}
\end{lemma}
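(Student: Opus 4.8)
\textbf{Proof of Lemma~\ref{L:bdNN}.}
The plan is to reduce the inequality to the elementary fact that the positive-part map $s\mapsto s^+$ is nonexpansive on $\bbR$. First I would record that
\begin{equation}\notag
\norm{a^+-b^+}{\bbR}\le\norm{a-b}{\bbR}\qquad\text{for all }a,b\in\bbR,
\end{equation}
which follows at once from $a^+=\tfrac12(a+\norm{a}{\bbR})$ and the triangle inequality, or by checking the sign cases.

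Next I would apply this bound pointwise. Given $z_1,z_2\in L^2$, set for almost every $x\in\Omega$
\begin{equation}\notag
a(x)\coloneqq z_1(x)+y_k(x)-\psi(x),\qquad b(x)\coloneqq z_2(x)+y_k(x)-\psi(x),
\end{equation}
so that $a(x)-b(x)=z_1(x)-z_2(x)$ and, by definition~\eqref{NNkdef},
\begin{equation}\notag
\norm{\mathcal N_k(z_1)(x)-\mathcal N_k(z_2)(x)}{\bbR}=k\,\norm{a(x)^+-b(x)^+}{\bbR}\le k\,\norm{z_1(x)-z_2(x)}{\bbR}.
\end{equation}
These quantities are measurable since $z_i$, $y_k$, and $\psi$ are, and the positive part preserves measurability. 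Squaring and integrating over $\Omega$ then gives
\begin{equation}\notag
\norm{\mathcal N_k(z_1)-\mathcal N_k(z_2)}{L^2}^2\le k^2\norm{z_1-z_2}{L^2}^2,
\end{equation}
which is the asserted estimate.

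There is no genuine obstacle here; the only point worth noting is that the bound is independent of $t$, since $y_k(t)$ and $\psi(t)$ cancel in $a-b$. Taking $z_2=0$ and using $\mathcal N_k(0)=0$ also shows $\mathcal N_k(z)\in L^2$ for every $z\in L^2$, so the operator is well defined, and the Lipschitz estimate yields the continuity $\mathcal N_k\in\clC(L^2,L^2)$ recorded in~\eqref{NNkdef}. \qed
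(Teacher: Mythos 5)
Your proposal is correct and follows essentially the same route as the paper: both rest on the nonexpansiveness of the positive-part map applied to $h_i=z_i+y_k-\psi$, so that $y_k$ and $\psi$ cancel and the Lipschitz constant $k$ comes out directly. You merely spell out the $L^2$-Lipschitz bound pointwise via $a^+=\tfrac12(a+\norm{a}{\bbR})$, which the paper takes for granted at the $L^2$ level.
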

\begin{proof}
 With~$(z_1,z_2)\in L^2\times L^2$, we find that
 \begin{align}\label{DiffNN}
 \mathcal N_k(z_1)-\mathcal N_k(z_2)\coloneqq k\left((z_1+{y}_k-\psi)^+-(z_2+{ y }_k-\psi)^+\right). 
 \end{align}
 
  Note that $h\mapsto h^+=\max(h,0)$ is a globally Lipschitz continuous functions with
  unitary Lipschitz constant, and thus $|h_1^+-h_2^+|_{L^2}\leq |h_1-h_2|_{L^2}$
  for all $h_1,h_2\in {L^2}$. Therefore,
  \begin{align}\notag
 |\mathcal N_k(z_1)-\mathcal N_k(z_2)|_{L^2}
 \leq k|(z_1+{ y }_k-\psi)-(z_2+{ y }_k-\psi)|_{L^2}= k|z_1-z_2|_{L^2},
 \end{align}
which finishes the proof.
 \end{proof}

\begin{lemma}\label{L:monot}
The nonlinear operator~\eqref{NNkdef} is monotone, 
 \begin{align}\notag
   & (\mathcal N_k(z_1)-\mathcal N_k(z_2),z_1-z_2)_{L^2}\ge 0,\qquad\mbox{for all}
 \quad (z_1,z_2)\in {L^2}\times {L^2}.
 \end{align} 
\end{lemma}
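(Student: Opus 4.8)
The plan is to reduce the claimed monotonicity of $\mathcal N_k$ to the elementary monotonicity of the positive-part (truncation) map $h\mapsto h^+$ on $L^2$, exactly as the boundedness (Lipschitz) estimate in Lemma~\ref{L:bdNN} was reduced to its Lipschitz property. First I would write, for arbitrary $(z_1,z_2)\in L^2\times L^2$, the difference
\[
\mathcal N_k(z_1)-\mathcal N_k(z_2)=k\bigl((z_1+y_k-\psi)^+-(z_2+y_k-\psi)^+\bigr),
\]
as in~\eqref{DiffNN}, and set $h_i\coloneqq z_i+y_k-\psi$ for $i\in\{1,2\}$, so that $h_1-h_2=z_1-z_2$. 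Then the claimed inequality becomes
\[
(\mathcal N_k(z_1)-\mathcal N_k(z_2),z_1-z_2)_{L^2}=k\,(h_1^+-h_2^+,h_1-h_2)_{L^2}\ge0,
\]
and since $k\ge1>0$ it suffices to prove $(h_1^+-h_2^+,h_1-h_2)_{L^2}\ge0$ for all $h_1,h_2\in L^2$.

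The second step is the pointwise inequality $(s^+-t^+)(s-t)\ge0$ for all real $s,t$, which is immediate because $s\mapsto s^+=\max(s,0)$ is nondecreasing: if $s\ge t$ then $s^+\ge t^+$ and both factors are $\ge0$; if $s\le t$ then $s^+\le t^+$ and both factors are $\le0$. Applying this with $s=h_1(x)$, $t=h_2(x)$ for almost every $x\in\Omega$ gives $(h_1^+-h_2^+)(x)\,(h_1-h_2)(x)\ge0$ a.e., and integrating over $\Omega$ yields $(h_1^+-h_2^+,h_1-h_2)_{L^2}\ge0$. (One may note that in fact $(h_1^+-h_2^+,h_1-h_2)_{L^2}\ge|h_1^+-h_2^+|_{L^2}^2$, a sharper bound that is not needed here but is consistent with~\eqref{monot+}.) Combining the two steps gives $(\mathcal N_k(z_1)-\mathcal N_k(z_2),z_1-z_2)_{L^2}\ge0$, which is the assertion.

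There is essentially no obstacle here: the only things to be slightly careful about are that all the functions involved lie in $L^2(\Omega)$ so that the scalar product and the a.e.\ pointwise manipulations are legitimate (they do, since $y_k(t)\in L^2$, $\psi(\cdot,t)\in L^2$ by Assumption~\ref{A:obst-lb}, and $h^+\in L^2$ whenever $h\in L^2$ because $|h^+|\le|h|$ pointwise), and that the integrability of the product $(h_1^+-h_2^+)(h_1-h_2)$ is guaranteed by Cauchy--Schwarz. So the proof is a short two-line argument: expand the difference as in~\eqref{DiffNN}, substitute $h_i=z_i+y_k-\psi$, and invoke the monotonicity of the real map $s\mapsto s^+$ integrated over $\Omega$.
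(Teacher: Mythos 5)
Your proof is correct and follows essentially the same route as the paper: both reduce the claim to the monotonicity of the truncation map $h\mapsto h^+$ after writing the difference as in~\eqref{DiffNN} (the paper simply invokes the monotonicity of $z\mapsto z^+$ in $L^2$ together with its invariance under the shifts by $y_k-\psi$, while you verify that monotonicity pointwise and integrate). Your parenthetical sharper bound $(h_1^+-h_2^+,h_1-h_2)_{L^2}\ge\norm{h_1^+-h_2^+}{L^2}^2$ is also correct, though not needed.
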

\begin{proof}
 Note that $z\mapsto G(z):=z^+$ is monotone in $L^2(\Omega)$. Hence, $z\mapsto G(z-\zeta_1)-\zeta_2$ is also monotone for arbitrary $\zeta_1$ and $\zeta_2$ in $L^2(\Omega)$,
which finishes the proof. 
\end{proof}

\subsection{Stabilizability result}
For simplicity, let us denote
\begin{align}
& A_{\rm rc}\coloneqq a\Id+b\cdot\nabla,\qquad C_{\rm rc}\coloneqq \norm{A_{\rm rc}}{L^\infty(\bbR_+,\clL(V,  L^2))},\notag\\
&\clK_{M}^{\lambda}\coloneqq-\lambda P_{\clU_{M}}^{\clE_{M}^\perp}A P_{\clE_{M}}^{\clU_{M}^\perp}.\label{lamK_M}
\end{align}
\begin{theorem}\label{T:stab-appN}
Let Assumptions~\ref{A:char-dom}--\ref{A:obst-lb} hold true, with~$\clB=a\Id$. Let the
sequence~$(\clU_M,\clE_M)_{M\in\bbN}$ be constructed as in Section~\ref{sS:oblProj}. Then, for every given~$\mu>0$, there are large enough constants~$\lambda>0$ and~$M\in\bbN$ such that, for every~$k\in\bbN$,
the system
 \begin{align}
&\dot{z}_k  + A{z}_k +A_{\rm rc}{z}_k +\mathcal N_k({z}_k )
=\clK_{M}^{\lambda}{z}_k ,
\qquad	{z}_k (0)={z}_\circ ,
\label{sys-y-P-k}\stepcounter{equation}\tag{\theequation$[k]$}
\end{align}
is exponentially stable with rate~$-\mu$.
For all~${z}_\circ \in L^2$, the solution satisfies
\begin{equation}\label{expyk}
 \norm{{z}_k (t)}{L^2}\le\ex^{-\mu (t-s)}\norm{{z}_k (s)}{L^2},\qquad t\ge s\ge0.
\end{equation}
Moreover, the feedback operator~$\clK_{M}^{\lambda}$ and control input~$\clK_{M}^{\lambda}{z}_k$ satisfy
the estimate
\begin{align}\label{normFeedz_k}
\norm{\clK_M^\lambda}{\clL(L^2)}
&\le\lambda\widehat\alpha_MC_P^2\quad\mbox{and}\quad
 \norm{\clK_M^\lambda{z}_k }{L^2(\bbR_+,L^2)}
\le\lambda\widehat\alpha_M\mu^{-1} C_P^2\norm{{z}_\circ }{L^2}.
 \end{align}
where~$\widehat\alpha_M$ and~$C_P$ are as in~\eqref{bound.Proj.ratLam}. Furthermore, we can choose
\begin{equation}\label{choice-Mk}
\lambda\sim\ovlineC{\mu, C_{\rm rc}}\quad\mbox{and}\quad M\sim\ovlineC{\mu, C_{\rm rc}}.
 \end{equation}
\end{theorem}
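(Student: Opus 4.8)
The plan is to carry out an energy estimate on the closed-loop system~\eqref{sys-y-P-k} after projecting the state into the two complementary subspaces given by the oblique-projection construction. First I would write $z_k = z_k^{\clU} + z_k^{\perp}$ with $z_k^{\clU} \coloneqq P_{\clU_M}^{\clE_M^\perp} z_k \in \clU_M$ and $z_k^{\perp} \coloneqq P_{\clE_M^\perp}^{\clU_M} z_k \in \clE_M^\perp$, and test the equation with $z_k$ in $L^2$. Using the monotonicity of $\mathcal N_k$ from Lemma~\ref{L:monot} (with $z_2 = 0$, noting $\mathcal N_k(0)=0$) we get $(\mathcal N_k(z_k),z_k)_{L^2}\ge 0$, so that term drops on the favourable side. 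The dissipative term contributes $\norm{z_k}{V}^2$, the reaction--convection term is bounded by $C_{\rm rc}\norm{z_k}{V}\norm{z_k}{L^2}$, and the key point is the feedback term $(\clK_M^\lambda z_k, z_k)_{L^2} = -\lambda (A P_{\clE_M}^{\clU_M^\perp} z_k,\, P_{\clU_M}^{\clE_M^\perp} z_k)_{L^2}$. This is exactly the structure analyzed in~\cite{KunRod19-cocv,Rod20-eect}: one shows that, because $z_k^\perp \in \clE_M^\perp$ and $A$ maps $\clE_M$ into itself, the pairing $\lambda(A z_k^{\clU}, z_k^{\clU})_{L^2}$ emerges as the dominant coercive contribution on $\clU_M$, controlling $\lambda\alpha_1 \norm{z_k^{\clU}}{L^2}^2$ from below, while the cross terms are absorbable.

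Next I would invoke the two structural facts from~\eqref{bound.Proj.ratLam}: the Poincaré-like constant $\beta_{M_+}\to+\infty$ as $M\to\infty$, which gives $\norm{z_k^\perp}{V}^2 \ge \beta_{M_+}^2 \norm{z_k^\perp}{L^2}^2$ and hence large dissipation on the $\clU_M^\perp$-directions once $M$ is large; and the uniform bound $\norm{P_{\clU_M}^{\clE_M^\perp}}{\clL(L^2)} \le C_P$, which keeps all the projection-dependent constants under control uniformly in $M$. Combining the coercivity $\lambda$-gain on $\clU_M$ with the $\beta_{M_+}$-gain on $\clU_M^\perp$, and splitting $\norm{z_k}{L^2}^2 = \norm{z_k^{\clU}}{L^2}^2 + \dots$ appropriately (since $\clU_M$ and $\clE_M^\perp$ need not be orthogonal, one uses $\norm{z_k}{L^2}\le (1+C_P)\norm{z_k^\perp}{L^2}+C_P\dots$ type splittings), I would arrive at a differential inequality of the form
\begin{equation*}
\tfrac{\rmd}{\rmd t}\norm{z_k}{L^2}^2 + 2\mu\norm{z_k}{L^2}^2 \le 0,
\end{equation*}
valid for $\lambda$ large (to dominate $C_{\rm rc}$ and produce the rate $\mu$ on the actuator part) and $M$ large (so $\beta_{M_+}$ produces the rate $\mu$ on the complementary part), with the crucial observation that the bound $k$ on $\mathcal N_k$ never enters because monotonicity alone was used — this is what makes $(\lambda,M)$ independent of $k$. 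Gronwall's lemma then yields~\eqref{expyk}.

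For the quantitative claims: $\norm{\clK_M^\lambda}{\clL(L^2)} \le \lambda \norm{P_{\clU_M}^{\clE_M^\perp}}{\clL(L^2)}\,\norm{A P_{\clE_M}^{\clU_M^\perp}}{\clL(L^2)}$, and since $A$ restricted to $\clE_M$ has norm $\widehat\alpha_M$ while $\norm{P_{\clE_M}^{\clU_M^\perp}}{\clL(L^2)}$ is bounded by $C_P$ (the adjoint/transpose relation between the two oblique projections, as in~\cite{KunRod19-cocv}), this gives $\norm{\clK_M^\lambda}{\clL(L^2)}\le \lambda\widehat\alpha_M C_P^2$. The control-energy bound follows by integrating $\norm{\clK_M^\lambda z_k(t)}{L^2}\le \lambda\widehat\alpha_M C_P^2 \norm{z_k(t)}{L^2} \le \lambda\widehat\alpha_M C_P^2 \ex^{-\mu t}\norm{z_\circ}{L^2}$ over $\bbR_+$. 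Finally, tracking the thresholds in the energy estimate gives $\lambda\sim\ovlineC{\mu,C_{\rm rc}}$ (needed so the $\lambda$-coercivity beats $C_{\rm rc}^2$ and leaves rate $\mu$) and $M\sim\ovlineC{\mu,C_{\rm rc}}$ (chosen so $\beta_{M_+}^2$ exceeds $\mu$ plus the absorbed cross-terms). The main obstacle I anticipate is the bookkeeping of the non-orthogonal splitting: carefully estimating the cross terms $(A z_k^{\clU}, z_k^\perp)$ and re-expressing $\norm{z_k}{L^2}^2$ and $\norm{z_k}{V}^2$ in terms of the two components without losing the uniformity in $k$, which is where the precise properties in~\eqref{bound.Proj.ratLam} and the results cited from~\cite{KunRod19-cocv,Rod20-eect,Rod21-sicon} must be used rather than re-derived.
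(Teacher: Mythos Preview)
Your overall strategy is right --- energy estimate, monotonicity of~$\mathcal N_k$ to kill the $k$-dependence, large~$\lambda$ and~$M$ for coercivity on two complementary pieces --- and this is exactly what the paper does. But you have chosen the \emph{wrong} oblique splitting, and this causes two concrete errors.

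You split $z_k = z_k^{\clU}+z_k^{\perp}$ along $L^2=\clU_M\oplus\clE_M^\perp$. The paper instead splits $z_k=\theta_k+\varTheta_k$ along the \emph{dual} decomposition $L^2=\clE_M\oplus\clU_M^\perp$, with $\theta_k\coloneqq P_{\clE_M}^{\clU_M^\perp}z_k$ and $\varTheta_k\coloneqq P_{\clU_M^\perp}^{\clE_M}z_k$. The reason is the feedback term: using the adjoint identity $(P_{\clU_M}^{\clE_M^\perp})^*=P_{\clE_M}^{\clU_M^\perp}$ one gets
\[
(\clK_M^\lambda z_k,z_k)_{L^2}=-\lambda\bigl(A P_{\clE_M}^{\clU_M^\perp}z_k,\,P_{\clE_M}^{\clU_M^\perp}z_k\bigr)_{L^2}=-\lambda\norm{\theta_k}{V}^2,
\]
so the feedback yields coercivity on the $\clE_M$-component $\theta_k$, \emph{not} on your $z_k^{\clU}\in\clU_M$. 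Your claim that ``$\lambda(Az_k^{\clU},z_k^{\clU})_{L^2}$ emerges as the dominant coercive contribution'' is therefore not what comes out of the calculation.

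The second error is consistent with the first: the Poincar\'e-like constant $\beta_{M_+}$ in~\eqref{bound.Proj.ratLam} is defined on $\clU_M^\perp\cap V$, so it applies to $\varTheta_k\in\clU_M^\perp$, not to your $z_k^\perp\in\clE_M^\perp$. With the correct splitting the paper then expands $-\norm{z_k}{V}^2=-\norm{\theta_k+\varTheta_k}{V}^2$ via a Young-inequality parameter to separate the $V$-norms of $\theta_k$ and $\varTheta_k$, uses $\norm{\varTheta_k}{V}^2\ge\beta_{M_+}\norm{\varTheta_k}{L^2}^2$ and $\norm{\theta_k}{V}^2\ge\alpha_1\norm{\theta_k}{L^2}^2$, and closes. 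Once you switch to the $\clE_M\oplus\clU_M^\perp$ decomposition everything in your plan goes through; the remainder of your proposal (the monotonicity argument, the operator-norm bound via $P_{\clU_M}^{\clE_M^\perp}=P_{\clU_M}^{\clE_M^\perp}P_{\clE_M}$ and $\norm{P_{\clE_M}AP_{\clE_M}}{\clL(L^2)}\le\widehat\alpha_M$, and the control-energy integral) matches the paper.
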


\begin{remark}
Note that the feedback operator~$\clK_{M}^{\lambda}$ in~\eqref{lamK_M} is independent of~$(k,\psi)$,
because the pair~$(\lambda,M)$ in~\eqref{choice-Mk}
can be chosen independently of~$(k,\psi)$. The upper bound in~\eqref{normFeedz_k} for the norm of the
control input~$\clK_{M}^{\lambda}{z}_k$ is also independent of~$(k,\psi)$.  The monotonicity stated in\black Lemma~\ref{L:monot} plays a key role on
such independences on~$k$.
\end{remark}

\begin{remark}
Inequality~\eqref{expyk} implies that~$t\mapsto\norm{{z}_k (t)}{L^2}^2$ is strictly
decreasing at time~$t=s$, if~$\norm{{z}_k (s)}{L^2}^2>0$.
Of course, if~$\norm{{z}_k (s)}{L^2}^2=0$ then~$\norm{{z}_k (t)}{L^2}^2=0$
for all~$t\ge0$, see \cite[Sect.~4]{Rod21-sicon}.
\end{remark}

\begin{proof}[Proof of Theorem~\ref{T:stab-appN}]
Following the arguments in~\cite[Sect.~4]{Rod21-sicon},
we decompose the solution of system~\eqref{sys-y-P-k} into oblique components as
\[
 {z}_k =\theta_k+\varTheta_k,\qquad\mbox{with}\qquad \theta_k\coloneqq P_{\clE_{M}}^{\clU_{M}^\perp}{z}_k 
 \quad\mbox{and}\quad \varTheta_k\coloneqq P_{\clU_{M}^\perp}^{\clE_{M}}{z}_k .
\]

Observe that form~\eqref{sys-y-P-k}, Lemma~\ref{L:monot}, and the Young inequality, we obtain that
\begin{align}
\tfrac{\ed}{\ed t}\norm{{z}_k }{L^2}^2&=-2\norm{{z}_k }{V}^2-2\langle A_{\rm rc}{z}_k ,{z}_k \rangle_{V',V}
-2\left(\mathcal N_k({z}_k ),{z}_k \right)_{L^2}
+2\left(\clK_{M}^{\lambda}{z}_k ,{z}_k \right)_{L^2}\label{est-Nk1}\\
&\le -2\norm{{z}_k }{V}^2-2\langle A_{\rm rc}{z}_k ,{z}_k \rangle_{V',V}
-2\lambda\left(A \theta_k,\theta_k\right)_{L^2}\label{est-Nk0}\\
&\le -2\norm{{z}_k }{V}^2+\gamma_1\norm{{z}_k }{V}^2+\gamma_1^{-1} C_{\rm rc}^2\norm{{z}_k }{L^2}^2
 -2\lambda\norm{\theta_k}{V}^2,\notag\\
 &\le -(2-\gamma_1)\norm{{z}_k }{V}^2+\gamma_1^{-1} C_{\rm rc}^2\norm{{z}_k }{L^2}^2
 -2\lambda\norm{\theta_k}{V}^2,\quad \mbox{for all}\quad  \gamma_1>0.\label{est_norm1}
\end{align}

Now we observe that, by the young inequality, we obtain for all~$\gamma_2>0$
\begin{align}
-\norm{{z}_k }{V}^2&= -\norm{\varTheta_k+\theta_k}{V}^2
=-\norm{\varTheta_k}{V}^2-\norm{\theta_k}{V}^2-2(\varTheta_k,\theta_k)_V\notag\\
&\le-\norm{\varTheta_k}{V}^2-\norm{\theta_k}{V}^2+\gamma_2\norm{\varTheta_k}{V}^2+\gamma_2^{-1}\norm{\theta_k}{V}^2
=-(1-\gamma_2)\norm{\varTheta_k}{V}^2-(1-\gamma_2^{-1})\norm{\theta_k}{V}^2.\label{est_norm2}
\end{align}
Combining~\eqref{est_norm1} and~\eqref{est_norm2} we obtain, for all~$(\gamma_1,\gamma_2)\in (0,2)\times\bbR_+$,
 \begin{align}
\tfrac{\ed}{\ed t}\norm{{z}_k }{L^2}^2
&\le -(2-\gamma_1)(1-\gamma_2)\norm{{\varTheta}_k}{V}^2
-\left(2\lambda+(2-\gamma_1)(1-\gamma_2^{-1})\right)\norm{{\theta}_k}{V}^2
+\gamma_1^{-1} C_{\rm rc}^2\norm{{z}_k }{L^2}^2\notag\\
&\le -(2-\gamma_1)(1-\gamma_2)\norm{{\varTheta}_k}{V}^2
-\left(2\lambda-(2-\gamma_1)(\gamma_2^{-1}-1)\right)\norm{{\theta}_k}{V}^2
+2\gamma_1^{-1} C_{\rm rc}^2(\norm{{\varTheta}_k }{L^2}^2+\norm{{\theta}_k }{L^2}^2)\notag
\end{align}
Now, we can choose~$\gamma_1\in(0,2)$ and~$\gamma_2\in(0,1)$, and
$\lambda$ satisfying~$2\lambda-(2-\gamma_1)(\gamma_2^{-1}-1)>0$.
For such choices,  using~\eqref{bound.Proj.ratLam}, we find
 \begin{align}
\tfrac{\ed}{\ed t}\norm{{z}_k }{L^2}^2
&\le -(2-\gamma_1)(1-\gamma_2)\beta_{M_+}\norm{{\varTheta}_k}{L^2}^2
-\left(2\lambda-(2-\gamma_1)(\gamma_2^{-1}-1)\right)\alpha_1\norm{{\theta}_k}{L^2}^2\notag\\
&\quad+2\gamma_1^{-1} C_{\rm rc}^2(\norm{{\varTheta}_k }{L^2}^2+\norm{{\theta}_k }{L^2}^2)\notag\\
&\le -\Xi_1(M)\norm{{\varTheta}_k}{V}^2-\Xi_2(M)\norm{{\theta}_k}{V}^2,\label{est_norm3}
\end{align}
where~$\alpha_1\coloneqq\min\left\{\left.\tfrac{\norm{h}{V}}{\norm{h}{L^2}}\;\right|\; h\in V\setminus\{0\}\right\}$,
and
\begin{subequations}\label{Xis}
\begin{align}
 \Xi_1(M)&\coloneqq(2-\gamma_1)(1-\gamma_2)\beta_{M_+}-2\gamma_1^{-1} C_{\rm rc}^2,\\
 \Xi_2(\lambda)&\coloneqq\left(2\lambda-(2-\gamma_1)(\gamma_2^{-1}-1)\right)\alpha_1-2\gamma_1^{-1} C_{\rm rc}^2,
\end{align}
\end{subequations}
\begin{subequations}\label{choice.gammalamM}
 Recall that,  due to~\eqref{bound.Proj.ratLam} we have that~$\lim\limits_{M\to+\infty}\beta_{M_+}=+\infty$.
Let us be given an arbitrary given~$\mu>0$ and let us choose~$\gamma_1$ and~$\gamma_2$ as above, satisfying
 \begin{align}
 &\gamma_1\in(0,2)\quad\mbox{and}\quad \gamma_2\in(0,1).
 \intertext{Then, subsequently we can choose~$\lambda>0$ and~$M\in\bbN$ large enough satisfying}
 &2\lambda-(2-\gamma_1)(\gamma_2^{-1}-1)>0,\quad \Xi_2(\lambda)\ge 4\mu,\quad\mbox{and}\quad
 \Xi_1(M)\ge 4\mu.
\end{align}
\end{subequations}

Form~\eqref{est_norm3}, with the choices in~\eqref{choice.gammalamM}, we arrive at
 \begin{align}
\tfrac{\ed}{\ed t}\norm{{z}_k }{L^2}^2
&\le -4\mu\left(\norm{{\varTheta}_k}{L^2}^2+\norm{{\theta}_k}{L^2}^2\right)
\le -2\mu\norm{{z}_k }{L^2}^2,\label{est_norm4}
\end{align}
which implies~\eqref{expyk}.

It remains to show the boundedness of the feedback control, with~$(\gamma_1,\gamma_2,\lambda,M)$
as in~\eqref{choice.gammalamM}.

We see that~$P_{\clU_{M}}^{\clE_{M}^\perp}=P_{\clU_{M}}^{\clE_{M}^\perp}P_{\clE_{M}}$, because
$P_{\clU_{M}}^{\clE_{M}^\perp} h=P_{\clU_{M}}^{\clE_{M}^\perp}(P_{\clE_{M}}h+P_{\clE_{M}^\perp}h)
=P_{\clU_{M}}^{\clE_{M}^\perp} P_{\clE_{M}}h$,
for all~$h\in L^2$. Here~$P_{\clE_{M}}\coloneqq P_{\clE_{M}}^{\clE_{M}\perp}$ stands for the orthogonal
projection in~$L^2$ onto~${\clE_{M}}$.
Using~\eqref{expyk}
we obtain that the feedback operator~$\clK_M^\lambda$
satisfy
\begin{subequations}\label{bound.controlM}
\begin{align}
\norm{\clK_M^\lambda}{\clL(L^2)}
&=\lambda\norm{P_{\clU_{M}}^{\clE_{M}^\perp}AP_{\clE_{M}}^{\clU_{M}^\perp}}{\clL(L^2)}
=\lambda\norm{P_{\clU_{M}}^{\clE_{M}^\perp}P_{\clE_{M}}A
P_{\clE_{M}}P_{\clE_{M}}^{\clU_{M}^\perp}}{\clL(L^2)}\notag\\
&\le \lambda\norm{P_{\clU_{M}}^{\clE_{M}^\perp}}{\clL(L^2)}
\norm{P_{\clE_{M}}AP_{\clE_{M}}}{\clL(L^2)}
\norm{P_{\clE_{M}}^{\clU_{M}^\perp}}{\clL(L^2)}
\le\lambda\widehat\alpha_M\norm{P_{\clU_{M}}^{\clE_{M}^\perp}}{\clL(L^2)}^2
\end{align}
and corresponding control~$\clK_M^\lambda{z}_k $ 
\begin{align}
 \norm{\clK_M^\lambda{z}_k }{L^2(\bbR_+,L^2)}
 &\le \lambda\widehat\alpha_M \norm{P_{\clU_{M}}^{\clE_{M}^\perp}}{\clL(L^2)}^2\norm{{z}_k }{L^2(\bbR_+,L^2)}
 \le \lambda\widehat\alpha_M \norm{P_{\clU_{M}}^{\clE_{M}^\perp}}{\clL(L^2)}^2\norm{{z}_\circ }{L^2}
 \int_0^{+\infty}\ex^{-\mu t}\,\ed t\notag\\
 &=\lambda\widehat\alpha_M\mu^{-1} \norm{P_{\clU_{M}}^{\clE_{M}^\perp}}{\clL(L^2)}^2\norm{{z}_\circ }{L^2},
 \end{align}
 \end{subequations}
where~$\widehat\alpha_M$ is as in~\eqref{bound.Proj.ratLam}. Finally,
with~$C_P$ is as in~\eqref{bound.Proj.ratLam},
we also obtain the bounds
\begin{equation}\label{bound.control}
\norm{\clK_M^\lambda}{\clL(L^2)}
\le \lambda\widehat\alpha_M C_{P}^2,\quad\mbox{and}\quad
 \norm{\clK_M^\lambda{z}_k }{L^2(\bbR_+,L^2)}
 \le \lambda\widehat\alpha_M\mu^{-1} C_{P}^2\norm{{z}_\circ }{L^2}. 
\end{equation}
 The proof is finished.
 \end{proof}

\section{Stabilization of the variational inequality}\label{sec:stabil-var}
Here we prove the main result, which we can write now in a more precise form as follows.

\begin{theorem}\label{T:main}
Let Assumptions~\ref{A:char-dom}--\ref{A:obst-lb} hold true,  let~$\mu>0$,  and let the pairs~$(\clU_M,\clE_M)$
be constructed as in Section~\ref{sS:oblProj}.
Further let~$y\in W_{\rm loc}(\bbR_+;H^1,V')$ 
with~$ry\in W_{\rm loc}(\bbR_+;H^2,L^2)$
solve~\eqref{sys-haty}. Then for~$M$ and~$\lambda$ large enough
the solution~$w$ of system~\eqref{sys-tildey-K-intro}
satisfies
\begin{equation}\label{goal}
 \left| w(t)- y(t)\right|_{L^2}\le {\rm e}^{-\mu t}\left| w_\circ- y_\circ\right|_{L^2},\quad t\ge0.
\end{equation}
Furthermore, with~$\widehat\alpha_M$ and~$C_P$ as in~\eqref{bound.Proj.ratLam} the control satisfies 
\begin{align}\label{prop.K-traj}
\norm{\clK_M^\lambda}{\clL(L^2)}
\le\lambda\widehat\alpha_MC_P^2\quad\mbox{and}\quad \norm{\clK_M^\lambda (w-y) }{L^2(\bbR^+,L^2)}
 \le \lambda\widehat\alpha_M\mu^{-1} C_P^2\norm{w_\circ- y_\circ }{L^2},
 \end{align}
\end{theorem}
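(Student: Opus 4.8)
The plan is to obtain Theorem~\ref{T:main} as a limit, along the Moreau--Yosida approximation index~$k\to\infty$, of the uniform stabilization estimates already established in Theorem~\ref{T:stab-appN}. First I would fix~$\mu>0$ and invoke Theorem~\ref{T:stab-appN} to select the pair~$(\lambda,M)$, together with the spaces~$(\clU_M,\clE_M)$ from Section~\ref{sS:oblProj}; this choice is independent of~$k$ and of~$\psi$, which is exactly the point that makes the limiting argument work. With these data fixed, let~$y_k$ be the solution of~\eqref{sys:MYyk} with~$f_k=f$, and let~$w_k$ be the solution of~\eqref{sys-wk}, which by Theorem~\ref{T:MY-approx} (applied with~$Q=a\Id+b\cdot\nabla+\lambda P_{\clU_M}^{\clE_M^\perp}AP_{\clE_M}^{\clU_M^\perp}$ and the corresponding modified right-hand side) is the Moreau--Yosida approximation of the controlled variational inequality~\eqref{sys-tildey-K-intro}. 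Setting~$z_k\coloneqq w_k-y_k$, Theorem~\ref{T:stab-appN} gives the $k$-uniform bound~$\norm{z_k(t)}{L^2}\le\ex^{-\mu t}\norm{w_\circ-y_\circ}{L^2}$ for all~$t\ge0$, together with the two control estimates in~\eqref{normFeedz_k}.

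Next I would pass to the limit. On any finite interval~$(0,T)$, Theorem~\ref{T:MY-approx} yields~$y_k\to y$ strongly in~$L^2((0,T);L^2)$, where~$y$ solves~\eqref{sys-haty}; applying the same theorem to the controlled data gives~$w_k\to w$ strongly in~$L^2((0,T);L^2)$, where~$w$ is the (unique, by the uniqueness clause of Theorem~\ref{T:MY-approx}) solution of~\eqref{sys-tildey-K-intro}. Hence~$z_k\to w-y$ in~$L^2((0,T);L^2)$, and after passing to a subsequence, $z_k(t)\to w(t)-y(t)$ in~$L^2$ for a.e.~$t$. The pointwise bound~$\norm{z_k(t)}{L^2}\le\ex^{-\mu t}\norm{w_\circ-y_\circ}{L^2}$ is preserved in the limit by lower semicontinuity of the norm, giving~\eqref{goal} for a.e.~$t$; since~$t\mapsto\norm{w(t)-y(t)}{L^2}$ is continuous (both~$w$ and~$y$ lie in spaces embedding into~$\clC([0,T];L^2)$, at least for~$t>0$, and the initial value is prescribed), the estimate holds for all~$t\ge0$ with~$C=1$. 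For the control estimates~\eqref{prop.K-traj}: the operator norm bound~$\norm{\clK_M^\lambda}{\clL(L^2)}\le\lambda\widehat\alpha_MC_P^2$ is purely about the fixed operator and is immediate from Theorem~\ref{T:stab-appN}; for the second bound, the uniform estimate~$\norm{\clK_M^\lambda z_k}{L^2(\bbR_+,L^2)}\le\lambda\widehat\alpha_M\mu^{-1}C_P^2\norm{w_\circ-y_\circ}{L^2}$ combined with~$z_k\to w-y$ (say weakly in~$L^2((0,T);L^2)$ for each~$T$, hence $\clK_M^\lambda z_k\rightharpoonup\clK_M^\lambda(w-y)$) and weak lower semicontinuity of the~$L^2(\bbR_+,L^2)$-norm (via Fatou on the truncations to~$(0,T)$) gives the claimed bound.

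The main obstacle I anticipate is the justification that~$w_k$ really is the Moreau--Yosida approximation of~\eqref{sys-tildey-K-intro} in the precise sense of Theorem~\ref{T:MY-approx}, and in particular that the limit~$w$ it produces coincides with the solution~$w$ appearing in the statement. This requires checking that~$Q=A_{\rm rc}+\clK_M^\lambda$ satisfies Assumption~\ref{A:Arc} — which holds since~$\clK_M^\lambda\in\clL(L^2)$ is a fixed bounded operator, so~$\clB=a\Id+\clK_M^\lambda\in L^\infty(\bbR_+;\clL(L^2))$ — and that the modified forcing~$f_k=f+\clK_M^\lambda y_k$ converges weakly in~$L^2((0,T);L^2)$ to~$f+\clK_M^\lambda y$, which follows from~$y_k\to y$ strongly and~$\clK_M^\lambda\in\clL(L^2)$. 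One should also note that, strictly speaking, Theorem~\ref{T:MY-approx} is stated for forcing sequences converging weakly, so the convergence of~$w_k$ to~$w$ is a direct application once this weak convergence of~$f_k$ is in hand. A secondary point requiring care is that the uniform exponential decay in Theorem~\ref{T:stab-appN} is stated for~$z_k$ the solution of the \emph{difference} system~\eqref{sys-zk}; one must observe that~$w_k-y_k$ indeed solves~\eqref{sys-zk} (this is the computation recorded just before~\eqref{sys-zk}), so the two notions of~$z_k$ agree and the decay estimate transfers.

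\begin{proof}[Proof of Theorem~\ref{T:main}]
Fix~$\mu>0$. Apply Theorem~\ref{T:stab-appN} to obtain~$\lambda>0$ and~$M\in\bbN$ (large enough, as in~\eqref{choice-Mk}) such that, with~$\clK_M^\lambda=-\lambda P_{\clU_M}^{\clE_M^\perp}AP_{\clE_M}^{\clU_M^\perp}$ and the spaces~$(\clU_M,\clE_M)$ from Section~\ref{sS:oblProj}, every solution of~\eqref{sys-y-P-k} satisfies~\eqref{expyk} and~\eqref{normFeedz_k}. Note~$\clK_M^\lambda\in\clL(L^2)$ is a fixed bounded operator, so its operator-norm bound~$\norm{\clK_M^\lambda}{\clL(L^2)}\le\lambda\widehat\alpha_MC_P^2$ holds as stated; this is the first inequality in~\eqref{prop.K-traj}.

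For each~$k\in\bbN$, let~$y_k\in\clZ_r$ solve~\eqref{sys:MYyk} with~$f_k=f$, and let~$w_k\in\clZ_r$ solve~\eqref{sys-wk}. By Theorem~\ref{T:MY-approx}, applied with~$Q=a\Id+b\cdot\nabla$ and forcing~$f$, we have~$y_k\to y$ in~$L^2((0,T);L^2)$ on every finite interval~$(0,T)$, where~$y$ solves~\eqref{sys-haty}. Since~$y_k\to y$ strongly in~$L^2((0,T);L^2)$ and~$\clK_M^\lambda\in\clL(L^2)$, the forcing~$f_k^w\coloneqq f+\clK_M^\lambda y_k$ converges weakly in~$L^2((0,T);L^2)$ to~$f+\clK_M^\lambda y$. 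Moreover, with~$\clB\coloneqq a\Id+\clK_M^\lambda\in L^\infty(\bbR_+;\clL(L^2))$, the operator~$Q_w\coloneqq\clB+b\cdot\nabla$ satisfies Assumption~\ref{A:Arc}. Hence Theorem~\ref{T:MY-approx} applies to~\eqref{sys-wk}, and gives~$w_k\to w$ in~$L^2((0,T);L^2)$, where~$w$ is the unique element of~$\clZ_r$ satisfying the variational inequality~\eqref{sys-tildey-K-intro} with data~$(w_\circ,\chi,\psi)$.

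Set~$z_k\coloneqq w_k-y_k$; as recorded just before~\eqref{sys-zk}, $z_k$ solves~\eqref{sys-zk}, i.e.~\eqref{sys-y-P-k} with initial condition~$z_\circ=w_\circ-y_\circ$. Thus~\eqref{expyk} yields
\begin{equation}\label{eq:ukey}
\norm{z_k(t)}{L^2}\le\ex^{-\mu t}\norm{w_\circ-y_\circ}{L^2},\qquad t\ge0,
\end{equation}
uniformly in~$k$. From~$y_k\to y$ and~$w_k\to w$ in~$L^2((0,T);L^2)$ we get~$z_k\to w-y$ in~$L^2((0,T);L^2)$ for every~$T>0$; passing to a subsequence, $z_k(t)\to w(t)-y(t)$ in~$L^2$ for a.e.~$t>0$. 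By lower semicontinuity of the~$L^2$-norm and~\eqref{eq:ukey},
\begin{equation}\notag
\norm{w(t)-y(t)}{L^2}\le\ex^{-\mu t}\norm{w_\circ-y_\circ}{L^2}\qquad\text{for a.e.}\quad t>0.
\end{equation}
Since~$w-y\in\clZ_r$, both~$w-y$ and~$r(w-y)$ admit continuous~$L^2$-representatives on~$(0,T]$, so~$t\mapsto\norm{w(t)-y(t)}{L^2}$ is continuous on~$(0,+\infty)$; together with~$w(0)-y(0)=w_\circ-y_\circ$ this gives~\eqref{goal} for all~$t\ge0$.

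It remains to prove the second inequality in~\eqref{prop.K-traj}. From~\eqref{normFeedz_k},
\begin{equation}\notag
\norm{\clK_M^\lambda z_k}{L^2((0,T);L^2)}\le\norm{\clK_M^\lambda z_k}{L^2(\bbR_+;L^2)}\le\lambda\widehat\alpha_M\mu^{-1}C_P^2\norm{w_\circ-y_\circ}{L^2}
\end{equation}
for every~$T>0$, uniformly in~$k$. Since~$z_k\to w-y$ strongly in~$L^2((0,T);L^2)$ and~$\clK_M^\lambda\in\clL(L^2)$, also~$\clK_M^\lambda z_k\to\clK_M^\lambda(w-y)$ in~$L^2((0,T);L^2)$, and therefore
\begin{equation}\notag
\norm{\clK_M^\lambda(w-y)}{L^2((0,T);L^2)}\le\lambda\widehat\alpha_M\mu^{-1}C_P^2\norm{w_\circ-y_\circ}{L^2}.
\end{equation}
Letting~$T\to+\infty$ and using the monotone convergence theorem gives~$\norm{\clK_M^\lambda(w-y)}{L^2(\bbR^+;L^2)}\le\lambda\widehat\alpha_M\mu^{-1}C_P^2\norm{w_\circ-y_\circ}{L^2}$, which is the second estimate in~\eqref{prop.K-traj}. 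The proof is finished.
\end{proof}
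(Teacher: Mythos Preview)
Your proof is correct and follows essentially the same strategy as the paper: fix~$(\lambda,M)$ via Theorem~\ref{T:stab-appN}, set up the Moreau--Yosida approximations~$y_k$ and~$w_k$, use the $k$-uniform decay~\eqref{expyk} for~$z_k=w_k-y_k$, and pass to the limit through Theorem~\ref{T:MY-approx}. The only notable difference is in the mechanics of transferring the pointwise-in-time estimate to the limit: the paper uses the~$\clC([0,T];L^2)$ convergence of~$r(y_k-y)$ and~$r(w_k-w)$ from~\eqref{MY:stlimitsVI-full}, a triangle inequality at a fixed~$T>0$, and then lets an auxiliary parameter~$\varrho\searrow1$; you instead use the~$L^2((0,T);L^2)$ convergence, extract an a.e.\ subsequence, and close with continuity of~$t\mapsto\norm{w(t)-y(t)}{L^2}$ (which is available since~$w-y\in W((0,T);H^1,V')\hookrightarrow\clC([0,T];L^2)$). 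Both routes are standard and equivalent in strength; your version is arguably a bit more direct since it avoids the~$r$-weight bookkeeping. For the control bound, the paper derives it directly from the operator-norm estimate and the already-established decay of~$w-y$, while you pass to the limit in~\eqref{normFeedz_k}; again both are fine.
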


\begin{proof}
Let us fix~$\lambda>0$ and~$M\in\bbN$ so that Theorem~\ref{T:stab-appN} holds true.
Note that~$\lambda>0$ and~$M\in\bbN$
are independent of~$k$.

Let~$y_k$ and~$w_k$ be the solutions of the Moreau--Yosida approximations~\eqref{sys-yk} and~~\eqref{sys-wk},
respectively.

\begin{subequations}\label{Dwy}
For the difference between the solution~$w$ of~\eqref{sys-tildey-K-intro} and the
solution~$y$ of~\eqref{sys-haty} we find
\begin{align}
 \norm{w(t)-y(t)}{L^2}&\le \norm{w(t)-w_k(t)}{L^2}+\norm{w_k(t)-y_k(t)}{L^2}+\norm{y_k(t)-y(t)}{L^2}
  \end{align}

Let us now be given arbitrary~$\epsilon>0$, ~$\varrho>1$, $T>0$, and~$t\in[0,T]$.

Now for the pair~$(y_k,y)$ we apply Theorem~\ref{T:MY-approx}
with $(f_k,Q)=(f,a\Id+b\cdot\nabla)$, and for the pair~$(w_k,w)$
we apply Theorem~\ref{T:MY-approx}
with~$(f_k,Q)=(f+\clK_M^\lambda y_k,a\Id+b\cdot\nabla+\clK_M^\lambda)$.
In this way we obtain that, for large enough~$k=k(\epsilon,T)$, we have
\begin{align}
&\norm{r(y_{k}-y)}{\clC([0,T], L^2)}\le \epsilon\quad\mbox{and}\quad\norm{r(w_{k}-w)}{\clC([0,T], L^2)}\le \epsilon,
\quad\mbox{with}\quad r(t)=\min\{t,1\}.
 \end{align}
and, since $z_k\coloneqq w_{k} - y_{k}$ satisfies~\eqref{sys-zk}, that is~\eqref{sys-y-P-k}, by using
Theorem~\ref{T:stab-appN}, we obtain
\begin{align}
 \norm{w_k(t)-y_k(t)}{L^2}&\le \rme^{-\mu t}\norm{w_\circ-y_\circ}{L^2},\quad\mbox{for every~$k\in\bbN$}.
\end{align}
\end{subequations}
 Hence, by selecting~$k$ large enough, from~\eqref{Dwy} we obtain that, at time~$t=T>0$, \begin{align}\notag
 \norm{w(T)-y(T)}{L^2}&\le 2\max\{\tfrac1T,1\}\epsilon +\rme^{-\mu T}\norm{w_\circ-y_\circ}{L^2}.
 \end{align}
Choosing now~$\epsilon\coloneqq\frac12\min\{T,1\}(\varrho-1)\ex^{-\mu T}\norm{w_\circ-y_\circ}{L^2}$,
we arrive at
\begin{align}\notag
 \norm{w(T)-y(T)}{L^2}&\le (\varrho-1)\rme^{-\mu T}\norm{w_\circ-y_\circ}{L^2}
 +\rme^{-\mu T}\norm{w_\circ-y_\circ}{L^2} = \varrho\rme^{-\mu T}\norm{w_\circ-y_\circ}{L^2}.\black
\end{align}
Furthermore, since~$T>0$ and~$\varrho>1$ are arbitrary we arrive at
 \begin{align}\notag
 \norm{w(t)-y(t)}{L^2}&\le\ex^{-\mu t}\norm{w_\circ-y_\circ}{L^2}, \quad t\ge0.
 \end{align}

Finally proceeding as in~\eqref{bound.controlM}, we find
 \[\norm{\clK_M^\lambda(w-p) }{L^2(\bbR^+,L^2)}
 \le \lambda\widehat\alpha_M \norm{P_{\clU_{M}}^{\clE_{M}^\perp}}{\clL(L^2)}^2\norm{w-p}{L^2(\bbR^+,L^2)}
 \le \lambda\widehat\alpha_M\mu^{-1} C_P^2\norm{w_\circ-y_\circ}{L^2},
\]
with~$\widehat\alpha_M$ and~$C_P$ as in~\eqref{bound.Proj.ratLam}, which finishes the proof.
\end{proof}

\section{Numerical simulations}\label{sec:num}
We consider Moreau--Yosida approximations of one-dimensional parabolic variational inequality in the
spatial open interval~$\Omega=(0,1)\subset\bbR$, and impose homogeneous Neumann boundary conditions, for simplicity. 
\begin{subequations}\label{sys-yk-nu}
 \begin{align}
&\tfrac{\p}{\p t}{ y_{k}} +(-\nu\Delta+\Id) y_{k}+a y_{k}+b\cdot\nabla y_{k} -f +k(y_{k}-\psi)^+= 0,
\quad t>0,\\
&\tfrac{\p}{\p\bfn} y_{k}\rest{\Gamma}=0,\qquad y_{k}(\Bigcdot,0)= y_\circ.
\end{align}
\end{subequations}

\begin{subequations}\label{simul.setting}
For the parameters, we  have chosen
\begin{align}
\nu&=0.1,\qquad& f(x,t)&=-\sin(t)x,\\
a(x,t)&=-6+x +2\norm{\sin(t+x)}{\bbR},\qquad& b(x,t)&=\cos(t)x^2
\end{align}
and 
\begin{equation}\label{Opsi-smooth}
\psi(x,t)=2+\cos(t)+\cos\left(10\pi x(x-1)\bigl(x-\tfrac{1}{4}\cos(5 t)\bigr)\right).
\end{equation}
\end{subequations}
Recall that by Theorem~\ref{T:MY-approx}, we have that~$y_{k}$ gives us an
approximation of the solution~$y$ of the variational inequality with the same data parameters.
See also Remark~\ref{R:nu}.

The targeted trajectory~$y$ is the one issued, at initial time~$t=0$, from the state
\begin{align}
y(x,0)&=y_\circ(x)=3\cos(\pi x),
\intertext{and we want to target such trajectory starting, again at time~$t=0$, from the state}
w(x,0)&=w_\circ(x)=-1.
\end{align}
Again by Theorem~\ref{T:MY-approx}, we have that~$w_{k}$ solving
\begin{subequations}\label{sys-wk-nu}
 \begin{align}
&\tfrac{\p}{\p t}{ w_{k}} +(-\nu\Delta+\Id) w_{k}+a w_{k}+b\cdot\nabla w_{k}
-f-\clK_M^\lambda(w_{k}-y_{k})+k(w_{k}-\psi)^+= 0,
\quad t>0,\\
&\tfrac{\p}{\p\bfn} w_{k}\rest{\Gamma}=0,\qquad w_{k}(\Bigcdot,0)= w_\circ,
\end{align}
\end{subequations}
gives us an
approximation of the solution~$w$ of the controlled variational inequality with the same data parameters.

Initial states are plotted in Figure~\ref{Fig:InitStates}.
\begin{figure}[hb]
\centering
\subfigure
{\includegraphics[width=0.45\textwidth,height=0.31\textwidth]{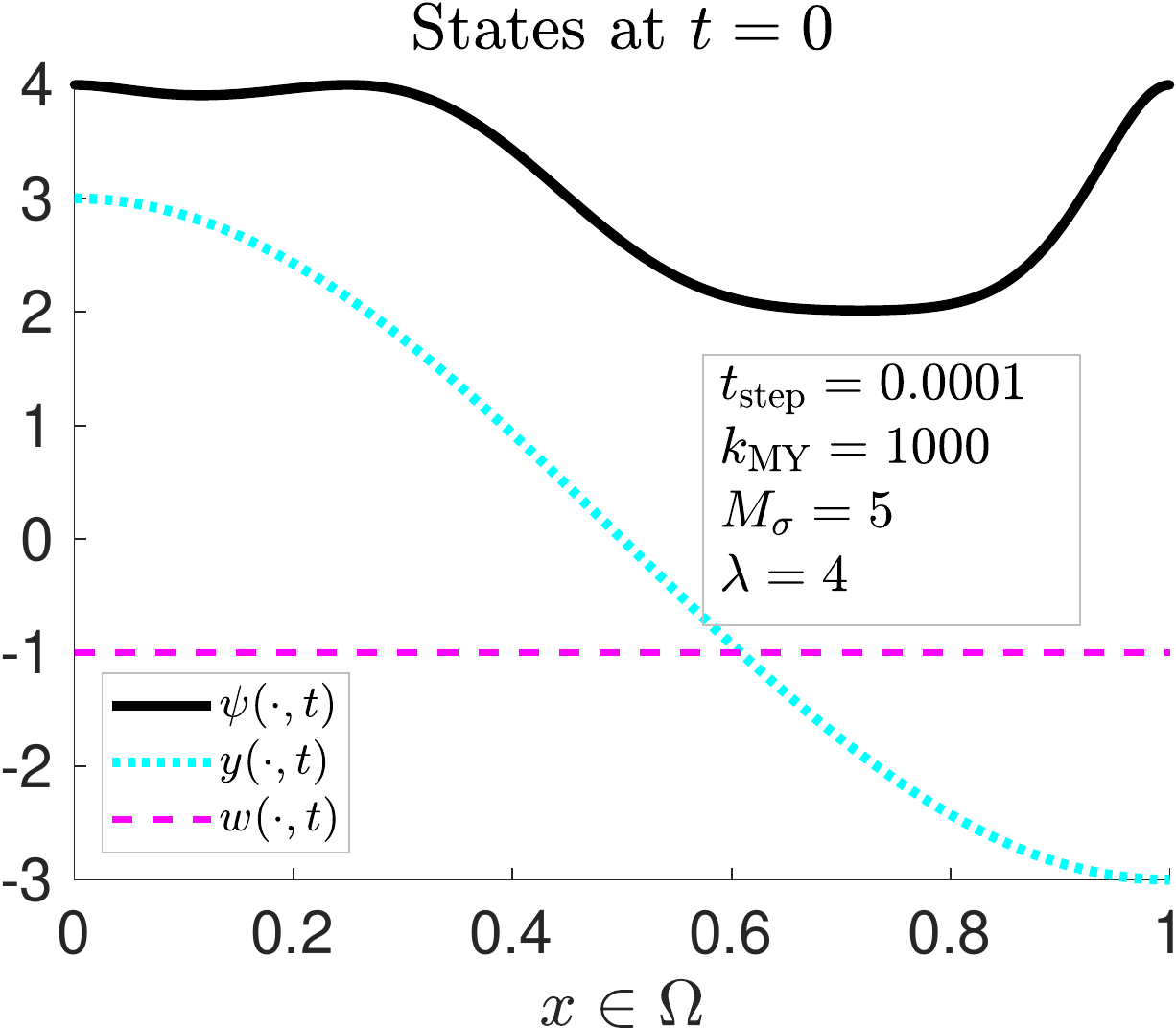}}
\caption{Initial states.}
\label{Fig:InitStates}
\end{figure}


For a fixed~$M\in\bbN$ we take~$M_\sigma=M$ actuators as in~\cite{KunRod19-cocv} which are indicator functions~$1_{\omega_j^M}$ of the subdomains
\[
\omega_j^M=(\tfrac{2j-1}{2M}-\tfrac1{20M},\tfrac{2j-1}{2M}+\tfrac1{20M}),\qquad  j\in\{1,2,\dots,M\}.
\]
In particular, note that the total volume covered by the actuators is independent of~$M$. It is given by~$\frac1{10}$, which is $10\%$ of the total volume of the spatial domain.

As auxiliary space of eigenfunctions we take the first eigenfunctions of the Laplace operator, under the  imposed Neumann boundary conditions, namely
\[
e_j^M=\cos((j-1)\pi x),\qquad  j\in\{1,2,\dots,M\}.
\]

The obstacle~$\psi(\cdot,t)$ satisfies $\tfrac{\p}{\p \bfn}\psi=0$ at every~$t\ge0$.
Recall that our Assumption~\ref{A:obst-lb}
  requires that~$\tfrac{\p}{\p \bfn}\psi\ge -\eta$ for a suitable positive function~$-\eta\in W_{\rm loc}^{1,2}(\bbR_+)\ge0$
hence it is satisfied.

 Furthermore, we can see that Assumptions~\ref{A:char-dom}--\ref{A:obst-lb} are satisfied.
 Therefore all the hypothesis of Theorems~\ref{T:stab-appN}
are satisfied. Hereafter we present the results of simulations illustrating the stability
result stated in the thesis of Theorem~\ref{T:stab-appN}. 

As we have mentioned above, by solving systems~\eqref{sys-yk-nu} and~\eqref{sys-wk-nu}, by Theorem~\ref{T:main}, 
with  a relatively large Moreau--Yosida parameter~$k=k_{MY}$ we expect to obtain a relatively good
approximation of the behavior of the limit solutions for the corresponding variational inequalities.
Depending on the simulation example, we have taken~$k_{MY}$ in the interval~$[500,20000]$.

For the discretization, we considered a finite element spatial approximation
based on the classical piecewise linear hat functions, where the closure~$[0,1]$ of
the spatial interval has been discretized with a regular mesh with 2001 equidistant points.
Subsequently the  closure~$[0,+\infty)$ of the temporal interval has been discretized with
a uniform time-step~$t_{\rm step}>0$ and a  Crank--Nicolson/Adams--Bashforth scheme was used. Depending on the simulation we
have taken~$t_{\rm step}\in\{10^{-4},10^{-5}\}$.

 In the figures below we denote~$H\coloneqq L^2(\Omega)$.

\subsection{Stabilizing performance of the feedback control}
In Figure~\ref{Fig:DF_lam4M5Feed04T4}
\begin{figure}[ht]
\centering
\subfigure
{\includegraphics[width=0.45\textwidth,height=0.31\textwidth]{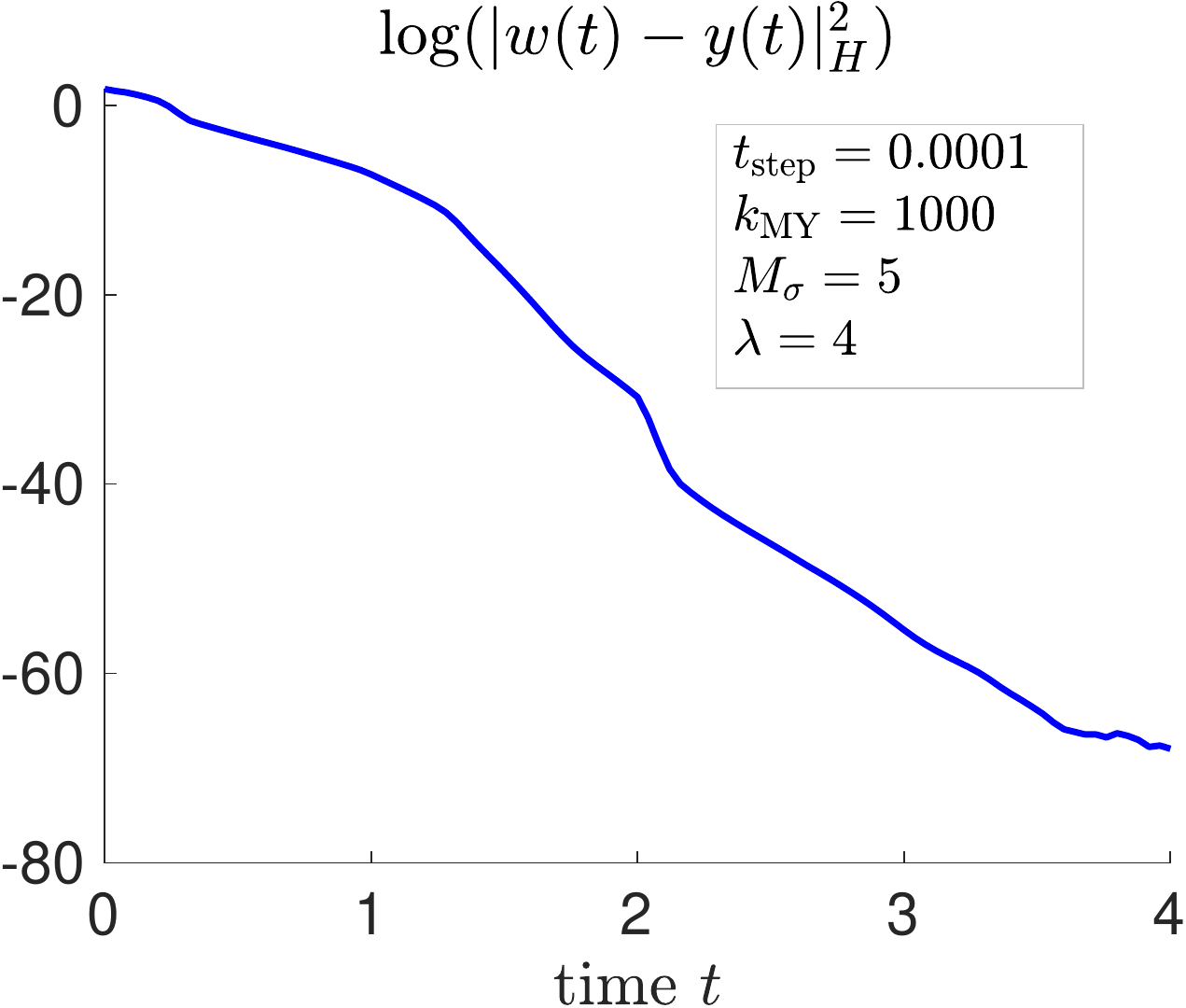}}
\qquad
\subfigure
{\includegraphics[width=0.45\textwidth,height=0.31\textwidth]{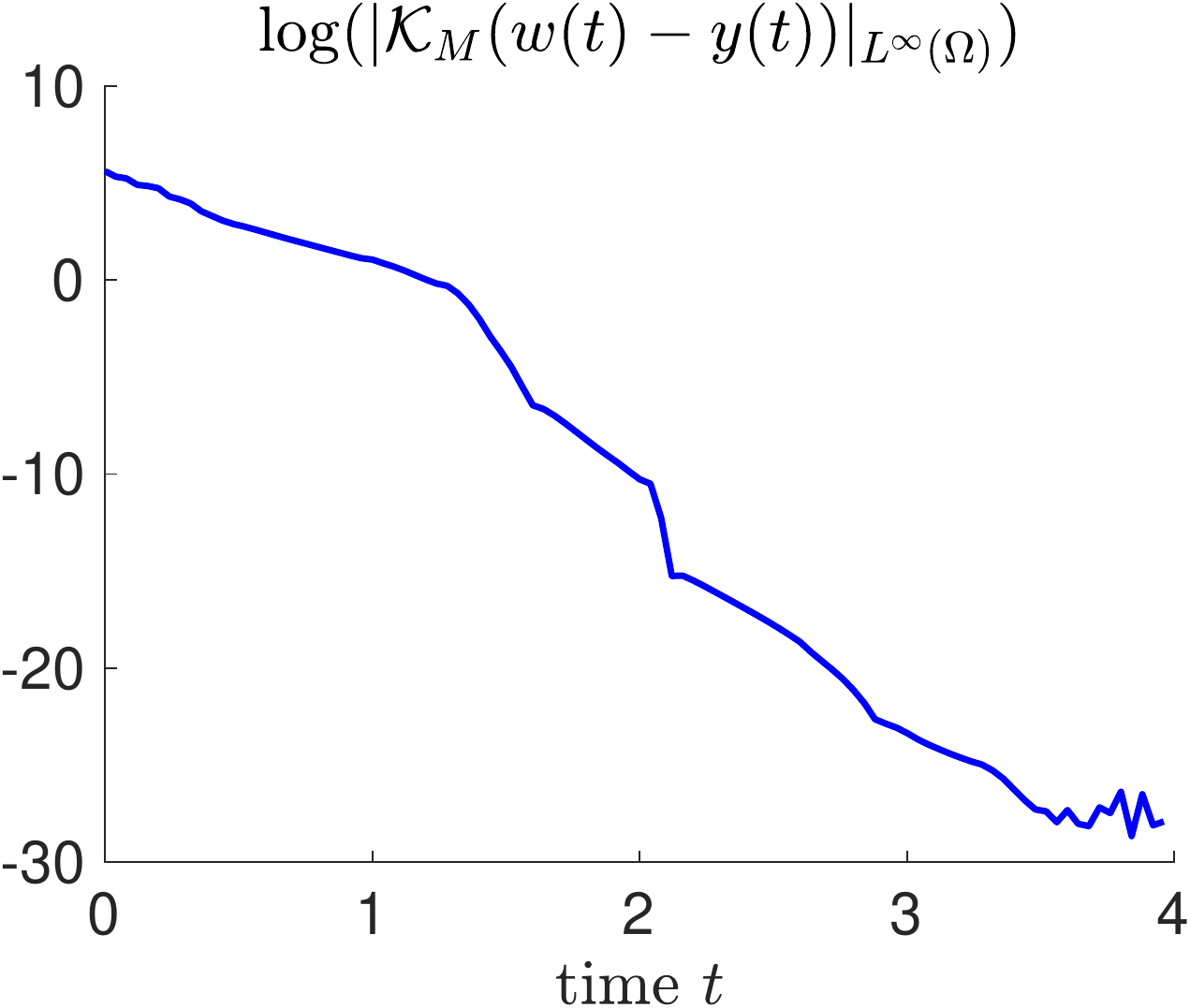}}
\caption{Norms of difference to target and control.}
\label{Fig:DF_lam4M5Feed04T4}
\end{figure}
we can see that with~$5$ actuators and~$\lambda=4$ the explicit oblique
projection feedback control we propose in this manuscript is able to stabilize
the solution~$w=w_k$ of the Moreau--Yosida approximation, with~$k=k_{\rm MY}=1000$,
to the corresponding targeted uncontrolled solution approximation~$y=y_k$.

Time snapshots of the corresponding trajectories and control
are shown in Figures~\ref{Fig:TDF_lam4M5Feed04T4-tl}.
\begin{figure}[ht]
\centering
\subfigure
{\includegraphics[width=0.45\textwidth,height=0.31\textwidth]{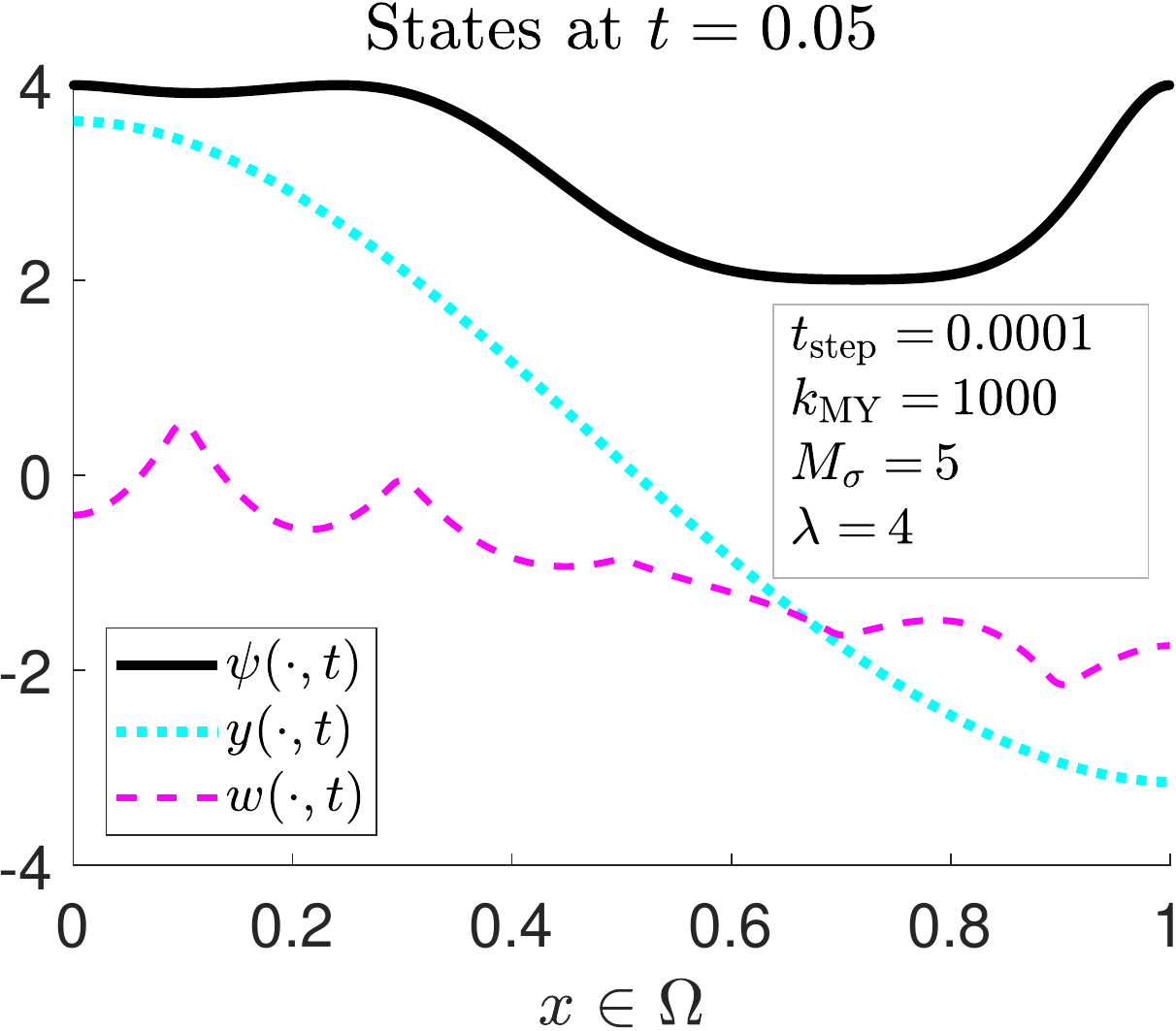}}
\qquad
\subfigure
{\includegraphics[width=0.45\textwidth,height=0.31\textwidth]{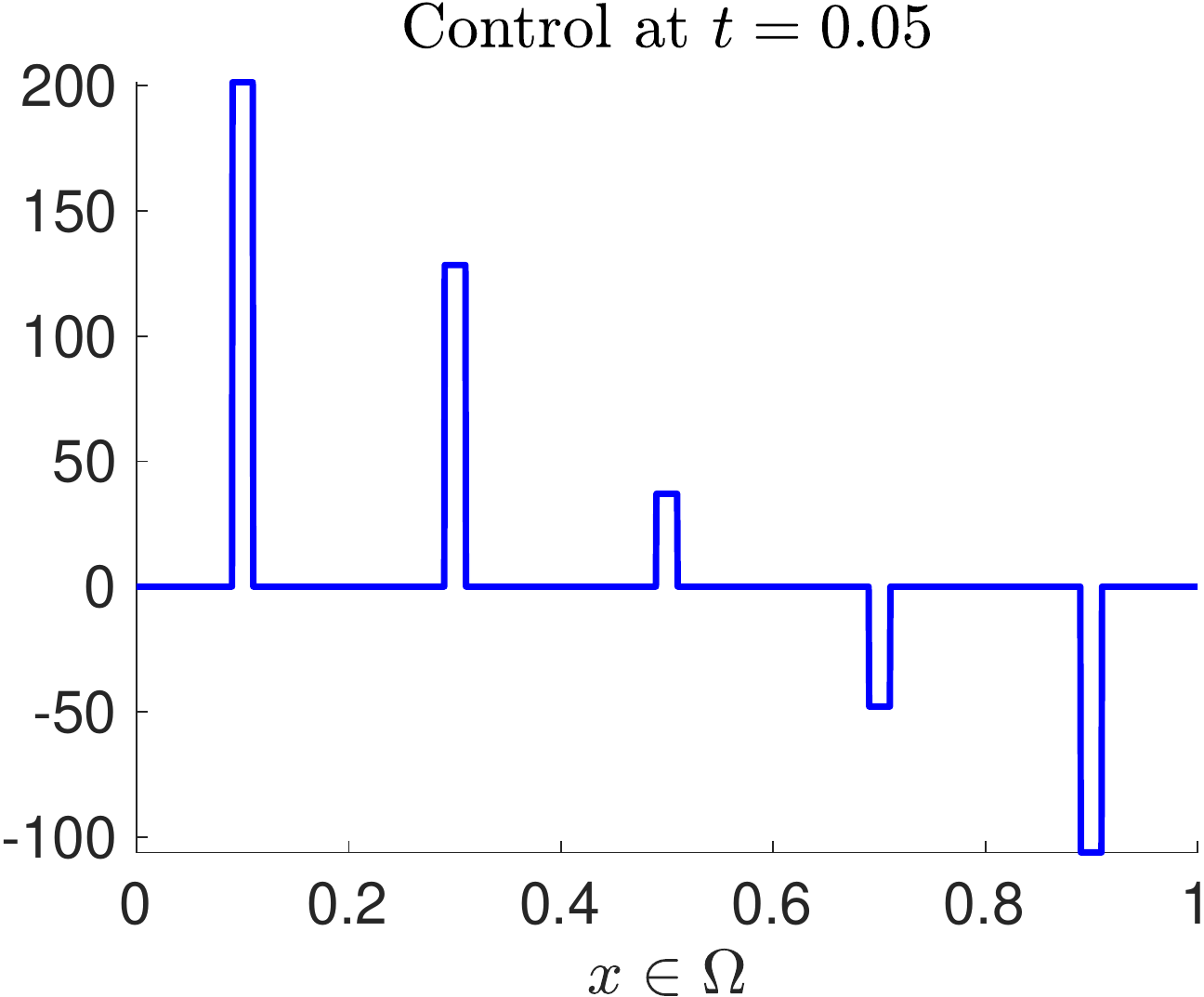}}
\subfigure
{\includegraphics[width=0.45\textwidth,height=0.31\textwidth]{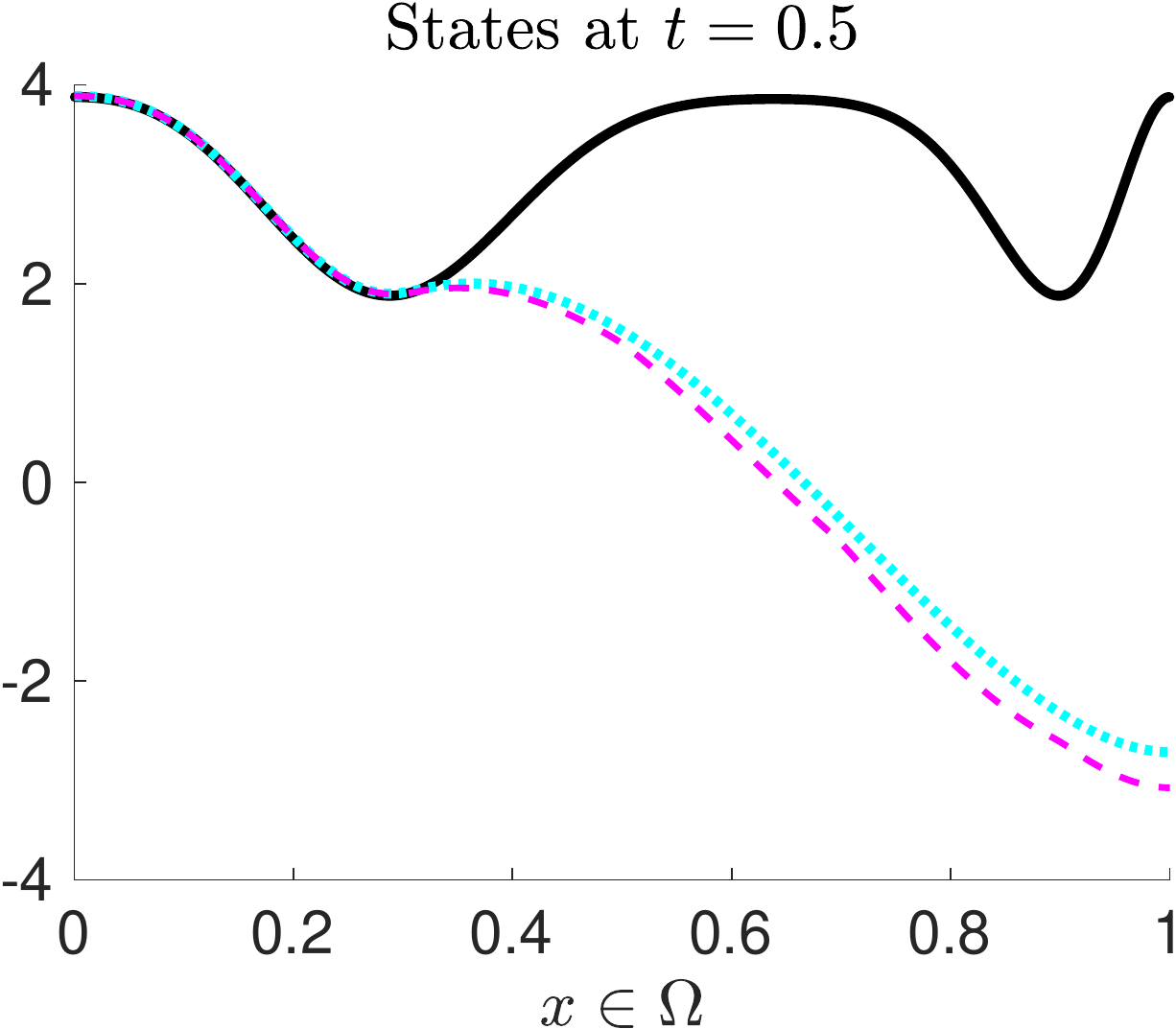}}
\qquad
\subfigure
{\includegraphics[width=0.45\textwidth,height=0.31\textwidth]{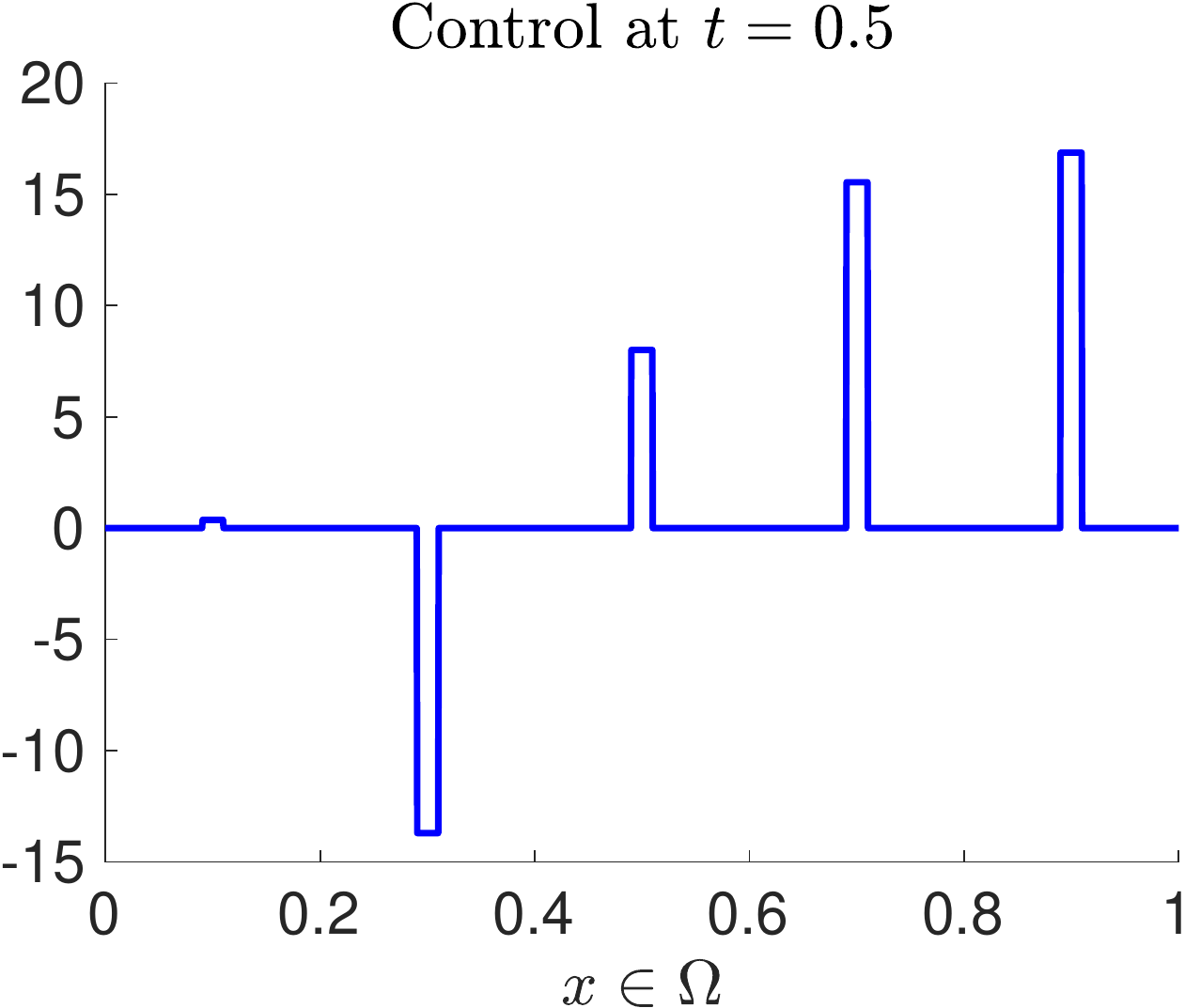}}\\
\subfigure
{\includegraphics[width=0.45\textwidth,height=0.31\textwidth]{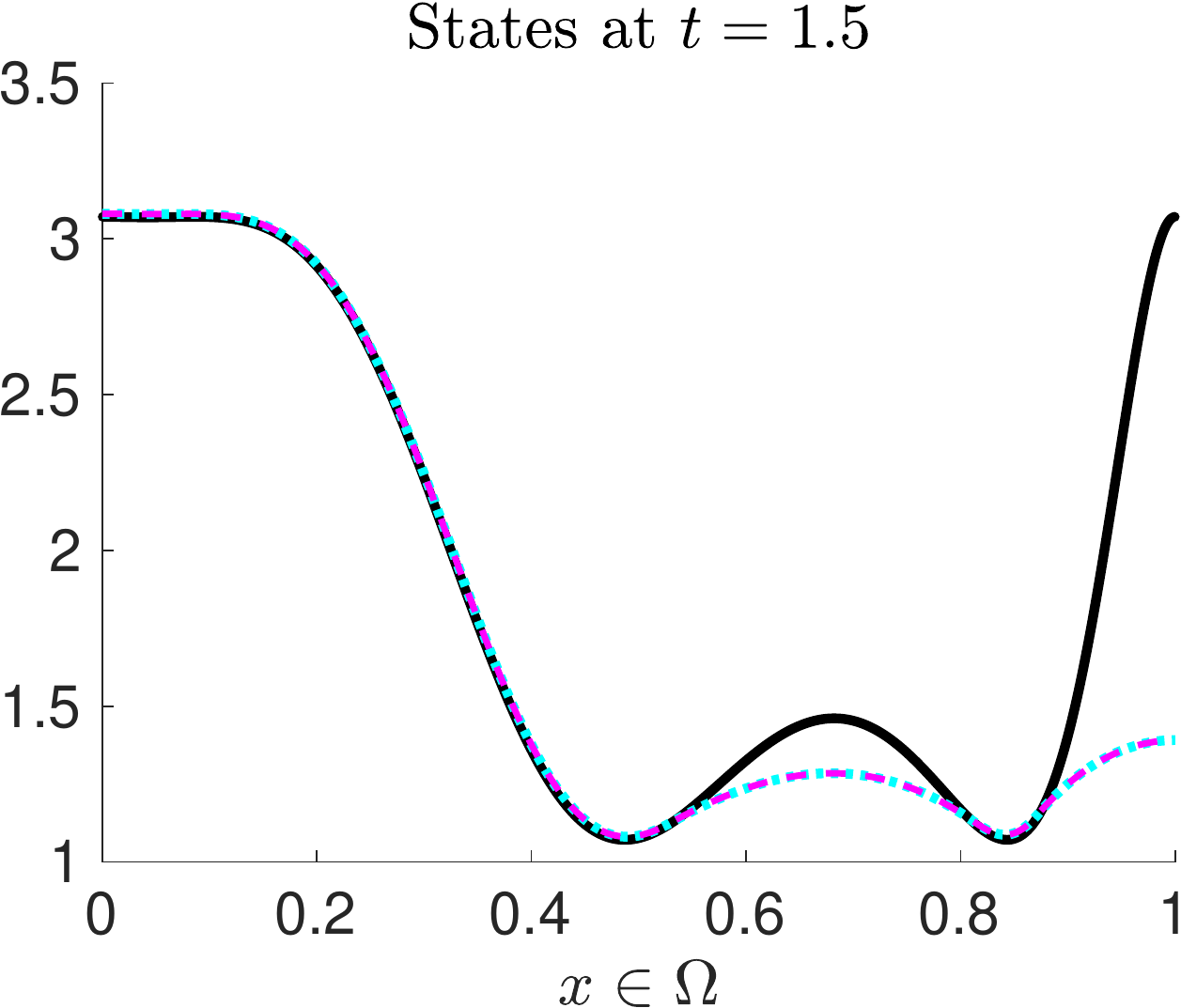}}
\qquad
\subfigure
{\includegraphics[width=0.45\textwidth,height=0.31\textwidth]{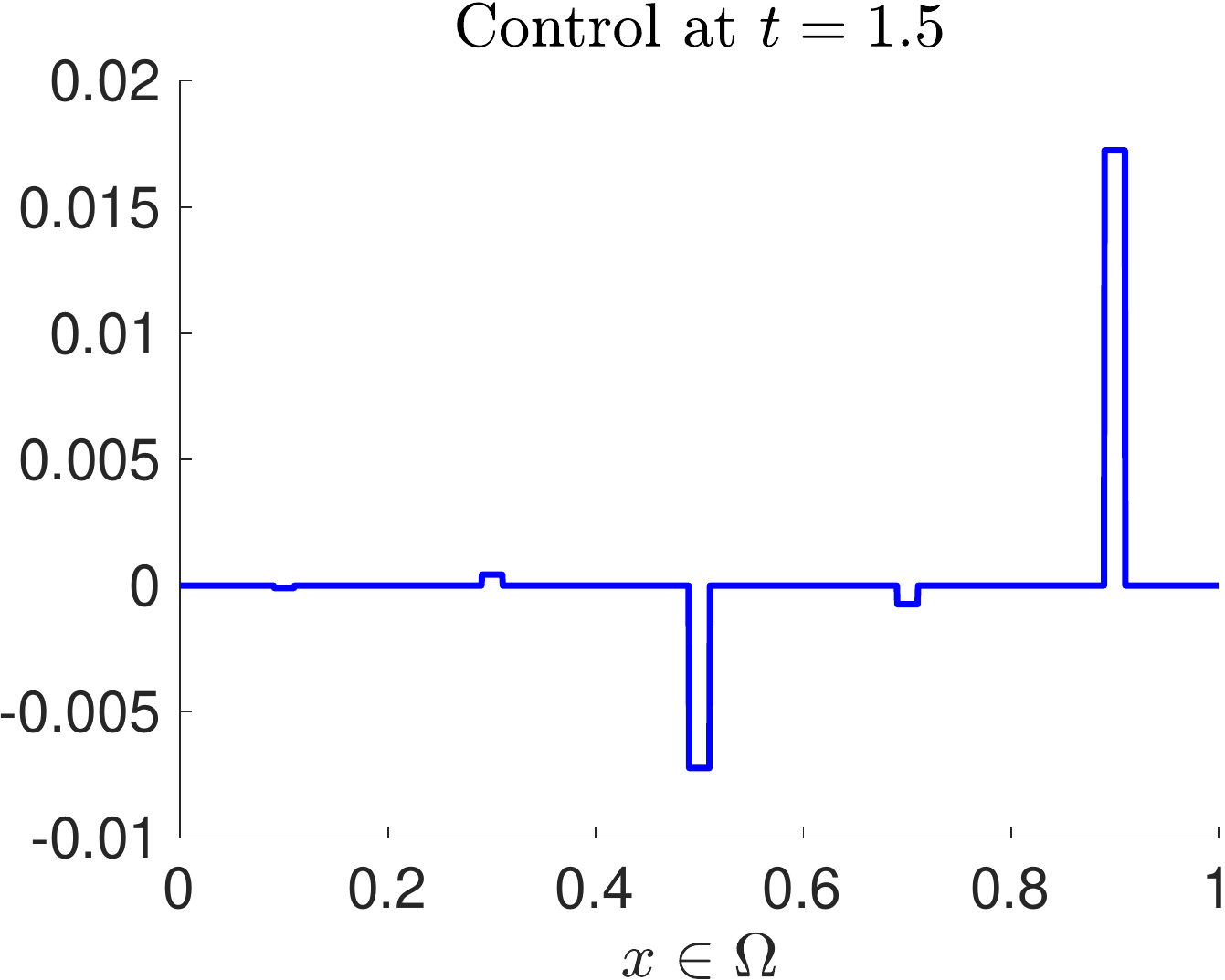}}
\caption{Time snapshots of trajectories and control. Larger time}
\label{Fig:TDF_lam4M5Feed04T4-tl}
\end{figure}
It is interesting to observe, at time~$t=0.05$, the $5$ bumps on the shape of the controlled solution, which are pointing towards the targeted one. The spatial location of these bumps coincide with spatial location of the actuators, and they show the action of the feedback control pushing the controlled solution towards the targeted one.

\subsection{On the Moreau--Yosida parameter~$k_{\rm MY}$}\label{sS:MYk}
The goal of this section is to show that it is very likely that the Moreau--Yosida approximation  
with parameter $k_{\rm MY}=500$ in the above simulation give us already a good approximation of the  
behavior of the limit solution of the variational inequality.
Indeed, in Figure~\ref{Fig:kMYDF_lam4M5Feed04T4}, we can  see that the norm 
of the difference to the target presents an analogous evolution for the considered parameters~$k_{\rm MY}s$.
\begin{figure}[ht]
\centering
\subfigure
{\includegraphics[width=0.45\textwidth,height=0.31\textwidth]{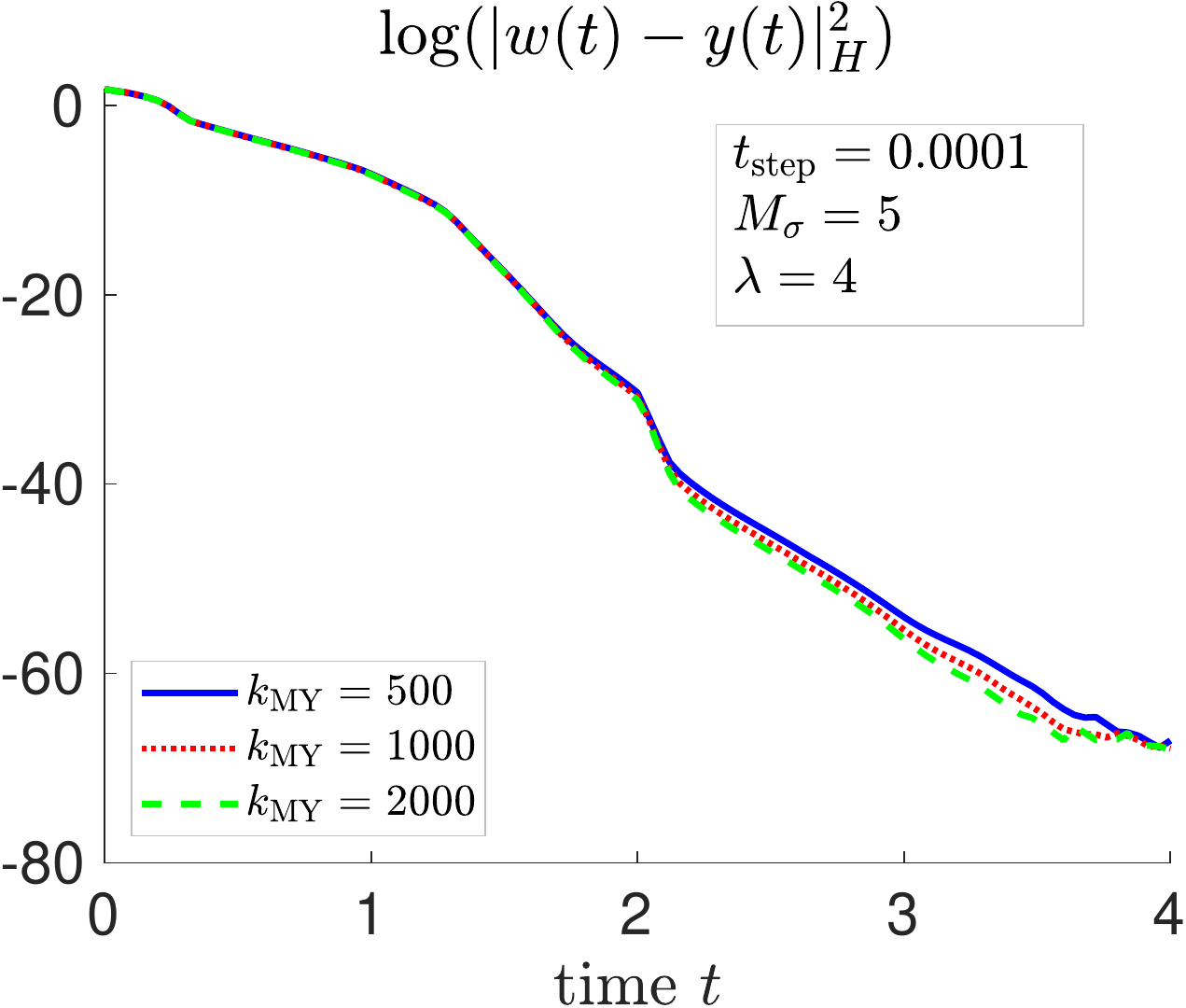}}
\qquad
\subfigure
{\includegraphics[width=0.45\textwidth,height=0.31\textwidth]{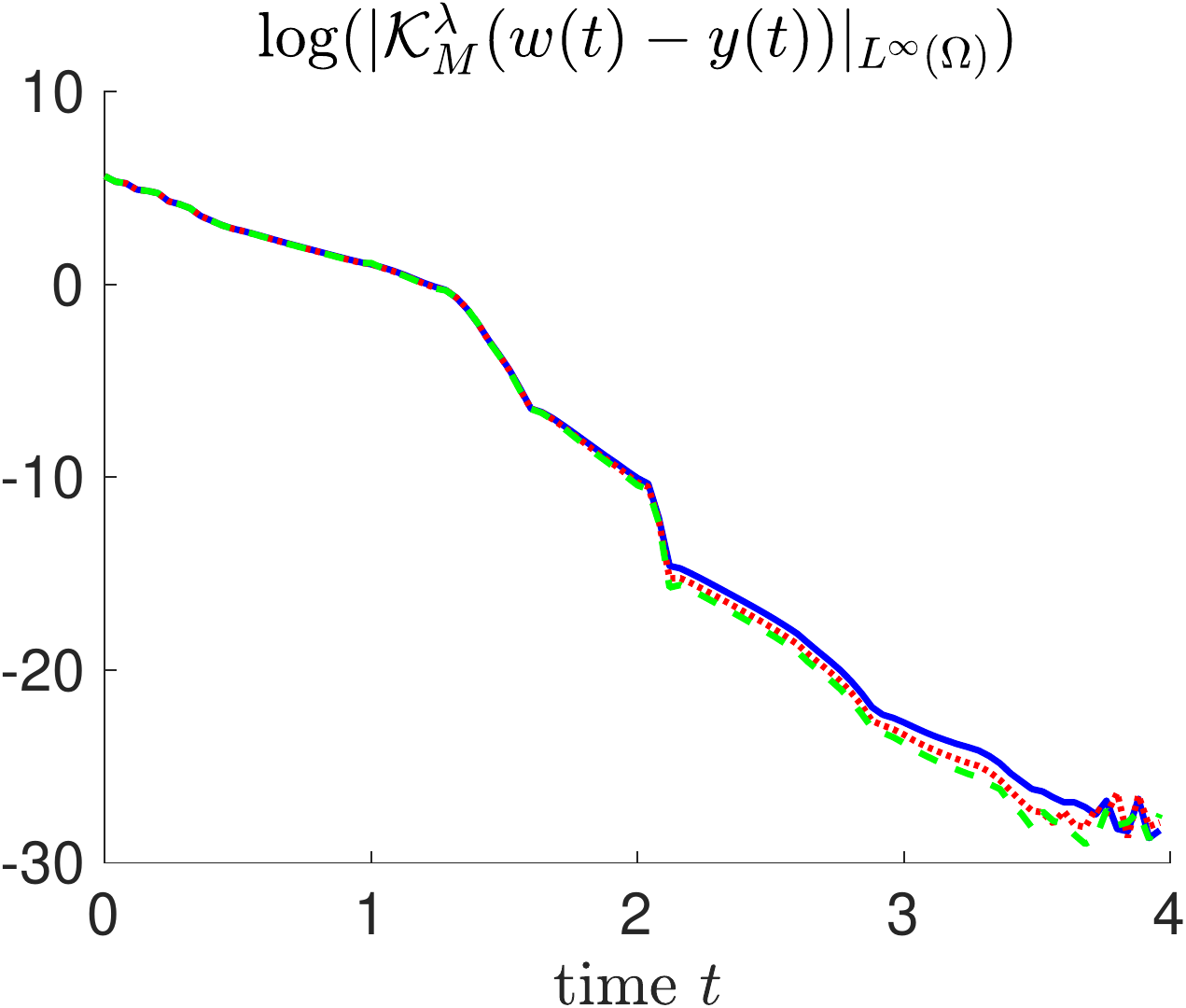}}
\caption{Norms of difference to target and control}
\label{Fig:kMYDF_lam4M5Feed04T4}
\end{figure}

In Figure~\ref{Fig:kMYOyw_lam4M5Feed04T4}
\begin{figure}[ht]
\centering
\subfigure
{\includegraphics[width=0.45\textwidth,height=0.31\textwidth]{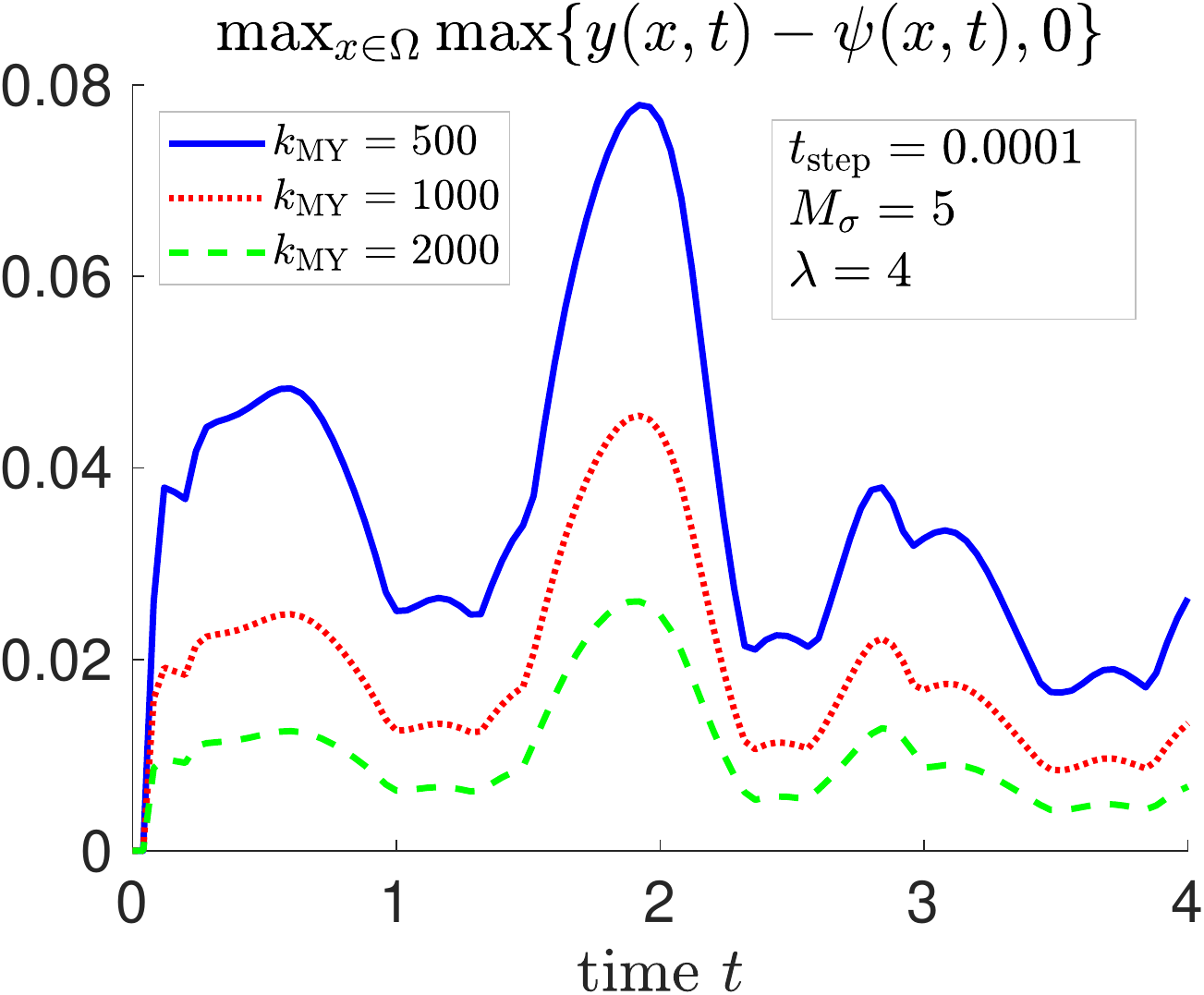}}
\qquad
\subfigure
{\includegraphics[width=0.45\textwidth,height=0.31\textwidth]{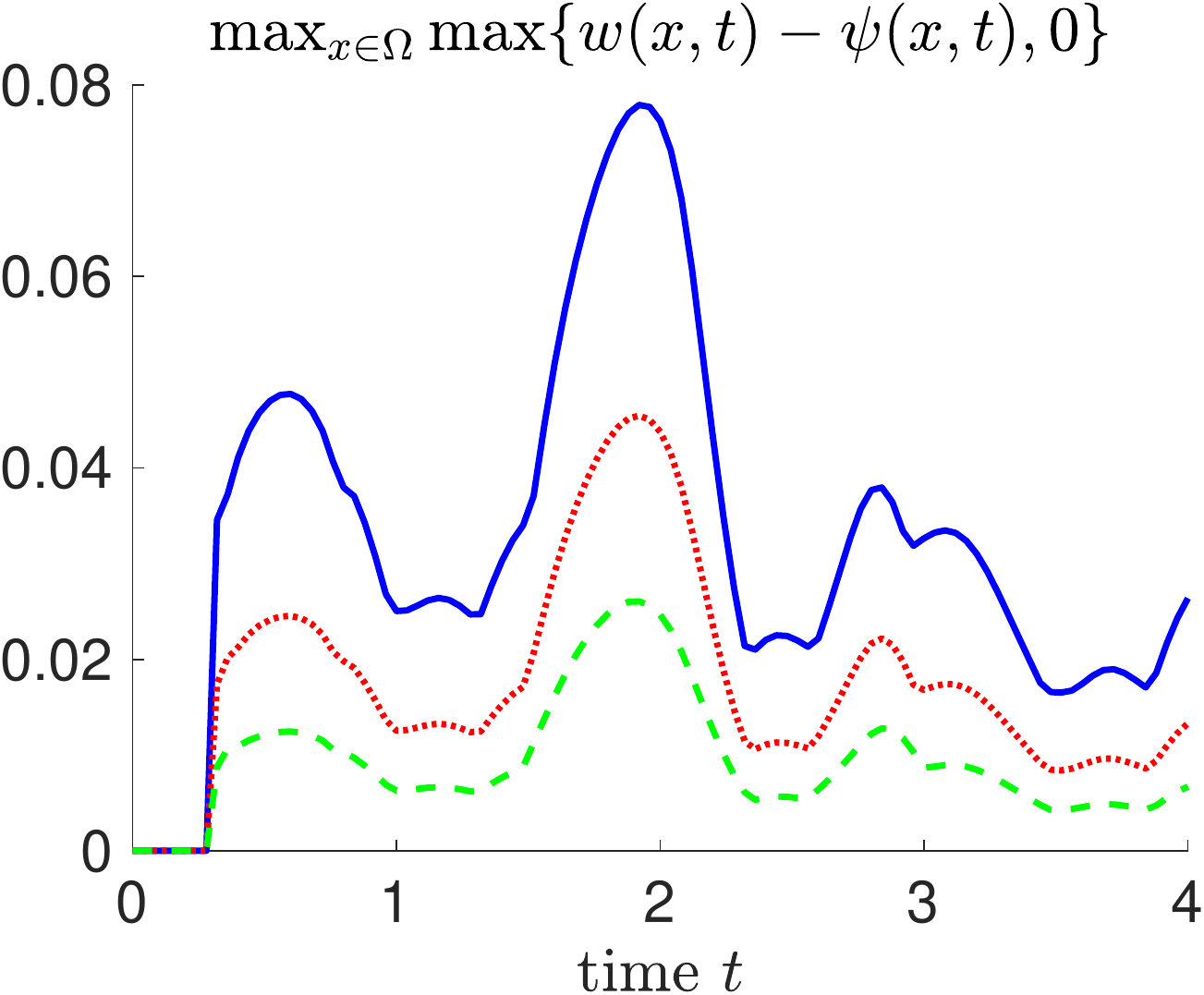}}
\caption{Largest magnitude of obstacle constraint violation}
\label{Fig:kMYOyw_lam4M5Feed04T4}
\end{figure}
 we see that the obstacle constraint violation decreases as~$k_{\rm MY}$ increases,
 as we expect, since at the limit we must have a vanishing constraint violation.
 Furthermore, from Lemma~\ref{L:weak-viol} we have that~$k\norm{(y_{k}-\psi)^+}{L^2(\Omega\times(0,T))}\le C$
 for a suitable constant~$C$ independent of~$k$.
 Figure~\ref{Fig:kMYOyw_lam4M5Feed04T4}  shows  that the violation decreases  (at each instant of time) 
 as~$k$ increases.

In Figure~\ref{Fig:kMYTwF_lam4M5Feed04T4}
\begin{figure}[hb]
\centering
\subfigure
{\includegraphics[width=0.45\textwidth,height=0.31\textwidth]{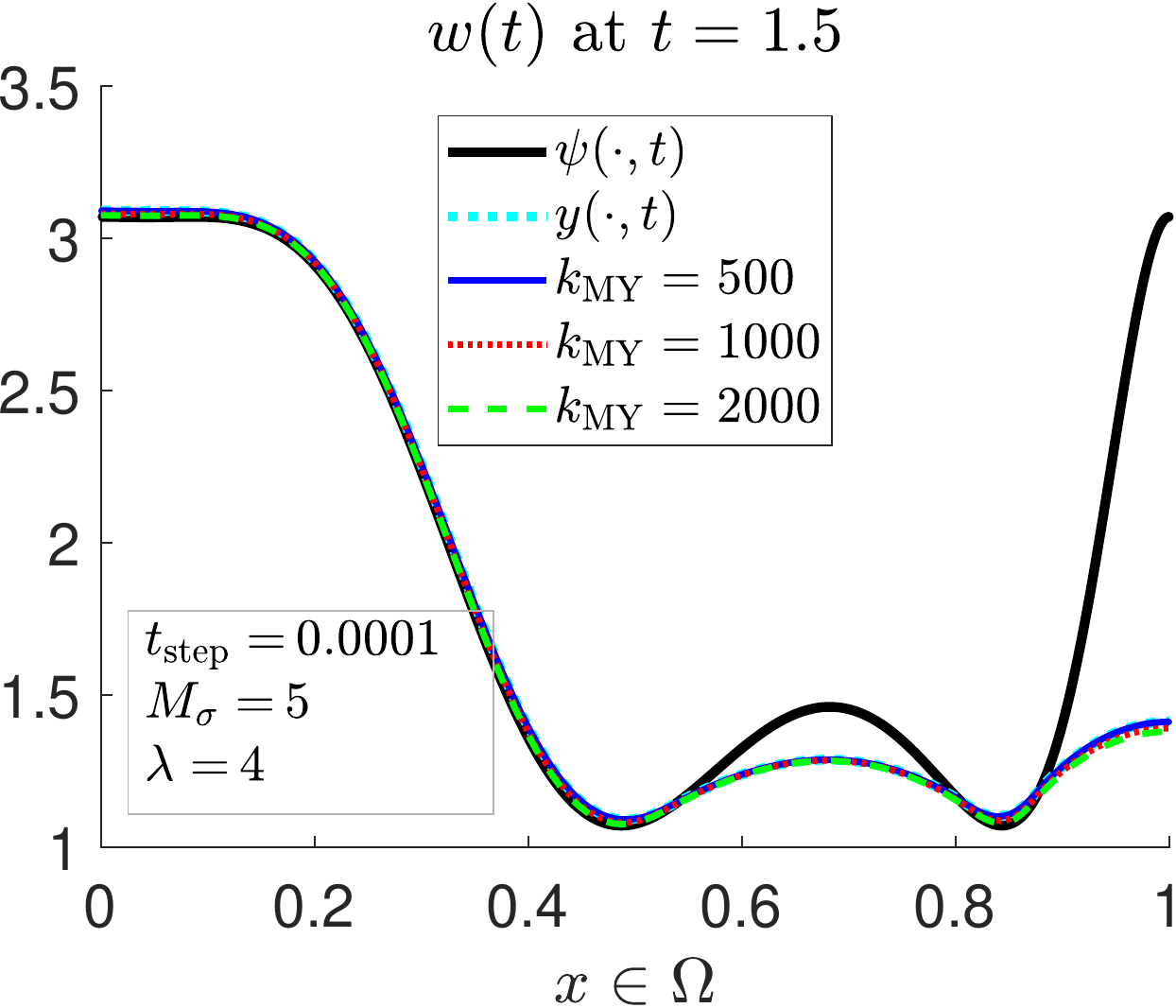}}
\qquad
\subfigure
{\includegraphics[width=0.45\textwidth,height=0.31\textwidth]{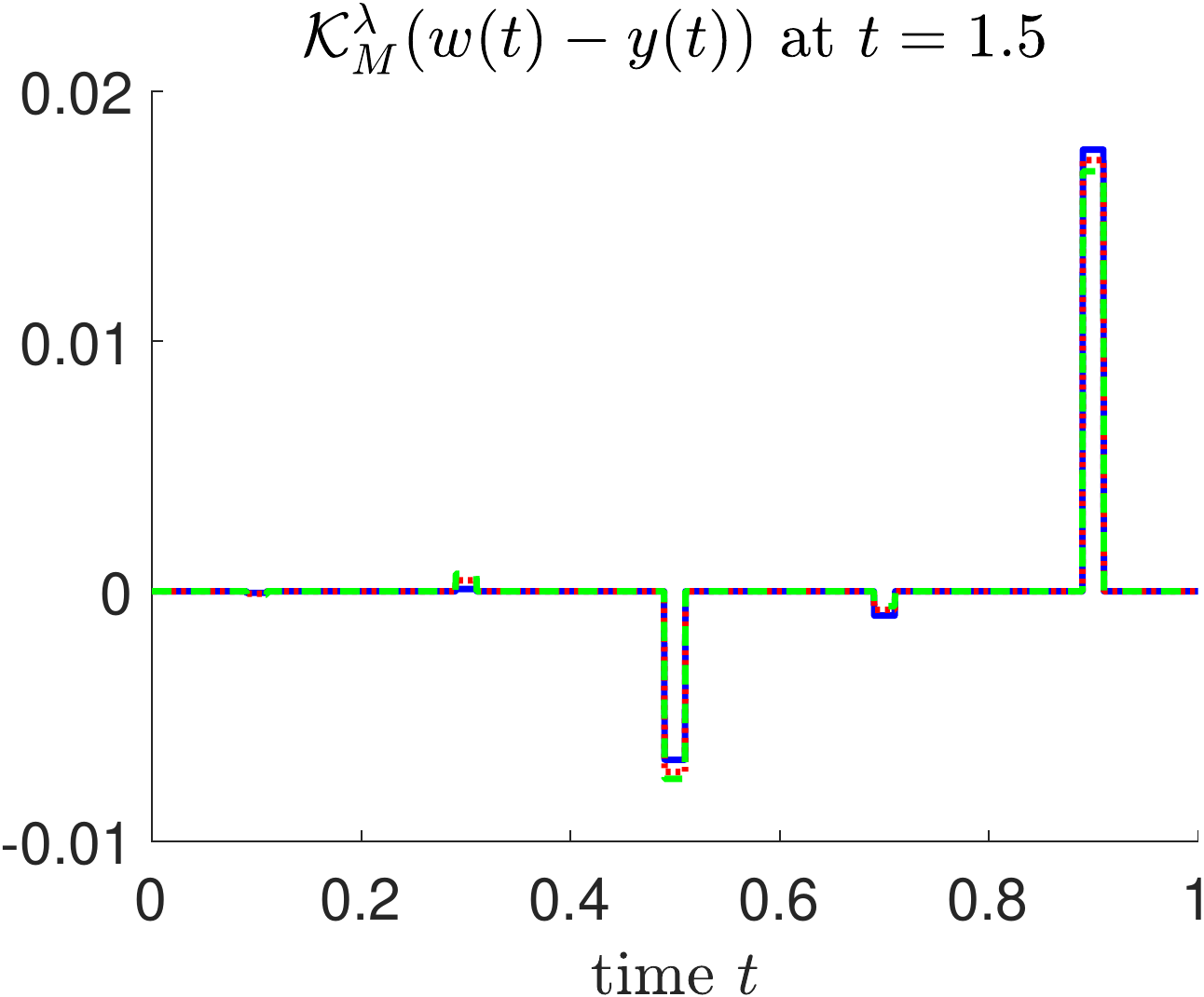}}
\caption{Time snapshots of trajectories and controls}
\label{Fig:kMYTwF_lam4M5Feed04T4}
\end{figure}
 we see a time snapshot of the controlled trajectories and control, where we see a small
 difference between the controlled trajectories for the several~$k_{MY}$s.
 A similar behavior was observed for the corresponding targeted trajectories,
 for simplicity we plotted only the targeted trajectory~$y$  corresponding to~$k_{MY}=500$
 (which, at that instant of time, is already almost indistinguishable form the controlled states with the naked eye).

\subsection{Necessity of both large~$M$ and large~$\lambda$}\label{sS:pairMLambda}
From our result, for stability it is {\em sufficient} to take large~$M$ and large~$\lambda$.
Here, we present simulations showing that such  condition is also {\em necessary}.

\subsubsection{Necessity of large enough~$M$}\label{ssS:necLM}
In Figure~\ref{Fig:LamDF_lam50M1Feed01T1}
\begin{figure}[hb]
\centering
\subfigure
{\includegraphics[width=0.45\textwidth,height=0.31\textwidth]{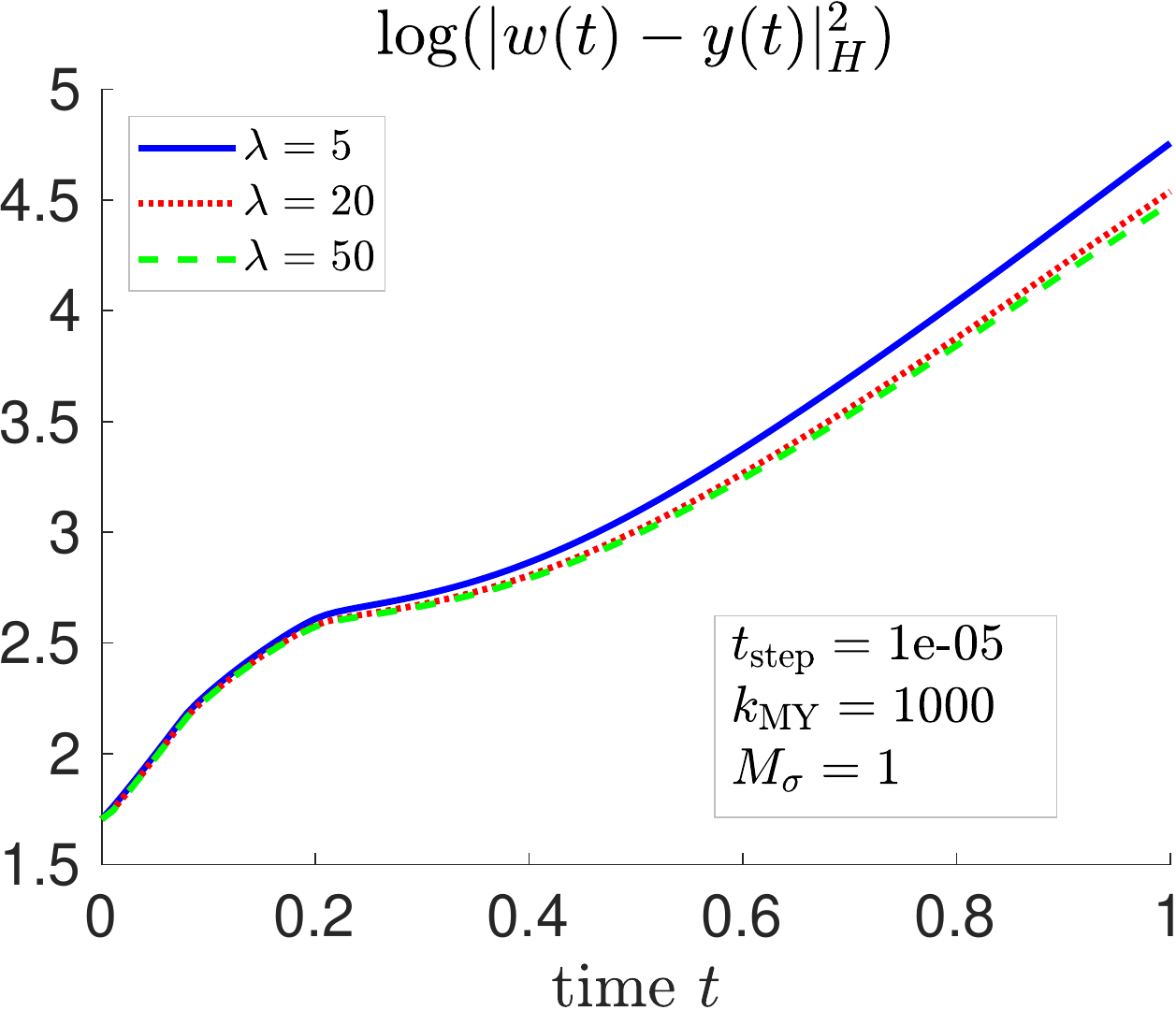}}
\qquad
\subfigure
{\includegraphics[width=0.45\textwidth,height=0.31\textwidth]{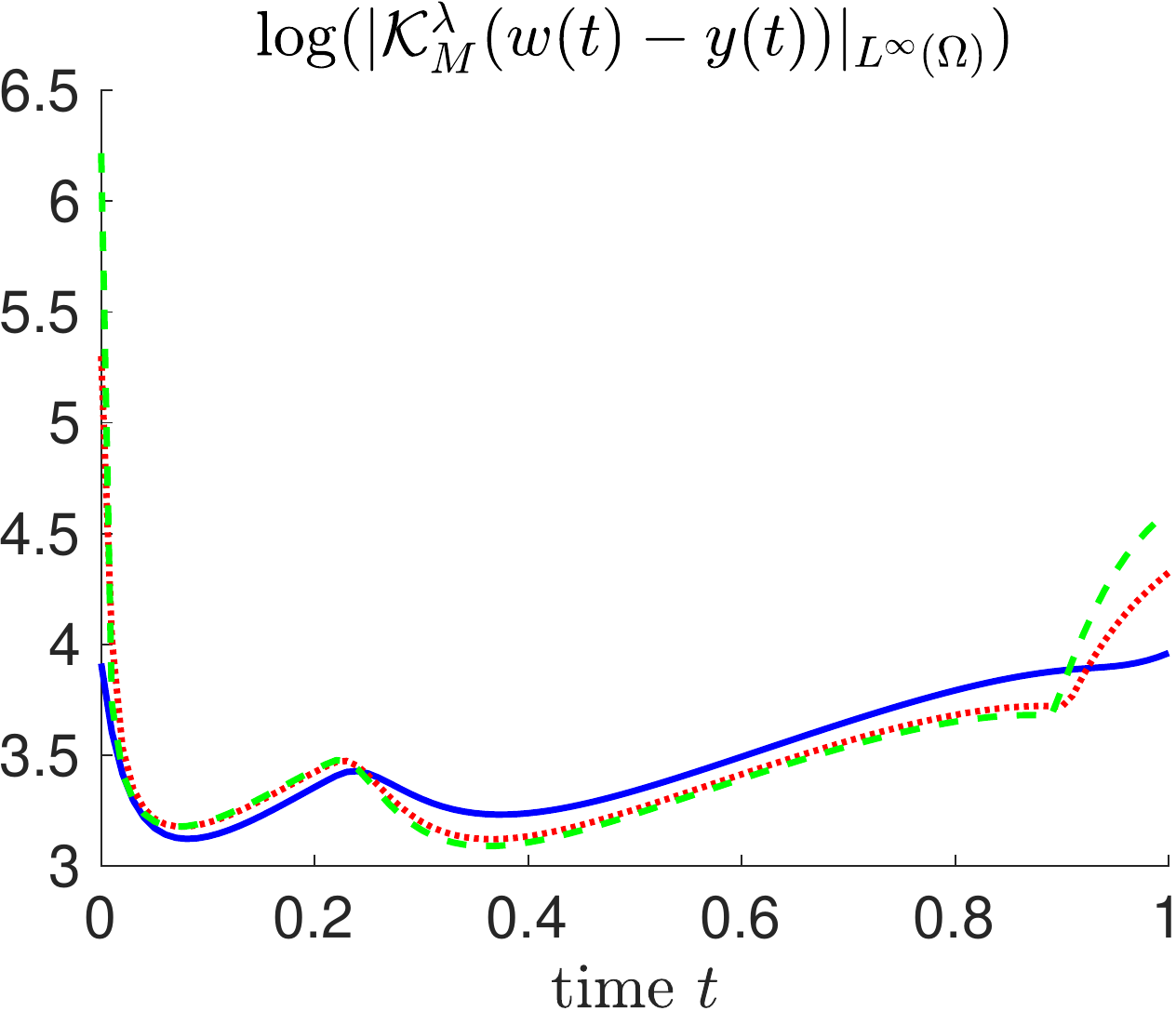}}
\caption{Norms of difference to targeted state and of control}
\label{Fig:LamDF_lam50M1Feed01T1}
\end{figure}
we see that with a single actuator we cannot stabilize the system, even for the relatively large~$\lambda=50$. Furthermore, for small time we cannot see a considerable change in the norm of the difference to the target for the several~$\lambda$s. This allow us to extrapolate that one actuator is not enough to stabilize the system.

In Figure~\ref{Fig:LamTwF_lam50M1Feed01T1}
\begin{figure}[hb]
\centering
\subfigure
{\includegraphics[width=0.45\textwidth,height=0.31\textwidth]{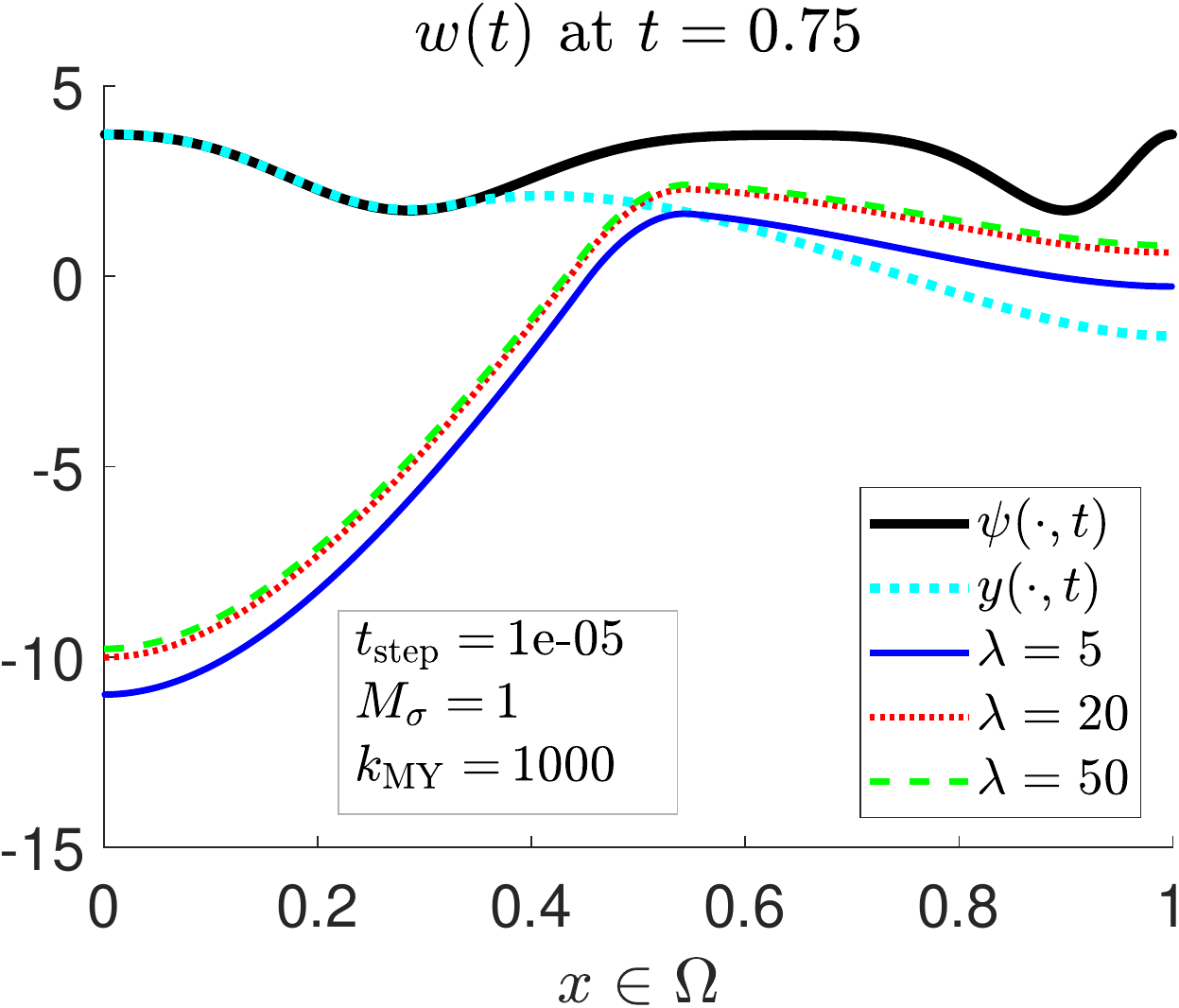}}
\qquad
\subfigure
{\includegraphics[width=0.45\textwidth,height=0.31\textwidth]{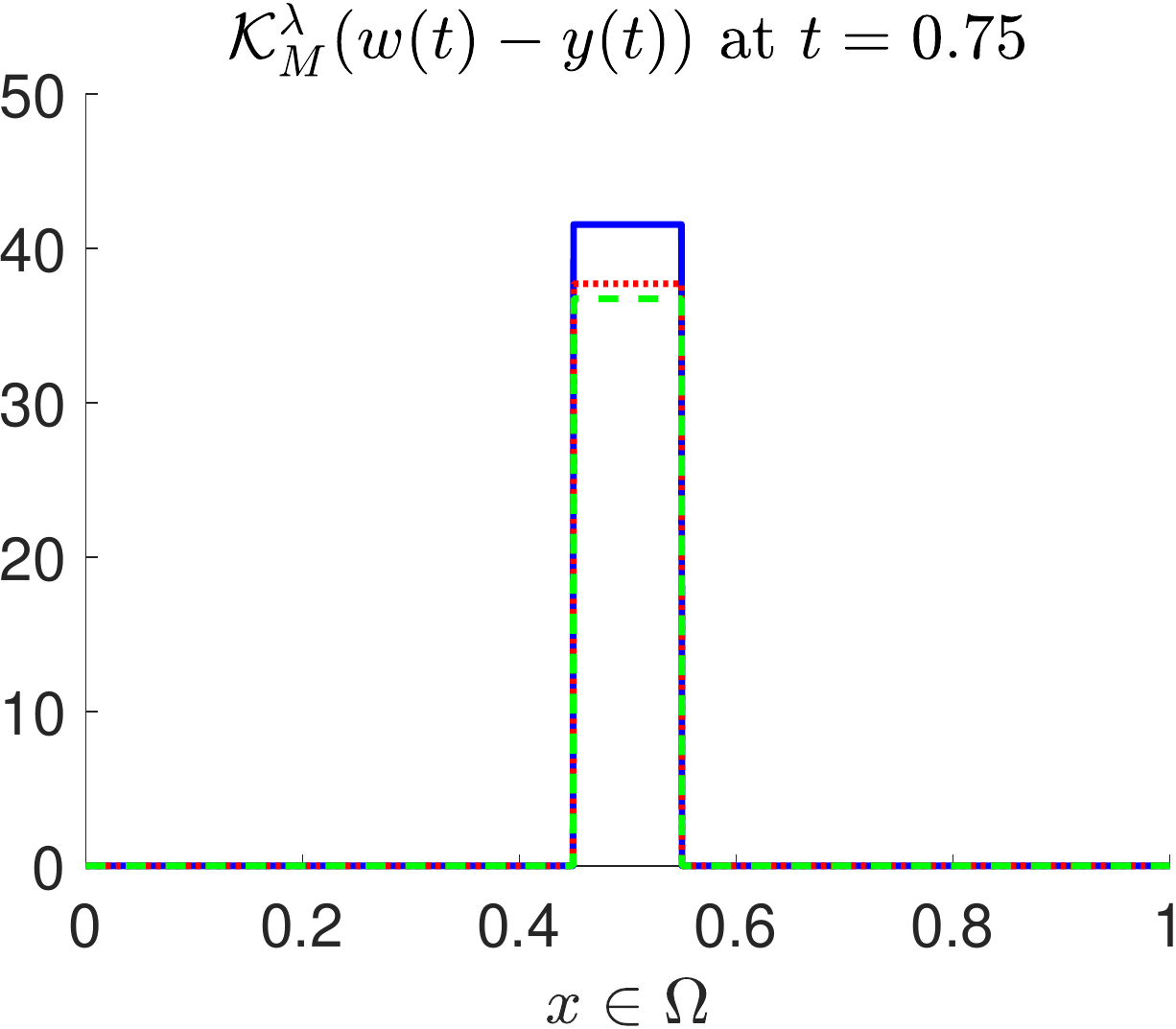}}
\caption{Time snapshots of trajectories and controls}
\label{Fig:LamTwF_lam50M1Feed01T1}
\end{figure}
 we present time snapshots of trajectories and control. We see that by taking a larger~$\lambda$ we cannot see a strong enough influence on the evolution of the trajectory to expect (or, hope for) a stabilization effect for large values of $\lambda$.

\subsubsection{Necessity of large enough~$\lambda$}\label{ssS:necLlambda}
In Figure~\ref{Fig:MDF_lam1M20Feed01T1}
\begin{figure}[ht]
\centering
\subfigure
{\includegraphics[width=0.45\textwidth,height=0.31\textwidth]{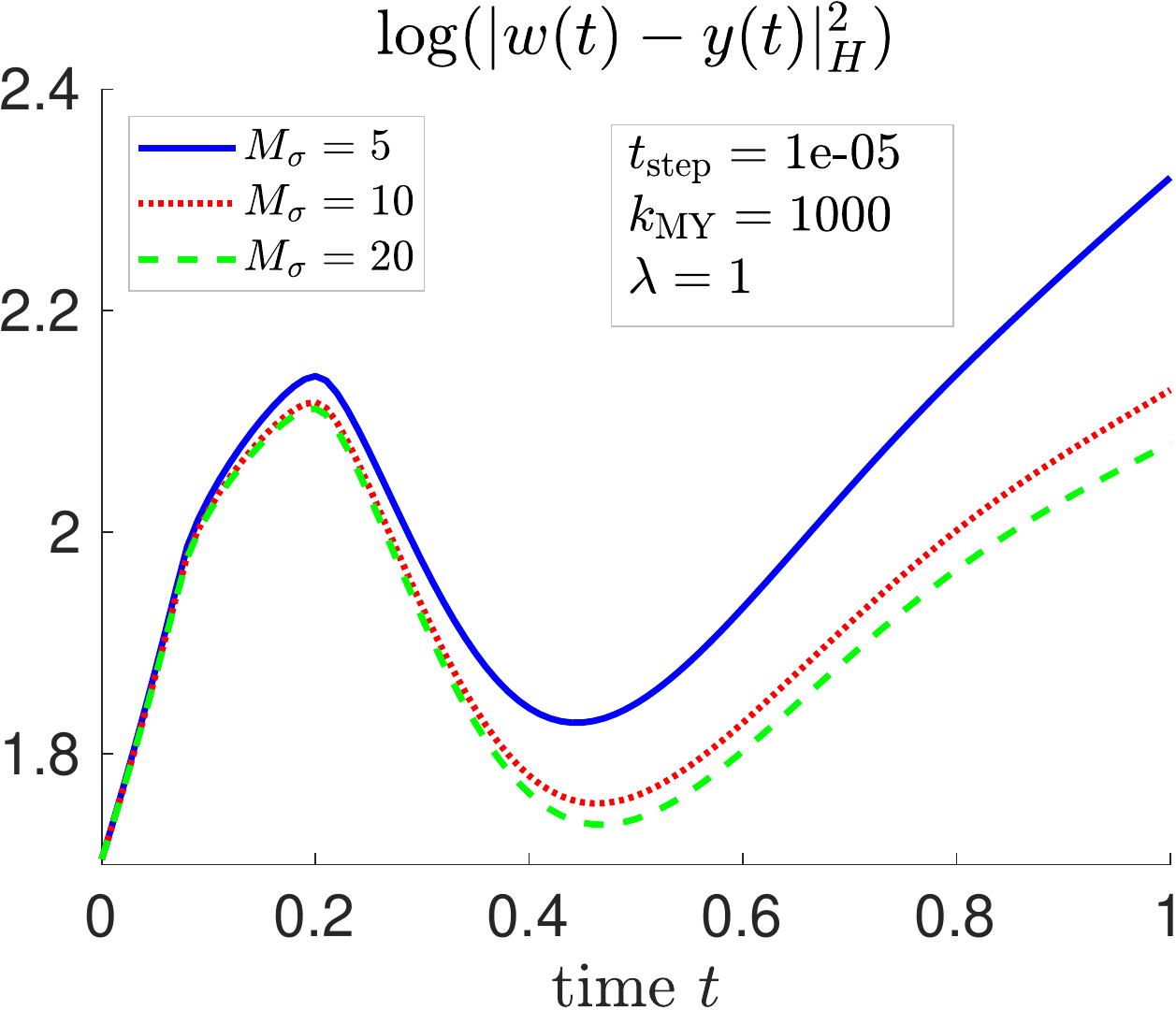}}
\qquad
\subfigure
{\includegraphics[width=0.45\textwidth,height=0.31\textwidth]{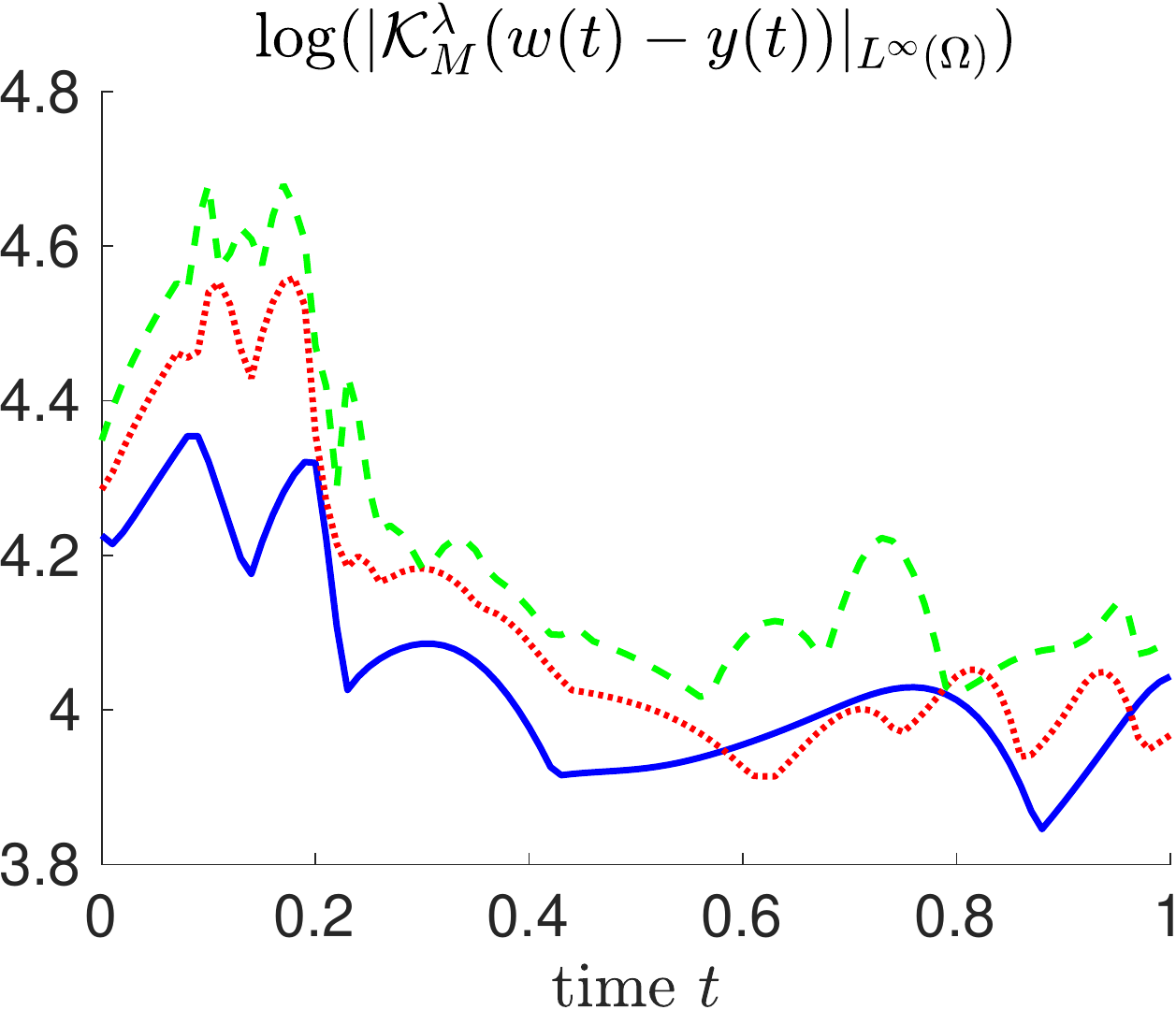}}
\caption{Norms of difference to targeted state and of  control}
\label{Fig:MDF_lam1M20Feed01T1}
\end{figure}
we see that with~$\lambda=1$
we cannot stabilize the system, even if we take~$20$ actuators. Furthermore, for small time we cannot see a considerable change in the norm of the difference to the target for the several~$M_\sigma$s. This allow us to extrapolate that it is necessary to take~$\lambda>1$ if we want to stabilize the system.

In Figure~\ref{Fig:MTw_lam1M20Feed01T1} we present time snapshots of trajectories and control. We see that with~$10$ and~$20$ actuators we cannot see a strong enough change on the evolution of the trajectory to expect (or, hope for) a stabilization effect for large values of $M_\sigma$.
\begin{figure}[ht]
\centering
\subfigure
{\includegraphics[width=0.45\textwidth,height=0.31\textwidth]{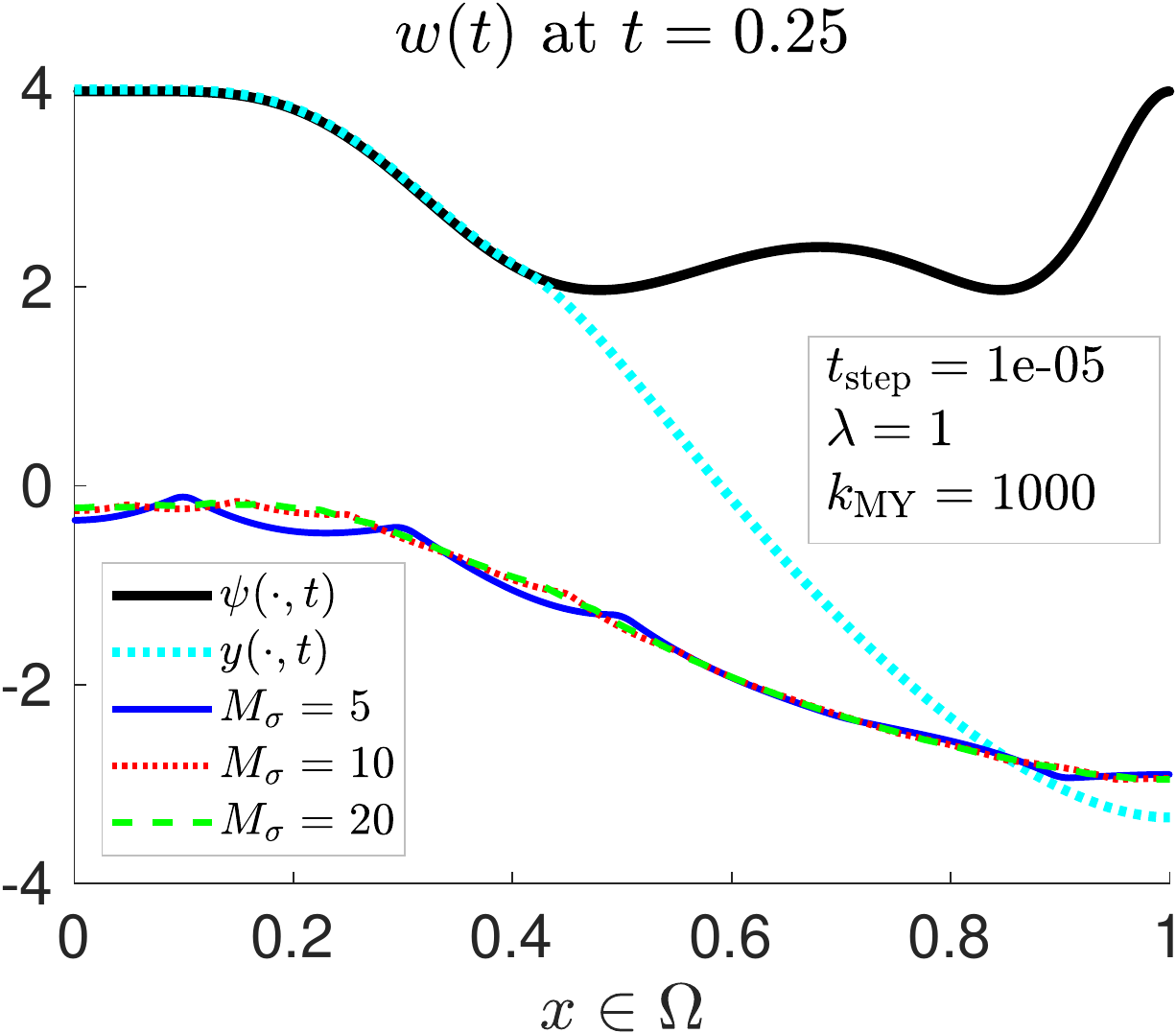}}
\qquad
\subfigure
{\includegraphics[width=0.45\textwidth,height=0.31\textwidth]{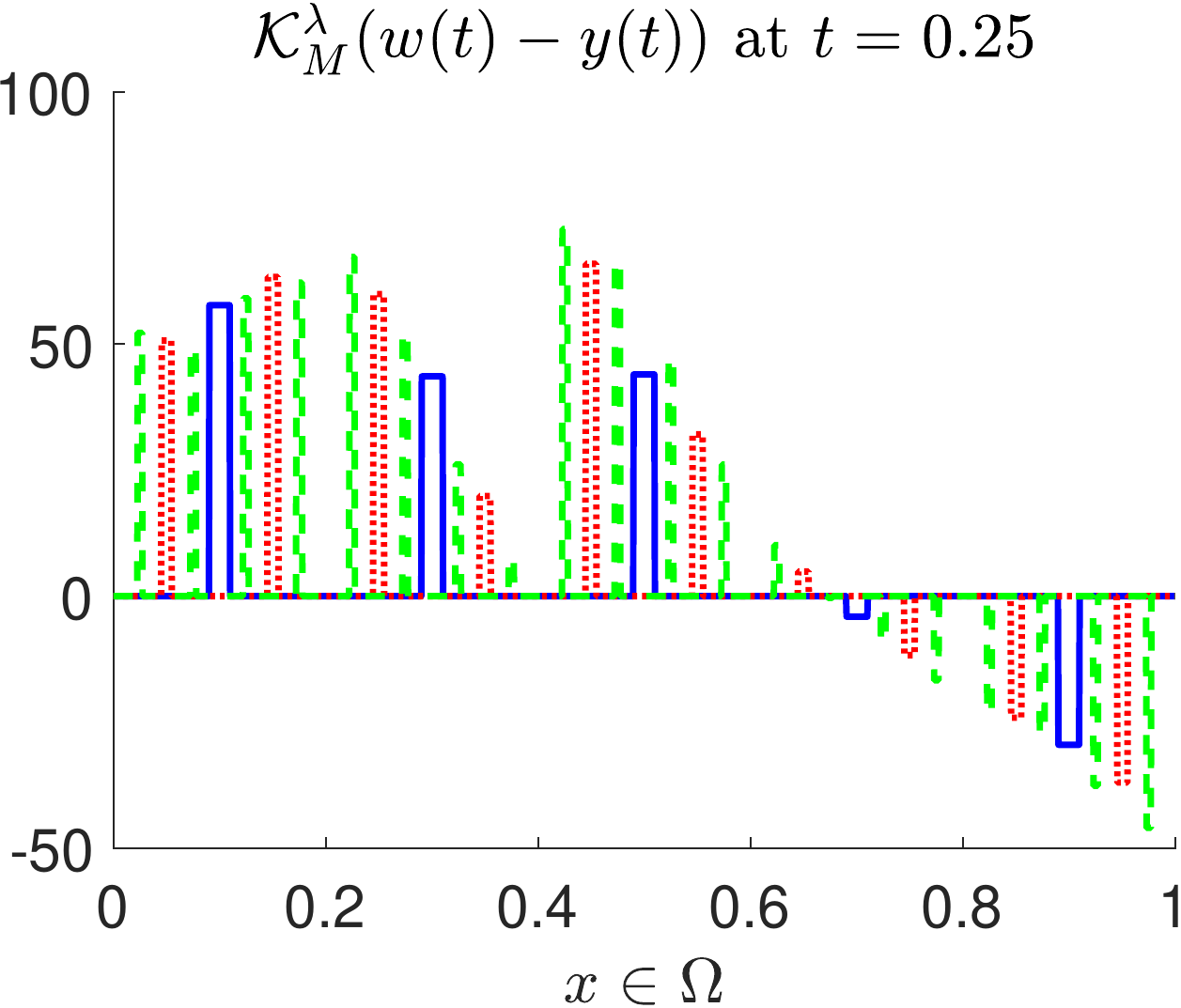}}
\caption{Time snapshots of controlled state}
\label{Fig:MTw_lam1M20Feed01T1}
\end{figure}

\subsubsection{On the achievement of an arbitrarily small exponential decreasing rate~$-\mu<0$}\label{ssS:achmu}

From our result we can reach an arbitrarily small exponential decreasing rate~$-\mu$, provided we take both~$M_\sigma$ and~$\lambda$ large enough. This is shown in Figure~\ref{Fig:rmuDF_lam6M10Feed04T4},
\begin{figure}[ht]
\centering
\subfigure
{\includegraphics[width=0.45\textwidth,height=0.31\textwidth]{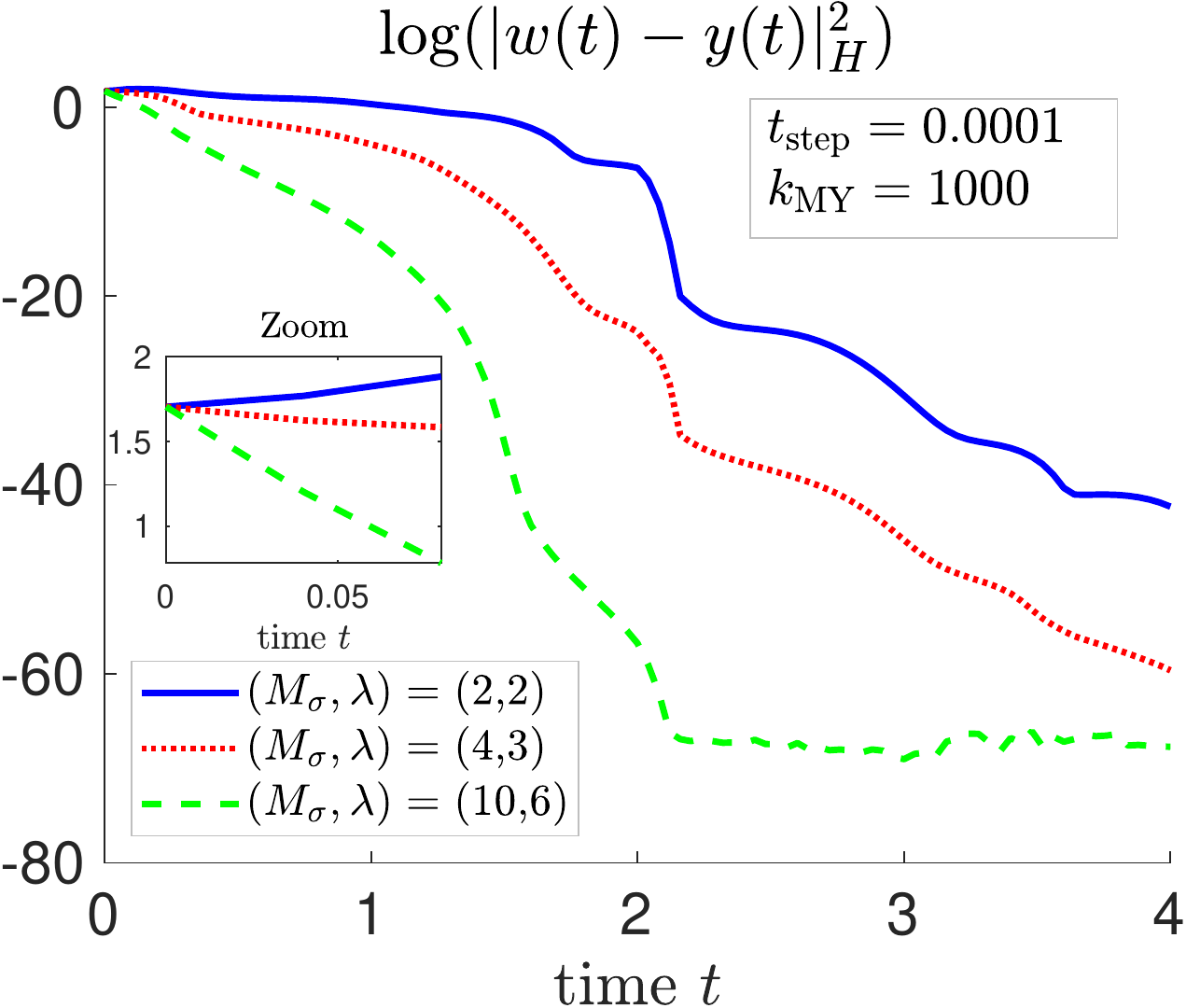}}
\qquad
\subfigure
{\includegraphics[width=0.45\textwidth,height=0.31\textwidth]{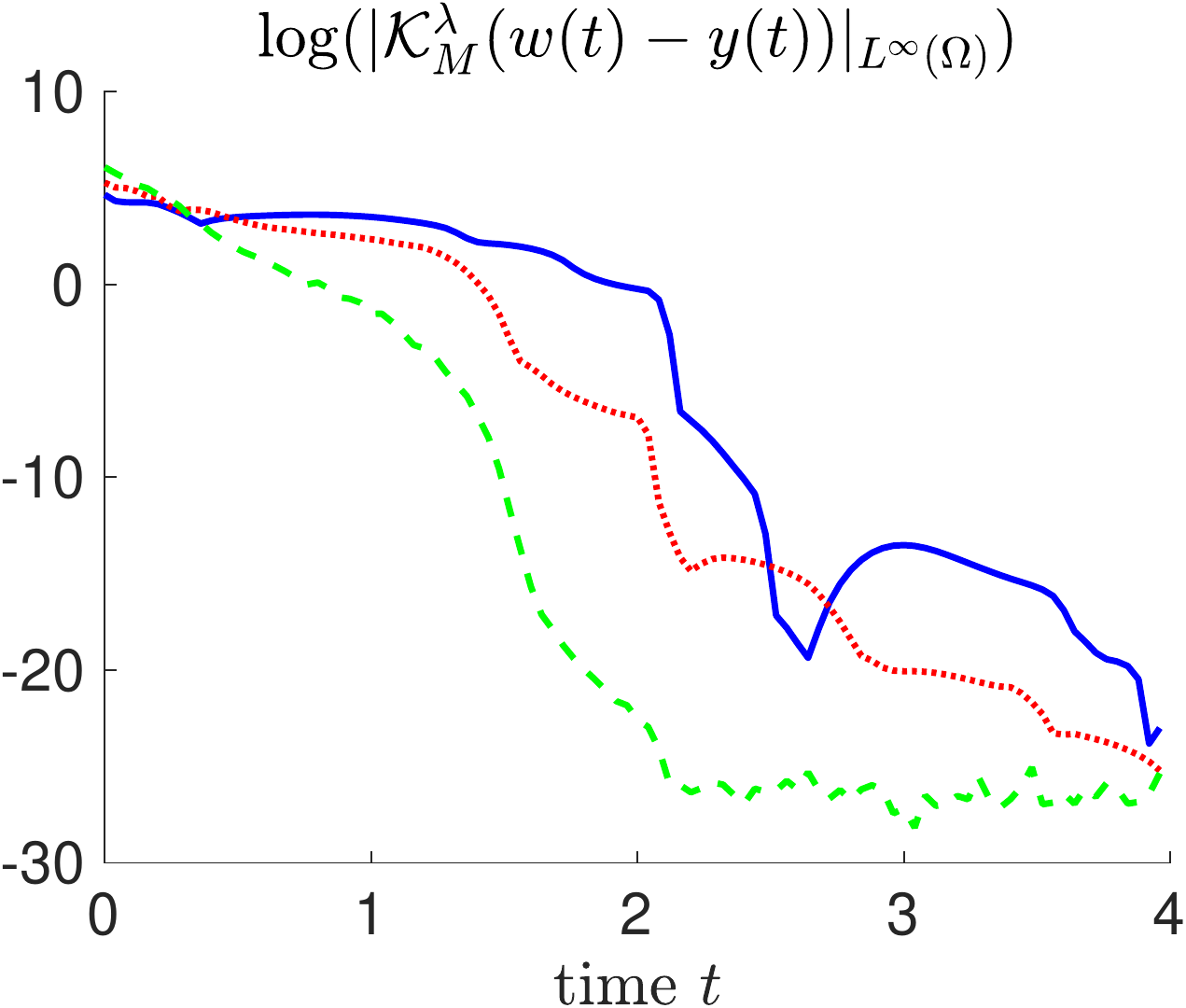}}
\caption{Norms of difference to targeted state and of  control}
\label{Fig:rmuDF_lam6M10Feed04T4}
\end{figure}
 where we see that with~$(M_\sigma,\lambda)=(10,6)$ we obtain a smaller exponential rate than with~$(M_\sigma,\lambda)=(4,3)$. We also observe that with~$(M_\sigma,\lambda)=(2,2)$ we are also able to stabilize the system, however this case does not fully confirm our result, where we can also guarantee that the norm of the difference to the targeted trajectory is strictly decreasing. In the zoomed subplot, in Figure~\ref{Fig:rmuDF_lam6M10Feed04T4}, we can see that
for small time the norm of the difference in not strictly decreasing,  for~$(M_\sigma,\lambda)=(2,2)$.

The time snapshots in Figure~\ref{Fig:rmuTwF_lam6M10Feed04T4}
\begin{figure}[ht]
\centering
\subfigure
{\includegraphics[width=0.45\textwidth,height=0.31\textwidth]{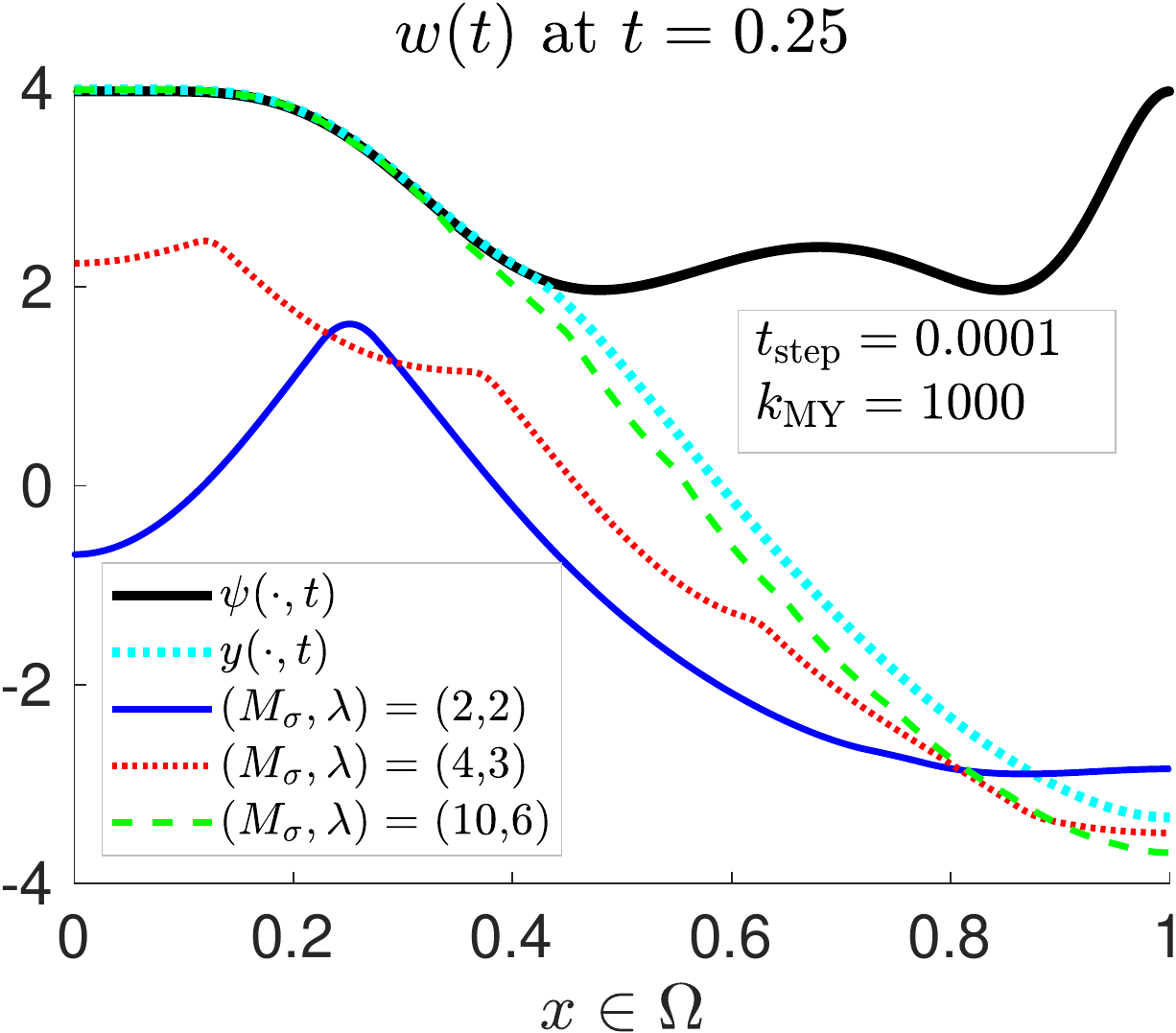}}
\qquad
\subfigure
{\includegraphics[width=0.45\textwidth,height=0.31\textwidth]{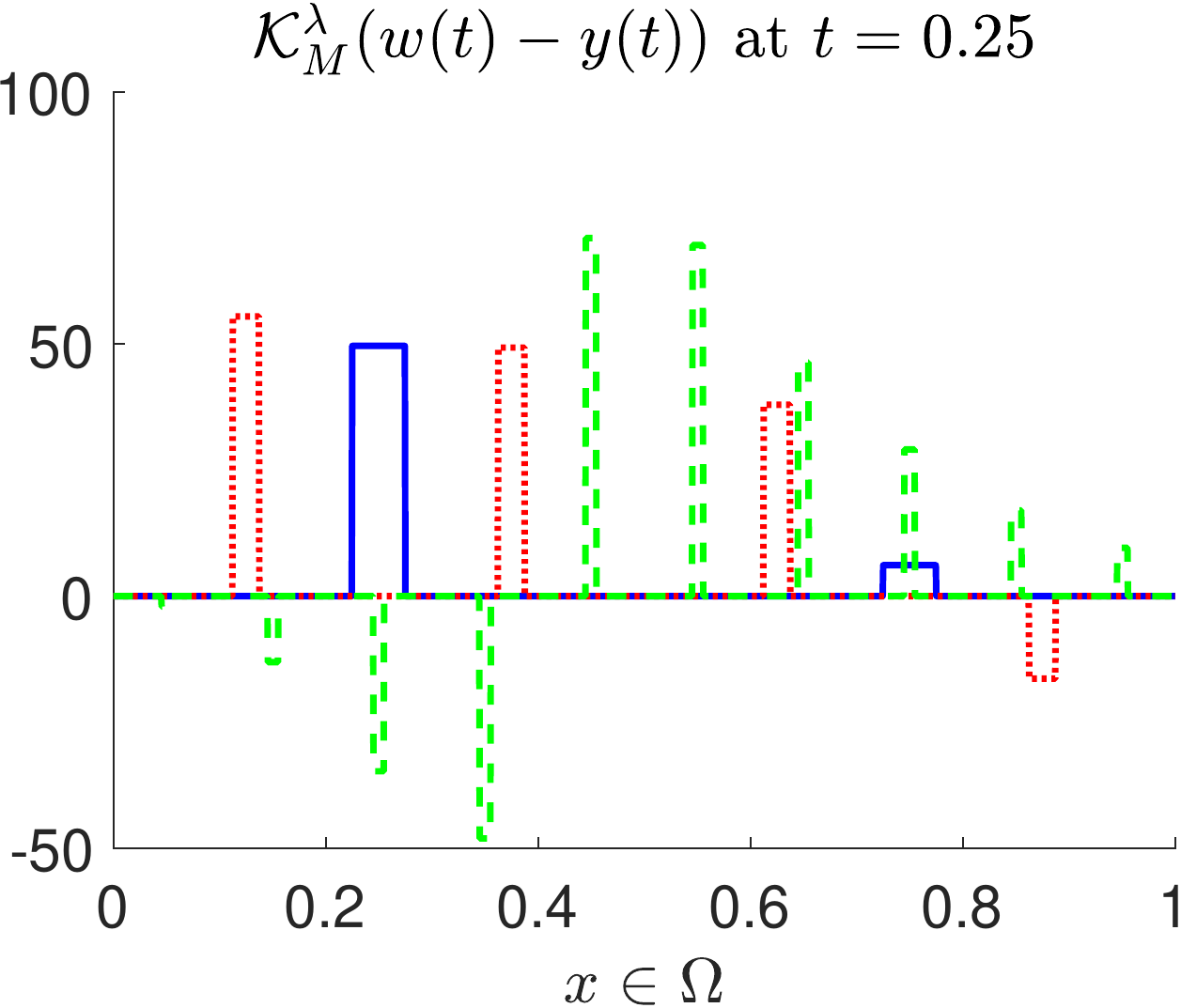}}
\caption{Time snapshots of trajectories and controls}
\label{Fig:rmuTwF_lam6M10Feed04T4}
\end{figure}
also confirm that with a pair~$(M_\sigma,\lambda)$ with larger coordinates, we obtain a faster convergence of the controlled trajectory~$w$ to the targeted one~$y$.

\subsection{The uncontrolled dynamics}\label{ssS:FeedOn}

Here we show that the uncontrolled dynamics is unstable. That is, a control is necessary to stabilize the system to the targeted trajectory. In Figure~\ref{Fig:FeedOnDF_lam5M6Feed14T4} the symbol~$FeedOn$ denotes the time interval where the feedback control is switched on. Thus, outside this time interval the free (uncontrolled) dynamics is followed.
\begin{figure}[ht]
\centering
\subfigure
{\includegraphics[width=0.45\textwidth,height=0.31\textwidth]{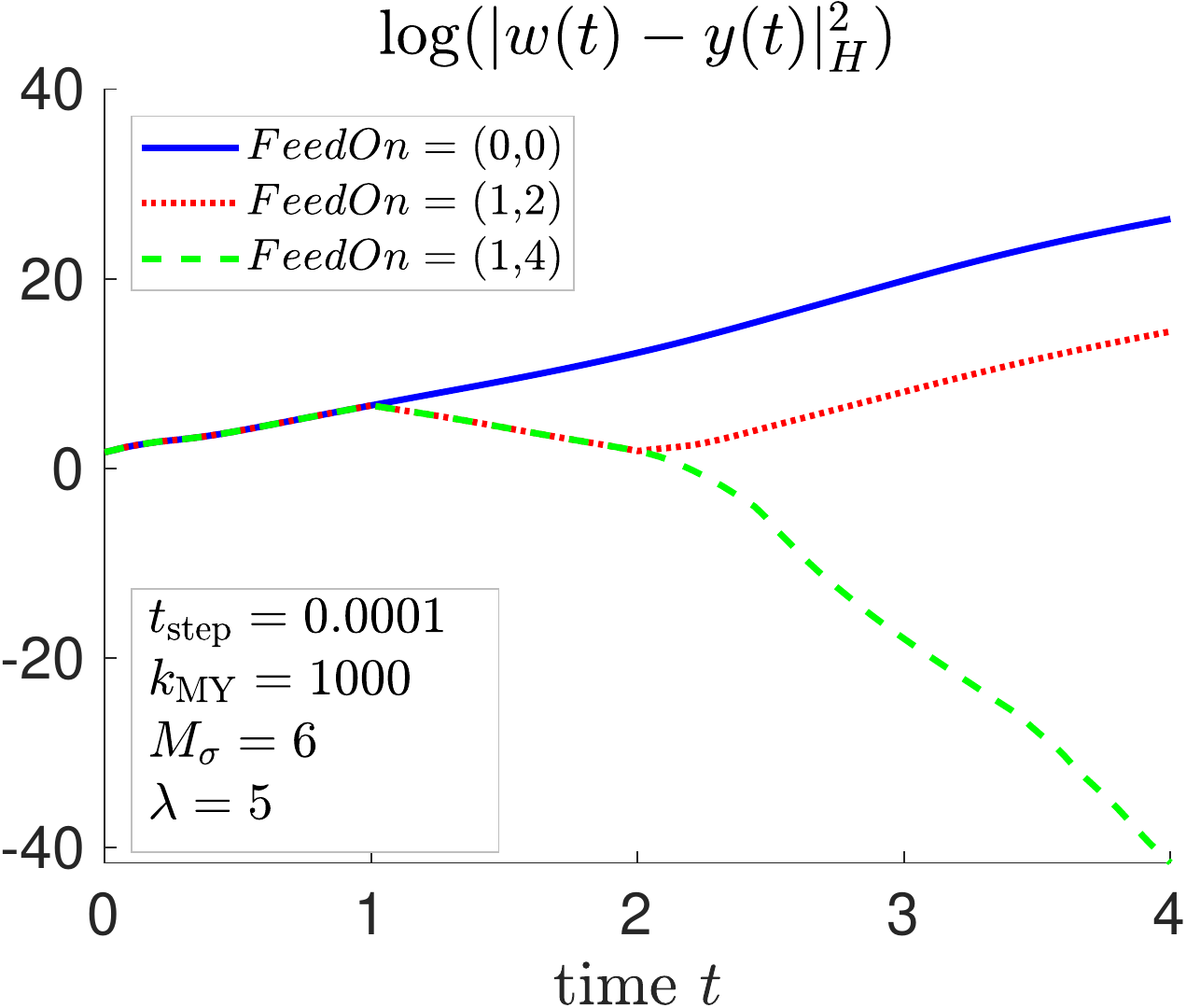}}
\qquad
\subfigure
{\includegraphics[width=0.45\textwidth,height=0.31\textwidth]{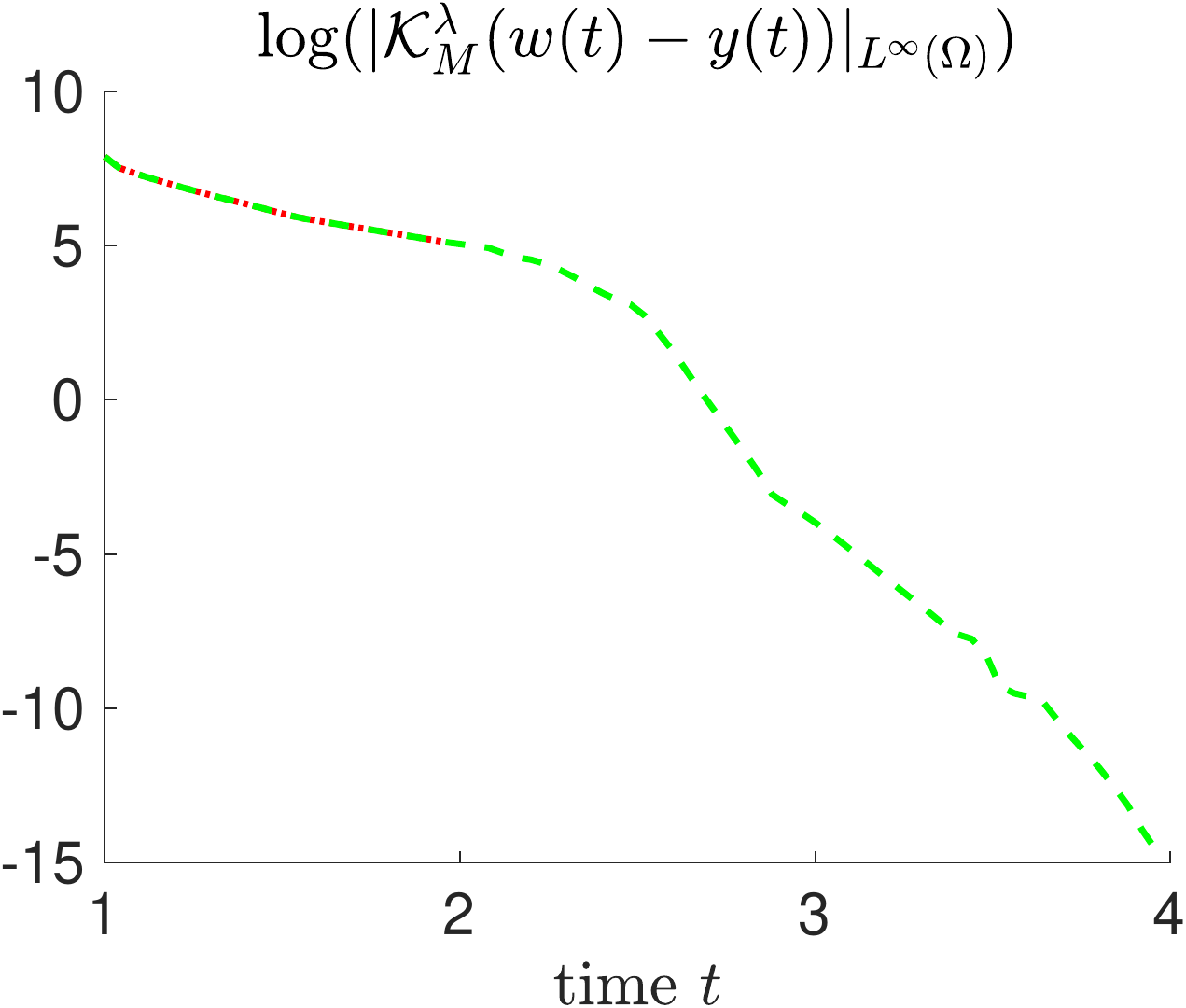}}
\caption{Norms of difference to targeted state and of  control}
\label{Fig:FeedOnDF_lam5M6Feed14T4}
\end{figure}
We see that the free dynamics is exponentially unstable, as the norm of the difference to the target {\em increases} exponentially when the control is switched off. On the other hand, when the control is switched on we see that such norm {\em decreases} exponential, confirming again our theoretical stabilizability results.

Time snapshots in Figure~\ref{Fig:FeedOnTwF_lam5M6Feed14T4} show again that the trajectory~$w$ corresponding to the free dynamics~$FeedOn=(0,0)$ is not approaching the targeted one~$y$ as time increases (cf.~Figure~\ref{Fig:InitStates}).
\begin{figure}[ht]
\centering
\subfigure
{\includegraphics[width=0.45\textwidth,height=0.31\textwidth]{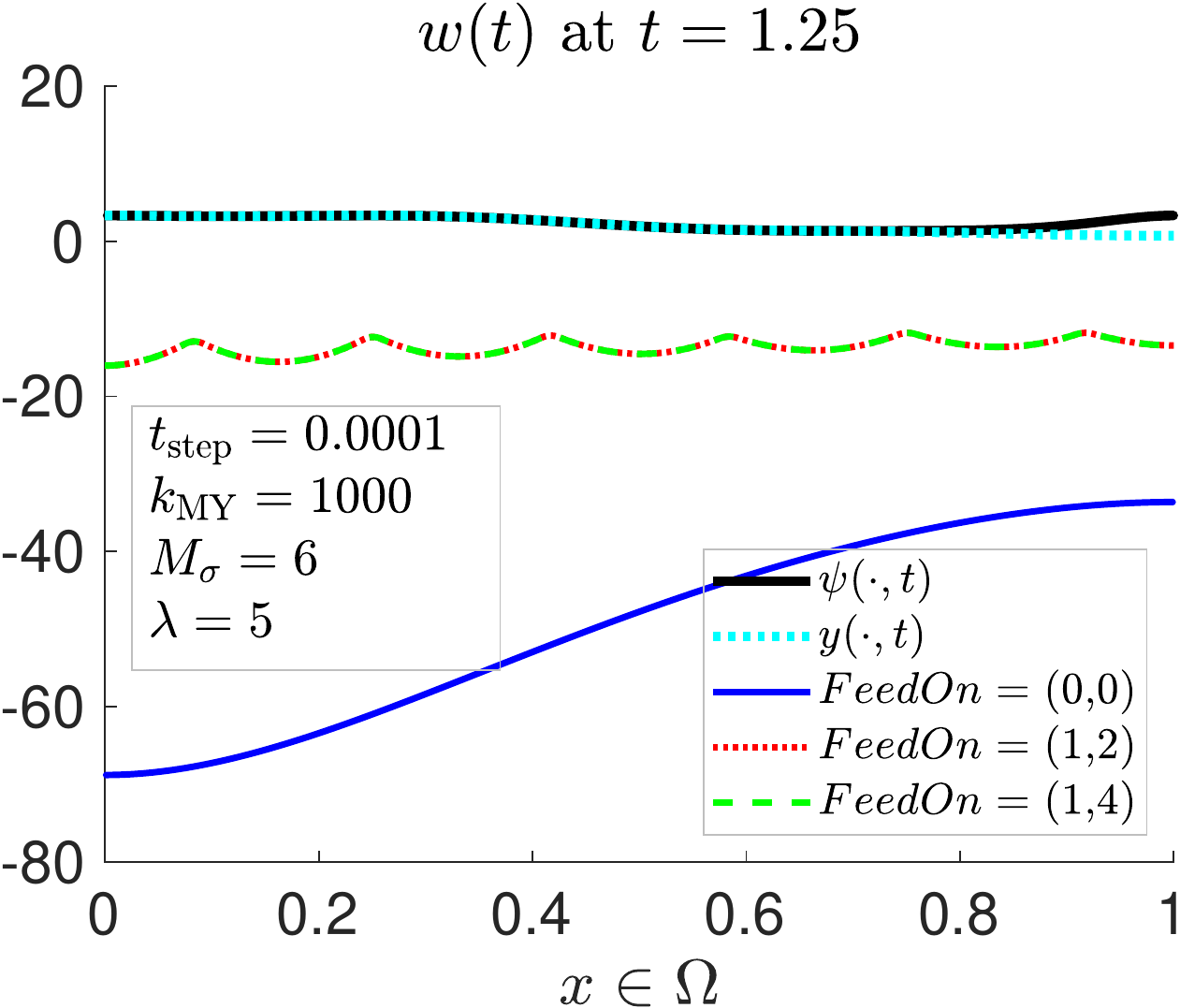}}
\qquad
\subfigure
{\includegraphics[width=0.45\textwidth,height=0.31\textwidth]{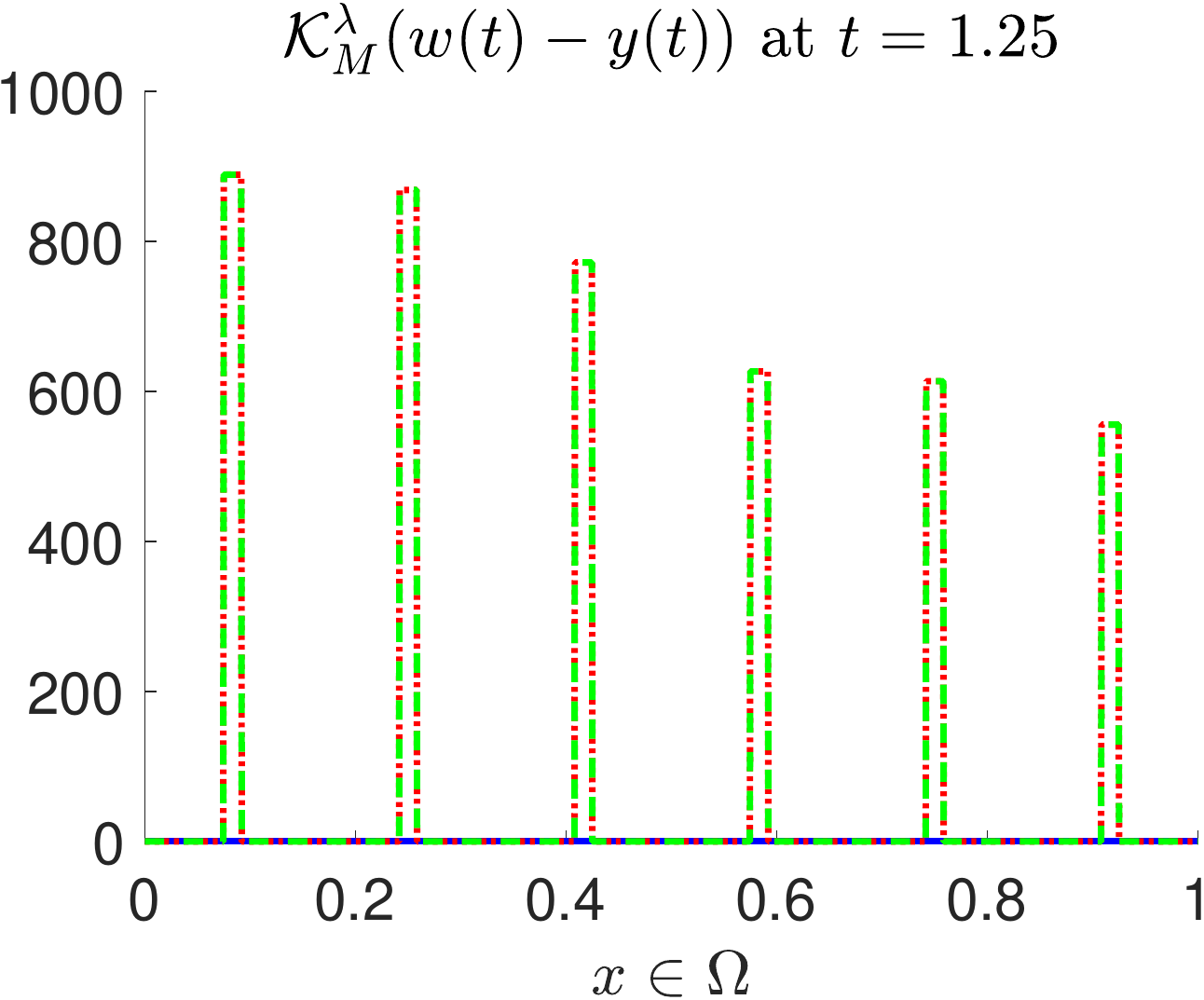}}
\caption{Time snapshots of trajectories and controls}
\label{Fig:FeedOnTwF_lam5M6Feed14T4}
\end{figure}

\subsection{Evolution of the contact set and the Moreau--Yosida parameter}\label{S:KoSet}

Here, we investigate the evolution of the contact (or, active) set.
In Figure~\ref{Fig:SKoSetDF_lam5M10Feed02T2}
\begin{figure}[ht]
\centering
\subfigure
{\includegraphics[width=0.45\textwidth,height=0.31\textwidth]{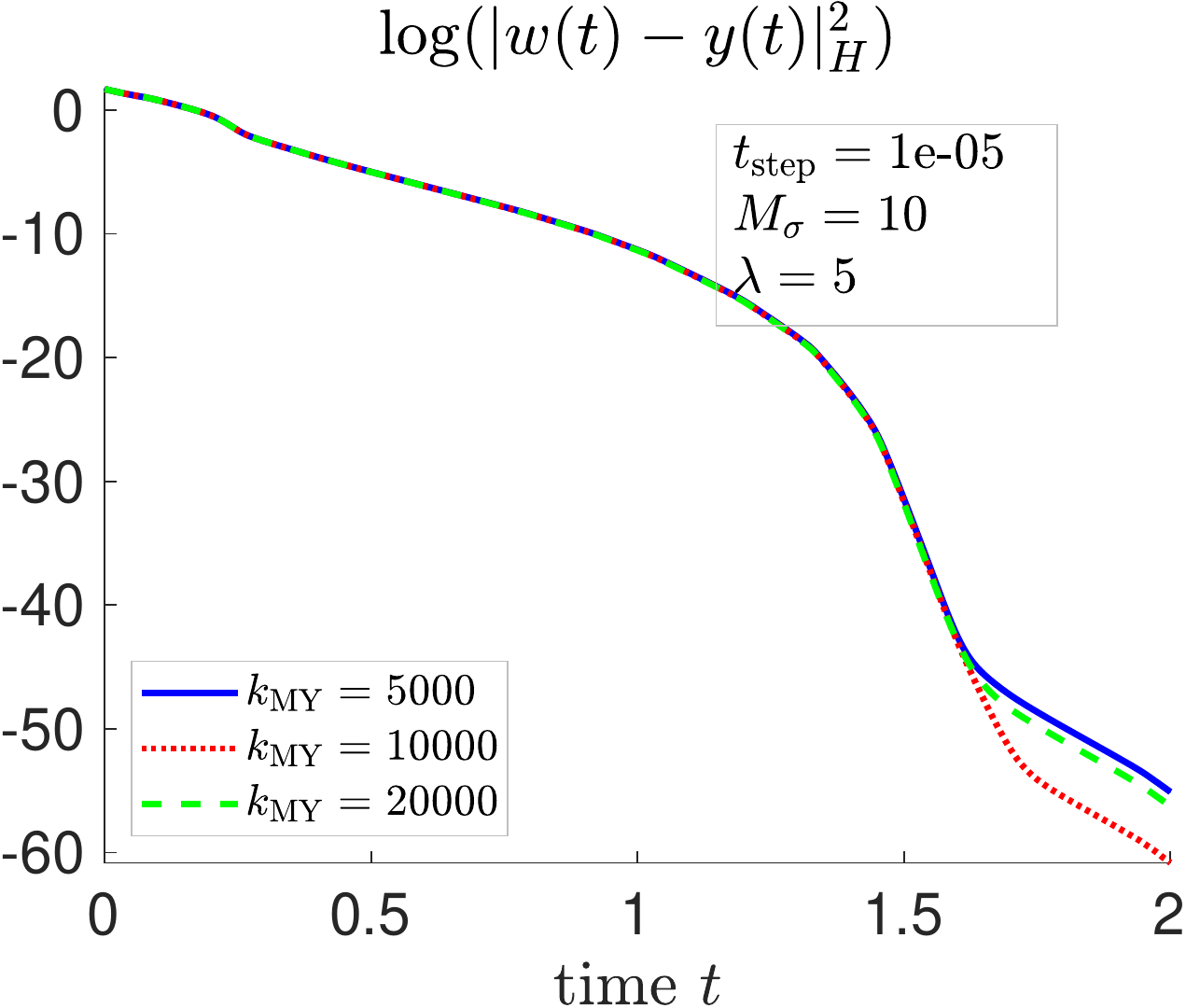}}
\qquad
\subfigure
{\includegraphics[width=0.45\textwidth,height=0.31\textwidth]{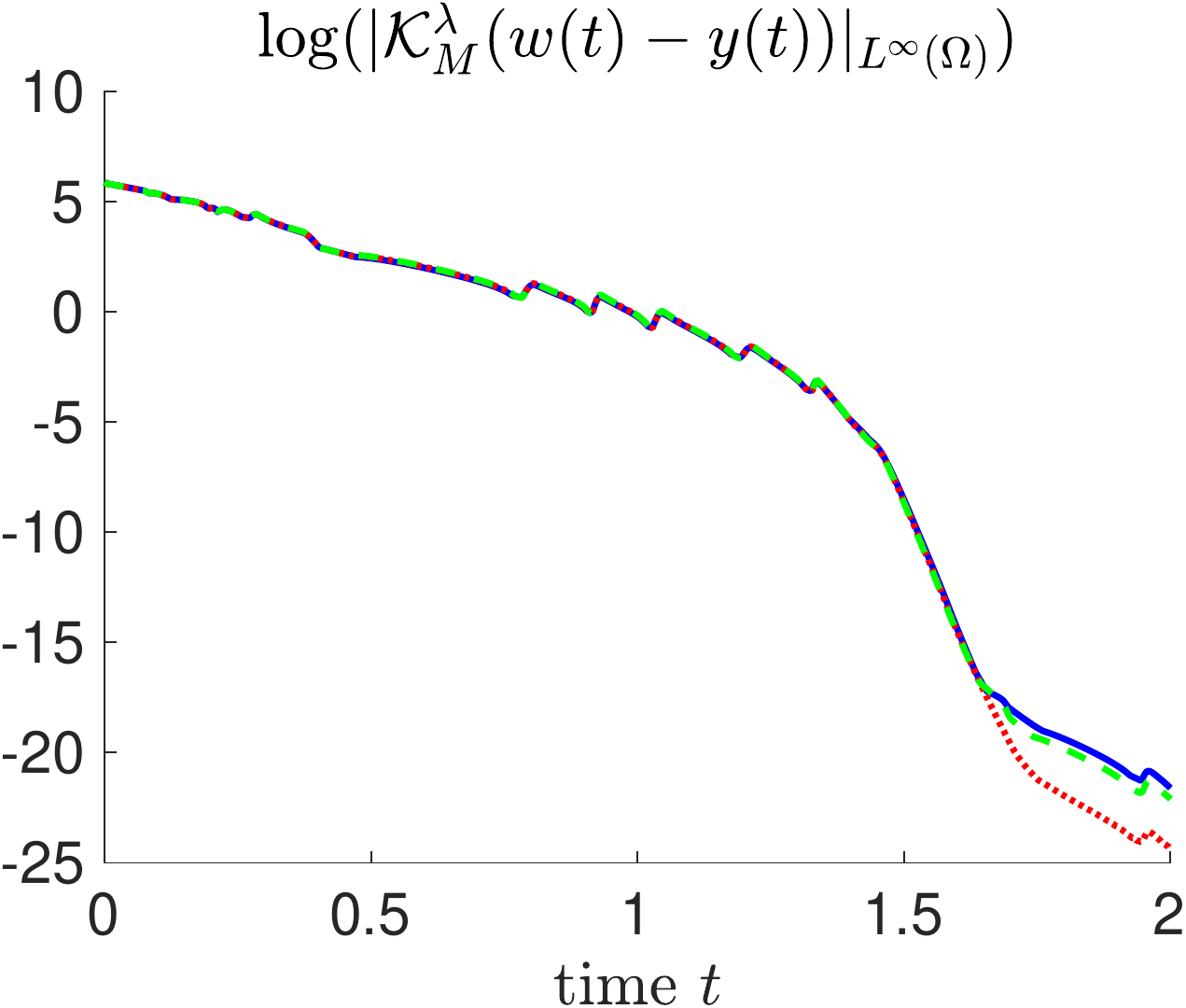}}
\caption{Norms of difference to target and control}
\label{Fig:SKoSetDF_lam5M10Feed02T2}
\end{figure}
 we see that  the behavior of the norm of the difference to target and of the control is similar for the several Moreau--Yosida parameters, with some differences for time~$t\ge 1.5$. So, the considered 
 parameters give us already a good picture of the qualitative behavior of the limit difference and control as $k_{\rm MY}$ diverges to~$+\infty$. 

The time snapshots in Figure~\ref{Fig:SKoSetTOF_lam5M10Feed02T2} 
\begin{figure}[ht]
\centering
\subfigure
{\includegraphics[width=0.45\textwidth,height=0.31\textwidth]{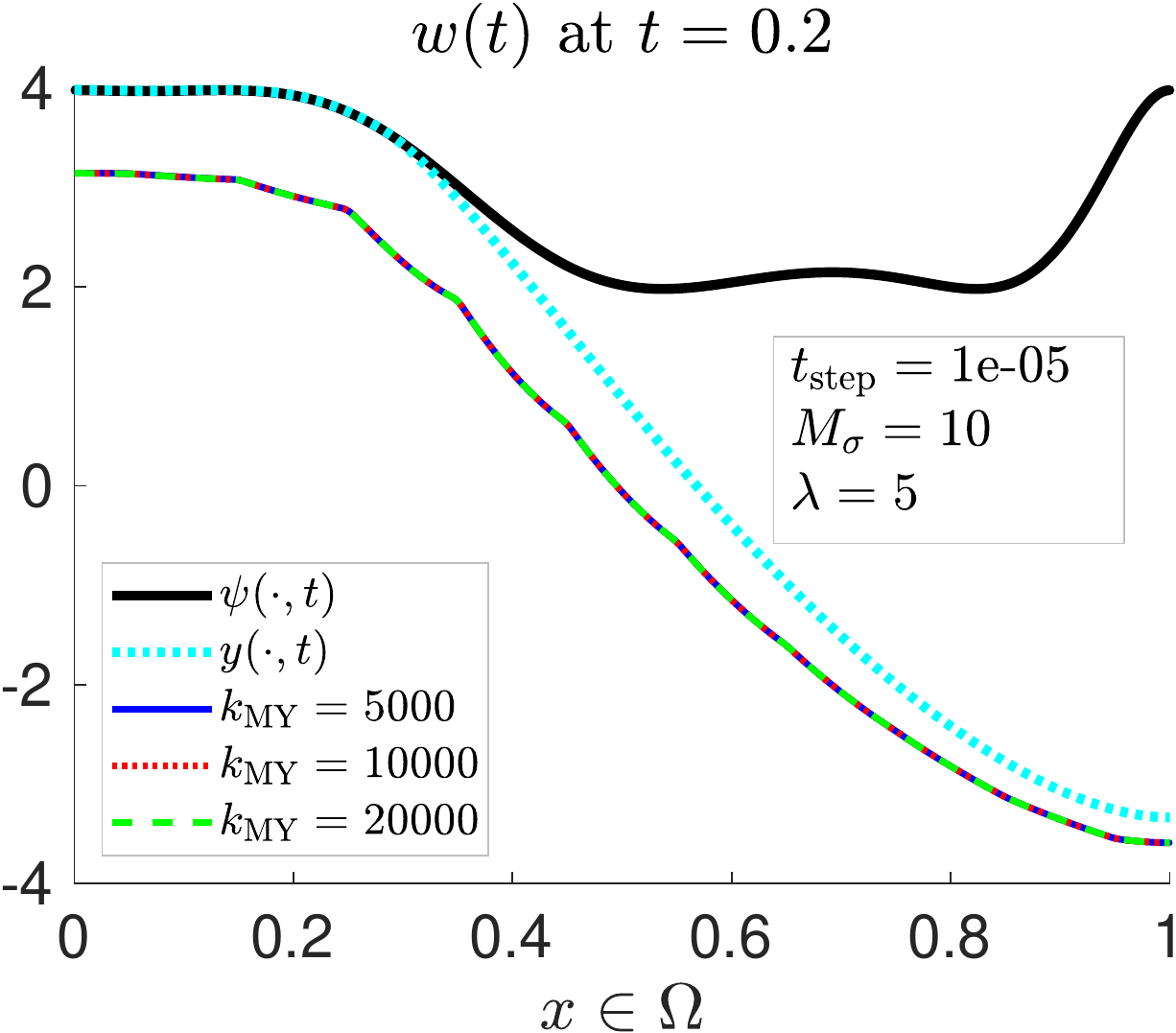}}
\qquad
\subfigure
{\includegraphics[width=0.45\textwidth,height=0.31\textwidth]{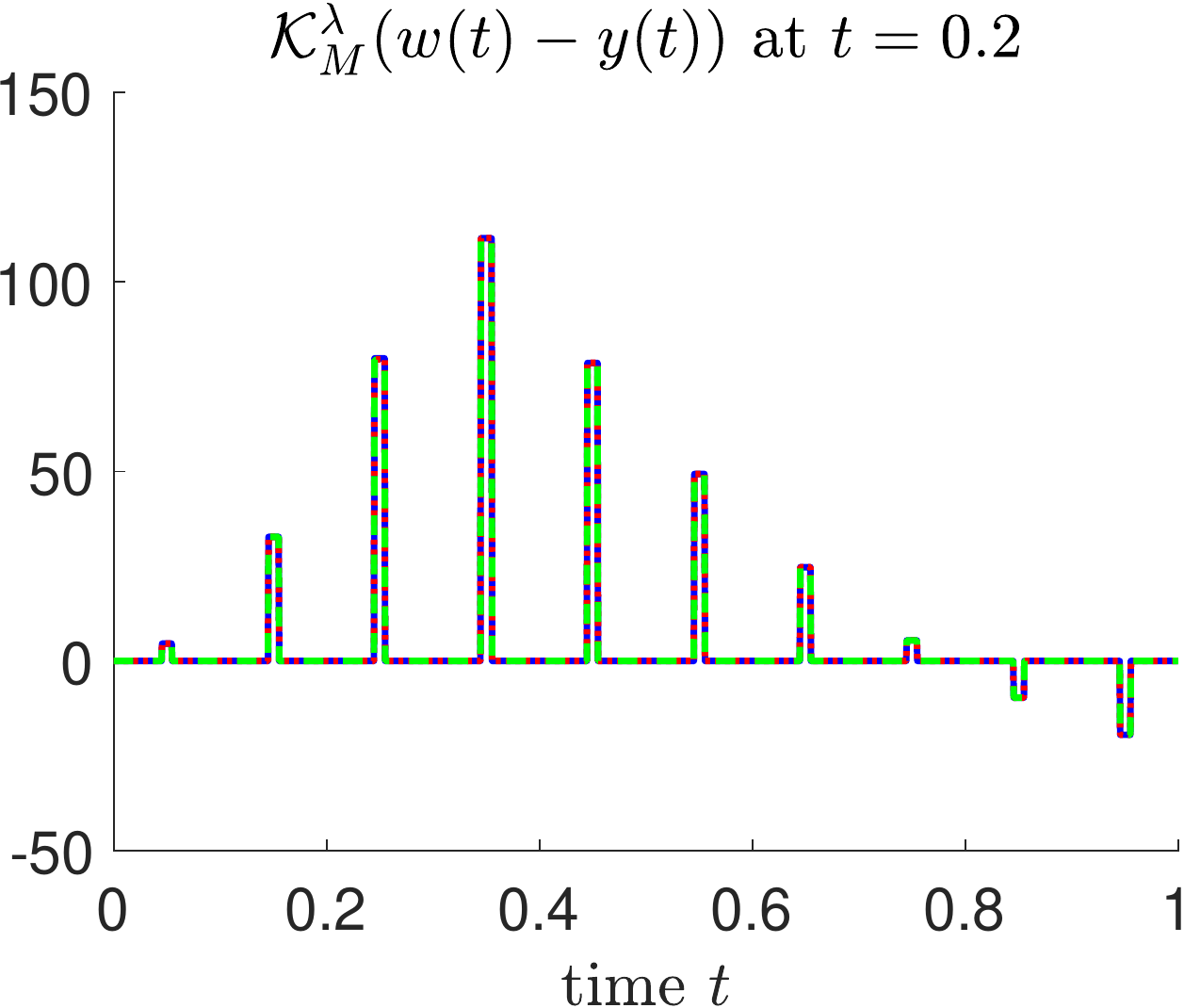}}
\subfigure
{\includegraphics[width=0.45\textwidth,height=0.31\textwidth]{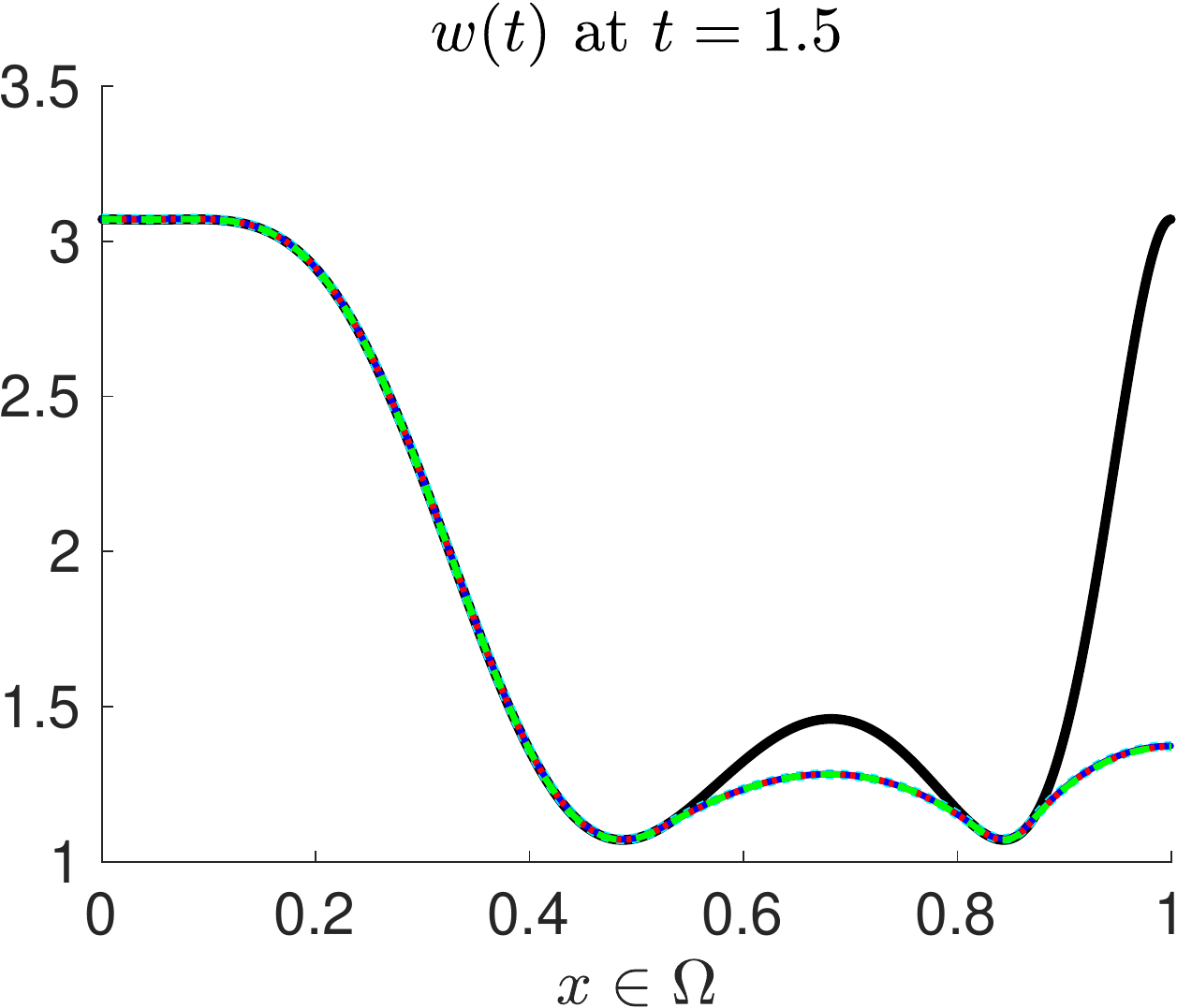}}
\qquad
\subfigure
{\includegraphics[width=0.45\textwidth,height=0.31\textwidth]{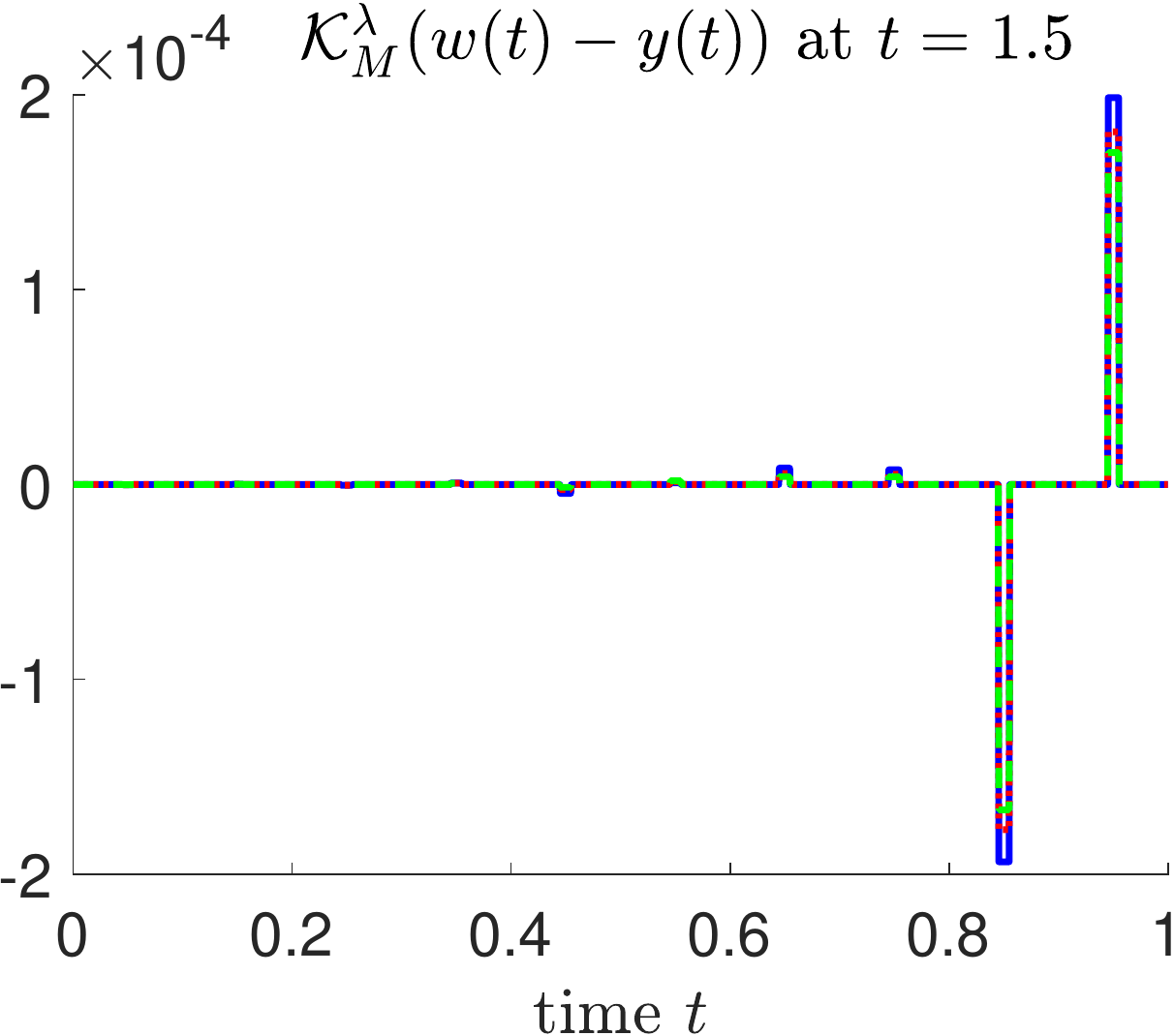}}
\caption{Time snapshots of trajectories and control}
\label{Fig:SKoSetTOF_lam5M10Feed02T2}
\end{figure}
show that the smallest value of~$k_{\rm MY}$ already captures a good
picture of the likely limit behavior for the parabolic variational inequality.

From Figure~\ref{Fig:SKoSetOyw_lam5M10Feed02T2}
\begin{figure}[ht]
\centering
\subfigure
{\includegraphics[width=0.45\textwidth,height=0.31\textwidth]{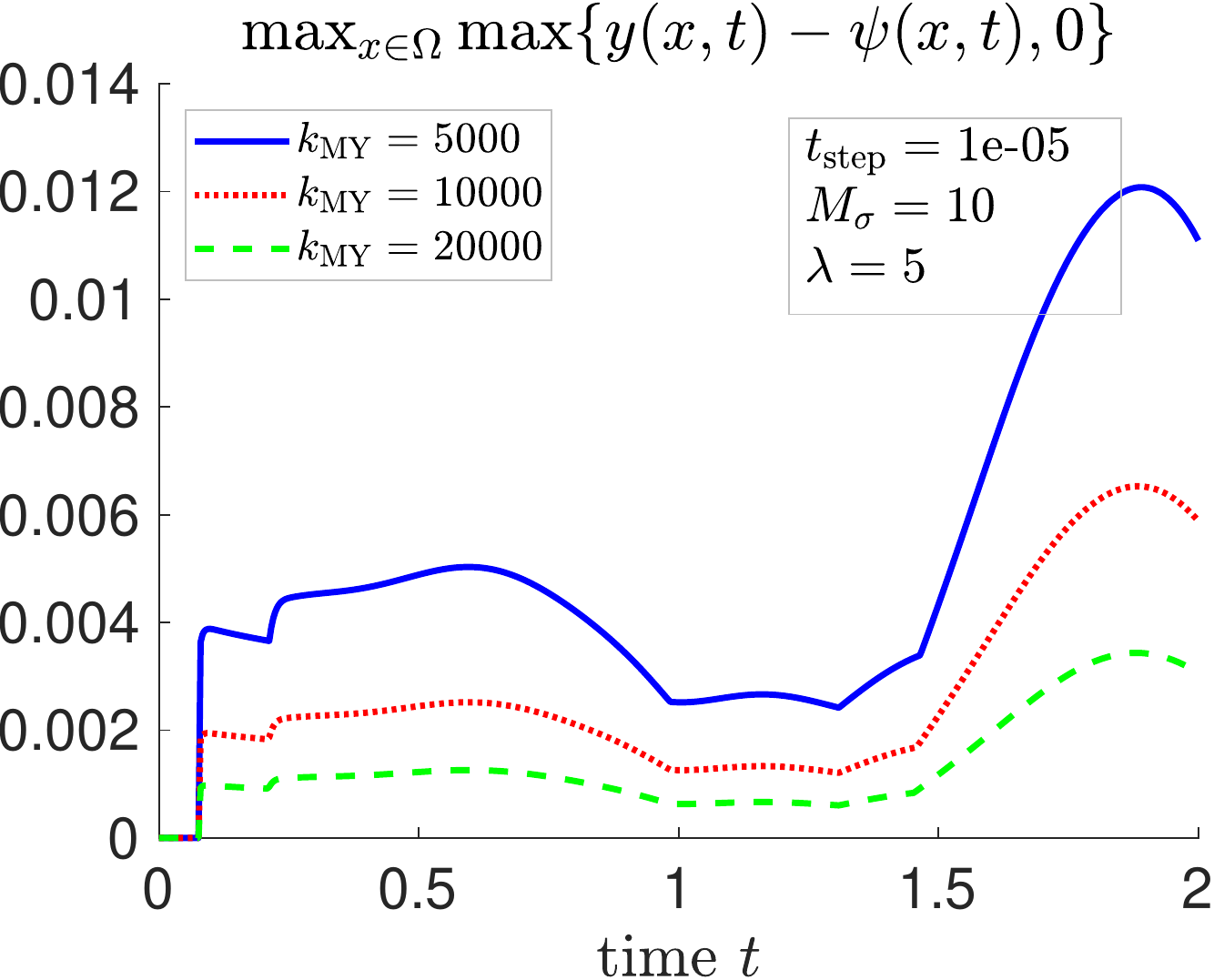}}
\qquad
\subfigure
{\includegraphics[width=0.45\textwidth,height=0.31\textwidth]{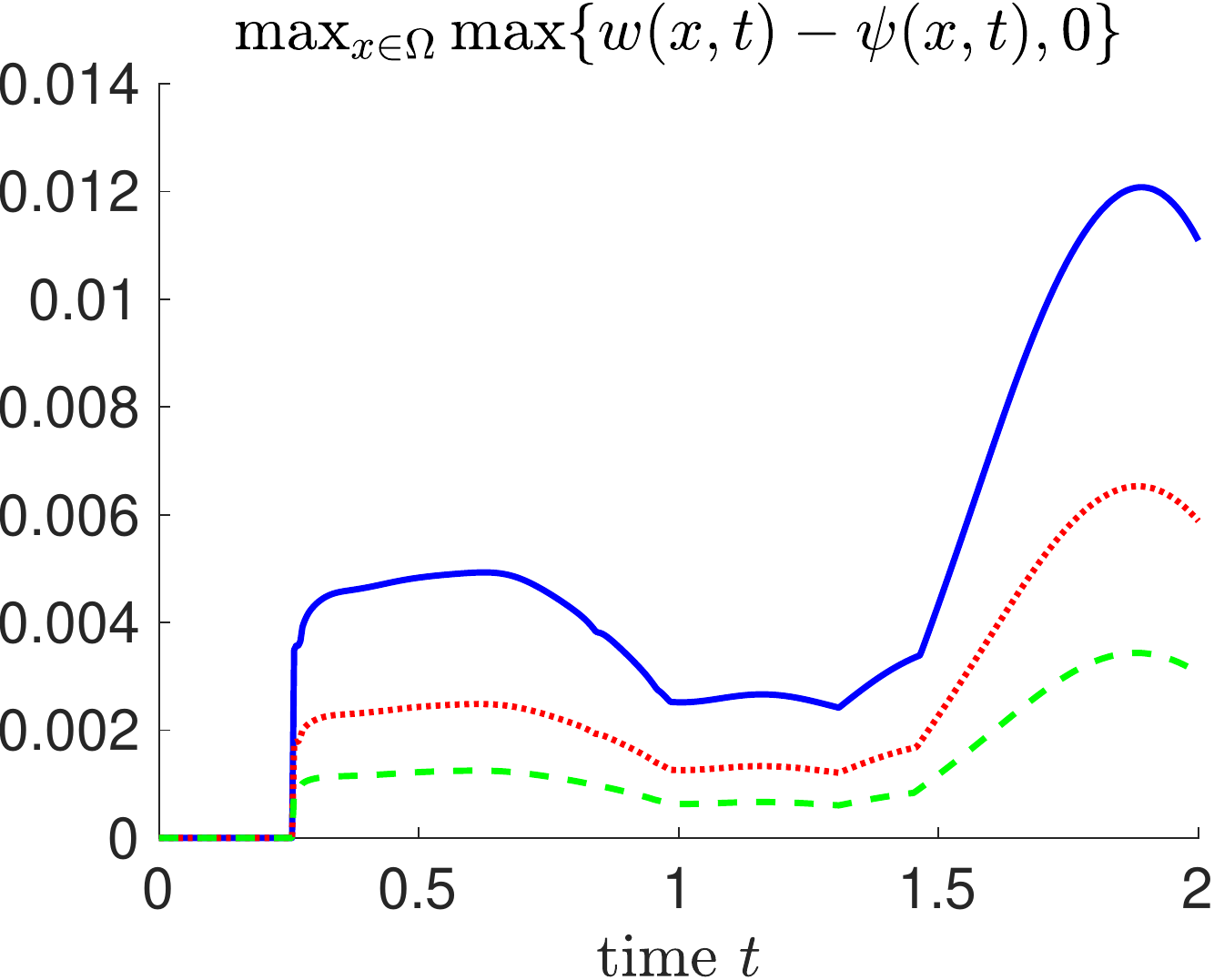}}
\caption{Largest magnitude of obstacle constraint violation}
\label{Fig:SKoSetOyw_lam5M10Feed02T2}
\end{figure}
we can conjecture also that the magnitude of the violation of the obstacle constraint
converges to zero  as $k_{\rm MY}\to\infty$.
That is, at the limit  such magnitude will vanish,
as we expect due to the theoretical results.

Finally,  in  Figures~\ref{Fig:KoSetKSy_lam5M10Feed02T2KTK} and~\ref{Fig:KoSetKSw_lam5M10Feed02T2KTK}
\begin{figure}[ht]
\centering
\subfigure
{\includegraphics[width=0.45\textwidth,height=0.31\textwidth]{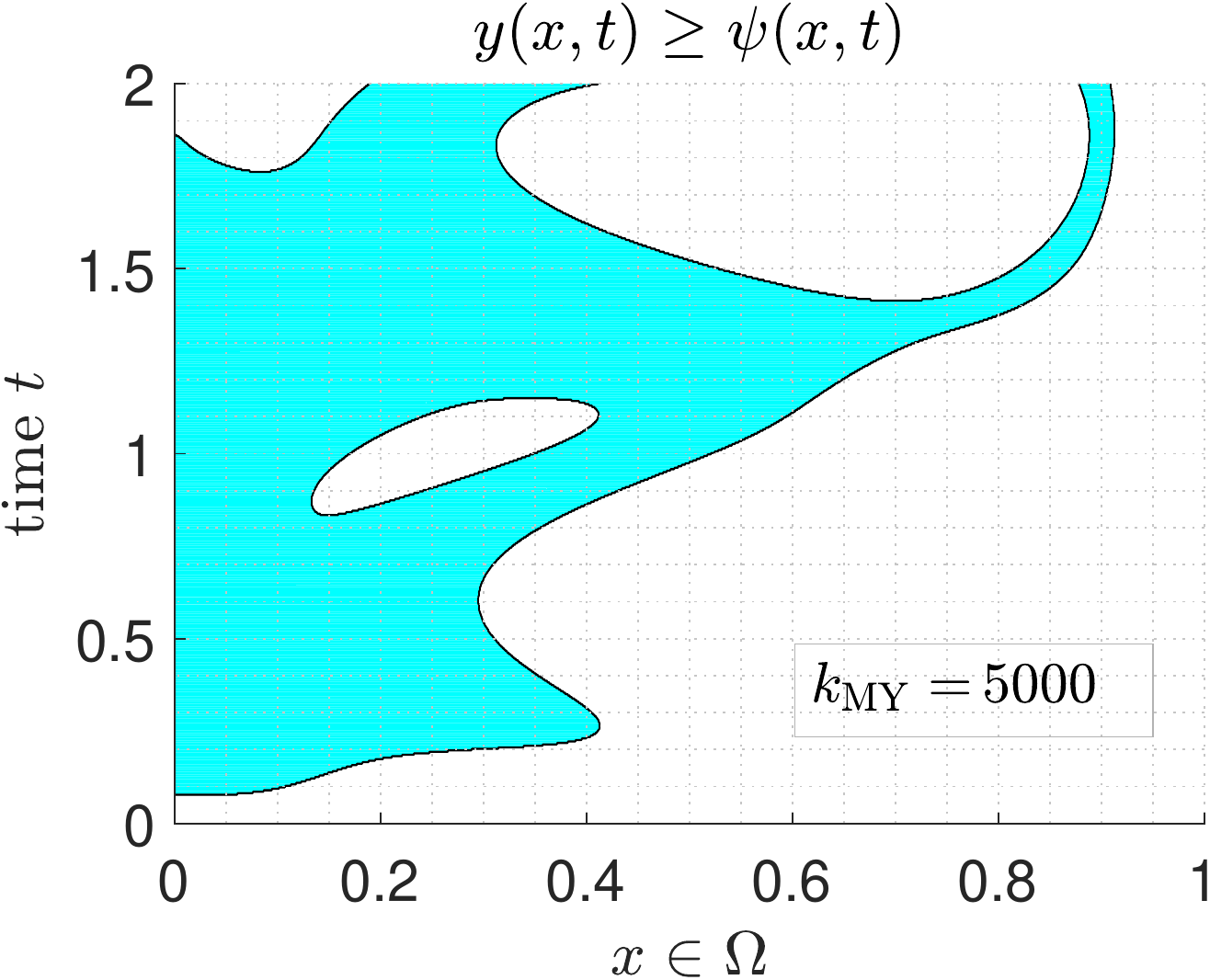}}
\qquad
\subfigure
{\includegraphics[width=0.45\textwidth,height=0.31\textwidth]{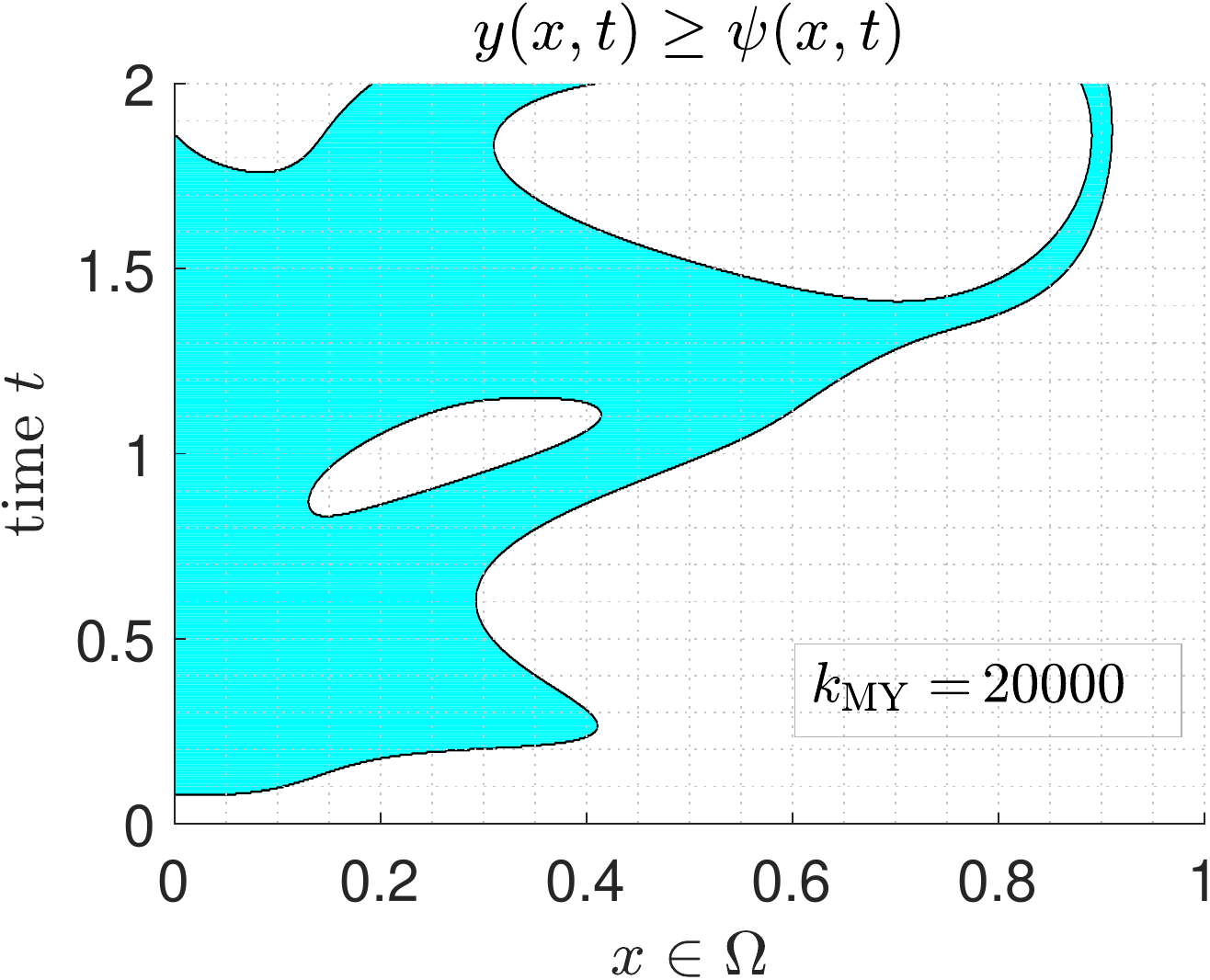}}
\caption{Evolution of obstacle constraint violation set for targeted trajectory}
\label{Fig:SKoSetKSy_lam5M10Feed02T2KTK}
\end{figure}
\begin{figure}[ht]
\centering
\subfigure
{\includegraphics[width=0.45\textwidth,height=0.31\textwidth]{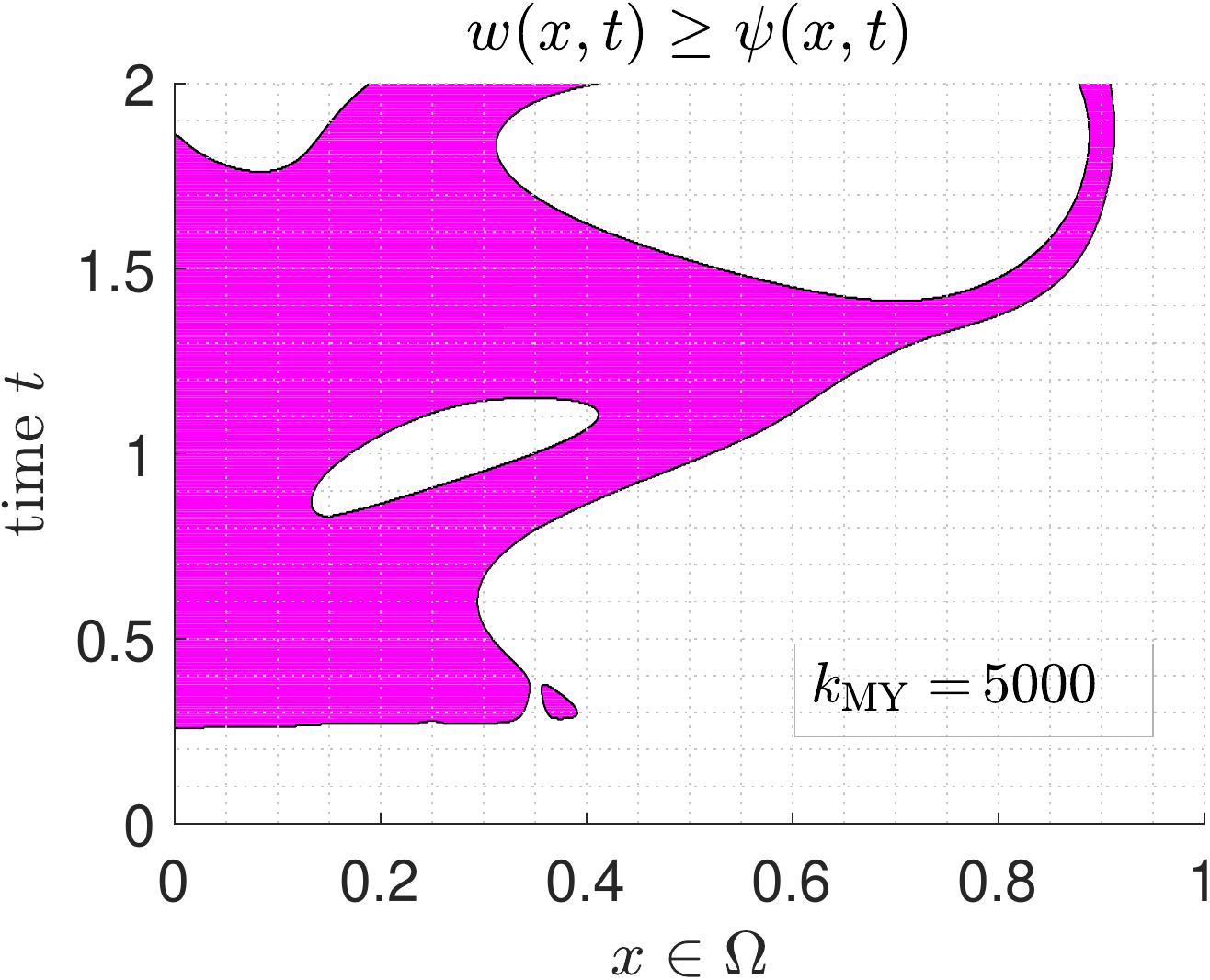}}
\qquad
\subfigure
{\includegraphics[width=0.45\textwidth,height=0.31\textwidth]{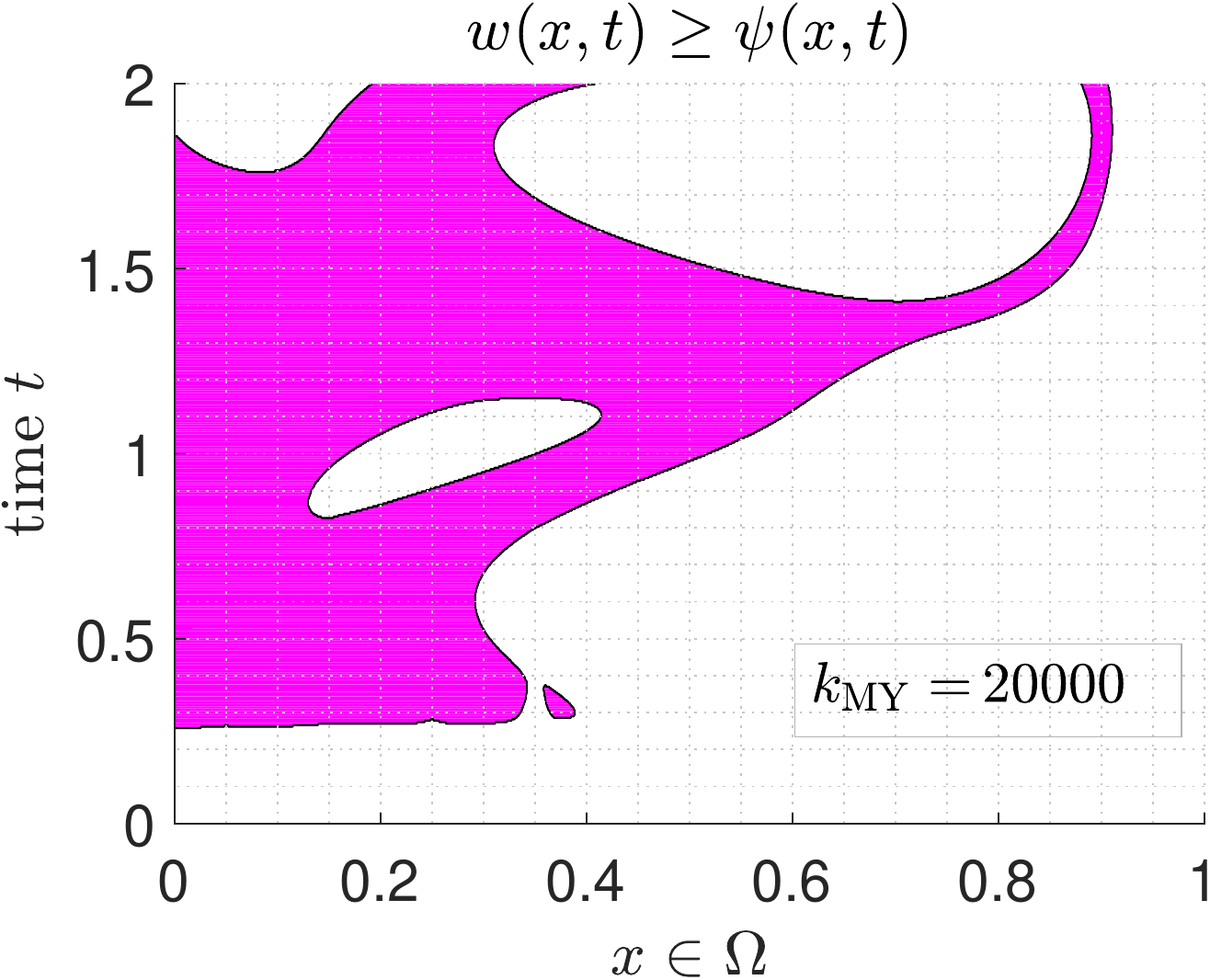}}
\caption{Evolution of obstacle constraint violation set for controlled trajectory}
\label{Fig:SKoSetKSw_lam5M10Feed02T2KTK}
\end{figure}
we can see the evolution of the obstacle constraint violation set.
It is interesting to observe that with the smallest value of~$k_{\rm MY}=5000$ considered,
we can already capture a good picture of the likely limit contact set evolution for the parabolic
variational inequality. The evolution is not simple, for example the number of contact connected
components change with time, this can simply be explained from the fact that the moving obstacle
and its shape  (cf.~Figure~\ref{Fig:TDF_lam4M5Feed04T4-tl} and other time snapshots)
are not simple themselves. 

\section{Numerical simulations for a nonsmooth obstacle}\label{sec:num_nonsmooth}
Note that the stability result for the sequence of $k_{\rm MY}$-Moreau--Yosida approximations hold true for obstacles which live
in~$L^2_{\rm loc}(\Omega\times\bbR_+)$, and in particular we have a weak limit for the pair~$z_k=y_k-w_k$,
Thus, we may ask ourselves if~$y_k$ and~$w_k$ also converge separately and if each of these limits satisfy
(a weaker formulation of) the variational inequality. 
Next, we present results of simulations which suggest that this may be indeed the case
for obstacles in~$\clC^1([0,+\infty),L^2(\Omega))$.
This means that our result can probably be extended to less regular obstacles.
Such extension is an interesting problem for future investigation. Note that, if possible,
such extension is nontrivial and thus will  likely  require a considerably different proof.

The following simulations correspond to the setting as in~\eqref{simul.setting} with the exception that we take a nonsmooth obstacle. Namely, we modify the smooth obstacle in~\eqref{Opsi-smooth}, by changing it to constant functions on the spatial set~$[0,\frac{1}{10}]\bigcup[\frac{8}{10},1]$. More precisely, we take the obstacle
\begin{equation}\notag
\psi(x,t)=\begin{cases}
\frac{31}{10},\;&\mbox{if }x\in[0,\frac{1}{10}];\\
2+\cos(t)+\cos\left(10\pi x(x-1)\bigl(x-\tfrac{1}{4}\cos(5 t)\bigr)\right),\;&\mbox{if }x\in(\frac{1}{10},\frac{8}{10});\\
-\frac{5}{10},\;&\mbox{if }x\in[\frac{8}{10},1].
\end{cases}
\end{equation}

In Figure~\ref{Fig:KoSetDF_lam5M10Feed02T2}
\begin{figure}[ht]
\centering
\subfigure
{\includegraphics[width=0.45\textwidth,height=0.31\textwidth]{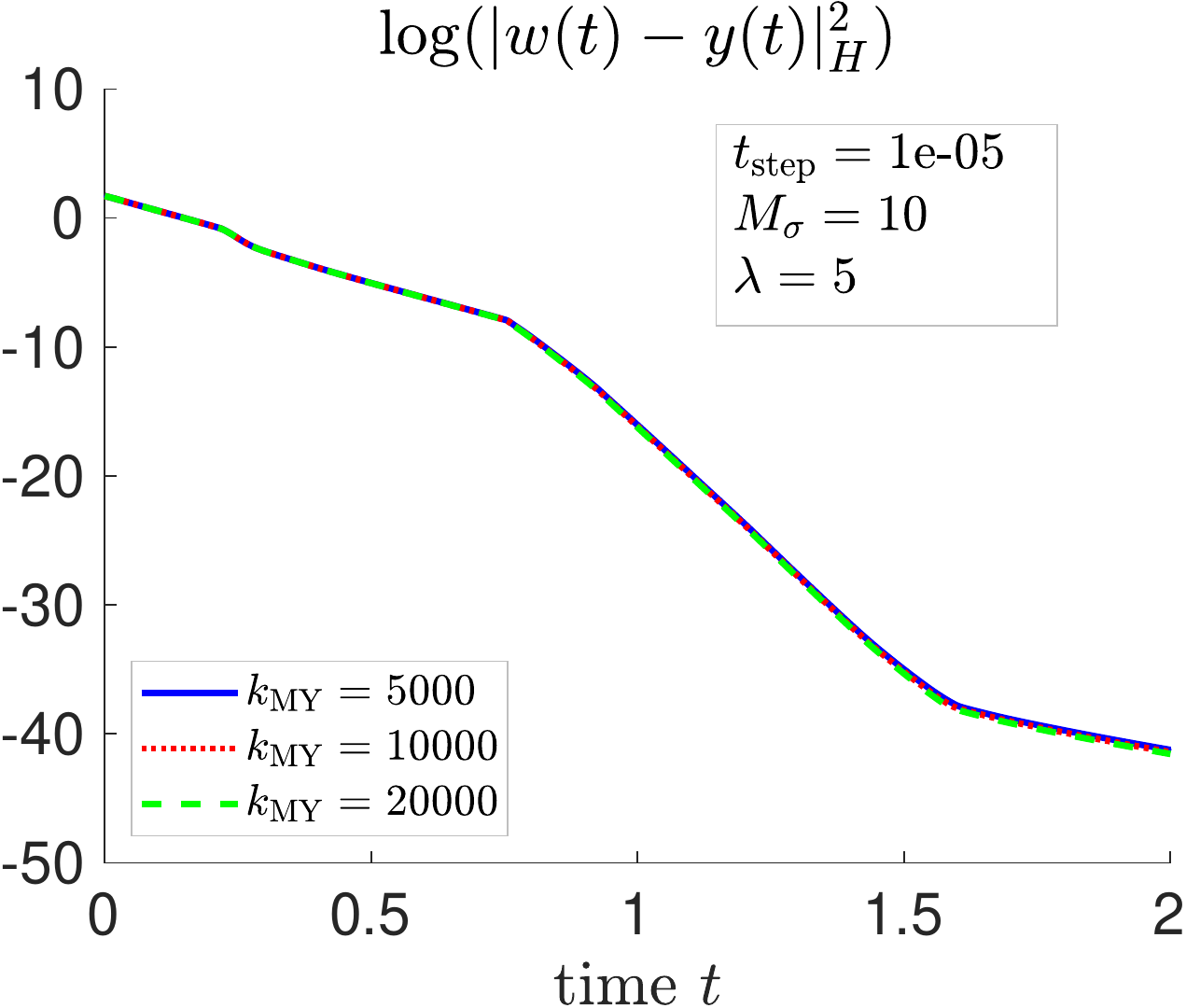}}
\qquad
\subfigure
{\includegraphics[width=0.45\textwidth,height=0.31\textwidth]{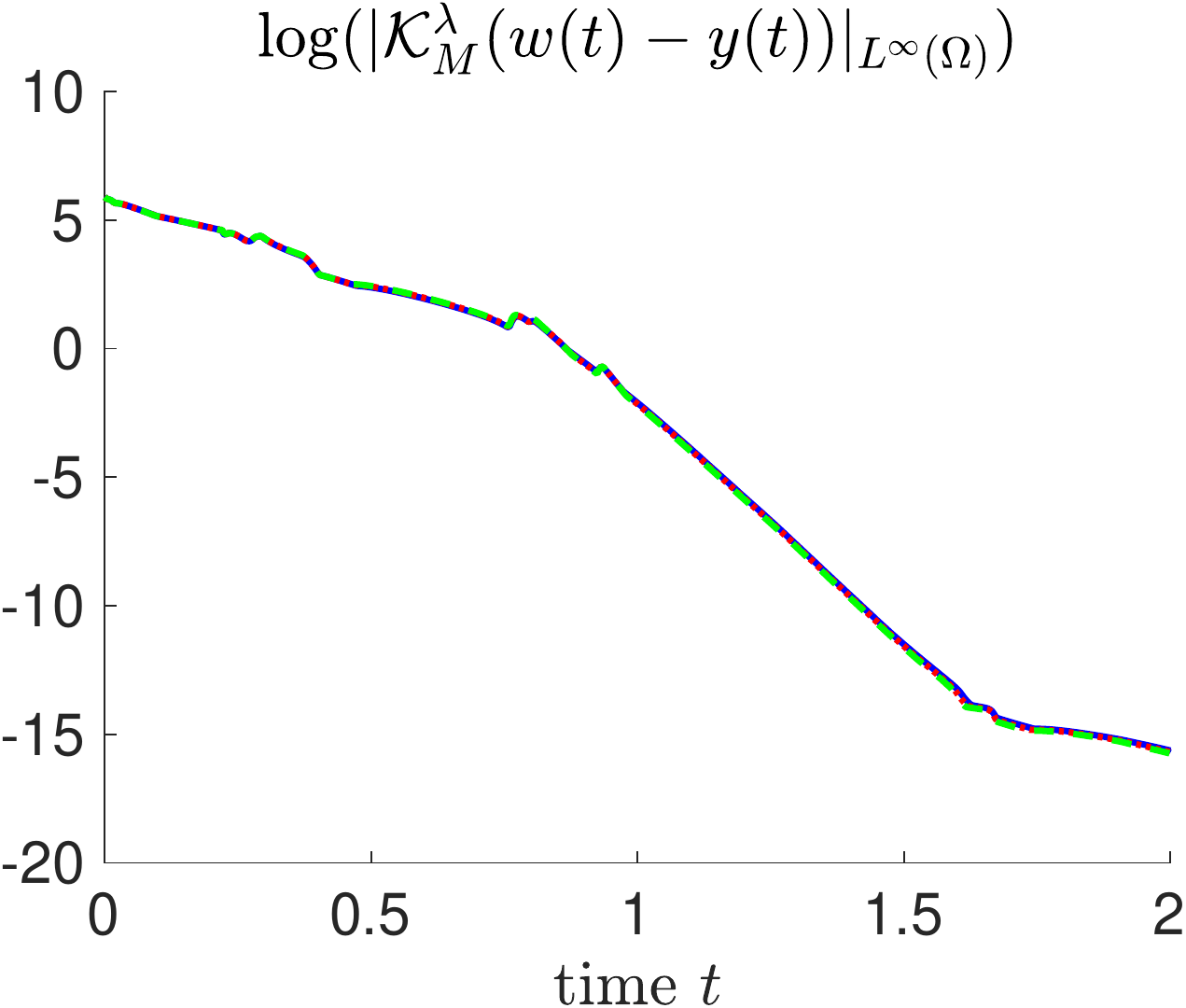}}
\caption{Norms of difference to target and control}
\label{Fig:KoSetDF_lam5M10Feed02T2}
\end{figure}
 we cannot see a considerable difference in the behavior of the
 norm of the difference to target and of the control for the several
 Moreau--Yosida parameters. The same holds for the time snapshots in Figure~\ref{Fig:KoSetTF_lam5M10Feed02T2}.
\begin{figure}[ht]
\centering
\subfigure
{\includegraphics[width=0.45\textwidth,height=0.31\textwidth]{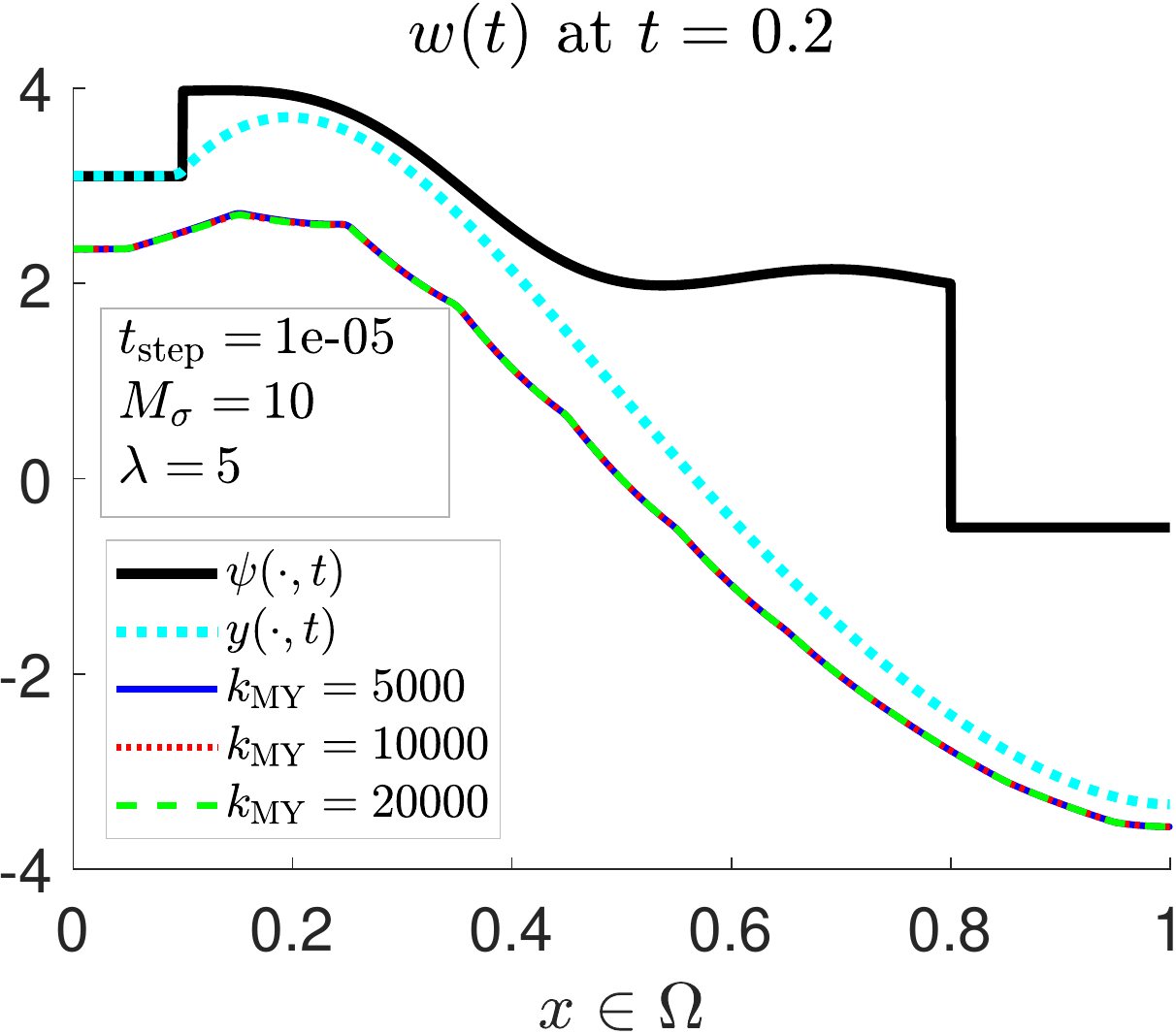}}
\qquad
\subfigure
{\includegraphics[width=0.45\textwidth,height=0.31\textwidth]{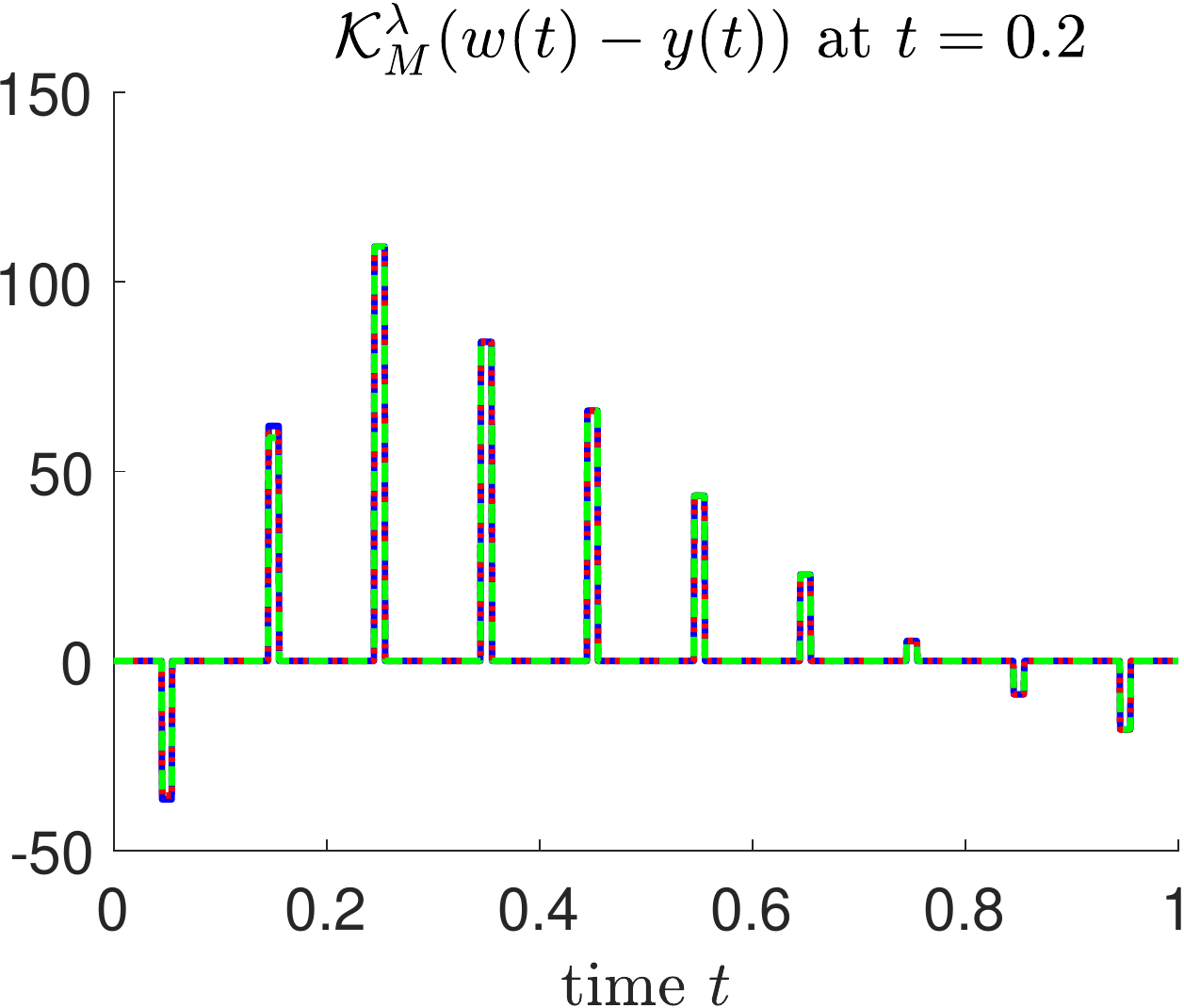}}
\subfigure
{\includegraphics[width=0.45\textwidth,height=0.31\textwidth]{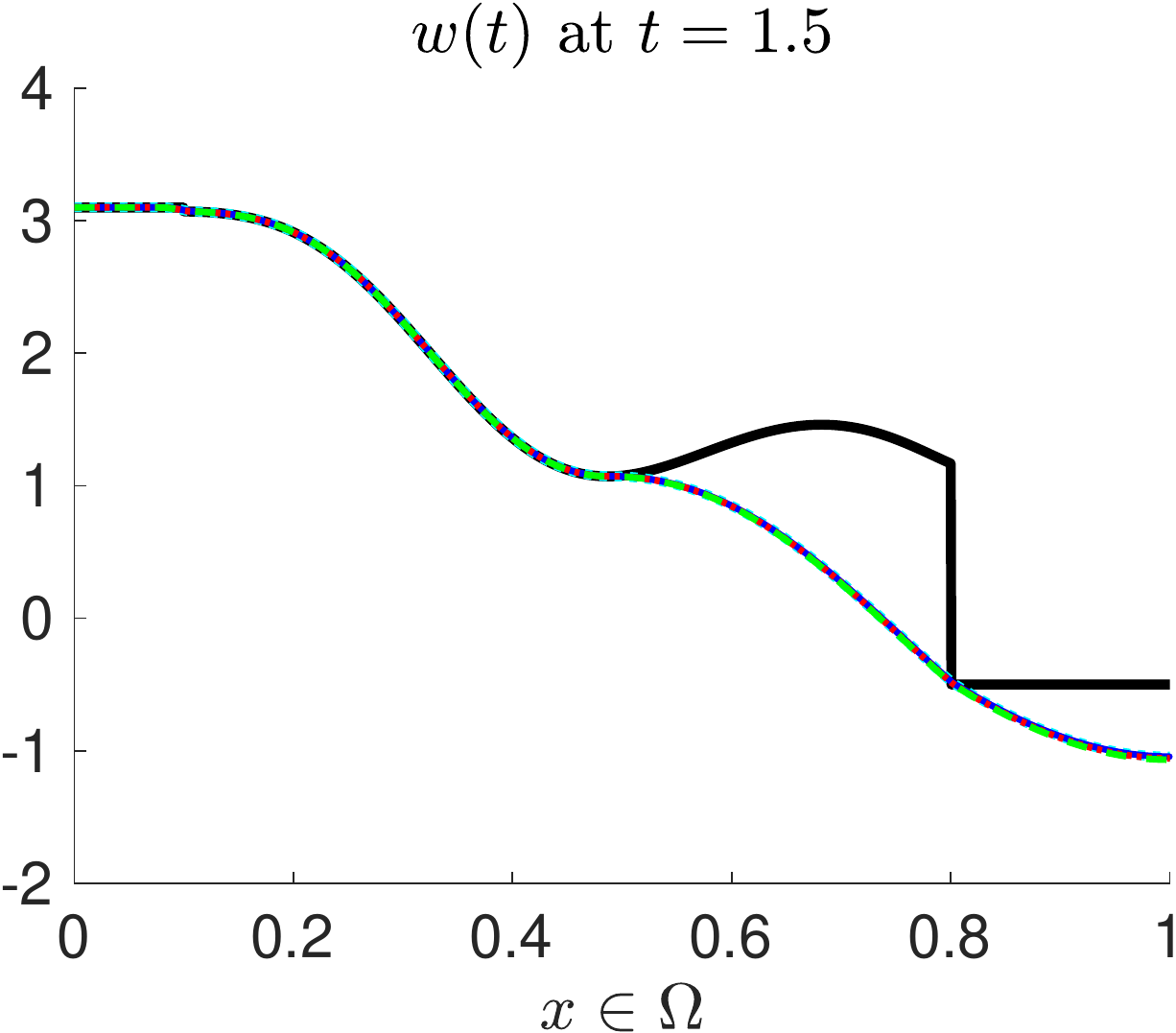}}
\qquad
\subfigure
{\includegraphics[width=0.45\textwidth,height=0.31\textwidth]{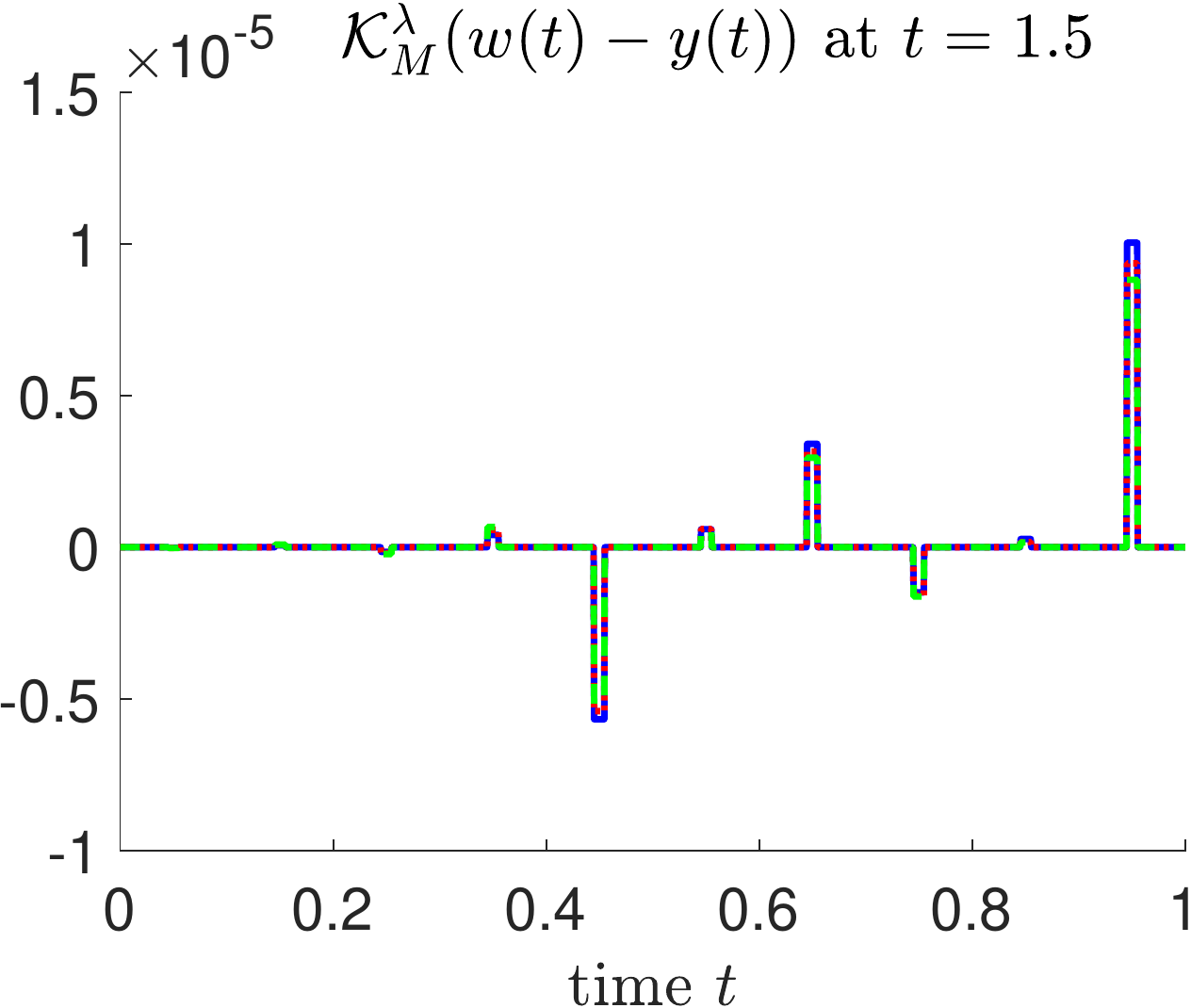}}
\caption{Time snapshots of trajectories and control}
\label{Fig:KoSetTF_lam5M10Feed02T2}
\end{figure}
 So we can conjecture that the considered parameters give us already a good picture of the behavior of the limit difference and control as $k_{\rm MY}$ diverges to~$+\infty$. 

From Figure~\ref{Fig:KoSetOyw_lam5M10Feed02T2} we can conjecture also that the magnitude of the violation of the obstacle
constraint converges to zero   as $k_{\rm MY}\to\infty$.
\begin{figure}[ht]
\centering
\subfigure
{\includegraphics[width=0.45\textwidth,height=0.31\textwidth]{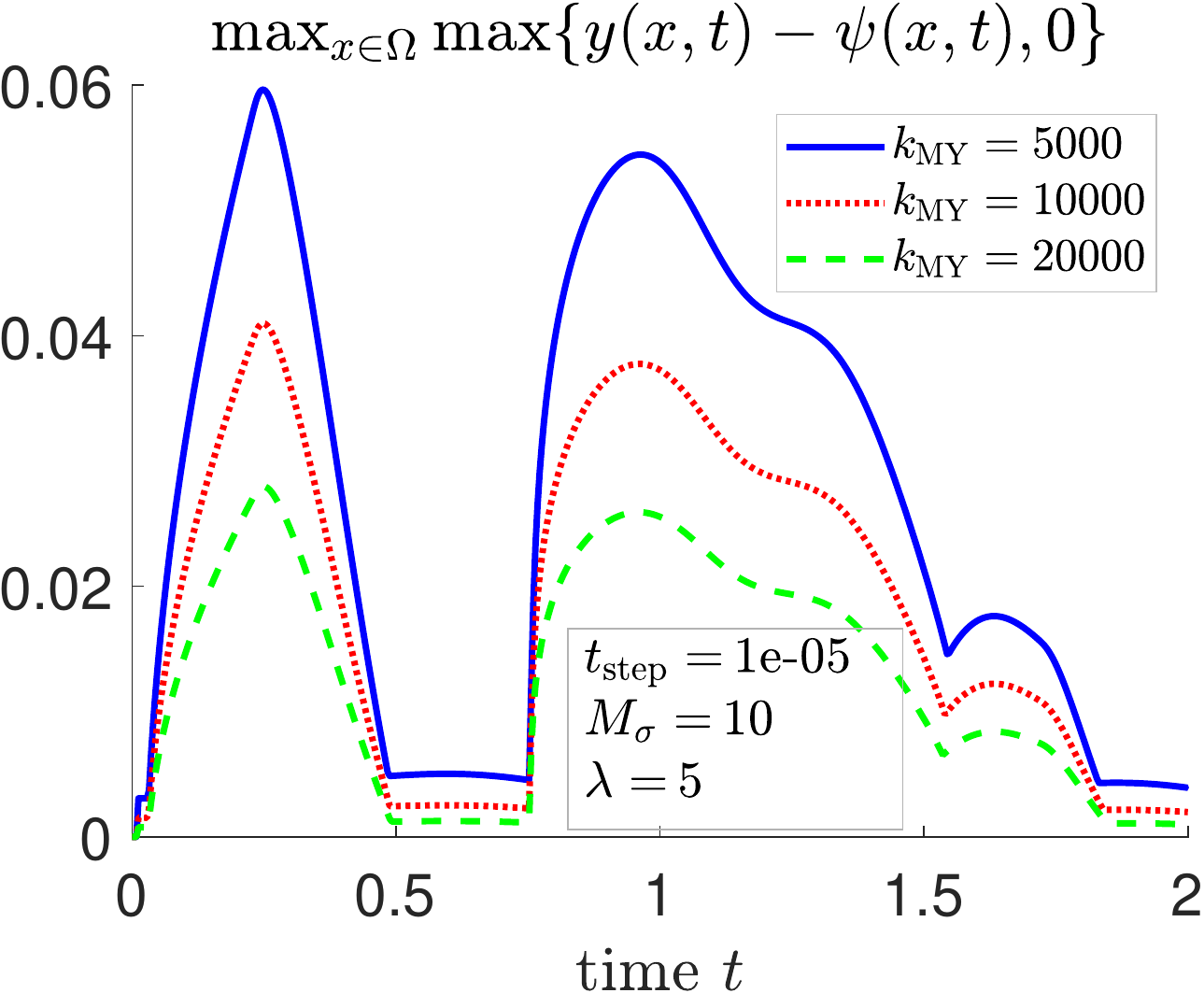}}
\qquad
\subfigure
{\includegraphics[width=0.45\textwidth,height=0.31\textwidth]{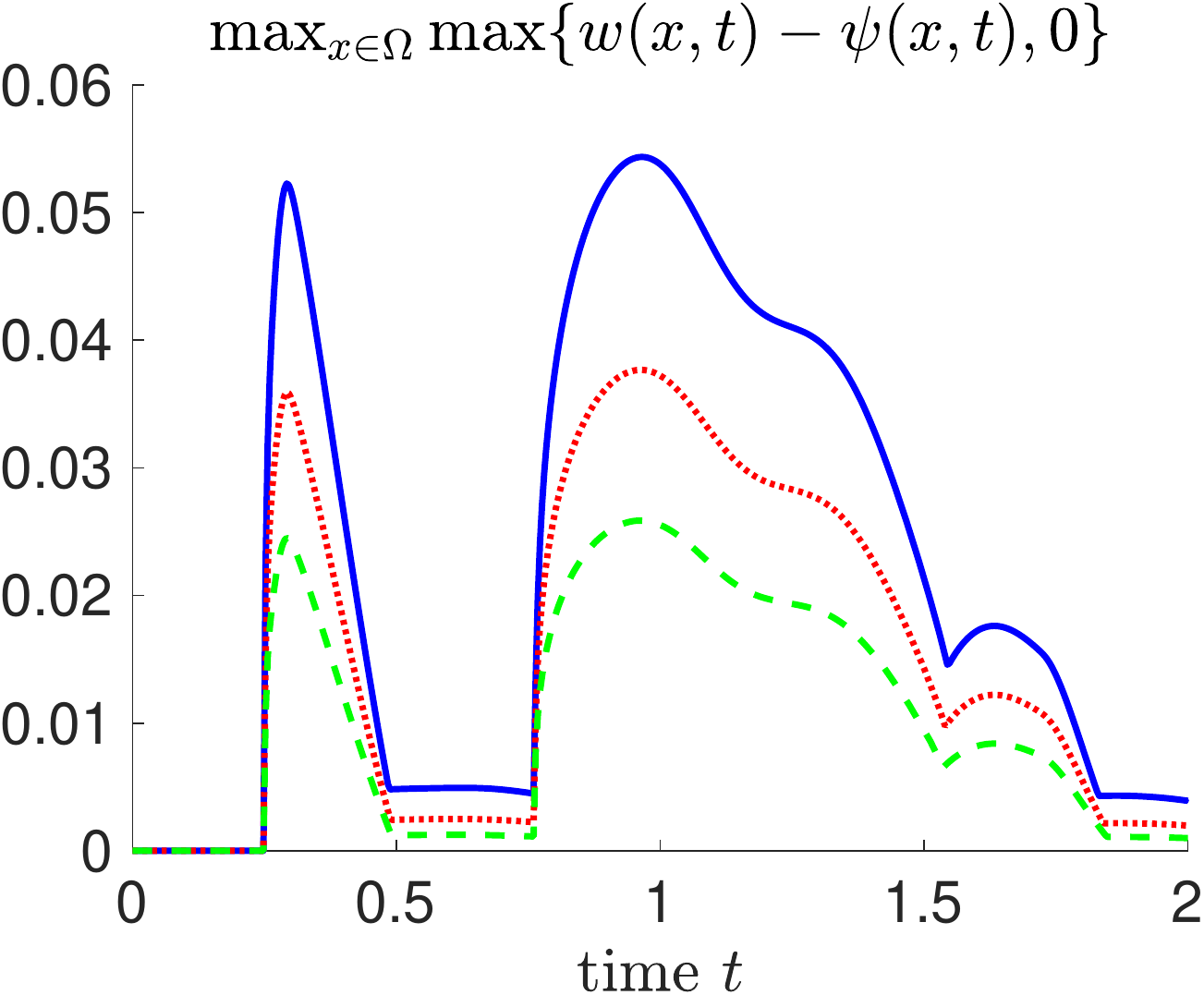}}
\caption{Largest magnitude of obstacle constraint violation}
\label{Fig:KoSetOyw_lam5M10Feed02T2}
\end{figure}

All the above suggest that a variational inequality will be satisfied at a limit.
But, this remains to be proven for nonsmooth obstacles.

Finally, in Figures~\ref{Fig:KoSetKSy_lam5M10Feed02T2KTK} and~\ref{Fig:KoSetKSw_lam5M10Feed02T2KTK}
\begin{figure}[ht]
\centering
\subfigure
{\includegraphics[width=0.45\textwidth,height=0.31\textwidth]{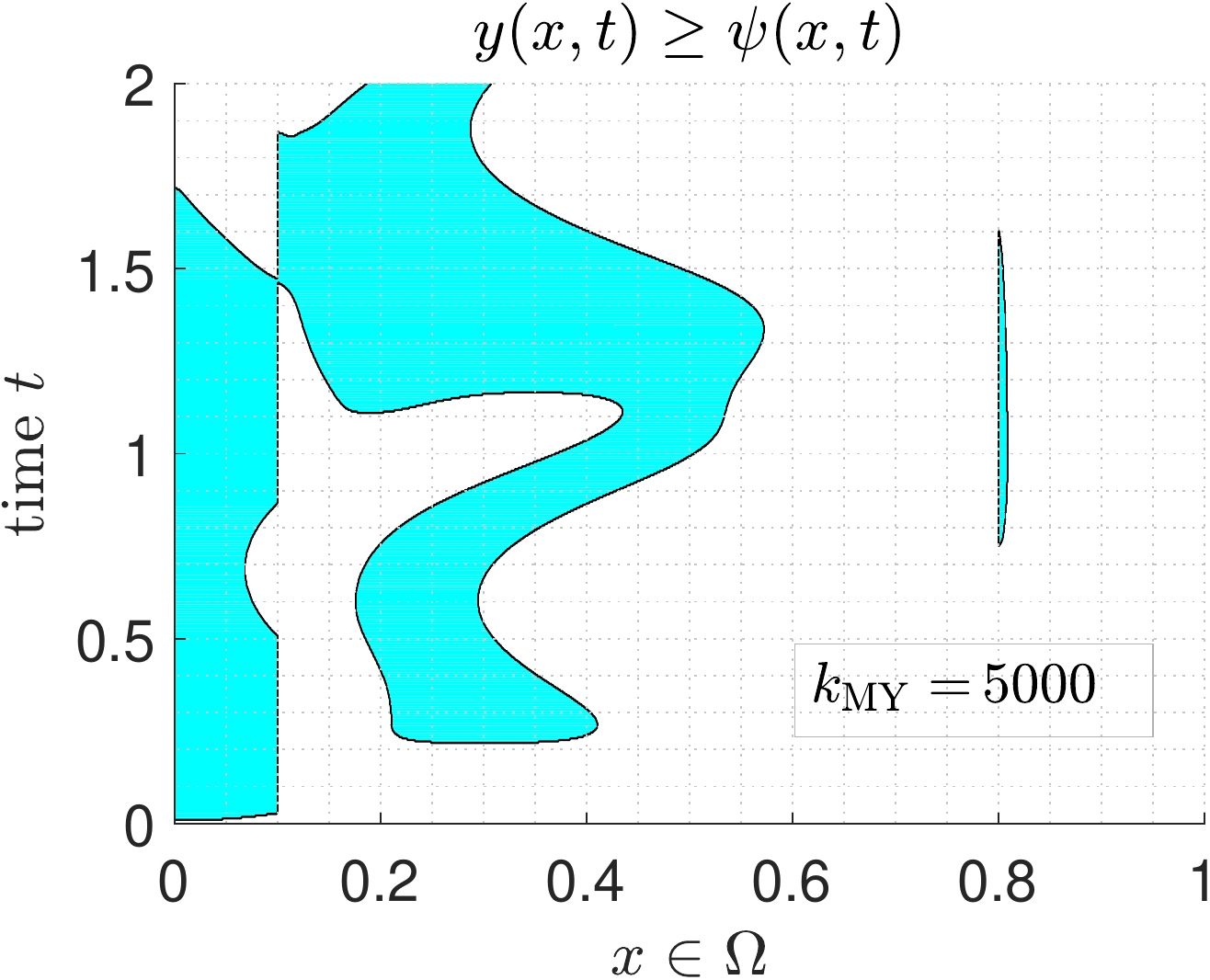}}
\qquad
\subfigure
{\includegraphics[width=0.45\textwidth,height=0.31\textwidth]{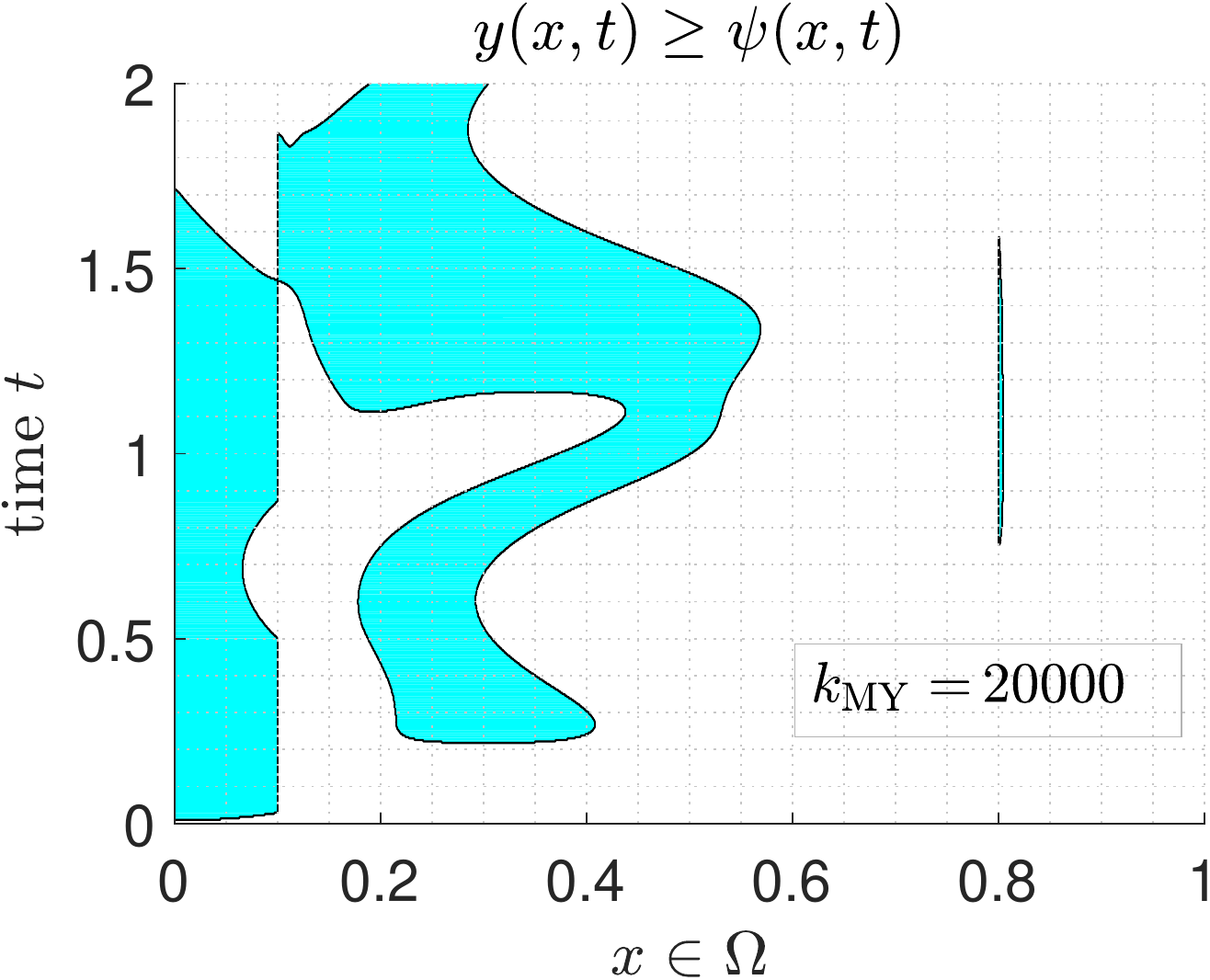}}
\caption{Evolution of obstacle constraint violation set for targeted trajectory}
\label{Fig:KoSetKSy_lam5M10Feed02T2KTK}
\end{figure}
\begin{figure}[ht]
\centering
\subfigure
{\includegraphics[width=0.45\textwidth,height=0.31\textwidth]{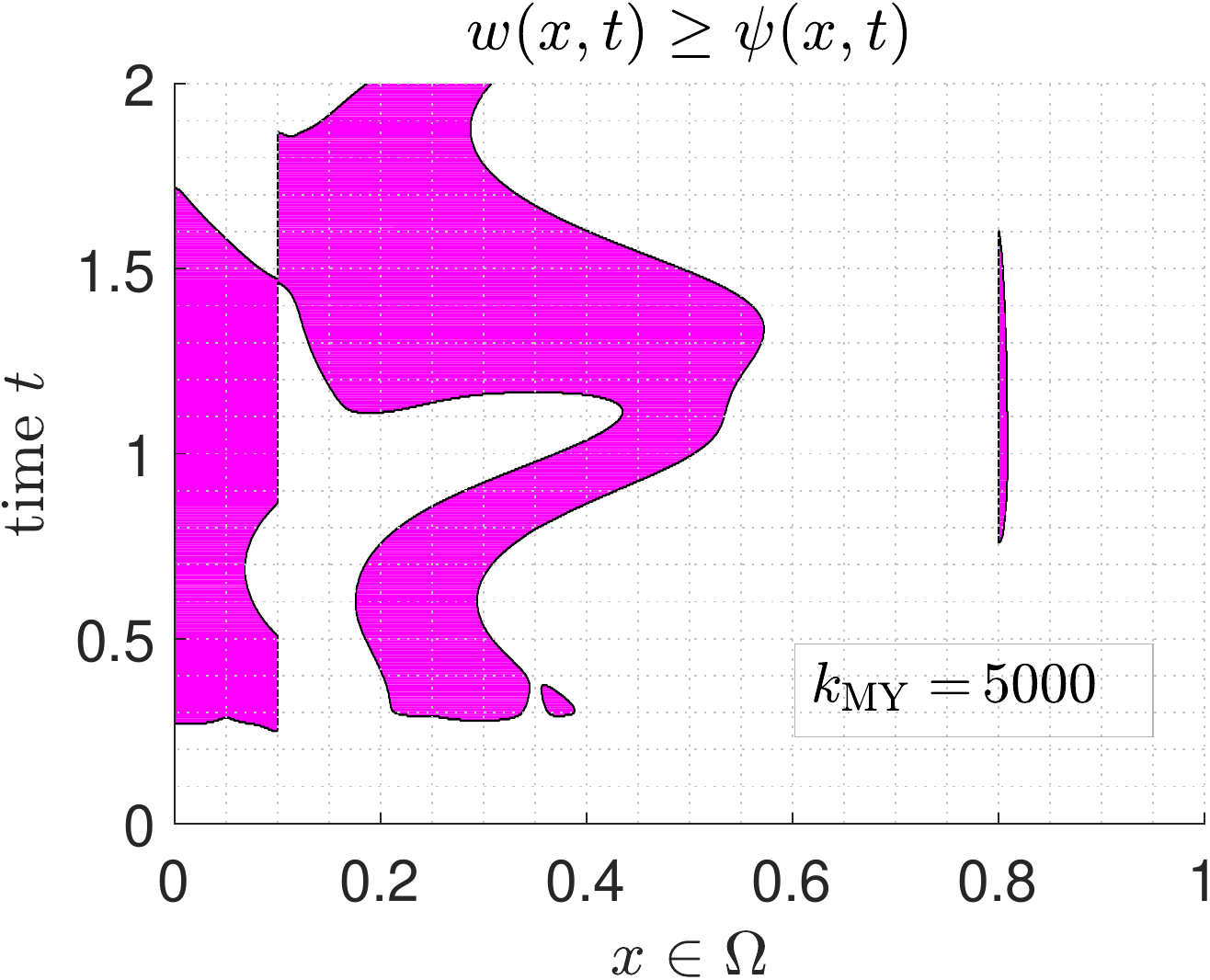}}
\qquad
\subfigure
{\includegraphics[width=0.45\textwidth,height=0.31\textwidth]{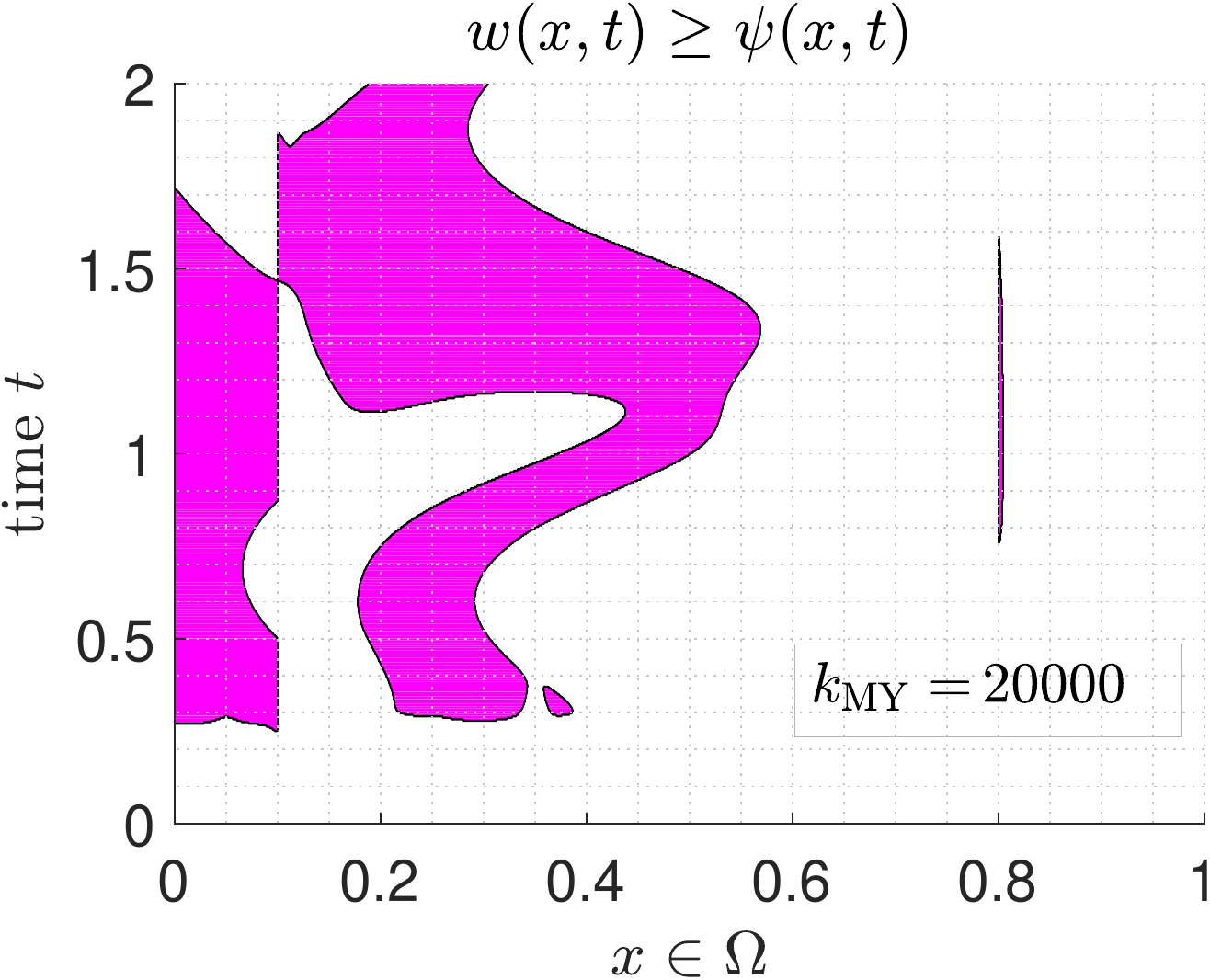}}
\caption{Evolution of obstacle constraint violation set for controlled trajectory}
\label{Fig:KoSetKSw_lam5M10Feed02T2KTK}
\end{figure}
we can see the evolution of the obstacle constraint violation sets.
Again, the smallest value of~$k_{\rm MY}$ provides
us already with good picture of such evolutions.
However, note that by taking the largest value we are able to
``sharpen'' the picture, in particular it confirms that locally the contact
is made at the single (discontinuity) point~$x=0.8$ during a suitable interval of time,
where~$t=1.5$ is included,  as we see in the snapshot in Figure~\ref{Fig:KoSetTF_lam5M10Feed02T2}. We also observe that the discontinuity of the obstacle at the spatial points $x\in\{0.1,0.8\}$ is somehow reflected in Figures~\ref{Fig:KoSetKSy_lam5M10Feed02T2KTK} and~\ref{Fig:KoSetKSw_lam5M10Feed02T2KTK}.


\bibliographystyle{plainurl}
\bibliography{ObstacleParab_Stabil_arx}
\end{document}